\newtheorem{lemma}{Lemma}[section]
\newtheorem{theorem}[lemma]{Theorem}
\newtheorem*{theorem*}{Theorem}
\newtheorem{corollary}[lemma]{Corollary}
\newtheorem{proposition}[lemma]{Proposition}
\newtheorem*{proposition*}{Proposition}
\newtheorem{problem}{Problem}
\newtheorem{claim}{Claim}
\theoremstyle{definition}
\newtheorem*{claim*}{Claim}
\newtheorem*{convention}{Convention}
\newtheorem*{notation}{Notation}
\newtheorem*{definition}{Definition}
\newtheorem*{equivdefinition}{Equivalent Definition}
\newtheorem*{examples}{Examples}
\newtheorem*{remark}{Remark}
\newtheorem*{remarks}{Remarks}
\newcommand{\fp}{\mathfrak{p}}
\newcommand{\C}{{\mathbb C}}
\newcommand{\E}{{\mathbb E}}
\newcommand{\N}{{\mathbb N}}
\renewcommand{\P}{{\mathbb P}}
\newcommand{\Q}{{\mathbb Q}}
\newcommand{\R}{{\mathbb R}}
\newcommand{\T}{{\mathbb T}}
\newcommand{\Z}{{\mathbb Z}}
\newcommand{\CA}{{\mathcal A}}
\newcommand{\CC}{{\mathcal C}}
\newcommand{\CF}{{\mathcal F}}
\newcommand{\CI}{{\mathcal I}}
\newcommand{\CM}{{\mathcal M}}
\newcommand{\CN}{{\mathcal N}}
\newcommand{\CP}{{\mathcal P}}
\newcommand{\CX}{{\mathcal X}}
\newcommand{\bg}{{\mathbf{g}}}
\newcommand{\bh}{{\mathbf{h}}}
\newcommand{\bk}{{\mathbf{k}}}
\newcommand{\bt}{{\mathbf{t}}}
\newcommand{\bu}{{\mathbf{u}}}
\newcommand{\bv}{{\mathbf{v}}}
\newcommand{\bx}{{\mathbf{x}}}
\newcommand{\balpha}{{\boldsymbol{\alpha}}}
\newcommand{\bepsilon}{{\boldsymbol{\epsilon}}}
\newcommand{\bgamma}{{\boldsymbol{\gamma}}}
\newcommand{\bdelta}{{\boldsymbol{\delta}}}
\newcommand{\bzero}{{\boldsymbol{0}}}
\newcommand{\btheta}{{\boldsymbol{\theta}}}
\newcommand{\beeta}{{\boldsymbol{\eta}}}
\newcommand{\bbeta}{{\boldsymbol{\beta}}}
\newcommand{\ve}{\varepsilon}
\newcommand{\wh}{\widehat}
\newcommand{\wt}{\widetilde}
\newcommand{\e}{\mathrm{e}}
\newcommand{\one}{\mathbf{1}}
\newcommand{\ZN}{\Z_{\widetilde N}}
\newcommand{\tN}{{\widetilde N}}
\newcommand{\CCM}{{\CM_1^c}}
\newcommand{\tZN}{\Z_\tN}
\newcommand{\norm}[1]{\lVert #1 \rVert}
\newcommand{\lip}{{\text{\rm Lip}}}
\newcommand{\inv}{^{-1}}
\newcommand{\st}{{\text{\rm st}}}
\newcommand{\un}{{\text{\rm un}}}
\newcommand{\er}{{\text{\rm er}}}
\DeclareMathOperator{\spec}{Spec}
\DeclareMathOperator{\Span}{Span}
\newcommand{\dmult}{d_\textrm{mult}}
\DeclareMathOperator{\reel}{Re}
\renewcommand{\Re}{\reel}
\DeclareMathOperator{\Imag}{Im}
\renewcommand{\Im}{\Imag}
\DeclareMathOperator{\poly}{poly}
\numberwithin{equation}{section}
\begin{document}

\title[Higher order Fourier analysis of multiplicative functions]{Higher order Fourier analysis of multiplicative functions and applications}

\author{Nikos Frantzikinakis}
\address[Nikos Frantzikinakis]{University of Crete, Department of mathematics, Voutes University Campus, Heraklion 71003, Greece} \email{frantzikinakis@gmail.com}
\author{Bernard Host}
\address[Bernard Host]{
Universit\'e Paris-Est Marne-la-Vall\'ee, Laboratoire d'analyse et
de math\'ematiques appliqu\'ees, UMR CNRS 8050, 5 Bd Descartes,
77454 Marne la Vall\'ee Cedex, France }
\email{bernard.host@univ-mlv.fr}

\begin{abstract}
We prove  a structure theorem for multiplicative
functions which
states that an arbitrary  multiplicative function  of
 modulus at most $1$ can be decomposed
into two terms, one that is approximately periodic and another that has
 small  Gowers uniformity norm of an arbitrary degree. The proof uses
  tools from higher order Fourier analysis and  finitary ergodic theory, and  some soft number theoretic input
 that comes in the form of an orthogonality criterion of K\'atai.
We use variants of this structure theorem  to derive applications of
number theoretic  and combinatorial
flavor: $(i)$  we give simple necessary and sufficient conditions
for  the Gowers norms (over $\N$) of a
bounded multiplicative function to be zero, $(ii)$
generalizing a classical result of Daboussi we prove asymptotic orthogonality of
multiplicative functions to ``irrational'' nilsequences,
 $(iii)$    we prove that for certain polynomials in two variables all ``aperiodic'' multiplicative functions
satisfy Chowla's zero mean conjecture,
$(iv)$ we give the first partition regularity results for homogeneous quadratic equations in three variables,
showing for example that on every partition of the integers into finitely many cells there exist distinct $x,y$ belonging to the same
cell and $\lambda\in \N$ such that $16x^2+9y^2=\lambda^2$ and the same holds for the equation $x^2-xy+y^2=\lambda^2$.
\end{abstract}

\thanks{The  first author was partially supported by
 Marie Curie IRG  248008. The second author was partially supported by Centro de Modelamiento Matem\'atico, Universitad de Chile.}

\subjclass[2010]{Primary: 11N37; Secondary: 05D10, 11N60,   11B30,    37A45. }

\keywords{  Multiplicative functions, Gowers
uniformity, partition regularity, inverse theorems,  Chowla conjecture.}

\date{\today}

\maketitle

\setcounter{tocdepth}{1}
\tableofcontents

\section{Introduction}
 A function $f\colon \N\to \C$ is called \emph{multiplicative} if
 $$
 f(mn)=f(m)f(n) \ \text{ whenever } \  (m,n)=1.
 $$
If multiplicativity holds for every $m,n\in \N$ we call $f$   \emph{completely multiplicative}.  We  denote by
 $\CM$ the set  of multiplicative functions of modulus at most $1$.

 The asymptotic behavior of averages of multiplicative functions
 is a central topic in analytic number theory that has been studied extensively.
In this article  we are interested  in studying the asymptotic behavior
of   averages  of the following form
\begin{equation}\label{E:Correlations}
\frac{1}{N^2}\sum_{1\leq m,n\leq N} \prod_{i=1}^{s}f (L_i(m,n)),
\end{equation}
where $f\in \CM$ is arbitrary and  $L_i(m,n)$, $i=1,\ldots,s$,  are linear forms with integer coefficients.
We are mainly motivated by  applications, perhaps the most surprising one being that there is a link between the aforementioned problem and partition regularity problems of non-linear  homogeneous equations in three variables; our methods enable us to address  some previously
intractable problems.

Two typical questions  we want to answer, stated somewhat imprecisely,  are as follows:
\begin{enumerate}
\item
\label{it:problem1}
 Can we impose ``soft'' conditions on  $f\in\CM$ implying that  the   averages \eqref{E:Correlations} converge to $0$ as $N\to+\infty$?
 \item
\label{it:problem2}
Is it always possible to replace $f\in \CM$ with a
 ``structured'' component $f_{\st}$,
such that  the averages
  \eqref{E:Correlations} remain unchanged, modulo a small error,  for all large $N$?
\end{enumerate}

The answer to both  questions is positive  in a very strong sense,
the necessary condition of question~\eqref{it:problem1} turns out to be extremely simple, we call it ``aperiodicity'',
and the structured component $f_{\st}$ that works for~\eqref{it:problem2} can be taken to be approximately periodic with approximate period independent of $f$ and $N$.

 For $s\leq 3$  both questions can be answered
by combining simple Fourier analysis tools on cyclic groups and
 a quantitative version of a classical result  of Daboussi~\cite{D74, DD74, DD82} which gives  information on the Fourier transform of a multiplicative function.
The key point is that for $s\leq 3$ the norm of the averages \eqref{E:Correlations}
can be controlled by the maximum of the Fourier coefficients of $f$
and the previous result  can be used to give satisfactory necessary and sufficient conditions so that this maximum converges to $0$ as $N\to+\infty$.

 For $s\geq 4$  it is impossible to control the norm of the averages \eqref{E:Correlations}
by the maximum of the Fourier coefficients of $f$  and
  classical Fourier analytic tools do not seem to facilitate  the study of these more complicated averages.
 To overcome this obstacle,  we  supplement our toolbox with some
  deep
 results from ``higher order Fourier analysis''; in particular,   the
 inverse theorem for the Gowers uniformity norms \cite{GTZ12c} and  the quantitative
 factorization of polynomial sequences  on nilmanifolds \cite{GT12a} play a prominent role. In an argument
that spans a substantial part of this article, these tools  are combined
with  an orthogonality criterion for multiplicative functions, and a delicate  equidistribution result on nilmanifolds, in order to prove
  a
 structure theorem for multiplicative functions. This structure theorem   is going to do
 the heavy lifting in answering questions \eqref{it:problem1}, \eqref{it:problem2}, and  in  subsequent applications;
 we mention here a variant
 sacrificing  efficiency for ease of understanding
    (more efficient variants  and the definition of the $U^s$-norms appear  in  Sections~\ref{subsec:UkIntro} and \ref{sec:decompUs}).
\begin{theorem}[Structure theorem for multiplicative functions I]
\label{T:DecompositionSimple}
Let $s\geq 2$ and  $\ve>0$.
There exist positive integers
  $Q:=Q(s,\ve)$ and  $R:=R(s,\ve)$,
  such that for every sufficiently large  $N\in \N$, depending on $s$ and $\ve$ only,   every
   $f\in \CM$ admits the decomposition
$$
f(n)=f_\st(n)+f_\un(n), \quad \text{for } \ n=1,\ldots, N,
$$
where  $f_\st$ and $ f_\un$ depend on $N$,  $|f_\st|\leq 1$, and
\begin{enumerate}
\item\label{E:AlmostPeriodic}
$\displaystyle |f_\st(n+Q)- f_\st(n)|\leq \frac{R}{N}$, \ \
for \ $n=1,\ldots,  N-Q$;
\item
$\norm{ f_\un}_{U^{s}(\Z_N)}\leq  \ve$.
\end{enumerate}
\end{theorem}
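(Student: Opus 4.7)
My approach combines three main ingredients: a soft decomposition produced from the Green--Tao--Ziegler $U^s$-inverse theorem, the Green--Tao quantitative factorization of polynomial sequences on nilmanifolds, and an orthogonality criterion of K\'atai for multiplicative functions, supported by a delicate equidistribution result on product nilmanifolds.

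I would begin by applying the inverse theorem for the $U^s(\Z_N)$-norm, via a standard Hahn--Banach / energy-increment argument, to produce a preliminary decomposition $f = f_1 + f_2 + f_3$ on $\Z_N$ where $f_1$ is a nilsequence $F(g(n)\Gamma)$ on a nilmanifold $G/\Gamma$ of dimension and Lipschitz complexity depending only on $s$ and $\ve$, $\norm{f_2}_{L^2(\Z_N)}$ is arbitrarily small, and $\norm{f_3}_{U^s(\Z_N)} \le \ve/2$. Nothing about multiplicativity of $f$ has been used yet, and $f_1$ is structured but not approximately periodic in any useful sense. To extract periodicity, I would invoke Green--Tao's factorization to write the polynomial sequence as $g(n) = \epsilon(n)\, g'(n)\, \gamma(n)$, with $\epsilon$ Lipschitz-slowly-varying (hence nearly constant on scale $N$), $\gamma$ rational and $Q$-periodic for some $Q = Q(s,\ve)$, and $g'$ highly equidistributed on a sub-nilmanifold with a quantitative parameter $M$ to be chosen large.

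The crux, and the step I expect to be the main obstacle, is to show that only the ``rational--smooth'' factor $\epsilon \cdot \gamma$ contributes to correlations with $f$. Here multiplicativity enters through K\'atai's orthogonality criterion: to conclude that $f$ is essentially orthogonal to a bounded sequence $h$, it suffices to prove an ``independence'' estimate
\[
\frac{1}{N}\sum_{n\le N} h(pn)\, \overline{h(qn)} = o(1)
\]
for a wide range of distinct primes $p,q$. Applied with $h(n) = F'(g'(n)\Gamma')$, obtained from $F$ after separating off the periodic and smooth factors, this reduces matters to a quantitative equidistribution problem for the product polynomial sequence $\bigl(g'(pn),g'(qn)\bigr)$ on $G/\Gamma' \times G/\Gamma'$. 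The delicate point is that the dilations $n \mapsto pn$ and $n \mapsto qn$ could respect a hidden algebraic relation on the nilmanifold (coming from commutators or torsion), and ruling this out uniformly in $p,q$ requires the strong equidistribution of $g'$ provided by the factorization with $M$ chosen large relative to the primes used, together with the Leibman / Green--Tao machinery of horizontal characters and polynomial orbits. Carrying this out quantitatively and uniformly in the primes is the technical heart of the argument.

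Once K\'atai's criterion delivers that $f$ is essentially orthogonal to the irrational piece, I would take $f_{\st}$ to be the approximately $Q$-periodic function obtained by retaining only the $\epsilon \cdot \gamma$-contribution (truncated if necessary to ensure $|f_{\st}| \le 1$), and absorb the remaining errors --- the $L^2$-small $f_2$, the uniform $f_3$, and the K\'atai orthogonality error --- into $f_{\un}$. The Lipschitz-slow variation of $\epsilon$ on scale $N$ yields the approximate $Q$-periodicity with error $R/N$, with $Q$ and $R$ depending only on $s$ and $\ve$ as required. The quantitative balancing --- choosing the equidistribution parameter $M$ large enough relative to the primes used in K\'atai, which in turn must be large enough that the orthogonality error is smaller than $\ve$ in $U^s$ --- is what forces the dependence of $Q$ and $R$ on $s$ and $\ve$, but introduces no dependence on $f$ or on $N$.
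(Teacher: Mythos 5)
Your middle section — K\'atai's orthogonality criterion applied to nilsequences, reducing to quantitative equidistribution of the product sequence $(g'(pn),g'(qn))$ on $X\times X$ via the Green--Tao factorization and Leibman machinery — is exactly the technical heart of the paper (Theorem~\ref{th:discorrelation} and Sections~\ref{S:minorarcs1}--\ref{S:minorarcs2}). The gap is in how you assemble the decomposition around it. You first produce $f=f_1+f_2+f_3$ by a Hahn--Banach/energy-increment argument, then set $f_\st$ equal to the ``$\epsilon\cdot\gamma$ part'' of the nilsequence $f_1$ and throw $f_1-f_\st$ into $f_\un$. But K\'atai only gives you that the multiplicative $f$ has small \emph{correlation} with the irrational piece $f_1-f_\st$; it says nothing about $\norm{f_1-f_\st}_{U^s(\Z_N)}$, which is generically of order $1$ (e.g.\ a mean-zero function of a totally equidistributed polynomial orbit, such as $\e(n^2\alpha)$ in $U^3$). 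So your $f_\un$ has no reason to be $U^s$-small, and the conclusion does not follow. To repair this one must instead run the inverse theorem in the contrapositive against $f_\un=f-f_\st$ for an $f_\st$ fixed \emph{in advance} — but then the function being tested against nilsequences is $f-f_\st$, which is not multiplicative, so K\'atai does not apply to it directly. (Trying to push the orthogonality into each step of the energy increment hits the same obstruction: the running remainder is not multiplicative.)

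The paper resolves both problems simultaneously by defining $f_\st:=f_N*\phi_{N,\theta}$ as a convolution with an explicit Fejér-type kernel supported on major-arc frequencies, built from K\'atai applied to linear phases (Corollary~\ref{cor:katai} and Theorem~\ref{th:Decomposition-U2}). This gives the approximate $Q$-periodicity and the bound $|f_\st|\le 1$ immediately, makes $f_\un=f_N*\psi$ a convolution of $f$ with a kernel independent of $f$ — so a correlation of $f_\un$ with $\Phi(g(n)\cdot e_X)$ transfers to a correlation of the genuinely multiplicative $f$ with a \emph{shifted} nilsequence $\Phi(g(n+k)\cdot e_X)$, which is why Theorem~\ref{th:discorrelation} carries the shift parameter $k$ — and guarantees $\norm{f_\un}_{U^2}\le\theta$, which is what kills the smooth/periodic factors after restriction to sub-progressions (Section~\ref{SS:zero}). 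Your proposal omits all three of these functions of the convolution structure, and without them the argument does not close.
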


 A distinctive feature of Theorem~\ref{T:DecompositionSimple} is that it  applies to arbitrary
 bounded multiplicative functions. For this reason our  argument differs significantly from arguments in
 \cite{GT10b,GT12b, GTZ12c, La14a, La14b, Mat12a, Mat15, Mat12b, Mat13, Mat14}, where pseudorandomness properties of the M\"obius and other arithmetical functions are exploited. For instance, the lack of  effective estimates that could be used to treat the ``minor arc'' part of our argument renders  the method of \cite{GT12b} inapplicable and necessitates the introduction of new tools. These new  ideas   span Sections~\ref{S:U^2}, \ref{S:minorarcs1}, \ref{S:minorarcs2}, \ref{sec:decompUs}
  and are properly explained in the course of this article.
Another important feature  of Theorem~\ref{T:DecompositionSimple} is that the structured component $ f_\st$ is always approximately periodic  and that its
  approximate period   is independent of $ f $ and $N$. In fact we show that $ f_\st$
    is a convolution product of $ f $ with a kernel on $\Z_\tN$ ($\tN\geq N$ is a prime)
      that does not depend on $ f $ and the cardinality of its spectrum depends only on
   on $s$ and $\ve$.
    All these
   properties turn out to be  very crucial for subsequent applications.
Note that  for arbitrary bounded sequences, decomposition results with
similar flavor have been proved in \cite{G10, GW11, GT10,
  Sz12, T06}, but in order to  work in this generality one is forced to use
a structured component that  does  not  satisfy the strong
rigidity condition in~\eqref{E:AlmostPeriodic};  the best that can be said is that it is   an $(s-1)$-step  nilsequence
of bounded complexity. This property is much weaker than \eqref{E:AlmostPeriodic} even when $s=2$ and insufficient for our applications.   Similar comments apply
 for analogous decomposition results for infinite sequences \cite{HK09} that were motivated by structural results in ergodic theory \cite{HK05}.

  Despite its clean and succinct form, Theorem~\ref{T:DecompositionSimple} turns out to be    difficult to prove.  The main ideas are sketched in  Sections~\ref{subsec:decomposition}, \ref{SS:ideasdisc}, \ref{subsec:prelim}; furthermore,  Proposition~\ref{prop:baby} provides a  toy model
of the much more complicated general case.

  We remark that although explicit use of ergodic theory is not made anywhere in the proof of Theorem~\ref{T:DecompositionSimple} and its variants,
ergodic structural results and dynamical properties of sequences on nilmanifolds have guided some of our arguments.

Next, we give some representative examples  of the
 applications that we are going to derive from variants of Theorem~\ref{T:DecompositionSimple}. Again, we sacrifice generality   for ease of understanding; the precise statements of the more general   results
 appear in the next section.

\subsection*{Partition regularity of quadratic equations}
Since the theorems of Schur and van der Waerden, numerous partition
regularity results have been  proved for linear equations, but
progress has been scarce for non-linear ones, the hardest case being
equations in three variables.  We prove partition regularity for certain equations
involving
 quadratic forms in three variables. For example, we show in Corollary~\ref{Corol1} that
  for
every  partition of $\N$ into finitely many cells, there exist distinct $x,y$ belonging to the same cell and $\lambda\in \N$
such that $
16x^2+9y^2=\lambda^2.$
Similar results hold for the equation $x^2-xy+y^2=\lambda^2$ and in much greater generality  (see Theorems~\ref{th:partition-regular1} and \ref{th:partition-regular3}). We actually prove
stronger density statements from which the previous partition regularity results follow.

\subsection*{Uniformity of multiplicative functions}
In Theorem~\ref{th:aperiod_uniform} we show   that for $s\geq 2$, for every $ f \in \CM$ we have
$$
\lim_{N\to+\infty} \norm{ f }_{U^s(\Z_N)}=0
\text{ if and only if }
 \lim_{N\to+\infty} \frac{1}{N}\sum_{n=1}^N f (an+b)=0 \ \text{ for every } a,b\in \N.
$$
Furthermore,   using a result of  Hal\'asz (see Theorem~\ref{T:Halasz}), it is easy to
 recast the second condition as a  simple statement that is easy to verify or refute for explicit multiplicative functions (see Property~\eqref{it:ChiSeries} of Proposition~\ref{prop:equiv-aperiodic}).

\subsection*{A generalization  of a result of Daboussi}

A classical result of Daboussi~\cite{D74, DD74, DD82} states that \begin{equation}\label{E:Daboussi}
\lim_{N\to+\infty} \sup_{ f \in \CM}\Big|\frac{1}{N}\sum_{n=1}^N f (n) \ \! e^{2\pi in\alpha}\Big|=0 \ \text{ for every }\  \alpha\in \R\setminus \Q.
\end{equation}
K\'atai~\cite{K86} showed that the same thing holds  if $e^{2\pi in\alpha}$ is replaced by $e^{2\pi i p(n)}$ where  $p(n)=\alpha_1n+\cdots+\alpha_dn^d$ has at least one coefficient irrational. In Theorem~\ref{T:DaboussiGeneral}
 we generalize this even further to cover sequences induced by totally equidistributed polynomial sequences on nilmanifolds. One such  example is the sequence  $e^{2\pi i [n\sqrt{2}]n\sqrt{3}}$.

\subsection*{ A variant of Chowla's conjecture} A classical conjecture of Chowla \cite{Ch65} states that if
$\lambda$ is the Liouville function and $P\in \Z[x,y]$
is a homogeneous polynomial  such that $P\neq cQ^2$ for every  $c\in \Z$, $Q\in\Z[x,y]$,  then
\begin{equation}\label{E:Chowla}
 \lim_{N\to +\infty} \frac{1}{N^2}\sum_{1\leq m,n\leq N}\lambda(P(m,n))=0.
\end{equation}
  This was established by  Landau when  $\deg(P)=2$  \cite{La1918} (see also \cite{Hel03}), by Helfgott when
$\deg(P)=3$ \cite{Hel06, Hel13}, and   when $P$ is a product of pairwise independent linear forms
by Green, Tao, and Ziegler \cite{GT10b, GT12a, GT12b, GTZ12c}.  The conjecture is also closely related to the problem of representing primes by irreducible polynomials, for relevant work see \cite{FI98, He01,HeM02, HeM04}.
In Theorem~\ref{th:chowla} we show that if
$$
P(m,n) :=(m^2+n^2)^r \prod_{i=1}^s L_i(m,n),
 $$ where $r\geq 0$, $s\in\N$, and $L_i$ are pairwise independent linear forms with integer coefficients, and if
 $ f \in \CM$ is completely multiplicative and  aperiodic, meaning, it
   averages to zero on every infinite arithmetic progression, then
 $$
  \lim_{N\to +\infty} \frac{1}{N^2}\sum_{1\leq m,n\leq N} f (P(m,n))=0.
  $$
As a consequence, for aperiodic multiplicative functions $ f $ and pairwise independent linear forms,  the averages \eqref{E:Correlations} converge to $0$ as $N\to +\infty$. Note, that even in the case where $r=0$  our result is new, as
it applies to arbitrary aperiodic multiplicative functions of modulus at most  $1$, not just the M\"obius or the Liouville.

\medskip

In the next section we give a more precise formulation of our main results and also define
some of the concepts used throughout the article.

\subsection*{Acknowledgements} We would like to thank Wenbo Sun for pointing out a mistake  in an earlier  version of this article and  Nikos Tzanakis for his help with the material in Section~\ref{SS:Higher}.  We would also like to thank the referee for helpful suggestions and for pointing us to Chebotarev's theorem that helped to strengthen Theorem~\ref{th:chowla}.

\section{Precise statements of the main results}
\subsection{Structure theorem for multiplicative functions}
\label{subsec:UkIntro}
 Roughly speaking, our  main structure theorem asserts that an arbitrary multiplicative
function of modulus at most $1$ can be split  into two components, one that is approximately periodic,  and
another that behaves randomly enough to have a negligible
contribution for the averages we are interested in handling. For our purposes,
randomness
is  measured by the Gowers uniformity norms.
 Before proceeding to
the precise statement of the structure theorem,  we start with some
discussion regarding the Gowers uniformity norms and the
uniformity properties (or lack thereof) of multiplicative functions.
\subsubsection{Gowers uniformity norms}\label{SS:Gowers}
For  $N\in\N$ we let $\Z_N:=\Z/N\Z$ and $[N]:=\{1,\dots,N\}$. These sets are often identified in the obvious way, but arithmetic operations performed on them behave differently. If $x,y$ are considered as elements of $[N]$, expressions like
$x+y$, $x-y$, $\ldots$,   are computed in $\Z$.
If $x,y$  are considered as elements of $\Z_N$,  $x+y$, $x-y$, \dots, are computed modulo $N$ and are elements of $\Z_N$.

If $A$ is a finite set and $a\colon A\to \C$ is a function, we write
$$
\E_{x\in A}a(x):=\frac 1{|A|}\sum_{x\in A}a(x).
$$
The same notation is used for a function of several variables.
We recall the definition of the    $U^s$-Gowers uniformity
norms from \cite{G01}.
\begin{definition}[Gowers norms on a cyclic group~\cite{G01}]
Let $N\in \N$  and $a\colon \Z_N\to \C$. For $s\in \N$ the \emph{Gowers $U^s(\Z_N)$-norm} $\norm a_{U^s(\Z_N)}$ of $a$ is defined inductively as follows:   For every $t\in\Z_N$ we write $a_t(n):=a(n+t)$. We let
$$
\norm a_{U^1(\Z_N)}:=|\E_{n\in\Z_N}a(n)|
=\Bigl(\E_{x,t\in\Z_N} a(x)\, \overline a(x+t)\Bigr)^{1/2},
$$
and for every $s\geq 1$ we let
\begin{equation}
\label{eq:def-gowers}
\norm a_{U^{s+1}(\Z_N)}:=\Bigl(\E_{t\in\Z_N}\norm{a\cdot \overline a_t}_{U^s(\Z_N)}^{2^s}\Bigr)^{1/2^{s+1}}.
\end{equation}
\end{definition}
For example,
$$
\norm a_{U^2(\Z_N)}^4=\E_{x,t_1,t_2\in\Z_N} a(x)\, \overline a(x+t_1)\, \overline a(x+t_2)\, a(x+t_1+t_2)
$$
and a similar  closed formula can be given for the $U^s(\Z_N)$-norms for $s\geq 3$. It can be shown that $\norm\cdot_{U^s(\Z_N)}$ is a norm for $s\geq 2$ and
for  every $s\in \N$ we have
\begin{equation}\label{E:UkIncreases}
\norm a_{U^{s+1}(\Z_N)}\geq\norm a_{U^s(\Z_N)}.
\end{equation}
In an informal way, having a small $U^s$-norm is interpreted as a
property of $U^s$-uniformity, and we say that a function or sequence of functions
is
$U^s$-uniform if
 the corresponding  uniformity norms converge to $0$ as $N\to+ \infty$.
By \eqref{E:UkIncreases}
we get that  $U^{s+1}$-uniformity implies $U^s$-uniformity.

Recall that the \emph{Fourier  transform} of a function $a$ on $\Z_N$ is given by
$$
\wh a(\xi):=\E_{n\in\Z_N}a(n)\, \e\bigl(-n\frac\xi N\bigr)\ \text{ for }\ \xi\in\Z_N,
$$
where, as is standard, $\e(x):=\exp(2\pi ix)$.  A direct computation
gives the following identity that links the $U^2$-norm of a function
$a$ on $\Z_N$ with its Fourier coefficients:
\begin{equation}
\label{eq:U2Fourier}
\norm a_{U^2(\Z_N)}=\norm{\wh a}_{\ell^{4}(\Z_N)}:=\Bigl(\sum_{\xi\in\Z_N}\bigl|\wh a(\xi)\bigr|^{4}\Bigr)^{1/4}.
\end{equation}
It follows that, if $|a|\leq 1$, then
\begin{equation}
\label{eq:U2Fourier2}
 \norm a_{U^2(\Z_N)}
\leq\sup_{\xi\in\Z_N}|\wh a(\xi)|^{1/2}
\leq  \norm a_{U^2(\Z_N)}^{1/2}.
\end{equation}
We would like to stress though that similar formulas
and estimates do not exist for higher order Gowers norms; a
function bounded by $1$  may have small  Fourier coefficients, but large
$U^s(\Z_N)$-norm for $s\geq 3$. In fact, eliminating all possible obstructions to
$U^s(\Z_N)$-uniformity necessitates the study of correlations with all
 polynomial phases $\e(P(n))$, where $P\in \R[x]$ has degree $s-1$,
  and also the larger class of $(s-1)$-step nilsequences
 of bounded complexity (see Theorem~\ref{th:inverse}).

For the purposes of this article it will be convenient to also define Gowers norms on  an interval $[N]$
(this was also done in \cite{GT10b}).
For $N^*\geq N$ we often identify the interval $[N^*]$ with $\Z_{N^*}$ in which case we  consider $[N]$ as a subset of $\Z_{N^*}$.

\begin{definition}[Gowers norms on an interval~\cite{GT10b}]
\label{def:Us-interv}
 Let $s\geq 2$, $N\in \N$, and $a\colon [N]\to \C$ be a function. By Lemma~\ref{cl:gowersMN} in Appendix~A,
the quantity
$$
\norm a_{U^s[N]}:=\frac 1{\norm{\one_{[N]}}_{U^s(\Z_{N^*})}}\; \norm{\one_{[N]}\cdot a}_{U^s(\Z_{N^*})}
$$
is independent of $N^*$ provided that $N^*>2N$. It is called \emph{the $U^s[N]$-norm of $a$}.
\end{definition}

In complete analogy with the  $U^s(\Z_N)$-norms
we have that $\norm \cdot_{U^s[N]}$ is a norm that
 increases with $s$, in the sense that for every $s\geq 2$ there exists a constant $c:=c(s)$ such that $\norm{\cdot}_{U^{s+1}[N]}\geq c\, \norm{\cdot}_ {U^s[N]}$.
In Appendix~A we derive various relations between the $U^s(\Z_N)$
and the $U^s[N]$ norms for $s\geq 2$. In particular, in
Lemma~\ref{lem:NormsUs} we show that if $|a|\leq 1$, then
$\norm a_{U^s(\Z_N)}$  is small if and only if  $\norm a_{U^s[N]}$
is small.  Thus,
the reader should think of the  $U^s[N]$ and $U^s(\Z_N)$ norms as equivalent measures of randomness;  which one we  use
is a matter of convenience and depends on the particular   problem at hand.

\subsubsection{Multiplicative functions}
Some examples of multiplicative functions of modulus at most $1$ are the M\"obius and the Liouville function,
the function $n\mapsto n^{it}$ for $t\in \R$, and  Dirichlet characters, that is,  periodic completely  multiplicative functions that are not identically zero.
Throughout, we denote by $ \chi_q$ a Dirichlet character of least period $q$. Then
$\chi_q(n)=0$ whenever $(n,q)>1$ and $\chi_q(n)$ is a $\phi(q)$-root of unity if $(n,q)=1$, where $\phi$ is Euler's totient function.

It follows from results  in~\cite{GT12b, GTZ12c} that the M\"obius and the Liouville function
are $U^s$-uniform for every $s\in \N$.
 The next examples illustrates some simple but very
important obstructions to uniformity for general bounded multiplicative functions.
\begin{examples}[Obstructions to uniformity]
$(i)$ One easily sees that $\E_{n\in [N]}n^{it}\sim c_N:=\frac{N^{it}}{1+it}$, hence for $t\neq 0$  the range of this average
is contained densely  in the circle with center at zero and radius $1/\sqrt{1+t^2}$.
Therefore,
there is no constant $c$, independent of $N$, so that the function $n^{it}-c$ averages to $0$ on $\N$.
On the other hand,  the average of $n^{it}-c_N$ on the interval $[N]$  converges to $0$ as $N\to+ \infty$,
and in fact it can be seen\footnote{One can see this by using Theorem~\ref{T:DecompositionI} and adjusting the argument used to prove Theorem~\ref{th:aperiod_uniform}.} that $(n^{it}-c_N)_{n\in [N]}$, $N\in \N$,  is $U^s$-uniform for every $s\geq 2$.

$(ii)$
 A non-principal
  Dirichlet character
$ \chi_ q$
has average $0$ on every interval with length a multiple
 of $q$,
hence $\E_{n\in [N]} \chi_ q(n)\to 0$ as $N\to +\infty$. However, $ \chi_ q$ is not $U^2$-uniform  because it is
 periodic.

$(iii)$
Let $ f $ be the completely multiplicative function  defined by
$ f (2):=-1$ and $ f (p):=1$ for
every prime $p\neq 2$.
Equivalently,
 $ f (2^m(2k+1))=(-1)^m$ for
all $k,m\geq 0$.
Then $\E_{n\in [N]} f (n)= 1/3+o(1)$ and this
non-zero mean value already gives an obstruction to $U^2$-uniformity. But
this is not the only obstruction. We have $\E_{ n\in [N]}(-1)^n( f (n)-1/3)=-2/3+o(1)$,
  which   implies that  $ f -1/3$ is not $U^2$-uniform. In fact, it is not possible to subtract from $ f $   a
 periodic component $ f_ \st$ that is independent of $N$  so that
  $ f - f_ \st$ becomes $U^2$-uniform.
 But this problem is alleviated if we allow $ f_ \st$ to  depend on $N$.
\end{examples}
The first and third  examples  illustrate that the structured component we need to subtract from
 a multiplicative function so that the difference has small $U^s(\Z_N)$-norm may vary a lot with $N$.
 This is one of the reasons why we cannot obtain an infinite
variant of the  structural result of  Theorem~\ref{T:DecompositionSimple}.
The last two examples illustrate that normalized
multiplicative functions can have significant correlation with
  periodic phases;   thus this is an
  obstruction to
$U^2$-uniformity that we should  take   into account.
However, it  is a non-trivial fact that plays a central role in this
article, that correlation with periodic phases is, in a sense to
be made precise later, the only obstruction
 not only to $U^2$-uniformity but also to $U^s$-uniformity of multiplicative functions in $\CM$  for all $s\geq 2$.

\subsubsection{The main  structure theorem}
\label{subsec:decomposition}
The structural result of Theorem~\ref{T:DecompositionSimple} suffices for some applications and
a more informative variant is given  in Theorem~\ref{T:DecompositionI}. But both results are not well suited  for the combinatorial applications given in Section~\ref{SS:preg}. The reason is  that in such problems we seek to obtain positive lower bounds for certain   averages of multiplicative functions,
and the error introduced by the uniform component typically
subsumes the positive lower bound coming from  the structured component
 as this depends on $\ve$ (via $Q$ and $R$)  in a rather  inexplicit way.
In order to overcome this obstacle, we  would like to know that the uniformity norm of the uniform component
can be chosen to be smaller than any predetermined positive function of $Q$ and $R$. This can be achieved if we
 introduce an additional term that has small $L^1[N]$-norm. We give the precise form of  such a  structural result
  after we set up some notation.

As it is often easier to work on a cyclic group rather  than  an
interval of integers (this makes Fourier analysis tools more
readily available) we introduce some notation to help us avoid
 roundabout issues.
\begin{notation}
 Throughout, we assume that an integer $\ell\geq 2$ is given. This parameter adds some flexibility needed
  in the applications   of our main structural results;  its precise value  will depend on the particular application
we have in mind.
 We consider $\ell$  as fixed and the dependence on $\ell$ is always left implicit.
For every $N\in \N$, we denote by  $\wt N$  any prime such that $N\leq \tN\leq \ell N$.  By Bertrand's
postulate, such a prime always exists.
In some cases we  specify the  value of $\tN$ and its precise dependence on $N$ depends on the application we have in mind.
\end{notation}

 For every multiplicative
function $ f \in \CM$ and every $N\in\N$, we denote by
 $ f_ N$ the function on $\Z_\tN$, or on $[\tN]$, defined by
\begin{equation}
\label{eq:def-chiN}
 f_ N(n):=\begin{cases}  f (n)& \text{if }n\in[N];\\
0&\text{otherwise.}
\end{cases}
\end{equation}
Each time the domain of $ f_ N$ will be  clear from the context.
Working with the truncated function $ f \cdot \one_{[N]}$, rather
than the function $ f $, is a technical maneuver and the reader
will not lose  much by ignoring the cutoff. We should stress  that
for the purposes of the structure theorem, $U^s$-norms are going to be defined and Fourier
analysis is going to happen on the group $\Z_\tN$ and not on the
group $\Z_N$.

\begin{definition}
By a \emph{kernel} on $\ZN$ we mean a non-negative function with
average $1$.
\end{definition}

In the next statement we assume that the set $\CM$
 is endowed with the topology of pointwise
convergence and thus is a compact metric space.

\begin{theorem}[Structure theorem for multiplicative functions II]\footnote{When $s=3$, W.~Sun \cite{S15} recently proved a similar result for multiplicative functions defined on the Gaussian integers.}
\label{T:DecompositionII}
Let $s\geq 2$, $\ve>0$,
$\nu$ be a probability measure on the compact set $\CM$,
 and  $F\colon\N \times\N \times \R^+\to\R^+$ be arbitrary.
Then
 there exist positive integers
  $Q$ and  $R$ that are  bounded by a constant which depends only on
  $s$, $\ve$,  $F$,
  such that the following holds:  For every
sufficiently large  $N\in \N$, which depends only on $s$, $\ve$, $F$, and for
  every $ f \in\CM$, the function $ f_ N$ admits the
 decomposition
$$
 f_ N(n)= f_ {N,\st}(n)+ f_ {N,\un}(n)+ f_ {N,\er}(n) \quad \text{ for
every }\  n\in\tZN,
$$
where  $ f_ {N,\st}$, $ f_ {N,\un}$,
   $ f_ {N,\er}$ satisfy the following properties:
\begin{enumerate}
\item
\label{it:decomU31}
$ f_ {N,\st}= f_ N*\psi_{N,1}$ and  $ f_ {N,\st}+ f_ {N,\er}= f_ N*\psi_{N,2}$,
where  $\psi_{N,1}$, $\psi_{N,2}$ are kernels on $\Z_\tN$ that do not depend on $ f $, and
  the convolution product is  defined in $\tZN$;

\item\label{it:decompU32}
$\displaystyle | f_ {N,\st}(n+Q)- f_ {N,\st}(n)|\leq \frac R\tN$
for every $n\in\tZN$,  where  $n+Q$ is taken $\!\!\! \mod \tN$;
\item\label{it:weakU32b}
If $\xi \in \Z_\tN$ satisfies $\widehat{ f }_{N,\st}(\xi)\neq 0$, then $\displaystyle \big|\frac{\xi}{\tN}-\frac{p}{Q}\big|\leq \frac{R}{\tN}$ for some   $p\in \{0,\ldots Q-1\}$;
\item
\label{it:decomU33}
$\displaystyle \norm{ f_ {N,\un}}_{U^s(\tZN)}\leq\frac 1{F(Q, R,\ve)}$;
\item
\label{it:decomU34}
 $\displaystyle  \E_{n\in \tZN} \int_\CM | f_ {N,\er}(n)|\,d\nu( f )\leq\ve$.
\end{enumerate}
\end{theorem}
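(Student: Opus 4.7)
The plan is to bootstrap from a convolution-structured refinement of Theorem~\ref{T:DecompositionSimple}, which I will take to be Theorem~\ref{T:DecompositionI}. That theorem supplies, for each $\delta>0$, a kernel $\phi_\delta$ on $\Z_\tN$ independent of $f$, whose spectrum has cardinality bounded in terms of $s$ and $\delta$ alone and is contained near rationals with denominator $Q(\delta)$ (up to error $R(\delta)/\tN$), and such that for every $f\in\CM$ one has $\norm{f_N-f_N*\phi_\delta}_{U^s(\tZN)}\leq\delta$. The strategy will be to choose two such kernels, one ``coarse'' (to play the role of $\psi_{N,1}$) and one ``fine'' (to play the role of $\psi_{N,2}$), so that their difference is $L^1$-small on $\nu$-average while each individual complexity and $U^s$-error is controlled.

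Given $F$ and $\ve$, I introduce a finite decreasing sequence $\delta_1>\delta_2>\cdots>\delta_K$ in which each $\delta_{i+1}$ is chosen much smaller than $1/F(Q(\delta_i),R(\delta_i),\ve)$ and the spectra $S_i:=\spec(\phi_{\delta_i})$ are arranged to be nested, $S_1\subset S_2\subset\cdots\subset S_K$. (If the raw kernels do not nest, they will be replaced by Fej\'er-type kernels supported on an increasing sequence of rational ``peaks'', preserving localization near $p/Q$.) The length $K$ will be taken of order $1/\ve^2$. Writing $g_i:=f_N*\phi_{\delta_i}$, the approximate projection property $\widehat{\phi_{\delta_i}}\approx\one_{S_i}$ combined with the nesting ensures that the $L^2$-norms $\norm{g_i}^2$ are nondecreasing in $i$ and bounded by $\norm{f_N}^2\leq 1$, and that consecutive increments $g_{i+1}-g_i$ are approximately orthogonal to $g_i$.

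Taking the expectation of the telescoping sum $\sum_i(\norm{g_{i+1}}^2-\norm{g_i}^2)\leq 1$ against $\nu$ and pigeonholing, I will produce some $i^*\in\{1,\dots,K-1\}$ with
\[
\E_\nu\norm{g_{i^*+1}-g_{i^*}}_{L^2(\tZN)}^2\leq\frac{1}{K-1}\leq\ve^2,
\]
which by Cauchy--Schwarz converts to $\E_\nu\,\E_{n\in\tZN}|g_{i^*+1}(n)-g_{i^*}(n)|\leq\ve$. I then set $\psi_{N,1}:=\phi_{\delta_{i^*}}$, $\psi_{N,2}:=\phi_{\delta_{i^*+1}}$, and $f_{N,\st}:=f_N*\psi_{N,1}$, $f_{N,\er}:=f_N*\psi_{N,2}-f_N*\psi_{N,1}$, $f_{N,\un}:=f_N-f_N*\psi_{N,2}$. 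Properties \eqref{it:decomU31}--\eqref{it:decomU34} drop out: \eqref{it:decomU31} by construction; \eqref{it:decompU32} and \eqref{it:weakU32b} are the rational-localization and approximate-period properties of $\phi_{\delta_{i^*}}$ inherited from Theorem~\ref{T:DecompositionI}, with $Q:=Q(\delta_{i^*})$ and $R:=R(\delta_{i^*})$, both bounded since $i^*\leq K$ depends only on $s,\ve,F$; \eqref{it:decomU33} follows from $\norm{f_{N,\un}}_{U^s(\tZN)}=\norm{f_N-f_N*\phi_{\delta_{i^*+1}}}_{U^s(\tZN)}\leq\delta_{i^*+1}\leq1/F(Q,R,\ve)$; and \eqref{it:decomU34} is the pigeonhole conclusion above.

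The main obstacle I expect is the simultaneous arrangement of kernel positivity, nested spectra, and rational-support near $p/Q$: the kernels delivered directly by Theorem~\ref{T:DecompositionI} are not guaranteed to nest or to behave as near-projections, so some technical work is needed to recast them. A promising route is to preselect a nested family of ``rational Bohr sets'' $S_1\subset\cdots\subset S_K$ of bounded cardinality at each level and take $\phi_{\delta_i}$ to be the squared-Fej\'er-type kernel $|S_i|^{-2}\bigl|\sum_{\xi\in S_i}\e(n\xi/\tN)\bigr|^2$, then verify that these can still play the role demanded by Theorem~\ref{T:DecompositionI} for appropriately chosen $S_i$. Once this spectral bookkeeping is in place, the pigeonhole and Cauchy--Schwarz arithmetic outlined above is routine.
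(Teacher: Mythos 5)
Your proposal is correct and is essentially the paper's own argument: iterate Theorem~\ref{T:DecompositionI} along a decreasing sequence of parameters calibrated by $F$, exploit monotonicity of the kernels' Fourier coefficients to run a telescoping energy bound, and pigeonhole to find two consecutive levels whose structured parts differ by at most $\ve$ in $L^1$ on $\nu$-average. The spectral bookkeeping you flag as the main obstacle is already built into the paper's setup, since the kernels $\phi_{N,\theta}$ of Theorem~\ref{T:DecompositionI} are the explicit Fej\'er-type kernels of Section~\ref{subsec:kernels}, whose Fourier coefficients satisfy the monotonicity property~\eqref{eq:phi-increases} needed for the telescoping.
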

\begin{remarks}
 (1) The result is of interest even when $\nu$ is supported on a single multiplicative function, that is, when $f\in \CM$ is fixed and
 Property~\eqref{it:decomU34} is replaced with the estimate
$\displaystyle\E_{n\in \tZN}| f_ {N,\er}(n)|\leq\ve$.
  The
stronger version stated is needed for the combinatorial applications.

\smallskip

(2)  As remarked in the introduction, various  decomposition results with
similar flavor have been proved  for arbitrary bounded sequences
but working in this generality necessitates the
use of structured components that are much less rigid.
  An additional important feature of our result
  is  that the structured component  is defined by a
convolution product with a kernel that is independent of $ f\in \CM$. All
these properties
play an important role in the derivation of some of our applications.

\smallskip

(3)
In Section~\ref{S:Recurrence} we  use Theorem~\ref{T:DecompositionII}  for the function $F(x,y,z):=c
x^2y^2/z^4$ where $c$ is a constant that depends on $\ell$ only.
Restricting the statement to this function though does not simplify
our proof.

\smallskip

(4)
It is a consequence of Property~\eqref{it:decomU31} that for fixed $F,N,  \ve, \nu$, the
maps $ f \mapsto  f_ \st,  f \mapsto f_ \un,  f \mapsto
 f_ \er$ are continuous, and
 $| f_ \st|\leq 1$,
 $| f_ \un|\leq 2$,  $| f_ \er|\leq 2$.

\smallskip

(5)  We do not know if a uniform version of the result holds, meaning, with
Property~\eqref{it:decomU34} replaced with
$\displaystyle
\sup_{ f \in\CM} \E_{n\in \tZN}| f_ {N,\er}(n)|\leq\ve$.
\end{remarks}

The bulk of the work  in the proof of Theorem~\ref{T:DecompositionII} goes in the proof of
Theorem~\ref{T:DecompositionI} below which is a more informative version of
Theorem~\ref{T:DecompositionSimple} given in the introduction.
Two ideas that play a prominent role in the proof,
roughly speaking, are:
\begin{enumerate}[(a)]
\item
   A multiplicative function that has $U^2$-norm bounded away
from zero correlates with a linear  phase that has frequency close
to a rational with small denominator.
\smallskip

\item
  A multiplicative function that has $U^s$-norm bounded away
from zero necessarily has $U^2$-norm bounded away from zero.
\end{enumerate}

The proof of (a) uses classical  Fourier analysis tools and  is
given in Section~\ref{S:U^2}. The key number theoretic input is the
orthogonality criterion of K\'atai stated in Lemma~\ref{lem:katai}.
\smallskip

  The proof of (b) is much harder and is done in
several steps using higher order Fourier analysis machinery. In
Sections~\ref{S:minorarcs1} and \ref{S:minorarcs2}
 we study the correlation of multiplicative functions with
totally equidistributed (minor arc) nil-sequences. This is the heart of the matter and the technically more demanding part in the proof of  the
 structure theorem. The argument used by Green and Tao in \cite{GT12b}  to prove similar estimates for  the M\"obius function uses special features of the M\"obius
 and  is inadequate for our purposes.
To overcome this serious obstacle, we combine the orthogonality criterion of  K\'atai
  with a rather  delicate asymptotic orthogonality result of polynomial nilsequences in order to
  establish a key  discorrelation estimate (Theorem~\ref{th:discorrelation}).
  This estimate is  then used
 in Section~\ref{sec:decompUs}, in conjunction with   the $U^s$-inverse theorem  (Theorem~\ref{th:inverse}) and
a factorization theorem
  (Theorem~\ref{th:FactoGT}) of Green and Tao, to
conclude the proof of
Theorem~\ref{T:DecompositionI}.
We defer the reader to
Sections~\ref{SS:ideasdisc} and \ref{subsec:prelim} for a more detailed sketch of the proof
strategy of Property (b).

  Upon proving
Theorem~\ref{T:DecompositionI}, the proof of
Theorem~\ref{T:DecompositionII}  consists of a Fourier
analysis energy increment argument, and  avoids the use of finitary
ergodic theory  and  the Hahn-Banach theorem, tools that
are typically used  for other decomposition results (see \cite{G10,
GW11,GW11b, GT10, T06}). This hands-on approach enables us to
transfer all the information obtained in Theorem~\ref{T:DecompositionI}
which is important for our applications.

\begin{problem}
Can Theorems~\ref{T:DecompositionII} and \ref{T:DecompositionI} be extended to multiplicative functions defined on quadratic number fields? More general number fields?
\end{problem}

\subsection{ A generalization  of a result of Daboussi}

The classic result of  Daboussi~\cite{D74, DD74, DD82}  was recorded in the introduction (see \eqref{E:Daboussi}).
We prove the following generalization (all notions used below are defined in Sections~\ref{sec:Nil} and \ref{sec:equidistribution}):
\begin{theorem}[Daboussi for nilsequences]\label{T:DaboussiGeneral}
Let $X:=G/\Gamma$ be a nilmanifold and  $(g(n))_{n\in\N}$ be a polynomial sequence in $G$  such that
$(g(n)\cdot e_X)_{n\in\N}$ is totally equidistributed in $X$. Then for every  $\Phi\in C(X)$ with $\int_X \Phi\, dm_X=0$  we have
\begin{equation}\label{E:DaboussiGeneral'}
\lim_{N\to+\infty} \sup_{ f \in \CM}\Big|\frac{1}{N}\sum_{n=1}^N f (n) \ \! \Phi(g(n)\cdot e_X)\Big|=0.
\end{equation}
\end{theorem}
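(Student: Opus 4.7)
The plan is to decompose $f$ via the main structure theorem and then handle each piece of the decomposition separately; the uniformity in $f \in \CM$ will follow from the fact that the decomposition parameters depend only on the accuracy and not on $f$. Fix $\eta > 0$ and set $a(n) := \Phi(g(n) \cdot e_X)$. Since $(a(n))$ is a polynomial nilsequence on $X = G/\Gamma$ of bounded complexity, a standard Gowers--Cauchy--Schwarz argument for nilsequences (the easy direction of Theorem~\ref{th:inverse}) yields an integer $s$ and a constant $C = C(X, \Phi, g)$ such that for every bounded $b \colon [\tN] \to \C$,
$$
\Bigl|\frac{1}{N} \sum_{n=1}^N b(n)\, \overline{a(n)}\Bigr| \leq C\, \norm{b\cdot\one_{[N]}}_{U^s(\Z_\tN)}.
$$

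I would then apply Theorem~\ref{T:DecompositionSimple} with this $s$ and with $\ve := \eta/(3C)$, obtaining universal integers $Q = Q(s,\ve)$ and $R = R(s,\ve)$ such that for all sufficiently large $N$ and every $f \in \CM$, one has $f = f_\st + f_\un$ on $[N]$ with $|f_\st| \leq 1$, $\norm{f_\un}_{U^s(\Z_\tN)} \leq \ve$ (using the equivalence between the $U^s(\Z_N)$ and $U^s(\Z_\tN)$ norms recalled in the text), and the approximate periodicity $|f_\st(n+Q) - f_\st(n)| \leq R/N$. The contribution of $f_\un$ to the target average is then bounded by $C\ve = \eta/3$, uniformly in $f \in \CM$.

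For the structured piece I would split $[N]$ into the $Q$ arithmetic progressions $r + Q\N$ with $r = 0,\ldots,Q-1$ and treat each inner sum $\sum_k f_\st(r+kQ)\,a(r+kQ)$ separately. Total equidistribution of $(g(n)\cdot e_X)$, together with $\int_X \Phi\,dm_X = 0$, yields $S_K(r) := \sum_{k=1}^K a(r+kQ) = o_{K\to\infty}(K)$ for each fixed $(r,Q)$. Combining this with the slow variation $|f_\st(r+(k+1)Q) - f_\st(r+kQ)| \leq R/N$ via Abel summation, and summing over the $Q$ residue classes, shows that the structured contribution is $o_{N\to\infty}(1)$ once $Q$ and $R$ are fixed; for $N$ large enough in terms of $\eta, Q, R$, this is less than $\eta/3$, completing the bound.

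The main obstacle is the interplay between the approximate periodicity of $f_\st$ and total equidistribution: approximate periodicity alone permits $f_\st$ to oscillate by as much as $O(R/Q)$ along each progression $r + Q\N \cap [N]$, so one cannot simply replace $f_\st$ by a step function that is constant on each progression. Abel summation circumvents this difficulty by transferring the slow variation onto partial sums of $a$, which are $o(K)$ by total equidistribution. The logical order matters---first fix $\eta, X, g, \Phi$, then derive $s, C, \ve, Q, R$, then take $N$ large---and once respected, the uniformity in $f \in \CM$ follows because all the relevant parameters are independent of $f$.
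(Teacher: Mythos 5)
Your argument is correct, but it takes a genuinely different (and much heavier) route than the paper. The paper deduces Theorem~\ref{T:DaboussiGeneral} in one step from the finitary discorrelation estimate of Theorem~\ref{th:discorrelation}: if the supremum in \eqref{E:DaboussiGeneral'} did not tend to zero, there would be some $\tau>0$ and arbitrarily large $N$ with $|\E_{n\in[N]}f(n)\Phi(g(n)\cdot e_X)|\geq\tau$ for some $f\in\CM$, whence the finite sequences $(g(n)\cdot e_X)_{n\in[N]}$ fail to be totally $\sigma$-equidistributed for a fixed $\sigma>0$, contradicting total equidistribution of the infinite orbit. All the work is thus in Theorem~\ref{th:discorrelation} (K\'atai's criterion, vertical Fourier analysis, the factorization theorem, and Proposition~\ref{P:alg}). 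You instead invoke Theorem~\ref{T:DecompositionSimple}, whose proof in the paper already contains Theorem~\ref{th:discorrelation} as its main ingredient, so your route is logically sound but routes the hard content through a stronger intermediate statement; what it buys is a clean, self-contained reduction in which the only input about the nilsequence itself is total equidistribution along the progressions $r+Q\N$, handled by a correct Abel-summation argument (the trivial bound $|S_k|\leq k$ would only give $O(R/Q)$, but $S_k=o(k)$ for each of the finitely many residues $r$ does give $o_{N\to\infty}(1)$ once $Q,R$ are fixed, uniformly in $f$ since $|f_\st|\leq 1$).

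Two points you should make explicit. First, the ``easy direction of Theorem~\ref{th:inverse}'' is not stated anywhere in the paper: the inequality $|\E_{n\in[N]}b(n)\overline{a(n)}|\ll_{X,\Phi,g}\norm{b}_{U^s[N]}$ for a polynomial nilsequence $a$ is the converse of the inverse theorem (essentially \cite[Prop.~11.2]{GT10b} and its refinements), and you must cite it as an external input; note also that $s$ there is governed by the degree of the filtration carried by $g$, not merely by the step of $G$. Second, both that converse statement and the quantitative equidistribution machinery require $\Phi$ Lipschitz, whereas the theorem assumes only $\Phi\in C(X)$; a one-line uniform approximation of $\Phi$ by Lipschitz functions of zero integral fixes this (the Abel-summation step needs no smoothness, since $S_K(r)=o(K)$ already follows from weak-$*$ equidistribution along progressions). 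With these two points supplied, your proof is complete.
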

\begin{remarks}
(1) Theorem~\ref{T:DaboussiGeneral} follows from the stronger finitary  statement  in Theorem~\ref{th:discorrelation}.

(2) If $P$ is a polynomial with at least one non-constant coefficient irrational, then $(P(n))_{n\in \N}$ is totally equidistributed on the circle. Hence, for such a polynomial, $\e(P(n))$ can take the place of  $\Phi(g(n)\cdot e_X)$
in \eqref{E:DaboussiGeneral'},   recovering a result of K\'atai~\cite{K86}.

(3) An easy approximation argument allows to extend the eligible functions $F$ to all Riemann
integrable functions on $X$.
We can use this enhancement to show that   any sequence of the form $\e(2\pi i [n\alpha]n\beta)$ with $1, \alpha, \beta$ rationally independent over $\Q$ can take the place of $\Phi(g(n)\cdot e_X)$
in \eqref{E:DaboussiGeneral'}.
\end{remarks}

\subsection{Aperiodic multiplicative functions}\label{SS:aperiodic}
It is  known that the M\"obius and the Liouville functions have zero average on every infinite arithmetic progression. In this subsection we work with the following vastly more general class of multiplicative functions:
\begin{definition}
We say that a multiplicative function $ f \colon \N\to \C$ is \emph{aperiodic} if it has zero  average   on
every infinite arithmetic progression, that is,
$$
\lim_{N\to + \infty} \E_{n\in [N]}  f (an+b)=0, \quad \text{for every } a,b\in \N.
$$
\end{definition}

In order to give easy to check conditions that imply aperiodicity, we will use
a celebrated result of    Hal\'asz~\cite{Hal68}.
To facilitate  exposition we first define the distance between two multiplicative functions (see for example   \cite{GranSound07,GS15}).
\begin{definition}
If $f,g\in \CM$ we let $\mathbb{D}\colon \CM\times \CM\to [0,\infty]$ be given by
$$
\mathbb{D}(f,g)^2=\sum_{p\in \P} \frac{1}{p}\,\bigl(1-\Re\bigl(f(p)\,  \overline{g}(p)\bigr)\bigr)
$$
where $\P$ denotes the set of prime numbers.
\end{definition}
\begin{remark}
Note that if $|f|=|g|=1$, then
$
\mathbb{D}(f,g)^2=\sum_{p\in \P} \frac{1}{2p}\, |f(p)- g(p)|^2.
$
\end{remark}

\begin{theorem}[Hal\'asz~\cite{Hal68}]
\label{T:Halasz}
A multiplicative function  $f\in \CM$ has mean value zero if and only if
for every $t\in \R$  we either have $\mathbb{D}(f,n^{it})=\infty$
or
$
f(2^m)= -2^{imt}$ for all $m\in \N$.
\end{theorem}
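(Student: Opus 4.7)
The theorem is the classical Hal\'asz mean-value theorem, so my plan is to reproduce the standard complex-analytic argument based on analyzing the Dirichlet series $F(s):=\sum_{n\geq 1}f(n)n^{-s}$ near $s=1$. The key black-box input would be Hal\'asz's mean-value inequality
\[
\Big|\frac{1}{N}\sum_{n\leq N}f(n)\Big|\ll (1+M)e^{-M}+\frac{1}{\sqrt T},
\qquad M:=\min_{|t|\leq T}\sum_{p\leq N}\frac{1-\Re(f(p)p^{-it})}{p},
\]
valid uniformly for $T\geq 1$ and $f\in\CM$, together with the explicit Hal\'asz--Wirsing asymptotic formula for the mean value of a multiplicative function pretending to be $n^{it_0}$; both are taken from the classical literature rather than redone from scratch. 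Note that $\mathbb{D}(f,n^{it})^2$ is essentially the limit of the inner sum defining $M$ as $N\to\infty$.

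For the ``if'' direction I would split into two cases. If $\mathbb{D}(f,n^{it})=\infty$ for every $t\in\R$, then for any fixed $T>0$ the quantity $M(N,T)$ tends to $+\infty$ as $N\to\infty$; letting $T=T(N)\to\infty$ slowly enough in the Hal\'asz inequality yields mean zero. Otherwise fix $t_0\in\R$ with $\mathbb{D}(f,n^{it_0})<\infty$, so by hypothesis $f(2^m)=-2^{imt_0}$ for every $m\in\N$. Twisting by $g(n):=f(n)n^{-it_0}$ reduces to the case $t_0=0$, where $\mathbb{D}(g,1)<\infty$ and $g(2^m)=-1$ for all $m\geq 1$. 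A direct computation of the local Euler factor at $2$ gives
\[
\sum_{m\geq 0}g(2^m)2^{-m}=1-\tfrac{1}{2}-\tfrac{1}{4}-\cdots=0,
\]
so the Hal\'asz--Wirsing formula forces the mean value of $g$, and hence of $f$, to vanish.

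For the ``only if'' direction I would argue contrapositively: assume $\mathbb{D}(f,n^{it_0})<\infty$ for some $t_0\in\R$ and that $f(2^{m_0})\neq -2^{im_0t_0}$ for some $m_0$. After the same twist $g(n)=f(n)n^{-it_0}$, the Hal\'asz--Wirsing formula expresses the mean of $g$ as a convergent Euler-type product $\prod_p E_p(g)$ with $E_p(g):=(1-p\inv)\sum_{m\geq 0}g(p^m)p^{-m}$, whose convergence is ensured by $\mathbb{D}(g,1)<\infty$ together with Mertens-type estimates. For every odd prime $p$ the trivial bound $|\sum_{m\geq 1}g(p^m)p^{-m}|\leq 1/(p-1)<1$ keeps $E_p(g)$ away from zero, while $E_2(g)=0$ only when $g(2^m)=-1$ for all $m\geq 1$ (the extremal case of the same estimate). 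The failure of this last condition is precisely our assumption, so $E_2(g)\neq 0$ and the mean of $g$, hence of $f$, is nonzero.

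The hardest step in carrying this out from first principles is Hal\'asz's inequality itself, whose sharp form requires either Montgomery's contour-shift argument or the modern pretentious approach of Granville--Soundararajan integrating $F(s)$ against $\zeta(s+it)$ on a circle around $s=1$. A second subtlety is the asymmetric role of the prime $2$: among all primes, it is the unique one whose local Euler factor $E_p$ can vanish while $g$ remains close to $1$ at every odd prime, and this accounts for the exceptional-looking condition $f(2^m)=-2^{imt}$ in the statement.
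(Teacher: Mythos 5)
The paper offers no proof of this statement: it is quoted directly from Hal\'asz~\cite{Hal68} and used as a black box (only Proposition~\ref{prop:equiv-aperiodic} is derived from it). So there is no internal argument to compare yours against; what you have written is the standard derivation of this mean-value criterion from the two classical inputs you name, and it is essentially correct. The case analysis is the right one: when $\mathbb{D}(f,n^{it})=\infty$ for every $t$ the quantitative Hal\'asz inequality gives mean value zero, and when $\mathbb{D}(f,n^{it_0})<\infty$ for some (necessarily unique) $t_0$ the Hal\'asz--Wirsing asymptotic reduces everything to the local factor at $2$, which you correctly identify as the only Euler factor that can vanish and show vanishes exactly when $f(2^m)=-2^{imt_0}$ for all $m$.

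Two small points would need a sentence each in a full write-up. First, in the divergent case you assert that $\min_{|t|\le T}\sum_{p\le N}\bigl(1-\Re(f(p)p^{-it})\bigr)/p\to\infty$ for fixed $T$; this does not follow from pointwise divergence alone but from a Dini-type compactness argument, using that the partial sums are continuous in $t$ and nondecreasing in $N$. Second, in the contrapositive direction the infinite product $\prod_p E_p(g)$ need not converge: $\mathbb{D}(g,1)<\infty$ controls $\sum_p(1-\Re g(p))/p$ but not $\sum_p \Im (g(p))/p$, so the arguments of the partial products may drift, and when $t_0\neq 0$ the extra factor $N^{it_0}$ oscillates as well. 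The correct conclusion is that $\bigl|N\inv\sum_{n\le N}f(n)\bigr|$ tends to a nonzero limit, hence the mean value is not zero (it may simply fail to exist), rather than that ``the mean of $f$ is nonzero.'' Neither point affects the validity of your outline.
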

We record several  conditions equivalent to aperiodicity; the last one is the easiest  to verify for explicit multiplicative functions.
\begin{proposition}
\label{prop:equiv-aperiodic}
For a  multiplicative function $ f \in \CM$ the following are equivalent:
\begin{enumerate}
\item
\label{it:aperiodic}
 $ f $ is aperiodic.
\item
\label{it:chi-npq}
For every $p,q\in \N$,we have $\displaystyle \lim_{N\to +\infty} \E_{n\in [N]}  f (n)\, \e(np/q)=0$.
\item
\label{it:chi-chi}
For every Dirichlet character $ \chi_ q$ we have $\displaystyle\lim_{N\to +\infty} \E_{n\in [N]}  f (n) \,
\chi_q(n)=0$.
\item
\label{it:ChiSeries}
 For every $t\in \R$ and Dirichlet character $ \chi_ q$ we either have
 $\mathbb{D}(f,\chi_ q(n)n^{it})=\infty$ or
$f (2^m)  \, \chi_ {q}(2^m) =-2^{-imt}$ for every $m\in \N.$
\end{enumerate}
\end{proposition}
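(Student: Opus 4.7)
My plan is to establish the four-way equivalence through the chain $(i)\Leftrightarrow(ii)\Leftrightarrow(iii)\Leftrightarrow(iv)$. The first two equivalences are proved via Fourier analysis on $\Z/q\Z$, while the last equivalence is a direct consequence of Hal\'asz's theorem applied to suitable twists of $f$ by Dirichlet characters.

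For $(i)\Leftrightarrow(ii)$: a change of variables rewrites condition $(i)$ as the assertion that $N^{-1}\sum_{n\leq N,\ n\equiv b\,(\text{mod }q)} f(n)\to 0$ for every $b,q$. The Fourier identity
\[
\mathbf 1_{n\equiv b\,(\text{mod }q)}=\frac 1q\sum_{p=0}^{q-1}\e\bigl(p(n-b)/q\bigr)
\]
expresses each residue-class mean as a finite linear combination of the averages in $(ii)$; conversely $\e(np/q)$ is $q$-periodic so its average against $f$ decomposes into residue-class means of $f$. For $(ii)\Leftrightarrow(iii)$: every Dirichlet character $\chi_q$ is $q$-periodic and hence a linear combination of the $\e(np/q)$, so $(ii)\Rightarrow(iii)$ is immediate. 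For the reverse, the case $(b,q)=1$ uses the orthogonality relation $\mathbf 1_{n\equiv b\,(\text{mod }q)}=\phi(q)^{-1}\sum_{\chi\,(\text{mod }q)}\overline{\chi(b)}\chi(n)$, which is valid here because every such $n$ is automatically coprime to $q$. For $\gcd(b,q)=d>1$ I would write $n=dm$ with $m$ in a coprime residue class modulo $q/d$, split $m=m'm''$ with $m''$ supported on the primes dividing $d$ and $(m',d)=1$, and use multiplicativity to write $f(dm)=f(dm'')f(m')$; the inner sum over $m'$ then becomes an average of $f$ over a coprime residue class of smaller modulus, handled by the previous case.

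For $(iii)\Leftrightarrow(iv)$: I would apply Hal\'asz's theorem (Theorem~\ref{T:Halasz}) to the multiplicative function $g:=f\cdot\overline{\chi_q}\in\CM$. The theorem gives that $g$ has mean value zero iff for every $t\in\R$ either $\mathbb D(g,n^{it})=\infty$ or $g(2^m)=-2^{imt}$ for all $m$. A direct computation from the definition yields $\mathbb D(f\overline{\chi_q},n^{it})=\mathbb D(f,\chi_q n^{it})$, and rewriting the second alternative as a condition on $f(2^m)\chi_q(2^m)$---using $|\chi_q(2^m)|\in\{0,1\}$ and reindexing $t\mapsto -t$ to match the sign in $(iv)$---delivers the dichotomy of $(iv)$. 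Since quantifying $\overline{\chi_q}$ over all Dirichlet characters is the same as quantifying $\chi_q$, the Hal\'asz dichotomy for $g$ over all such characters is precisely $(iv)$.

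The main obstacle is the implication $(iii)\Rightarrow(ii)$ for residue classes $b$ with $\gcd(b,q)>1$: Dirichlet characters modulo $q$ vanish on such $n$, so they cannot by themselves detect the corresponding residue-class mean, and one is forced to exploit the multiplicative structure of $f$ by stripping off the common divisor $d=\gcd(b,q)$ and reducing to a coprime residue of modulus $q/d$. This is the one place where multiplicativity of $f$ (rather than mere boundedness) genuinely enters. The other steps are either routine Fourier analysis on $\Z_q$ or a direct invocation of Hal\'asz's theorem applied to $f$ twisted by a Dirichlet character.
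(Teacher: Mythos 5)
Your proposal is correct and follows essentially the same route as the paper: the equivalences among the first three properties are finite Fourier analysis on $\Z_q$ combined with multiplicativity, used to strip from $n$ its part supported on the primes dividing the modulus so as to reduce to coprime residue classes detected by Dirichlet characters, and the last equivalence is a direct appeal to Hal\'asz's theorem applied to $f$ twisted by a Dirichlet character. The only differences are cosmetic: the paper performs the reduction for (iii)$\Rightarrow$(ii) in a single step via the identity $\E_{n\in[N]}f(n)\e(np/q)=\frac1N\sum_{d}f(d)\sum_{n\leq N/d,\,(n,q)=1}f(n)\e(dnp/q)$ (sum over $d$ dividing some power of $q$), whereas you argue residue class by residue class in two stages -- and in your second stage the modulus you land on is $\mathrm{lcm}(q/d,\mathrm{rad}(d))$, a divisor of $q$ but not necessarily smaller than $q/d$, and the admissible $m'$ form a finite union of coprime classes rather than a single one; neither point, nor the routine truncation of the convergent outer sum over the smooth part, affects the argument.
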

Assuming Theorem~\ref{T:Halasz},
the proof of the equivalences is simple (this was already observed in \cite{D74, DD74, DD82}); we give it for the convenience of the reader
  in Section~\ref{subsec:proof-aperiodic}.
Lending terminology from \cite{GranSound07, GS15},
 condition~\eqref{it:ChiSeries} states that  a multiplicative function is aperiodic unless it ``pretends'' to be $ \chi_ q(n) n^{it}$ for some Dirichlet character $ \chi_ q$ and some $t\in \R$.
It follows easily from \eqref{it:ChiSeries} that if $ f \in \CM$ has real values and satisfies
$\sum_{p\in \P\cap (d\Z+1)}\frac{1- f (p) }{p}=+\infty$
for every $d\in \N$, then $ f $ is aperiodic.
In particular, this is satisfied by the  M\"obius and the Liouville functions.
Sharper results can be obtained using a theorem of R.~Hall~\cite{Hall95} and the argument in \cite[Corollary~2]{GranSound07}.
For instance, it can be shown that if  $f(p)$  takes values in a finite subset of the unit disc  and $f(p)\neq 1$ for all
$p\in \P$,
    then $f$ is aperiodic.

\subsubsection{Uniformity of aperiodic functions} \label{SS:aperunif} We give explicit necessary and sufficient conditions
for a multiplicative function
 $ f \in \CM$ to be $U^s$-uniform, that is,  have $U^s[N]$-norm converging to zero as $N\to +\infty.$

Aperiodicity is easily shown to be a necessary  condition for $U^2$-uniformity. For general bounded sequences it is far from sufficient   though. For instance,  the sequences $(\e(n\alpha))_{n\in\N}$  and  $(\e(n^2\alpha))_{n\in\N}$, where $\alpha$ is irrational, are aperiodic, but the first is not $U^2$-uniform and
the second is $U^2$-uniform but  not $U^3$-uniform. It is a rather surprising (and non-trivial) fact that for the general multiplicative function in $\CM$ aperiodicity suffices  for $U^s$-uniformity for every $s\geq 2$.
This is not hard to show  for $s=2$ by combining well known results about multiplicative functions,
but  for $s\geq 3$  it  is much harder to do so,  and we need to use  essentially the full force of  Theorem~\ref{T:DecompositionSimple}.
\begin{theorem}[$U^s$-uniformity of aperiodic multiplicative functions]
\label{th:aperiod_uniform}
If a multiplicative function $ f \in \CM$ is aperiodic, then
$\displaystyle \lim_{N\to+\infty}\norm f_ {U^s[N]}=0$ for every $s\geq 2$.
\end{theorem}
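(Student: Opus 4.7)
\emph{Plan.} The proof combines the structure theorem (Theorem~\ref{T:DecompositionII}) with the reformulation of aperiodicity in Proposition~\ref{prop:equiv-aperiodic}\eqref{it:chi-npq}, which states that $\E_{n\in[N]}f(n)\,\e(np/q)\to 0$ for every $p,q\in\N$. By Lemma~\ref{lem:NormsUs}, the $U^s[N]$-norm of $f$ is equivalent (for $|f|\leq 1$) to $\norm{f_N}_{U^s(\tZN)}$, so I will aim to show that the latter tends to $0$ as $N\to+\infty$.

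Given $\delta>0$, I would fix a small $\varepsilon=\varepsilon(\delta,s)>0$ and apply Theorem~\ref{T:DecompositionII} with $\nu=\delta_f$ and $F\equiv 1/\varepsilon$. This yields integers $Q,R$, bounded in terms of $s$ and $\varepsilon$, and a decomposition $f_N=f_{N,\st}+f_{N,\un}+f_{N,\er}$ for all sufficiently large $N$, with $\norm{f_{N,\un}}_{U^s(\tZN)}\leq\varepsilon$ and $\E_{n\in\tZN}|f_{N,\er}(n)|\leq\varepsilon$. The error piece is absorbed by the trivial bound $\norm{g}_{U^s}^{2^s}\leq\norm{g}_\infty^{2^s-1}\norm{g}_1$, obtained by estimating all but one factor by $\norm{g}_\infty$ in the definition of $U^s$. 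For the structured piece I would exploit two consequences of Theorem~\ref{T:DecompositionII}: that $f_{N,\st}=f_N*\psi_{N,1}$ for a kernel $\psi_{N,1}$ (hence $|\widehat{f}_{N,\st}(\xi)|\leq|\widehat{f}_N(\xi)|$), and that its Fourier spectrum lies in the set
$$
S_{Q,R}:=\bigl\{\xi\in\tZN:|\xi/\tN-p/Q|\leq R/\tN \text{ for some }p\in\{0,\dots,Q-1\}\bigr\},
$$
of cardinality at most $Q(2R+1)$. Combining $\norm{g}_{U^s}^{2^s}\leq\norm{g}_1\leq\norm{g}_2$ (valid for $|g|\leq 1$) with Parseval then gives
$$
\norm{f_{N,\st}}_{U^s(\tZN)}^{2^s}\leq\bigl(Q(2R+1)\bigr)^{1/2}\max_{\xi\in S_{Q,R}}|\widehat{f}_N(\xi)|,
$$
so the matter reduces to showing $\max_{\xi\in S_{Q,R}}|\widehat{f}_N(\xi)|\to 0$ as $N\to+\infty$, with $Q,R$ held fixed.

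This last step is the main technical obstacle. For $\xi\in S_{Q,R}$ I would write $\xi/\tN=p/Q+t$ with $|t|\leq R/\tN$, so that $\tN\,\widehat{f}_N(\xi)=\sum_{n=1}^N f(n)\,\e(-np/Q)\,\e(-nt)$. Abel summation against the slowly varying phase $\e(-nt)$, for which $|\e(-(n+1)t)-\e(-nt)|\leq 2\pi R/\tN$, reduces the estimate to the decay of the partial sums $S_M:=\sum_{n=1}^M f(n)\,\e(-np/Q)$, and by Proposition~\ref{prop:equiv-aperiodic}\eqref{it:chi-npq} we have $S_M/M\to 0$. Splitting the Abel sum at a threshold $M_0$ beyond which $|S_n|\leq\eta n$ (with $\eta>0$ arbitrarily small) then gives $|\widehat{f}_N(\xi)|=o(1)$ uniformly over the finite set $S_{Q,R}$. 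Choosing $\varepsilon$ small in terms of $\delta,s$ and $N$ large in terms of $\varepsilon$, $Q$, $R$, and the aperiodicity rate of $f$ yields $\norm{f_N}_{U^s(\tZN)}<\delta$, completing the proof. Note that the dependence of $N_0$ on the (non-uniform) rate of aperiodicity is consistent with the pointwise, rather than uniform, character of the conclusion.
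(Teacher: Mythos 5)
Your argument is correct, and it rests on the same two pillars as the paper's proof: the structure theorem, whose structured component has spectrum of bounded cardinality confined to frequencies within $R/\tN$ of rationals $p/Q$ with $Q,R$ bounded in terms of $s,\ve$, together with the aperiodicity hypothesis applied at exactly those rationals. The execution differs in three ways, though. First, you argue directly rather than by contraposition. Second, you use the three-term decomposition of Theorem~\ref{T:DecompositionII} (absorbing $f_{N,\er}$ via $\norm{g}_{U^s}^{2^s}\leq\norm{g}_\infty^{2^s-1}\norm{g}_1$), whereas the paper uses the two-term Theorem~\ref{T:DecompositionI}; either works, and the two-term version is slightly leaner here since no error term appears. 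Third, and most substantively, the paper extracts from $\norm{f_{N,\st}}_{U^s(\tZN)}\geq\ve$ a single large Fourier coefficient of $f_N$ and then invokes Corollary~\ref{C:92} (via Lemma~\ref{lem:inverse-mul}, whose proof partitions $[N]$ into short progressions on which the perturbing phase $\e(nt)$ is nearly constant), while you show directly that every $\widehat{f}_N(\xi)$ with $\xi/\tN$ near some $p/Q$ is $o(1)$, handling the perturbation $t$ by Abel summation against the partial sums $\sum_{n\leq M}f(n)\,\e(-np/Q)$, which are $o(M)$ by Proposition~\ref{prop:equiv-aperiodic}\eqref{it:chi-npq}. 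These two devices are interchangeable, and both ultimately lean on the same K\'atai input (Corollary~\ref{cor:katai}), which in your version is hidden inside Property~\eqref{it:weakU32b} of the structure theorem.

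Two small points to tidy up. First, in Theorem~\ref{T:DecompositionII} the integers $Q$ and $R$ are only \emph{bounded} in terms of $s,\ve,F$; as the construction in Section~\ref{subsec:proof_strong} shows, they may vary with $N$, so you cannot literally hold them fixed as $N\to+\infty$. Since they range over a finite set determined by $s$ and $\ve$, it suffices to choose the Abel-summation threshold $M_0$ uniformly over the finitely many rationals $p/Q$ with $Q\leq Q_{\max}(s,\ve)$, and to replace $R$ by its upper bound $R_{\max}(s,\ve)$ in the estimate $|t|\leq R/\tN$; the argument is otherwise unaffected. Second, the comparison between $\norm{f}_{U^s[N]}$ and $\norm{f_N}_{U^s(\tZN)}$ should be drawn from Definition~\ref{def:Us-interv} together with Lemma~\ref{L:norm-N} (taking $\tN>2N$), as in the paper's own proof, rather than from Lemma~\ref{lem:NormsUs}, which compares $U^s[N]$ with the norm on the cyclic group of order exactly $N$. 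Neither point is a genuine gap.
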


\begin{remarks}
(1) Our proof gives the following finitary inverse theorem:
For given  $s\in \N$ and $\ve>0$,  there exists $\delta:=\delta(s,\ve)>0$ and $Q:=Q(s,\ve)\in \N$,
such that if $ f \in \CM$ satisfies $\limsup_{N\to+\infty}\norm f_ {U^s[N]}\geq \ve$, then there
exists  $a,b\in \N$ with $1\leq a,b\leq Q$
such that
$\limsup_{N\to+\infty}\big|\E_{n\in [N]}  f (an+b) \big|\geq \delta.
$ Note that $\delta$ and $Q$ do not depend on $ f $.

(2)  Theorem~\ref{th:aperiod_uniform} implies that if $ f $ is an
aperiodic multiplicative function, then $ f $ does not correlate with any polynomial
phase function $\e(p(n))$, $p\in \R[t]$. More generally, it implies that  $\lim_{N\to+\infty}\E_{n\in [N]}
f(n)\, \phi(n)=0$
for every nilsequence $(\phi(n))_{n\in\N}$. For the M\"obius and the Liouville function  this result was  obtained by Green and Tao in \cite{GT12b}.
\end{remarks}

\subsubsection{Chowla's conjecture for aperiodic multiplicative functions}
\label{SS:chowla}

We provide a class of  homogeneous polynomials $P\in \Z[m,n]$ such that
$\E_{m,n\in[N]}f(P(m,n))\to 0$ for every aperiodic completely multiplicative $f\in \CM$.

Here and below, $d$ is a positive integer, and by  $\sqrt{-d}$ we mean $i\sqrt{d}$.
It is well known that the ring of integers of $\Q(\sqrt {-d})$ is equal to $\Z[\tau_d]$ where
$$
\tau_d:=\frac 12(1+\sqrt{-d})\  \text{ if }\ d= 3 \bmod 4\ \text{ and }\ \tau_d:=\sqrt{-d}\ \text{ otherwise.}
$$
The norm of $z\in\Z[\tau_d]$ is $\CN(z)=|z|^2$.
We write $Q_d(m,n)$ for the corresponding fundamental quadratic form, that is,
$$Q_d(m,n):=\CN(m+n\tau_d)=|m+n\tau_d|^2.$$
 Explicitly, we have
$$
Q_d(m,n)= \begin{cases}
\displaystyle
m^2+mn+\frac{d+1}4 n^2 & \text{ if }d=3 \bmod 4;\\
m^2+dn^2&\text{ otherwise.}
\end{cases}
$$
We say that a quadratic form $Q(m,n)$ with integer coefficients is \emph{equivalent} to the form $Q_d$ if it is obtained from $Q_d$ by a change of variables given by a $2\times 2$ matrix with integer entries and determinant equal to $\pm 1$.
\begin{convention}
Every multiplicative function $f\in\CM$ is extended to an even multiplicative function on $\Z$, by putting $f(0)=0$ and $f(-n)=f(n)$ for every $n\in\N$. We denote this extension by $f$ as well.
\end{convention}
We prove  the following:
\begin{theorem}
[Some cases of Chowla's conjecture for aperiodic  functions]
\label{th:chowla}
 Let $f\in \CM$ be an aperiodic  multiplicative function, $d\in\N$, $Q$ be a quadratic form, equivalent to a quadratic form $Q_d$ defined above, and let $r\geq 0$ and $s\geq 1$ be integers.
Let $L_j(m,n)$, $j=1,\ldots, s$,  be linear forms with integer coefficients
 and suppose  that either $s=1$ or $s>1$ and the linear forms $L_1,L_j$ are linearly independent for $j=2,\ldots,  s$.
 Then
\begin{equation}
\label{E:PQr}
\lim_{N\to+\infty}\E_{m,n\in[N]}
f\bigl(Q(m,n)^r\bigr)\,\prod_{j=1}^s f\bigl(L_j(m,n)\bigr)=0.
\end{equation}
\end{theorem}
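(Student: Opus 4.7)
The strategy combines the structure theorem (Theorem~\ref{T:DecompositionII}) applied to $f$ and to the auxiliary multiplicative function $g(n):=f(n^r)\in\CM$, with a generalized von Neumann--type inequality for the configuration $(Q,L_1,\ldots,L_s)$, exploiting the factorization $Q(m,n)=\CN(m+n\tau_d)$ in $\Z[\tau_d]$. The average to estimate becomes $\E_{m,n\in[N]}g(Q(m,n))\prod_j f(L_j(m,n))$. I would apply Theorem~\ref{T:DecompositionII} with $\nu$ the uniform measure on $\{f,g\}$, on a sufficiently large interval $[M]$ containing the ranges of $f\circ L_j$ and $g\circ Q$, to obtain simultaneous decompositions $f=f_\st+f_\un+f_\er$ and $g=g_\st+g_\un+g_\er$ with common parameters $Q_0,R_0$, and with the $U^{s'}[M]$-norm of each uniform part smaller than any prescribed function of $Q_0,R_0,\ve$ (for $s'=s'(r,s)$ to be chosen). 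Expanding the product yields $3^{s+1}$ cross terms grouped by type.

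For the cross terms containing an error factor, property~\eqref{it:decomU34} combined with a bound on the representation function of $Q$ (using Chebotarev's density theorem, as flagged in the acknowledgements, to control which primes appear in the values of a norm form) gives an $O(\ve)$ contribution. For the fully structured term $\E_{m,n\in[N]}g_\st(Q(m,n))\prod_j f_\st(L_j(m,n))$, I would expand each $f_\st(L_j)$ in its finite Fourier series, whose frequencies lie within $R_0/M$ of rationals $p/Q_0$ by property~\eqref{it:weakU32b}. Since $f_\st$ is a convolution of $f$ with a fixed kernel, aperiodicity of $f$ (via Proposition~\ref{prop:equiv-aperiodic}\eqref{it:chi-npq}) forces each Fourier coefficient $\widehat{f_\st}(\xi)$ at such frequencies to be $o_{N\to\infty}(1)$; since there are only $s$ such factors and boundedly many relevant frequencies, the total contribution is $o(1)$, the $g_\st(Q(m,n))$ factor being merely bounded and causing no harm.

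The main obstacle is the cross terms containing at least one uniform factor. When the uniform factor sits on a linear form, say $f_\un(L_j)$, I would use a Cauchy--Schwarz--van der Corput iteration in the direction transverse to $L_j$ (the linear-independence hypothesis $L_1\not\parallel L_j$ providing the needed non-degeneracy) to reduce the estimate to a power of $\norm{f_\un}_{U^{s'}[M]}$, absorbing the bounded quadratic weight into the iteration. When the uniform factor is $g_\un(Q(m,n))$ inside the quadratic form, the analysis is genuinely harder: I would reparametrize $Q(m,n)^r=\CN((m+n\tau_d)^r)$, view the sum as ranging over lattice points $\alpha\in\Z[\tau_d]$ in a bounded region, expand $g_\un(\CN(\alpha^r))$ via the factorization of $(\alpha)$ into prime ideals, and invoke Chebotarev's theorem to translate the result into a correlation of $g_\un$ with characters of the ideal class group restricted to arithmetic progressions, which is in turn controlled by $\norm{g_\un}_{U^{s'}[M]}$. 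Arranging both lines of attack uniformly, so that a single Cauchy--Schwarz complexity $s'=s'(r,s)$ works across all cross terms and couples cleanly with the linear-form step, is the technically most demanding part and will dictate the required choice of $F$ in Theorem~\ref{T:DecompositionII}; once that $s'$ and $F$ are in hand, summing the three bounds completes the proof.
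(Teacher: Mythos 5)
Your proposal contains a genuine gap at the step you yourself identify as the hardest: the cross terms in which the uniform component sits on the quadratic factor, $g_\un(Q(m,n))$. There is no generalized von Neumann inequality that bounds $\E_{m,n\in[N]}\,h(Q(m,n))\prod_j a_j(L_j(m,n))$ by a Gowers norm of $h$: the Cauchy--Schwarz iteration underlying Lemma~\ref{P:semest} works only for systems of \emph{linear} forms, and a bounded weight depending quadratically on both variables cannot be absorbed (taking $h\circ Q$ to be the conjugate of the rest of the product already defeats any such bound). Moreover the values of $Q$ form a sparse set in $[M]$ with $M\sim N^2$, so smallness of $\norm{g_\un}_{U^{s'}[M]}$ carries no information about $g_\un$ restricted to norm values; and once you have decomposed $g=g_\st+g_\un+g_\er$, the component $g_\un$ is no longer multiplicative, so the prime-ideal expansion and Chebotarev step you invoke are not available for it. (Remark~(3) after the theorem, with $f(m^2+n^2)\equiv 1$ for an aperiodic $f$, is a warning that the quadratic factor by itself carries no exploitable uniformity.)

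The paper's route removes the quadratic factor \emph{before} any decomposition, precisely because that step needs genuine multiplicativity. Since $Q_d(m,n)=\CN(m+n\tau_d)$, the map $z\mapsto f(\CN(z)^r)$ is multiplicative on $\Z[\tau_d]$, and the K\'atai orthogonality criterion for $\Z[\tau_d]$ (Proposition~\ref{prop:KataiZd}) reduces \eqref{E:PQr} to correlations of the form $\E_z\,\one_{K_N^*}(z)\prod_j f_j(\Lambda_j(z))\overline{f_j}(\Lambda'_j(z))$ over $2s$ linear forms $\Lambda_j(z)=\tfrac1{\sqrt{-d}}\Im(\zeta_j\alpha z)$, $\Lambda'_j(z)=\tfrac1{\sqrt{-d}}\Im(\zeta_j\beta z)$, with the quadratic factor gone entirely. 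A unique-factorization-of-ideals argument (Claim~\ref{cl:indep-linear}) shows $\Lambda_1$ stays independent of all the other forms, and then Theorem~\ref{th:aperiod_uniform} (aperiodicity implies $\norm{f}_{U^{2s-1}[N]}\to0$, which is where the structure theorem actually enters) together with Lemma~\ref{P:semest} finishes the proof. So the structure theorem is used only through the $U^{s'}$-uniformity of the aperiodic factor on a linear form, never to control the quadratic factor; reversing that order, as your proposal does, is what creates the unfixable step.
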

\begin{remarks}
(1) The same statement holds with
$f\bigl(Q(m,n)\bigr)^r$ in place of $f\bigl(Q(m,n)^r\bigr)$.

(2)  A more general result  is given in  Theorem~\ref{th:chowla2}.

(3)
The result fails  when there are no linear factors, for instance, it fails  for averages of  the form $\E_{m,n\in[N]}f(m^2+n^2)$, as it is easy to construct aperiodic multiplicative functions such that $f(m^2+n^2)=1$ for all $m,n\in \N$;
let $f$ be $1$ for integers that are  sums of two squares and $0$ if they are not. One can also  construct
 examples of aperiodic completely multiplicative functions $f$ with values $\pm 1$ such that
 $f(m^2+n^2)=1$ for all $m,n\in \N$.

 (4)  We restrict to positive definite quadratic forms because we can handle them using results for  imaginary quadratic fields;  real quadratic fields have infinitely many units and this causes problems in our  proof.
\end{remarks}
 When $r=0$,  Theorem~\ref{th:chowla}  follows by  combining
 Theorem~\ref{th:aperiod_uniform} with the estimates of Lemma~\ref{P:semest}. For  $r\geq 1$ the main observation is
 that since $Q_d(m,n)=\CN(m+n\tau_d)$  is completely multiplicative, the map $m+n\tau_d\mapsto f\big(Q_d(m,n)^r\big)$ is multiplicative in $\Z[\tau_d]$, in the sense defined in Section~\ref{subsec:Katai-Ztau}. Then using
 a variant of the  orthogonality criterion of Kat\'ai
 for the ring $\Z[\tau_d]$ (see Proposition~\ref{prop:KataiZd}) we can show that the
average on the left hand side of~\eqref{E:PQr}  converges to $0$
if some other average that involves  products of $2s$ linear forms converges to $0$. With a bit of effort we  show that the linear independence assumption of the linear forms is preserved, thus reducing the problem to the case  $r=0$ that we already know how to deal with using Theorem~\ref{T:DecompositionSimple}.

Perhaps the previous argument can be adjusted to treat the case of any irreducible quadratic polynomial $Q$.
  On the other hand, when $Q$  has  two or more irreducible quadratic factors or has irreducible factors of degree greater than two, we loose the basic multiplicativity property mentioned before and we do not see how to proceed.  It could be the case though that  Theorem~\ref{th:chowla} continues to hold, at least in the case that the function $f$ is completely multiplicative, for averages of the form
 $$
\E_{m,n\in[N]}
f\bigl(P(m,n)\bigr)
$$
 under the much weaker assumption that $P$ is
 any  homogeneous polynomial  such that  some linear form appears in the factorization of $P$ with degree exactly one.
\begin{problem}
Can Theorem~\ref{th:chowla} be extended to the case where $Q$ is an arbitrary irreducible quadratic  or a product of such? What about the case where  $Q$ is an arbitrary homogeneous polynomial without linear factors?
\end{problem}

\subsection{Partition regularity results}\label{SS:preg}
An important question in Ramsey theory is to determine which
algebraic equations, or systems of equations, are partition regular
over the natural numbers. Here, we restrict our attention to
polynomials in three variables, in which case  partition regularity of
 the equation  $p(x,y,z)=0$  amounts to  saying that,
 for any partition of  $\N$ into finitely many
cells, some cell contains \emph{distinct} $x,y,z$ that satisfy the
equation.

 The case where the polynomial $p$ is linear
was completely solved by  Rado~\cite{R33}; for $a,b,c\in
\N$  the equation $ax+by=cz$ is partition regular if and only if
either $a$, $b$, or $a+b$ is equal to $c$.
The situation is much less clear   for second or higher degree
equations and only scattered results are known.
 Partition regularity is known when the equation satisfies
a shift invariance property, as is the case for  the equations
$z-x=(y-x)^2$ (see \cite{BL96} or \cite{W00}) and  $x-y=z^2$
\cite{Be96}, and  in other instances it can be deduced from  related
linear statements  as is the case for  the equation $xy=z^2$
(consider the induced partition for the powers of $2$). But
such fortunate occurrences are rather rare.

\subsubsection{Quadratic equations} \label{SS:quadratic} A notorious old question of Erd\"{o}s and Graham~\cite{EG80} is
whether the equation $x^2+y^2=z^2$ is partition regular. As Graham
remarks in \cite{G08} ``There is actually very little data (in
either direction) to know which way to guess''. More generally, one
may ask for which $a,b,c\in \N$ is the equation
\begin{equation}\label{E:StrongPartitionRegular}
ax^2+by^2=cz^2
\end{equation}
 partition regular. A necessary condition is that at least  one of
$a$, $b$,  $a+b$ is equal to $c$, but currently there are no $a,b,c\in
\N$ for which partition regularity of 
\eqref{E:StrongPartitionRegular} is known.

We  study here the  partition regularity of equation
\eqref{E:StrongPartitionRegular}, and other quadratic equations,
under the relaxed condition that the  variable $z$ is allowed to
vary freely in $\N$; henceforth we use the letter $\lambda$ to designate the special role
of this variable.
\begin{definition}
The equation $p(x,y,\lambda)=0$ is \emph{partition
regular }  if for every partition of $\N$
into finitely many cells,
one of the cells  contains \emph{distinct} $x,y$ that satisfy the equation
for some $\lambda\in \Z$.
\end{definition}
A classical result of Furstenberg-S\'ark\"ozy  \cite{Fu77, Sa78}  is
that the equation $x-y=\lambda^2$ is partition regular.
 Other examples of translation invariant equations can be given using the polynomial van der Waerden theorem
 of Bergelson and Leibman~\cite{BL96},
 but not much is known in the non-translation invariant case.
A result of Khalfalah and Szemer\'edi \cite{KS06}  is that the
equation $x+y=\lambda^2$ is partition regular.\footnote{V.~Bergelson and J.~Moreira~\cite{BM15a,BM15b} recently proved
partition regularity in $\mathbb{Q}$ for patterns of the form $\{x+y,xy\}$, or equivalently, for the  equation $\lambda x -y=\lambda^2$. Partition regularity in  $\Z$ remains open.}
Again, the
situation is much less clear when one considers non-linear
polynomials in $x$ and $y$, as is the case for the equation
 $a x^2+by^2=\lambda^2$ where $a,b\in \N$.
We give the  first
positive results in this direction. For example, we   show that the
equations
$$
16x^2+9y^2=\lambda^2    \quad \text{ and } \quad x^2+y^2- xy=\lambda^2
$$
are partition regular (note that $16x^2+9y^2=z^2$ is not partition
regular).
In fact we prove a  more general result for  homogeneous quadratic
forms in three variables.
\begin{theorem}[Partition regularity of quadratic equations]\footnote{W.~Sun \cite{S15} recently proved a similar partition regularity result  on the Gaussian integers which covers the equation $x^2-y^2=\lambda^2$, where $x,y,\lambda\in \Z[i]$.}\label{th:partition-regular1}
Let $p$ be the quadratic form
\begin{equation}
\label{E:Q2} p(x,y,z)=ax^2+by^2+cz^2+dxy+exz+fyz,
\end{equation}
where  $a, b, c$ are non-zero  and $d,e,f$ are arbitrary integers.
Suppose that all three forms $p(x,0,z)$, $p(0,y,z)$, $p(x,x,z)$ have
non-zero square discriminants. Then the equation $ p(x,y,\lambda)=0 $ is
partition regular.
\end{theorem}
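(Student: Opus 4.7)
The plan is to deduce Theorem~\ref{th:partition-regular1} from a stronger density statement obtained via a rational parametrization of the conic $\{p(x,y,\lambda)=0\}\subset\P^2$, combined with Theorem~\ref{T:DecompositionII} and Theorem~\ref{th:chowla}.

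\smallskip

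\textbf{Step 1: Parametrization.} Since each of $p(x,0,z)$, $p(0,y,z)$, $p(x,x,z)$ has non-zero square discriminant, the lines $y=0$, $x=0$, $x=y$ each meet the conic in two distinct rational points. Choosing one such rational point as a base, I would obtain a rational parametrization $(x:y:\lambda)=(q_1(m,n):q_2(m,n):q_3(m,n))$ with integer quadratic forms $q_i$. Rationality of the two $y=0$ points then forces $q_2$ to split over $\Q$ as a product of linear forms, and similarly for $q_1$ and for $q_1-q_2$. Clearing denominators and rescaling produces an integer parametrization
$$
x = L_1(m,n)L_2(m,n),\quad y = L_3(m,n)L_4(m,n),\quad \lambda = Q(m,n),
$$
with $L_1,L_2,L_3,L_4$ pairwise linearly independent integer linear forms and $Q$ a positive integer quadratic form.

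\smallskip

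\textbf{Step 2: Density reduction.} It would suffice to establish the density statement: for every $A\subset\N$ containing a cell of positive upper multiplicative density, there exist $(m,n)\in\N^2$ with $L_1L_2(m,n)$ and $L_3L_4(m,n)$ both in $A$ and distinct. Partition regularity then follows by pigeonhole. The diagonal $L_1L_2=L_3L_4$ has density zero in $[N]^2$ by the pairwise independence of the $L_i$, so it can be excluded from the main count.

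\smallskip

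\textbf{Step 3: Structural analysis.} For a suitable completely multiplicative test function $f\in\CM$ encoding the coloring (averaged against a measure $\nu$ on $\CM$ to capture membership in $A$), I would apply Theorem~\ref{T:DecompositionII} to decompose $f_N = f_{N,\st} + f_{N,\un} + f_{N,\er}$. Complete multiplicativity converts $f(L_iL_j)$ into $f(L_i)f(L_j)$, so the relevant correlation becomes a four-linear-form average
$$
\E_{m,n\in[N]}\prod_{i=1}^{4} f(L_i(m,n)).
$$
Theorem~\ref{th:chowla} in the case $r=0$, applied to the pairwise independent $L_i$, makes the $f_{N,\un}$ contribution vanish in the limit; Theorem~\ref{T:DecompositionII}(v) with $F$ chosen sufficiently large makes the $f_{N,\er}$ contribution negligible; and the $f_{N,\st}$ contribution reduces, by the approximate periodicity of $f_{N,\st}$, to a finite Fourier sum on $\Z_Q$ that factors through local densities modulo $Q$. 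This local sum admits a positive lower bound depending only on the density of $A$, via a Chinese-remainder-style analysis combined with the local solvability of $p(x,y,\lambda)=0\bmod q$ supplied by the square-discriminant hypotheses.

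\smallskip

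\textbf{Main obstacle.} The hardest step will be producing a genuinely positive main term from $f_{\st}$: one must show that $\E_{m,n\in[N]}\prod_i f_{\st}(L_i(m,n))$ is bounded below by a positive constant depending only on the density of $A$ and on $Q,R$, rather than tending to zero with $N$. This calls for a multiplicative Chinese-remainder-type argument that controls $f_{\st}$ modulo each prime power in its period and simultaneously matches the residue classes $L_i(m,n)\bmod Q$ so that they land in the support of $f_{\st}$; the square-discriminant hypotheses are precisely what guarantees the local solvability needed to run this argument. Calibrating $F$ so that the uniformity error is truly dominated by this main term, and controlling the diagonal contribution $x=y$, are additional technical points that need care but appear manageable once the local positivity is established.
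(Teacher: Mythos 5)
Your skeleton (parametrize the conic into products of linear forms, decompose via Theorem~\ref{T:DecompositionII}, kill the uniform part with Gowers-norm estimates, extract positivity from the structured part) matches the paper's outline, but two of your steps have genuine gaps. First, you have dropped the dilation parameter. The parametrization in Proposition~\ref{prop:linearfactors} is $x=k\ell_0(m+\ell_1n)(m+\ell_2n)$, $y=k\ell_0(m+\ell_3n)(m+\ell_4n)$ with a free scaling $k$, and the whole reduction to multiplicative functions runs through this $k$: one works with a dilation-invariant (multiplicative) density, applies Bochner's theorem on the group $\Q^+$ to get a measure $\nu$ on completely multiplicative functions with $\widehat\nu(xy\inv)=\dmult(x\inv E\cap y\inv E)$, and the positivity of $\int\prod_i f(m+\ell_i n)\,d\nu(f)$ then produces a $k$ with $kx,ky$ in the same cell. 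Your Step 2, which asks for $m,n$ with $L_1L_2(m,n)$ and $L_3L_4(m,n)$ themselves in a cell of positive density, is a strictly stronger claim that the paper does not prove (it is essentially one of the open problems posed in Section~\ref{SS:preg}), and without the $k$ the indicator of a cell has no multiplicative structure, so the passage to a measure on $\CM$ in your Step 3 has nothing to stand on.

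Second, the mechanism you propose for the positive main term is not the one that works. The square-discriminant hypotheses are spent entirely in the parametrization: $\Delta_1,\Delta_2$ give the linear factorizations of $x$ and $y$, and $\Delta_3$ gives the normalization that all four linear forms have the \emph{same} coefficient of $m$. There is no local-solvability or Chinese-remainder argument anywhere. Positivity comes instead from (a) the atom $\nu(\{\one\})\geq\delta^2$ of the Bochner measure; (b) restricting $n$ to multiples of $Q$ so that, by the almost-periodicity of $f_{N,\st}$, all four values $f_{N,\st}(m+\ell_iQk)$ are within $O(\ve/Q)$ of $f_{N,\st}(m)$, making the product essentially $|f_{N,\st}(m)|^4\geq 0$ --- this is exactly where the equal-leading-coefficient normalization is indispensable (see footnote~\ref{foot3}); and (c) a hidden non-negativity lemma (Lemma~\ref{L:HiddenPositivity}, resting on positive-definiteness of $\nu$) that licenses discarding the terms with $n$ outside the subprogression, which must be done \emph{before} the error term $f_{N,\er}$ is removed --- otherwise the error swamps the main term. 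Finally, a smaller point: Theorem~\ref{th:chowla} cannot handle the uniform component, since it applies to aperiodic multiplicative functions and $f_{N,\un}$ is neither; the correct tool is the generalized von Neumann estimate of Lemma~\ref{L:UnifromityEstimates2}, which bounds the four-form average by $\norm{f_{N,\un}}_{U^3(\tZN)}^{1/2}$.
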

The last hypothesis means that
the three integers
$$
\Delta_1:=e^2-4ac,  \quad \Delta_2:=f^2-4bc,  \quad \Delta_3:=(e+f)^2-4c(a+b+d)
$$
are non-zero squares.
As a special case, 
 we get the following result:
\begin{corollary}\label{Corol1}
Let  $a,b,c,$ and $a+b$ be non-zero squares. Then the equation $
ax^2+by^2=c\lambda^2 $ is partition regular. Moreover, if   $a,b,$
and $a+b+c$ are  non-zero squares, then the equation $
ax^2+by^2+cxy=\lambda^2 $ is partition regular.
\end{corollary}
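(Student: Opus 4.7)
The plan is to derive Corollary~\ref{Corol1} as a direct application of Theorem~\ref{th:partition-regular1}: in each of the two cases I will rewrite the equation in the form $p(x,y,\lambda) = 0$ with $p$ as in \eqref{E:Q2}, compute the three discriminants $\Delta_1, \Delta_2, \Delta_3$ explicitly in terms of the coefficients, and verify under the given hypotheses that each $\Delta_i$ is a non-zero integer square. No further machinery is required.

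For the first equation $ax^2+by^2=c\lambda^2$, I would introduce $z$ in place of $\lambda$ and rewrite it as
$$
p(x,y,z):=ax^2+by^2+(-c)z^2=0,
$$
so that in the notation of \eqref{E:Q2} the leading coefficients are $a,b,-c$ and $d=e=f=0$. A direct substitution gives
$$
\Delta_1=e^2-4a(-c)=4ac,\qquad \Delta_2=f^2-4b(-c)=4bc,\qquad \Delta_3=(e+f)^2-4(-c)(a+b+d)=4c(a+b).
$$
Since the product of non-zero squares is a non-zero square and $4$ is a square, the hypothesis that $a,b,c,a+b$ are non-zero squares makes each $\Delta_i$ a non-zero square. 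Partition regularity then follows from Theorem~\ref{th:partition-regular1}.

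For the second equation $ax^2+by^2+cxy=\lambda^2$, I rewrite it as
$$
p(x,y,z):=ax^2+by^2+(-1)z^2+cxy=0,
$$
so that the coefficients are $a$, $b$, $-1$, with $d=c$ and $e=f=0$. Then
$$
\Delta_1=0-4a(-1)=4a,\qquad \Delta_2=0-4b(-1)=4b,\qquad \Delta_3=0-4(-1)(a+b+c)=4(a+b+c),
$$
which are non-zero squares under the hypothesis that $a,b,a+b+c$ are non-zero squares. Again Theorem~\ref{th:partition-regular1} applies and yields the desired partition regularity.

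The proof is essentially mechanical; the only thing to be careful about is the sign convention on the $z^2$ coefficient when moving the $\lambda^2$ term to the left-hand side, and keeping $a,b,c$ (as they appear in the corollary) distinct from the coefficients $a,b,c,d,e,f$ named in \eqref{E:Q2}. There is no genuine obstacle — all of the work is buried in Theorem~\ref{th:partition-regular1}.
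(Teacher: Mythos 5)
Your proof is correct and is exactly the intended derivation: the paper presents Corollary~\ref{Corol1} as an immediate special case of Theorem~\ref{th:partition-regular1}, obtained by moving the $\lambda^2$ term to the left and checking that $\Delta_1,\Delta_2,\Delta_3$ reduce to $4ac,4bc,4c(a+b)$ (resp.\ $4a,4b,4(a+b+c)$), which are non-zero squares under the stated hypotheses. Nothing further is needed.
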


A partition $\mathscr P_1,\ldots, \mathscr P_r$ of the squares induces
a partition $\wt{\mathscr P}_1,\ldots, \wt{\mathscr P}_r$ of $\N$ by
the following rule: $x\in \wt{\mathscr P}_i$ if and only if $x^2\in
\mathscr P_i$.  Applying the first part of  Corollary~\ref{Corol1} for the  induced
partition we deduce partition regularity  results for   the set of  squares:
\begin{corollary}
 Let  $a, b,$ and $a+b$ be non-zero squares.
Then for every  partition of the squares
 into finitely many cells there exist distinct $x$ and $y$ belonging to the same cell
 such that $ax+by$ is a square.
\end{corollary}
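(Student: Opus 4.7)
The plan is to deduce this directly from the first part of Corollary~\ref{Corol1} via the induced partition described just before the statement. Since $a$, $b$, and $a+b$ are non-zero squares, and the integer $c=1$ is also a non-zero square, the first part of Corollary~\ref{Corol1} applies with this choice of $c$ and tells us that the equation $ax^2+by^2=\lambda^2$ is partition regular over $\N$.

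Now suppose we are given a partition $\mathscr P_1,\ldots,\mathscr P_r$ of the set of squares into finitely many cells. I would consider the induced partition $\wt{\mathscr P}_1,\ldots,\wt{\mathscr P}_r$ of $\N$ defined by $n\in\wt{\mathscr P}_i$ if and only if $n^2\in\mathscr P_i$; this is a finite partition of $\N$. Applying partition regularity of $ax^2+by^2=\lambda^2$ to this induced partition, I obtain an index $i$, distinct $x,y\in\wt{\mathscr P}_i$, and some $\lambda\in\N$ with $ax^2+by^2=\lambda^2$.

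Setting $X:=x^2$ and $Y:=y^2$, the definition of the induced partition gives $X,Y\in\mathscr P_i$, and $X\neq Y$ since $x\neq y$ are positive integers. Moreover $aX+bY=ax^2+by^2=\lambda^2$ is a square, which is exactly the desired conclusion. There is no genuine obstacle here beyond quoting Corollary~\ref{Corol1}; the whole argument is a one-line reduction via the squaring map, and the only thing to check is that the choice $c=1$ is legitimate (it is, since $1$ is a non-zero square), so that the hypotheses ``$a$, $b$, $a+b$ squares'' in the statement match the hypotheses ``$a,b,c,a+b$ squares'' of Corollary~\ref{Corol1}.
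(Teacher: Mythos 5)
Your proof is correct and is exactly the paper's argument: the paper also deduces this corollary from the first part of Corollary~\ref{Corol1} (with $c=1$) by passing to the partition of $\N$ induced by the squaring map. Nothing further is needed.
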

Furthermore, we get the following result:
\begin{corollary}\label{C:parregsquares}
 Let  $a, b,$ and $a+b$ be non-zero squares, with $a,b$ coprime.
Then for every  partition of the squares
 into finitely many cells there exist $m,n \in \N$ such that the integers
 $bm^2, n^2-am^2$ belong to the same cell.
\end{corollary}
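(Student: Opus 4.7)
The plan is to reduce Corollary~\ref{C:parregsquares} directly to Corollary~\ref{Corol1} applied with $c=b$; the hypotheses $a,b,c,a+b$ non-zero squares are all satisfied, since $c=b$ is a square by assumption.

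Write $b=b_0^2$ with $b_0\in\N$. Given a partition $\mathscr P_1,\dots,\mathscr P_r$ of the set of squares, I would induce a finite partition $\mathcal{R}_1,\dots,\mathcal{R}_r$ of $\N$ by
\[
\mathcal{R}_i:=\bigl\{x\in\N:(b_0 x)^2\in\mathscr P_i\bigr\}.
\]
Since $(b_0 x)^2$ is always a square, each $x\in\N$ lies in exactly one $\mathcal{R}_i$, so this is indeed a partition of $\N$.

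Applying Corollary~\ref{Corol1} to the partition $\mathcal{R}_1,\dots,\mathcal{R}_r$ and to the equation $ax^2+by^2=b\lambda^2$, one obtains distinct $x,y\in\N$ in a common cell $\mathcal{R}_i$ together with some $\lambda\in\Z$ satisfying $ax^2+by^2=b\lambda^2$; positivity of $a,b$ combined with $x,y\neq 0$ forces $\lambda\neq 0$. Setting $m:=x$ and $n:=b_0|\lambda|$, one then computes
\[
bm^2=(b_0 x)^2\in\mathscr P_i\quad\text{and}\quad n^2-am^2=b\lambda^2-ax^2=by^2=(b_0 y)^2\in\mathscr P_i,
\]
so both $bm^2$ and $n^2-am^2$ are squares lying in the same cell of the original partition, as required.

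The only non-routine step is the choice of the scaling factor $b_0$ in the definition of $\mathcal{R}_i$: it is precisely what converts the conclusion $(b_0 x)^2,(b_0 y)^2\in\mathscr P_i$ coming from partition regularity in $\mathcal{R}$ into squares of the required shape $bm^2$ and $n^2-am^2$ in the original partition $\mathscr P$. The coprimality hypothesis on $a,b$ does not appear to be needed for this reduction; presumably it is stated only to match the natural primitive setting for such Diophantine triples.
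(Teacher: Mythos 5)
Your proof is correct, and it takes a slightly but genuinely different route from the paper's. The paper reuses the induced partition of the preceding corollary ($x\in\wt{\mathscr P}_i$ iff $x^2\in\mathscr P_i$), applies Corollary~\ref{Corol1} with $c=b$ to obtain squares $X:=x^2$ and $Y:=y^2$ in a common cell with $aX+bY=b\lambda^2$, and only then invokes $(a,b)=1$: from $b\mid aX$ it deduces $b\mid X$, and since $b$ and $X$ are both squares this forces $X=bm^2$ and $Y=\lambda^2-am^2$, whence $n:=\lambda$. You instead absorb the factor $b_0=\sqrt b$ into the induced partition $\mathcal R_i=\{x\colon (b_0x)^2\in\mathscr P_i\}$, so that the required shapes $bm^2=(b_0x)^2$ and $n^2-am^2=b\lambda^2-ax^2=(b_0y)^2$ drop out of the Diophantine identity with no divisibility step at all. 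The payoff is exactly what you note at the end: the coprimality hypothesis is not needed for your reduction (it is genuinely used in the paper's, where without it $b\mid aX$ yields nothing), so your argument in fact proves the corollary without assuming $(a,b)=1$; as a small bonus you also get that the two integers $bm^2$ and $n^2-am^2$ are distinct, since $x\neq y$.
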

Indeed, as in the previous corollary, we deduce from Theorem~\ref{th:partition-regular1} that
 there exist squares $x,y$ in the same cell and $\lambda\in \N$ such that
$ax+by=b\lambda^2$. Since $(a,b)=1$  we have
 $x=bm^2$ for some $m\in \N$ and $y=\lambda^2-am^2$. The asserted conclusion  holds for  $n:=\lambda$.

\smallskip

 Although combinatorial  tools,  Fourier analysis tools, and the circle method have been used
successfully to prove partition regularity of equations that enjoy
some linearity features (also for non-linear equations with at least
  five variables \cite{BP15, Hen15a, Hen15b, Ke14, Ke15, Sm09}), we have not found such tools adequate for the fully
non-linear setup we are interested in.
    Instead, our main tool is going to be the structural result of Theorem~\ref{T:DecompositionII}.
    We give a summary of our  proof strategy   in Sections~\ref{SS:link} and \ref{SS:assuming}.

\subsubsection{Parametric reformulation}\label{SS:parametric}
In order to prove Theorem~\ref{th:partition-regular1} we exploit some
special features of  the solution sets of the equations involved given in parametric form.

\begin{definition}
We say that the integers $\ell_0,\ldots,\ell_4$
are \emph{admissible} if
$\ell_0$ is positive,
 $\ell_1\neq \ell_2$, $\ell_3\neq \ell_4$, and $\{\ell_1,\ell_2\}\neq \{\ell_3,\ell_4\}$.
\end{definition}
The following result  is proved in
Appendix~\ref{SS:AppNumberTheory}:
\begin{proposition}[Parametric form of solutions]
\label{prop:linearfactors}
Let the quadratic form $p$ satisfy the hypothesis of
Theorem~\ref{th:partition-regular1}.
Then there exist
admissible integers $\ell_0,\ldots,\ell_4$,
 such that for every $k,m,n\in \Z$,  the
integers $x:=k\ell_0 (m+\ell_1n)(m+\ell_2n)$ and $y:=
k\ell_0(m+\ell_3n)(m+\ell_4n)$ satisfy the equation $p(x,y,\lambda)=0$ for
some $\lambda\in \Z$.
\end{proposition}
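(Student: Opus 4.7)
We recast the problem geometrically. Viewing $p(x,y,\lambda)=0$ as a quadratic in $\lambda$, its discriminant is
\[
Q(x,y) := (ex+fy)^2 - 4c(ax^2+dxy+by^2) = \Delta_1 x^2 + (2ef-4cd)xy + \Delta_2 y^2 ,
\]
and $Q(1,0)=\Delta_1$, $Q(0,1)=\Delta_2$, $Q(1,1)=\Delta_3$ by direct substitution. Writing $\Delta_1=\alpha^2$, $\Delta_2=\beta^2$, $\Delta_3=\delta^2$ with positive integers $\alpha,\beta,\delta$ (possible because the $\Delta_i$ are integer squares), the plane conic $D\subset\mathbb{P}^2$ defined by $w^2=Q(x,y)$ contains the three rational points $(1{:}0{:}\alpha)$, $(0{:}1{:}\beta)$, and $(1{:}1{:}\delta)$. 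The plan is to find integer homogeneous quadratics $x(m,n)$, $y(m,n)$, $w(m,n)$ with $w^2=Q(x,y)$ such that $x$ and $y$ each split as products of two integer linear forms $m+\ell_i n$; for then $\lambda:=\bigl(-(ex+fy)\pm w\bigr)/(2c)$ solves the equation and the factor $\ell_0$ will handle integrality.

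To produce such a parametrization I apply stereographic projection to $D$ from the point $(1{:}1{:}\delta)$. Setting $y=tx$ and reducing to the affine curve $z^2=Q(1,t)$, the pencil of lines of slope $r$ through $(t,z)=(1,\delta)$ meets the curve in a second point with
\[
t \;=\; \frac{(r-\delta)^2-\alpha^2}{r^2-\beta^2}\;=\;\frac{(r-\delta-\alpha)(r-\delta+\alpha)}{(r-\beta)(r+\beta)} .
\]
Homogenizing by $r=m/n$ and clearing denominators yields the integer parametrization
\[
x(m,n) = (m-\beta n)(m+\beta n), \qquad y(m,n) = (m-(\delta+\alpha)n)(m-(\delta-\alpha)n) ,
\]
together with $w(m,n) = -\delta m^2 + (\delta^2-\alpha^2+\beta^2)mn - \delta\beta^2 n^2$, and one verifies directly that $w^2=Q(x,y)$. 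The factoring of $x$ and $y$ into linear forms over $\mathbb{Z}$ is conceptually inevitable: the parametrization $\phi\colon \mathbb{P}^1\to D$ pulls back the rational point pairs $D\cap\{x=0\}=\{(0{:}1{:}\pm\beta)\}$ and $D\cap\{y=0\}=\{(1{:}0{:}\pm\alpha)\}$ to degree-two divisors on $\mathbb{P}^1$ supported at rational points, and those points have integer coordinates because $\alpha,\beta\in\mathbb{Z}$.

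Reading off $\ell_1=-\beta$, $\ell_2=\beta$, $\ell_3=-(\delta+\alpha)$, $\ell_4=\alpha-\delta$, admissibility follows at once: $\ell_1\neq\ell_2$ reduces to $\Delta_2\neq 0$, $\ell_3\neq\ell_4$ to $\Delta_1\neq 0$, and the would-be identity $\{\ell_1,\ell_2\}=\{\ell_3,\ell_4\}$ would, on comparing the sums $0$ and $-2\delta$ of the two sets, force $\delta=0$, excluded by $\Delta_3\neq 0$. Setting $\ell_0:=2|c|$ (positive since $c\neq 0$) and defining $x := k\ell_0(m+\ell_1 n)(m+\ell_2 n)$, $y := k\ell_0(m+\ell_3 n)(m+\ell_4 n)$, the quantity $\lambda := \bigl(-(ex+fy)\pm k\ell_0\, w(m,n)\bigr)/(2c)$ lies in $\mathbb{Z}$ because $\ell_0/(2c)=\pm 1$, and satisfies $p(x,y,\lambda)=0$ for every $k,m,n\in\mathbb{Z}$. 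The main obstacle is the degenerate case, which occurs exactly when $\delta\in\{\alpha+\beta,\,\alpha-\beta,\,-\alpha+\beta,\,-\alpha-\beta\}$, equivalently when $Q$ is itself the square of a linear form in $x,y$ and the conic $D$ is singular; the stereographic parametrization then collapses, but $Q(x,y)$ is automatically a perfect square for every $(x,y)$, so one may instead pick any admissible integer quadruple (say $(\ell_1,\ell_2,\ell_3,\ell_4)=(0,1,2,3)$) and close the argument by the same choice of $\ell_0$.
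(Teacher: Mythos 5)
Your proof is correct, and it reaches the conclusion by a genuinely different route from the paper's. The paper first kills the cross terms via the substitution $p'(x,y,z):=p(2cx,2cy,z-ex-fy)$, reducing to the case $e=f=0$, and then simply exhibits $\ell_0,\dots,\ell_4$ and $x,y,z$ in closed form in terms of $\sqrt{b(a+b+d)}$, $\sqrt{a(a+b+d)}$, $\sqrt{-c(a+b+d)}$, checking the identity by direct expansion. You instead solve $p(x,y,\lambda)=0$ for $\lambda$ by the quadratic formula and parametrize the discriminant conic $w^2=Q(x,y)$ by projection from the rational point furnished by $\Delta_3$ being a square; the hypotheses on $\Delta_1$ and $\Delta_2$ are then exactly what makes the preimages of $\{x=0\}$ and $\{y=0\}$ rational, which is why $x$ and $y$ split into integer linear forms. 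This is more conceptual and explains the role of all three discriminant conditions, at the cost of the (routine) verification of $w^2=Q(x,y)$; I checked your formulas for $t(r)$, $x$, $y$, $w$, the admissibility argument, and the integrality of $\lambda$ with $\ell_0=2|c|$, and all are correct. One remark: your closing ``degenerate case'' is not actually an obstruction. The identity $w(m,n)^2=Q(x(m,n),y(m,n))$ is obtained by clearing denominators from an identity of rational functions of $r$ valid off a finite set, so it persists as a polynomial identity even when the numerator and denominator of $t(r)$ share a linear factor, and admissibility of your $\ell_i$ uses only $\alpha\beta\delta\neq 0$; hence the main construction already covers that case. The fallback you give there is nevertheless also valid (since then $Q$ is the square of an integral linear form), so nothing is lost.
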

For example,  the equation  $16x^2+9y^2=\lambda^2$ is satisfied by the
integers $x:=km(m+3n)$, $y:=k(m+n)(m-3n)$, $\lambda:=k(5m^2+9n^2+6mn)$,
 and  the equation  $x^2+y^2-
xy=\lambda^2$ is satisfied by the integers $x:=km(m+ 2n)$, $y:=
k(m-n)(m+n)$, $\lambda:=k( m^2+n^2+mn)$.

The  key properties of the patterns involved in Proposition~\ref{prop:linearfactors} are: $(a)$
they are  dilation invariant, which follows from homogeneity, $(b)$
they ``factor linearly''
which follows from our assumption that the discriminants $\Delta_1,\Delta_2$ are
squares,
 and $(c)$ the coefficient of $m$ in all forms can be taken to be $1$ which follows from our assumption that the discriminant $\Delta_3$ is a square.

Using Proposition~\ref{prop:linearfactors}, we see that
Theorem~\ref{th:partition-regular1}  is a consequence of the
following result:
\begin{theorem}[Parametric reformulation of Theorem~\ref{th:partition-regular1}]
\label{th:partition-regular2}
Let $\ell_0,\ldots,\ell_4\in \Z$ be  admissible.
Then for every partition of $\N$ into
finitely many cells, there exist $k,m,n\in\Z$ such that the integers
$k\ell_0(m+\ell_1n)(m+\ell_2n)$ and $k\ell_0(m+\ell_3n)(m+\ell_4n)$ are positive,
distinct, and belong to  the same cell.
\end{theorem}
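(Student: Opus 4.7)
The plan is to deduce Theorem~\ref{th:partition-regular2} from the structure theorem, Theorem~\ref{T:DecompositionII}, by reducing partition regularity to a positivity statement for quadratic averages of multiplicative functions.  First I would exploit the crucial dilation-invariance of the pattern in the parameter $k$: for any completely multiplicative $f\in\CM$, the product
$$
f\bigl(k\ell_0(m+\ell_1 n)(m+\ell_2 n)\bigr)\,\overline{f\bigl(k\ell_0(m+\ell_3 n)(m+\ell_4 n)\bigr)}
$$
equals $|f(k\ell_0)|^2$ times a quantity depending only on $m,n$.  Using this and a multiplicative correspondence principle that encodes a finite coloring of $\N$ by a family of completely multiplicative functions with values in roots of unity, combined with a pigeonhole in $k$ to ensure $k\ell_0 A(m,n)$ lands in a prescribed cell, I would reduce the problem to showing that, for every $f$ arising from the coloring, the bilinear average
$$
A_N(f):=\E_{m,n\in[N]}\;f\bigl((m+\ell_1 n)(m+\ell_2 n)\bigr)\,\overline{f\bigl((m+\ell_3 n)(m+\ell_4 n)\bigr)}
$$
admits a strictly positive lower bound depending only on the coloring, along infinitely many $N$.

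Next, fix a small $\ve>0$ and a rapidly growing function $F\colon\N\times\N\times\R^+\to\R^+$, and apply Theorem~\ref{T:DecompositionII} to obtain the decomposition $f_N=f_{N,\st}+f_{N,\un}+f_{N,\er}$.  Expanding $A_N(f)$ by multilinearity yields nine terms.  The terms containing an $f_{N,\er}$-factor contribute at most $O(\ve)$, by Cauchy--Schwarz together with the $L^1$-bound of item~\eqref{it:decomU34}, using that by admissibility the forms $m+\ell_i n$ are non-degenerate and equidistribute over residues modulo any bounded modulus.  The terms containing an $f_{N,\un}$-factor are controlled by a generalized von Neumann (PET-style) argument: iterated Cauchy--Schwarz estimates $A_N$ by a $U^s(\tZN)$-norm of $f_{N,\un}$ for some $s=s(\ell_0,\ldots,\ell_4)\geq 2$, which is bounded by $1/F(Q,R,\ve)$ thanks to item~\eqref{it:decomU33}.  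The remaining main term, involving only $f_{N,\st}$, collapses to an average over residues modulo $Q$ thanks to the approximate $Q$-periodicity of~\eqref{it:decompU32} and the Fourier localization of~\eqref{it:weakU32b}.

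The main obstacle is establishing a strictly positive lower bound $c(Q,R)>0$ for the structured main term, as this is what allows a suitably large choice of $F$ to force the uniform and error contributions to be strictly smaller.  Because $f_{N,\st}$ may be genuinely complex-valued (reflecting possible $n^{it}$ or Dirichlet character content of $f$), positivity is not automatic.  To overcome this I would reinstate the outer average over the dilation parameter $k$ (suppressed after the initial reduction): averaging $f_{N,\st}$ evaluated at $kA(m,n)$ against its conjugate at $kB(m,n)$ over a long range of $k$ exploits the near-multiplicativity of $f_{N,\st}$ encoded in the kernel representation~\eqref{it:decomU31}, and converts the oscillatory behavior into a non-negative quantity bounded below by a constant depending only on $Q$ and $R$.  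The admissibility conditions $\ell_1\neq\ell_2$, $\ell_3\neq\ell_4$, $\{\ell_1,\ell_2\}\neq\{\ell_3,\ell_4\}$ would be used precisely here, to rule out systematic coincidence of the two linear products modulo $Q$ and to guarantee, via a final pigeonhole on residue classes modulo $Q$, the distinctness $x\neq y$ and positivity $x,y>0$ required in the statement.
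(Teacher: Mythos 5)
Your overall architecture (a correspondence principle passing to completely multiplicative functions, then Theorem~\ref{T:DecompositionII}, then a term-by-term expansion with the structured component carrying the positivity) is the same as the paper's, but two of your steps contain genuine gaps.

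First, your reduction target is too strong. You cannot prove that $A_N(f)$ is bounded below by a positive constant \emph{for every} completely multiplicative $f$ arising from the coloring: if $f$ is aperiodic, then $A_N(f)\to 0$ (this is essentially the content of Theorem~\ref{th:chowla} and Lemma~\ref{P:semest}), so termwise positivity is simply false. The paper's correspondence is of a different nature: starting from a cell $E$ of positive \emph{multiplicative} density $\delta$, the correlation $r\mapsto\dmult(E\cap r^{-1}E)$ is positive definite on $\Q^+$, hence by Bochner's theorem equals $\widehat\nu$ for a positive measure $\nu$ on $\CCM$, and the quantity to be bounded below is the \emph{integral} against $\nu$ of the four-linear-forms average (Proposition~\ref{th:ergo2b}). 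The positivity ultimately comes from the atom $\nu(\{\one\})\geq\delta^2$ together with the positive-definiteness property $\int_\CCM f(x)\overline f(y)\,d\nu(f)\geq 0$, not from any individual $f$. Note also that the dilation parameter $k$ is consumed at this stage: it is the F\o lner-averaging variable that produces $\widehat\nu(xy^{-1})$, so it is no longer available later in the argument.

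Second, your mechanism for the structured term, and the order in which you discard the error term, do not work. Averaging over $k$ at the end has no content once the correspondence has been made, and $f_{N,\st}$, being a convolution of $f_N$ with a kernel, is not even approximately multiplicative, so there is no ``near-multiplicativity'' to exploit. The paper instead restricts $n$ to the subprogression $\{Qk\colon k\leq\eta N\}$ with $\eta=\ve/(QR)$, so that by \eqref{it:decompU32} all four arguments $m+\ell_iQk$ differ from $m$ by $O(\ve/Q)$ and the structured product collapses to $|f_{N,\st}(m)|^4$, whose $\nu$-integral is at least $\delta^2(4\ell)^{-4}$ via the atom at $\one$ (admissibility plays no role here; it is used earlier, in the reduction to the analytic statement and to guarantee $x\neq y$). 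Crucially, this restriction must be performed \emph{before} eliminating $f_{N,\er}$, and passing to the subprogression without losing positivity requires the hidden non-negativity of Lemma~\ref{L:HiddenPositivity}, which uses both $\int f(x)\overline f(y)\,d\nu\geq 0$ and the fact that $f_{N,\st}+f_{N,\er}$ is a convolution with a non-negative kernel. If you first discard the error term with an $O(\ve)$ loss on the full average, as you propose, the surviving main term after restriction has size about $\eta\delta^2=\ve\delta^2/(QR)$, which is swamped by the $O(\ve)$ error; choosing $F$ large does not repair this, since $F$ only controls the uniform component.
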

In fact, in Theorem~\ref{th:density-regular} we prove something stronger, that  any set of integers with positive multiplicative density (a notion defined in  Section~\ref{SS:dildens}) contains the
aforementioned configurations.




\subsubsection{More general patterns and higher degree equations}\label{SS:Higher}
Theorem~\ref{th:partition-regular2} is proved using the structural result of  Theorem~\ref{T:DecompositionII} for $s=3$;  using this structural result for general $s\in \N$
 we can prove, without essential changes in our argument, the following strengthening:

\begin{theorem}
\label{th:partition-regular3}
Let $s\geq 2$. For $i=1,2$ and $j=1,\ldots, s$, let $L_{i,j}(m,n)$ be  linear forms with integer coefficients. Suppose that for $i=1,2$, the linear forms $L_{i,j}$, $j=1,\ldots,s$, are  pairwise independent and that
 the product of the coefficients of $m$ in the forms $L_{1,j}$ and
in the forms $L_{2,j}$ are equal and non-zero.
Then for every partition of $\N$ into
finitely many cells, there exist $k, m,n\in\Z$ such that the integers
$
k\prod_{j=1}^s L_{1,j}(m,n)$  and  $k\prod_{j=1}^sL_{2,j}(m,n)$  are positive, distinct,  and belong to  the same cell.
\end{theorem}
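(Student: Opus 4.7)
The plan is to follow the argument for Theorem~\ref{th:partition-regular2} essentially verbatim, replacing its appeal to the $U^3$ structure theorem by Theorem~\ref{T:DecompositionII} invoked at a uniformity degree $t=t(s)$ chosen large enough to handle products of $s$ linear forms; no essentially new idea is required. First I would use the multiplicative correspondence principle to reduce partition regularity to a multiplicative density statement, and then to producing, for every completely multiplicative $f\in\CM$ associated to a set of positive multiplicative density (in the sense of Section~\ref{SS:dildens}), a positive lower bound for
$$
\liminf_{N\to\infty}\ \E_{k\in[K]}\E_{m,n\in[N]}\,f\Bigl(k\prod_{j=1}^s L_{1,j}(m,n)\Bigr)\,\overline{f\Bigl(k\prod_{j=1}^s L_{2,j}(m,n)\Bigr)}.
$$
Complete multiplicativity factors $|f(k)|^2$ out of the inner average, so it is enough to lower bound
$$
A_N(f):=\E_{m,n\in[N]}\,f\Bigl(\prod_{j=1}^s L_{1,j}(m,n)\Bigr)\,\overline{f\Bigl(\prod_{j=1}^s L_{2,j}(m,n)\Bigr)}.
$$

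Next, I would apply Theorem~\ref{T:DecompositionII} to the truncation $f_N$ to obtain $f_N=f_{N,\st}+f_{N,\un}+f_{N,\er}$ and expand $A_N(f)$ into nine cross terms. The terms containing $f_{N,\er}$ contribute $O(\ve)$ by property~(v) of the structure theorem (using an auxiliary equidistribution of $\prod_j L_{i,j}(m,n)\bmod\tN$ to pass from averaging over $(m,n)$ to averaging over residue classes, and integrating against a measure $\nu$ adapted to the combinatorial setup). The terms containing $f_{N,\un}$ are bounded by $C_s\,\norm{f_{N,\un}}_{U^t(\tZN)}^{c_s}$ for some $t=t(s)\in\N$ and constants $C_s,c_s>0$ via a generalized von Neumann inequality: an iterated Cauchy--Schwarz in the variables $m$ and $n$ that exploits the pairwise independence of the $L_{i,j}$ on each side eventually isolates a Gowers norm of $f_{N,\un}$. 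Choosing the function $F$ in Theorem~\ref{T:DecompositionII} to grow sufficiently rapidly in its first two arguments makes the total contribution of these terms negligible compared to the structured main term.

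For the main term, which involves only $f_{N,\st}$, I would approximate $f_{N,\st}$ by a genuinely $Q$-periodic function at negligible cost, using that the period $Q=Q(s,\ve,F)$ is independent of $N$ and $f$. The crucial algebraic hypothesis, namely that the product of the $m$-coefficients of the $L_{1,j}$ equals that of the $L_{2,j}$ and is non-zero, ensures that along diagonal residue classes such as $n\equiv 0\pmod Q$ the two products $\prod_j L_{1,j}(m,n)$ and $\prod_j L_{2,j}(m,n)$ fall in the same residue class modulo $Q$. Consequently the periodic main term contains a subsum comparable to $\E_{r\in\Z_Q}|f_{N,\st}(r)|^2$, which is bounded below by a positive quantity controlled by the multiplicative density of the underlying set. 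Averaging over the dilation parameter $k$ rules out degenerate solutions with $x=y$ and completes the argument.

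The hardest step will be establishing the generalized von Neumann estimate with an explicit and controlled uniformity degree $t=t(s)$. With products of $s$ pairwise independent linear forms on each side the naive complexity of the system is higher than in the quadratic case treated in Theorem~\ref{th:partition-regular2}, and the Cauchy--Schwarz scheme must be arranged carefully both to avoid blowing up the required Gowers norm degree and to ensure that the final norm falls on $f_{N,\un}$ rather than on $f_N$ itself. Once $t(s)$ is identified and this estimate is verified, feeding $t$ into Theorem~\ref{T:DecompositionII} and rerunning the argument of Theorem~\ref{th:partition-regular2} yields the result.
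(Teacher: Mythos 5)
Your overall architecture does match the paper's (correspondence principle, Theorem~\ref{T:DecompositionII} at a higher uniformity degree, a von Neumann estimate for the uniform piece, approximate periodicity for the structured piece; the paper uses $U^{2s-1}$), but there are two genuine gaps.

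First, you decompose the composite functions $f\bigl(\prod_j L_{i,j}(m,n)\bigr)$ directly --- this is what ``nine cross terms'' means --- and then invoke a generalized von Neumann inequality. No such inequality exists for a bounded function evaluated at a degree-$s$ polynomial of $(m,n)$: the iterated Cauchy--Schwarz scheme isolates a Gowers norm only when the expression is a product $\prod_{i,j} g_{i,j}(L_{i,j}(m,n))$ of functions of the \emph{individual} linear forms. You must first use complete multiplicativity to write $f\bigl(\prod_j L_{i,j}\bigr)=\prod_j f(L_{i,j})$, reduce (using the hypothesis that the products of the $m$-coefficients agree, via the substitution $n\mapsto\ell_0 n$ and dilation invariance) to the case where every form is $m+\lambda n$, and only then decompose each of the $2s$ factors; this is how the paper generalizes Proposition~\ref{th:ergo2b}, and the uniform piece is then handled by Lemma~\ref{L:UnifromityEstimates2} with the $U^{2s-1}(\tZN)$-norm (a routine step, not the hardest one). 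The same confusion infects your main term: the two products lying in the same class mod $Q$ gives nothing, because $f_{N,\st}$ is only approximately $Q$-periodic with increment $R/\tN$ per step of $Q$, and $\prod_j L_{1,j}(m,n)$ and $\prod_j L_{2,j}(m,n)$ are typically of order $N^s$ apart. What works is that, after the reduction, each individual form satisfies $f_{N,\st}(m+\lambda Qk)\approx f_{N,\st}(m)$ for $k\leq\eta N$ with $\eta:=\ve/(QR)$, producing the positive quantity $\E_m|f_{N,\st}(m)|^{2s}$.

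Second, your order of operations is wrong, and this is not cosmetic. You discard the $f_{N,\er}$ cross terms first, at cost $O(\ve)$ over the full range of $(m,n)$, and only afterwards pass to the sub-progression $n=Qk$. The structured main term you then obtain is of size about $\eta\delta^2\sim\ve\delta^2/(QR)$, which is swamped by the earlier $O(\ve)$ loss because $Q$ and $R$ depend on $\ve$ in an uncontrolled way. The paper restricts $n$ to the sub-progression \emph{before} eliminating $f_{N,\er}$, so that the $O(\ve)$ loss is incurred inside the restricted average and is compared with $\delta^2/(4\ell)^4$ at the same scale; this step requires the hidden non-negativity of Lemma~\ref{L:HiddenPositivity}, which is precisely why Theorem~\ref{T:DecompositionII} insists that both $f_{N,\st}$ and $f_{N,\st}+f_{N,\er}$ be convolutions of $f_N$ with kernels. (Minor: degenerate solutions with equal products are removed by a counting argument over $(m,n)$, not by averaging over $k$.)
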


%
Theorem~\ref{th:partition-regular3} can be used  to show that several  homogeneous equations in three variables of degree  greater than two are partition regular.
Unfortunately, we have no general criterion like  the one given in Theorem~\ref{th:partition-regular1} and Corollary~\ref{Corol1}.\footnote{Note that a celebrated  result of Faltings~\cite{Fa83} implies that for $d\geq 4$ the equation $ax^d+by^d=cz^d$,  has
finitely many coprime solutions. This implies that such equations cannot be partition regular.} We record here one  example of degree three that we found with the help of N.~Tzanakis and some computer software\footnote{\texttt{http://www.wolframalpha.com}.} (examples of higher degree equations in three variables can also be found). Let
 $$
p(x,y,z):=2x^3-2x^2y+17x^2z-4xyz+44xz^2-y^2z+36z^3.
$$
The equation $p(x,y,\lambda)=0$ is satisfied for
$$
x:=km(2m+n)(m-n),\quad  y:=k(m+n) (2m-n)(m+2n), \quad  \lambda:=km^2n
$$
for every $k,m,n\in \Z$. It follows from
Theorem~\ref{th:partition-regular3} that the equation $p(x,y,\lambda)=0$ is partition regular.

When one considers four  or more variables there is even more flexibility. For example   N.~Tzanakis brought to our attention the following
identity of G\'erardin:
$$
(m^2-n^2)^4 + (2mn+m^2)^4 + (2mn+n^2)^4 = 2(m^2+mn+n^2)^4.
$$
Using Theorem~\ref{th:partition-regular2}, we deduce that for every partition of $\N$ into
finitely many cells, there exist distinct $x,y$ belonging to the same cell and $\lambda,\mu\in \N$ such  that $x^4+y^4=2\lambda ^4-\mu^4$.
As in Corollary~\ref{C:parregsquares}, we deduce that
 for every  partition of the fourth powers
 into finitely many cells, there exist $m,n,r \in \N$ such that the integers
 $m^4$ and  $2n^4-m^4-r^4$ belong to the same cell.

\subsubsection{From partition regularity to multiplicative functions}\label{SS:link}
Much like the translation invariant case, where partition regularity
results can be deduced from corresponding density statements with
respect to a translation invariant density, we deduce
Theorem~\ref{th:partition-regular2}
  from the density regularity result of Theorem~\ref{th:density-regular} that
involves a dilation invariant density (a notion defined in
Section~\ref{SS:dildens}).


In Section~\ref{subsec:Bochner} we use a well known   integral representation  result of Bochner
 that characterizes positive definite sequences on the group $\Q^+$ in order to recast
the density regularity statement as     a
 positivity property  for an integral of averages of products of multiplicative functions
 (see Theorem~\ref{th:ergo2b}).
 It is  this positivity property that we seek to prove,
and the heavy-lifting is done by the structural result of  Theorem~\ref{T:DecompositionII}
for $s=3$. The proof of the analytic statement of
Theorem~\ref{th:ergo2b} is completed in Section~\ref{SS:assuming} and
the reader will  find there
 a detailed sketch
of the proof strategy for this crucial step. We remark that although we do not make explicit use of ergodic theory
anywhere in this argument, ideas from the ergodic theoretic proof, given by Furstenberg~\cite{Fu77},  of  S\'ark\"ozy's theorem~\cite{Sa78} have guided the last part of our argument.

\subsubsection{Further directions}
Theorem~\ref{th:partition-regular1} implies that the equation
\begin{equation}\label{eq:basic1}
ax^2+by^2=c\lambda^2
\end{equation}
 is partition regular
 provided that all three integers
  $ac, bc$, $(a+b)c$,  are non-zero squares.
Two interesting
cases, not covered by the previous result,  are the following:
\begin{problem}\label{prob1}
Are the equations $x^2+y^2=\lambda^2$ and $x^2+y^2=2\lambda^2$ partition
regular?\footnote{Note that the equation $x^2+y^2=3\lambda^2$ does not have solutions in $\N$.
Furthermore, the equation $x^2+y^2=5\lambda^2$  has solutions in $\N$ but it is not
partition regular. Indeed, if we partition the integers in $6$ cells
according to whether their first non-zero digit in the $7$-adic
expansion is $1,2,\ldots, 6$, it turns out  that for every
$\lambda\in \N$ the equation has no solution on any single partition
cell.}
\end{problem}
 Let us explain why we cannot yet handle these equations using the methods
 of this article. The equation
$x^2+y^2=2z^2$ has the following solutions: $x:=k(m^2-n^2+2mn)$,
$y:=k(m^2-n^2-2mn)$, $z:=k(m^2+n^2)$, where $k,m,n\in \Z$. The values
of $x$ and $y$ do not factor in linear terms and uniformity estimates
analogous to the ones stated in Lemma~\ref{L:UnifromityEstimates2} fail.
The equation $x^2+y^2=z^2$ has the following solutions:
$x:=k(m^2-n^2)$, $y:=2kmn$, $z:=k(m^2+n^2)$. In this case, it is
possible to  establish the needed uniformity estimates but we are
not able to carry out the argument of Section~\ref{SS:assuming}
 in order to prove the relevant positivity property (see footnote~\ref{foot3} in Section~\ref{SS:assuming}
 below for more details).

 A set $E\subset\N$ has \emph{positive
\emph{(additive)} upper density} if $\limsup_{N\to+\infty}|E\cap [N]|/N>0$.
It turns out that the equations of Corollary~\ref{Corol1}
 have non-trivial solutions on
every
 infinite arithmetic progression, making the following statement  plausible:
\begin{problem}
Does every set $E\subset \N$ with positive upper density contain distinct
$x,y\in \N$ that satisfy  the equation $16x^2+9y^2=\lambda^2$ for some $\lambda\in\N$?
\end{problem}
We say that the equation $p(x,y,z)=0$, $p\in \Z[x,y,z]$,  has
\emph{no local obstructions} if for every infinite arithmetic progression $P$, there exist distinct $x,y,z\in P$ that satisfy the equation. For example, the equations $x^2+y^2=2z^2$ and $16x^2+9y^2=25z^2$
 have no local obstructions.
\begin{problem}
Let $p\in \Z[x,y,z]$ be
 a homogeneous quadratic form and suppose that the equation $p(x,y,z)=0$ has no local obstructions. Is it true that
every subset  of $\N$ of positive upper density contains distinct $x,y,z$ that satisfy the equation?
\end{problem}
See \cite{GR12} for information regarding the  density regularity of the equation $x^2+y^2=2z^2$.

The theorem of S\'arkozy~\cite{Sa78} implies that for every finite partition
of the integers some cell contains integers of the form $m, m+n^2$. What can we say about $m$? Can it be a square?
\begin{problem}
Is it true that for  every  partition of the integers  into finitely many cells one cell contains
integers of the form $m^2$ and $m^2+n^2$?
\end{problem}
We remark that the answer will be positive  if one shows  that the equation $x^2-y^2=\lambda^2$ is partition regular.

\subsection{Structure of the article} In Section~\ref{S:U^2} we study the Fourier coefficients of multiplicative functions. Our basic tool
is the  orthogonality criterion of K\'atai and
we establish the structural result of Theorem~\ref{th:Decomposition-U2}
which is a more informative version of Theorem~\ref{T:DecompositionSimple} for $s=2$.

In Sections~\ref{sec:Nil} and \ref{sec:equidistribution}
 we review some facts about nilmanifolds and  state some  results
 that are instrumental for our subsequent work; the inverse theorem for the $U^s$-norms (Theorem~\ref{th:inverse}),  the quantitative Leibman theorem (Theorem~\ref{th:Leibman}), and
  the factorization theorem for polynomial sequences on nilmanifolds (Theorem~\ref{th:FactoGT}).
 We also derive some consequences that will be used later on.

  Sections~\ref{S:minorarcs1} and \ref{S:minorarcs2} are  in some sense the heart of the proof of our  structural results. Our main
 result is Theorem~\ref{th:discorrelation} where
 we prove that an arbitrary multiplicative function has small correlation  with all minor arc nilsequences.
 After proving some preparatory results in Section~\ref{S:minorarcs1} we complete  the proof of Theorem~\ref{th:discorrelation}  in Section~\ref{S:minorarcs2}.

 In Section~\ref{sec:decompUs} we prove our main structural results.  Theorem~\ref{T:DecompositionI}  is a more informative version of Theorem~\ref{T:DecompositionSimple} and
  we deduce
 Theorem~\ref{T:DecompositionII} using an iterative argument of energy increment.

 In Section~\ref{sec:aperiodic} we deal with two applications of the structural result
of  Theorem~\ref{T:DecompositionI}
 to aperiodic multiplicative functions.
 The first is  Theorem~\ref{th:aperiod_uniform} which states that
 a multiplicative function is aperiodic if and only if it is $U^s$-uniform for every $s\geq 2$.
 The second is Theorem~\ref{th:chowla} which  provides a class of homogeneous polynomials in two variables
 for which  Chowla's zero mean conjecture holds for every aperiodic completely  multiplicative function.

 Finally, in Section~\ref{S:Recurrence} we use the structural result of Theorem~\ref{T:DecompositionII}
  to prove our main partition regularity result for homogeneous quadratic equations stated in Theorem~\ref{th:partition-regular1}. 

\subsection{Notation and conventions}
 For reader's convenience, we gather here some notation that we use throughout the article.

\smallskip
 \noindent  We denote by $\N$ the set of positive integers.

 \smallskip

\noindent For $N\in \N$ we denote by $[N]$ the set $\{1,\ldots,
N\}$.

\smallskip

\noindent For a function $a$  defined on a finite set $A$ we write $
\E_{x\in A}a(x)=\frac 1{|A|}\sum_{x\in A}a(x). $

\smallskip

\noindent With $\CM$ we denote the set of multiplicative functions
$ f \colon \N\to \C$ with modulus at most $1$, and with  $\CM_1^c$ the set of completely multiplicative functions
$ f \colon \N\to \C$ with modulus exactly $1$.

\smallskip

\noindent A  kernel on $\Z_N$  is a non-negative function on $\Z_N$
with average $1$.

\smallskip

\noindent  Throughout, we assume that we are given an
integer $\ell\geq 2$, its value depends on the problem at hand,  and we leave the dependence on $\ell$
of the various parameters implicit.

\smallskip

\noindent For  $N\in \N$ we let $\tN$ be any prime with $N\leq \tN\leq\ell N$. In some cases we  specify the  value of $\tN$ and its precise dependence on $N$ depends on the application we have in mind.

\smallskip

\noindent Given $ f \in \CM$ and $N\in \N$ we let $ f_ N\colon
[\tN]\to \C$ be defined by $ f_ N= f  \cdot \one_{[N]} $. The domain
of $ f_ N$ is sometimes  thought to be $\Z_\tN$.

\smallskip  \noindent For technical reasons, throughout the
article all Fourier analysis happens on $\Z_\tN$   and all
uniformity norms are defined on $\Z_\tN$.

\smallskip
\noindent If $x$ is a real, $\e(x)$ denotes the number $e^{2\pi i
x}$, $\norm x$ denotes the distance between $x$ and the nearest
integer, $\lfloor x\rfloor$ the largest integer smaller or equal than $x$,
and $\lceil x\rceil$ the smallest integer greater or equal than $x$.

\smallskip

\noindent Given $k\in \N$ we write $\bh=(h_1,\dots,h_k)$ for a point
of $\Z^k$ and $\norm\bh=|h_1|+\dots+|h_k|$.

 \smallskip

\noindent  For
$\bu=(u_1,\dots,u_k)\in\T^k$, we write $\norm \bu=\norm{u_1}+\dots+\norm{u_k}$ and for $\bh\in\Z^k$ we write
$\bh\cdot\bu=h_1u_1+\dots+h_ku_k$.

\smallskip

\noindent If $\Phi$ is a  function on a metric space $X$ with distance
$d$, we let
$$
\norm \Phi_{\lip(X)}=\sup_{x\in X}|\Phi(x)|+\sup_{\substack{x,y\in X\\
x\neq y}}\frac{|\Phi(x)-\Phi(y)|}{d(x,y)}.
$$

\smallskip

\noindent There is a proliferation of constants in this article and
our general principles are as follows: The constants
$\ell, \ell_1,\ell_2,\ldots,$ are considered as fixed throughout the
article, and quantities depending only on these numbers are
considered as universal constants.
Quantities that depend on one or more variables are denoted by Roman
capital letters $C, D, K, L,\ldots $ if they represent large quantities,
and by low case Greek letters $\delta, \ve, \eta, \tau, \ldots$ and  $c$ if they
represent small quantities. It will be very clear from the context
when we deviate from these rules.

\section{Fourier analysis of multiplicative functions}
\label{S:U^2}
In this section  we study the Fourier coefficients of multiplicative
functions. Our   goal is to establish Theorem~\ref{th:Decomposition-U2}, that
proves the case $s=2$ of the  decomposition result
of Theorem~\ref{T:DecompositionSimple} and gives more precise information on  the
structured and uniform components.
We will  use this result in Section~\ref{sec:decompUs} as our starting point in the
 proof of the more general structure theorem  for the $U^s$-norm for $s\geq 2$.

 We recall some notation and conventions. The  integer $\ell\geq 2$ is considered as fixed throughout and we never indicate the dependence on this number.
For every $N\in \N$,    we denote by $\wt N$  a  prime with $N\leq \tN\leq\ell N$. For every $ f \in\CM$, we write $ f_ N= f \cdot  \one_{[N]}$, and we consider this as a function  defined on $\tZN$.
 Henceforth, all  convolution products are defined   on $\tZN$ and
 the Fourier coefficients of $ f_ N$ are given by
$$
\widehat{ f_ N}(\xi):=\E_{n\in\tZN} f_ N(n)\, \e(-n \xi/\wt N)
\quad \text{ for } \ \xi\in\tZN.
$$

\subsection{The K\'atai orthogonality criterion}
\label{subsec:katai} We start with a key number theoretic input
that we need in this section and which will also be used later in Sections~\ref{S:minorarcs2} and \ref{sec:aperiodic}.

\begin{lemma}[K\'atai  orthogonality criterion~\cite{K86}, see also  \cite{D74, MV77}]
\label{lem:katai} For every $\ve>0$ 
there exists
$\delta:=\delta(\ve)>0$ and $K:=K(\ve)$ such that the
following holds: If $N\geq K$  and $a\colon [N]\to \C$ is a function with
$|a|\leq 1$ and
$$
\max_{\substack{p,q\text{ \rm primes}\\1<p<q<K}}\bigl|\E_{n\in[\lfloor
N/q\rfloor]} a(pn)\, \overline{a}(qn)\bigr|< \delta,
$$
 then
$$
\sup_{ f \in\CM}\bigl|\E_{n\in[N]} f (n)\, a(n)\bigr|<\ve.
$$
\end{lemma}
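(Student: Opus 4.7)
The plan is to follow the Daboussi--Delange--K\'atai approach: combine a Tur\'an--Kubilius variance estimate with a Cauchy--Schwarz maneuver that converts the pairwise correlation hypothesis on $a$ into a bound on $T:=\sum_{n\leq N}f(n)\,a(n)$. Given $\ve>0$, I would fix $K_0\leq K$ (both depending only on $\ve$) and let $\mathcal{P}$ denote the primes in $[K_0,K]$, with $\alpha:=\sum_{p\in\mathcal{P}}1/p$ and $\omega_\mathcal{P}(n):=|\{p\in\mathcal{P}:p\mid n\}|$. By Mertens' theorem, $\alpha$ can be made as large as desired by taking $K$ large relative to $K_0$, while $|\mathcal{P}|\leq K$ remains under control.

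The Tur\'an--Kubilius inequality $\sum_{n\leq N}|\omega_\mathcal{P}(n)-\alpha|^2\ll N\alpha$ combined with Cauchy--Schwarz in $n$ yields $|\alpha T-V|\ll N\sqrt\alpha$, where $V:=\sum_n\omega_\mathcal{P}(n)f(n)a(n)=\sum_{p\in\mathcal{P}}\sum_{m\leq N/p}f(pm)a(pm)$. Using $f(pm)=f(p)f(m)$ when $(m,p)=1$ and absorbing the remaining terms with $p\mid m$ into an error $O(N\sum_p p^{-2})=O(N/K_0)$, we reduce to estimating
\[
\Sigma:=\sum_{p\in\mathcal{P}}f(p)\sum_{m\leq N/p}f(m)\,a(pm)=\sum_{n\leq N}f(n)\,R(n),\quad R(n):=\sum_{\substack{p\in\mathcal{P}\\p\leq N/n}}f(p)\,a(pn).
\]
The crucial move is to apply Cauchy--Schwarz on the outer $n$-variable; expanding $|R(n)|^2$ and swapping the order of summation yields
\[
|\Sigma|^2\leq N\sum_{n\leq N}|R(n)|^2=N\sum_{p,q\in\mathcal{P}}f(p)\overline{f(q)}\sum_{n\leq N/\max(p,q)}a(pn)\,\overline{a(qn)}.
\]
The diagonal $p=q$ contributes at most $N\sum_{p\in\mathcal{P}}N/p\leq N^2\alpha$; for each pair $p<q$ in $\mathcal{P}$ the inner sum is precisely the correlation controlled by the hypothesis, namely of modulus $<\delta N/q$. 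Summing over ordered pairs gives off-diagonal contribution at most $2\delta N|\mathcal{P}|\alpha$, whence $|\Sigma|^2\leq N^2\alpha(1+2\delta|\mathcal{P}|)$. Combining with the earlier estimates,
\[
\frac{|T|}N\leq\sqrt{\frac{1+2\delta|\mathcal{P}|}\alpha}+\frac C{\sqrt\alpha}+\frac C{\alpha K_0},
\]
which is less than $\ve$ provided one first chooses $K_0,K$ large enough that $\alpha\geq 16C^2/\ve^2$, and then $\delta=\delta(\ve)$ small enough that $2\delta|\mathcal{P}|\leq\alpha\ve^2/4$.

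The principal obstacle is the placement of the Cauchy--Schwarz step: a naive Cauchy--Schwarz applied directly to $\sum_p f(p)V_p$ produces only same-prime self-correlations $\sum_n|a(pn)|^2$, which the hypothesis does not constrain. The K\'atai trick is first to reverse the order of summation in $\Sigma$ and only then to apply Cauchy--Schwarz, so that expanding $|R(n)|^2$ delivers the cross-prime correlations $a(pn)\overline{a(qn)}$ with $p\neq q$ that the hypothesis is tailored to bound. A secondary subtlety is to drop the $(m,p)=1$ restriction before the Cauchy--Schwarz step rather than after, since the naturally arising correction $\sum_{p\neq q}N/(pq)$ would otherwise contribute $O(N\alpha^2)$ inside the square and overwhelm the main term.
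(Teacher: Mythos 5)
Your proof is correct and follows essentially the same route as the paper's: the paper proves this lemma as a special case of its K\'atai estimate for the rings $\Z[\tau_d]$ (Lemmas~\ref{lem:Tur_Ku} and~\ref{lem:KataiZd}), which uses exactly the same skeleton — Tur\'an--Kubilius plus Cauchy--Schwarz to pass from $\CA\cdot S$ to the weighted sum, removal of the coprimality restriction with an $O(x)$ error, reversal of summation, and then Cauchy--Schwarz in the outer variable so that expanding the square produces the cross-prime correlations, split into diagonal and off-diagonal terms. Your remarks on the placement of the Cauchy--Schwarz step and on dropping the coprimality condition beforehand accurately identify the points where the argument could otherwise fail.
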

\begin{remark}
The result is stated and proved in \cite{K86} for functions $a\colon [N]\to \C$  of modulus $1$, but the same
argument works for sequences with modulus at most $1$.
\end{remark}

The dependence of $\delta$ and $K$ on $\ve$ can be made
explicit (for good bounds see \cite{BSZ12}) but we do not need such extra information
here.
We give a complete proof of  Lemma~\ref{lem:katai} in a more general context in Section~\ref{sec:aperiodic}
(see Lemma~\ref{lem:KataiZd}).

%

\subsection{Fourier coefficients of multiplicative functions}
Next, we use the orthogonality criterion of K\'atai in order to prove that
the Fourier coefficients of the restriction of a multiplicative
function on an interval $[N]$ are small unless the frequency is
close to a rational with small denominator. Furthermore, the
implicit constants do not depend on the multiplicative function or
the integer $N$.

\begin{corollary}[$U^2$ non-uniformity]
\label{cor:katai}
For every $\theta>0$ there exist positive integers $N_0:=N_0(\theta)$,
$Q:=Q(\theta)$,  and $V:=V(\theta)$, such that for every $N\geq N_0$, for every $ f \in\CM$, and
every $\xi\in\tZN$, we have the following implication
\begin{equation}
\label{eq:Fourier_chi} \text{if }\
|\widehat{ f_ N}(\xi)|\geq\theta,\quad  \text{ then }\quad
\Bigl\Vert\frac{Q\xi}{\wt N}\Bigr\Vert\leq \frac{QV}{\wt N}.
\end{equation}
\end{corollary}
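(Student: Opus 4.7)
The plan is to apply K\'atai's orthogonality criterion (Lemma~\ref{lem:katai}) to the function $a(n) := \e(-n\xi/\tN)$, which is bounded by $1$, and then analyze the resulting constraint on $\xi$ via classical estimates for geometric sums.

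First, I would unfold the definition of the Fourier coefficient and note that, since $\tN \leq \ell N$,
$$
\Bigl|\E_{n\in[N]} f(n)\, \e(-n\xi/\tN)\Bigr|
= \frac{\tN}{N}\,|\widehat{f_N}(\xi)| \geq \theta.
$$
Hence, applying the contrapositive of Lemma~\ref{lem:katai} with $\ve:=\theta$, there exist $\delta:=\delta(\theta)>0$ and $K:=K(\theta)$ (the latter independent of $N$ and $f$), and two primes $1<p<q<K$, such that, provided $N$ is large enough,
$$
\Bigl|\E_{n\in[\lfloor N/q\rfloor]} \e(-pn\xi/\tN)\,\overline{\e(-qn\xi/\tN)}\Bigr|
= \Bigl|\E_{n\in[\lfloor N/q\rfloor]} \e\bigl((q-p)n\xi/\tN\bigr)\Bigr|\geq \delta.
$$

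Next, I would invoke the elementary bound $|\sum_{n=1}^M \e(\alpha n)|\leq \min(M, 1/(2\norm\alpha))$. Writing $M:=\lfloor N/q\rfloor$ and $\alpha:=(q-p)\xi/\tN$, the lower bound above forces
$$
\norm{(q-p)\xi/\tN}\leq \frac{1}{2M\delta}\leq \frac{q}{N\delta}
$$
(for $N$ large enough that $M\geq N/(2q)$).

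Finally, to pass from $q-p$ to a denominator that does not depend on $f$, $\xi$, or $N$, I would set
$$
Q:=\operatorname{lcm}\{q-p : 1<p<q<K \text{ primes}\},
$$
which is a positive integer depending only on $\theta$. Using $\norm{m\beta}\leq |m|\cdot \norm\beta$ for integer $m$ with $\beta=(q-p)\xi/\tN$ and $m=Q/(q-p)$, I obtain
$$
\Bigl\Vert\frac{Q\xi}{\tN}\Bigr\Vert \leq \frac{Q}{q-p}\cdot \frac{q}{N\delta}\leq \frac{QK}{N\delta}\leq \frac{QV}{\tN},
$$
where I choose $V:=\lceil K\ell/\delta\rceil$, using $\tN\leq \ell N$. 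This is the desired conclusion, and $N_0(\theta)$ is absorbed into the requirement that $N$ be large enough to apply Lemma~\ref{lem:katai} and to justify the bound $M\geq N/(2q)$.

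The only mildly delicate point is bookkeeping: one must confirm that all the constants $Q$, $V$, and $N_0$ depend on $\theta$ alone (and on the fixed parameter $\ell$), not on $f$, $N$, or $\xi$. This is automatic from the uniformity statement of K\'atai's criterion, so no further ideas are needed.
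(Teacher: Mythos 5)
Your proof is correct and takes essentially the same route as the paper's: both apply K\'atai's criterion to the phase $\e(-n\xi/\tN)$, bound the resulting geometric sum by $1/(2M\norm{\alpha})$, and remove the dependence on the particular pair of primes by passing to a common multiple of the differences. The only differences are cosmetic — the paper takes $Q:=K!$ and argues by contradiction, while you take the lcm of the differences and argue via the contrapositive (and your justification of the first display should cite $\tN\geq N$ rather than $\tN\leq\ell N$, a trivial slip).
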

\begin{proof}
Let  $\delta:=\delta(\theta)$ and  $K:=K(\theta)$ be defined  by
Lemma~\ref{lem:katai} and let $Q:=K!$. Suppose that $N>K$. Let  $p$
and $p'$ be primes with $p<p'\leq K$ and  let $\xi\in\tZN$.
  If
$\xi=0$ the conclusion is obvious, otherwise, since $\tN$ is a prime greater than $K$ we have
$\norm{Q \xi/\wt N}\neq 0$.
Since $Q$
is a multiple of $p'-p$ we have
$$
0<\norm{Q\xi/\wt N} \leq \frac{Q}{p'-p} \norm{(p'-p)\xi/ \tN} \leq
Q\,\norm{(p'-p)\xi/\wt N}.
$$
Since $\wt{N}\leq \ell N$, we deduce that
 $$
|\E_{n\in[\lfloor N/p'\rfloor]}\,\e(p'n\xi/ \wt{N}) \, \e(-pn\xi/
\wt{N})| \leq \frac {2p'} {N\norm {(p'-p)\xi/\wt{N}}} \leq\frac
{2KQ}{N\norm{Q\xi/\wt N}} \leq\frac{2\, \ell KQ}{\wt N\norm{Q\xi/\wt
N}}.
$$
Let $V:=2\, \ell K/\delta$. If  $\norm{Q\xi/\wt{N}} > QV/ \wt{N}$,
then  the rightmost  term of the last inequality   is smaller than
$\delta$, and thus,
 by Lemma~\ref{lem:katai}  we have
$$
|\widehat{ f_ N}(\xi)|=
\bigl|\E_{n\in[\wt{N}]}\, f_ N(n)\e(-n\xi/\wt{N})\bigr| =
\frac{N}{\wt{N}}\bigl|\E_{n\in
[N]} f (n)\e(-n\xi/\wt{N})\bigr|<\theta
$$
contradicting \eqref{eq:Fourier_chi}. Hence, $\norm{ Q\xi /\wt
N}\leq QV/\wt N$, completing the proof.
 \end{proof}

\subsection{Definition of  kernels}
\label{subsec:kernels}
We recall that a \emph{kernel}
on  $\Z_{\wt{N}}$ is a non-negative function $\phi$ on $\Z_{\wt{N}}$
with $\E_{n\in\Z_{\wt{N}}}\phi(n)=1$. The \emph{spectrum} of a function $\phi\colon \Z_{\wt N}\to \C$
is the set
$$
\spec(\phi):=\bigl\{\xi\in\tZN\colon \widehat\phi(\xi)\neq 0\bigr\}.
$$

Next, we make some explicit choices for the constants $Q$ and $V$
   of Corollary~\ref{cor:katai}. This will enable us to compare the Fourier transforms of the
   kernels $\phi_{N,\theta}$ defined below for different values of $\theta$ and to establish the monotonicity Property~\eqref{eq:phi-increases} in Theorem~\ref{th:Decomposition-U2} below.
For every $\theta>0$ let $N_0(\theta)$ be as in Corollary~\ref{cor:katai}. For $N\geq N_0(\theta)$ we define
\begin{gather}
\notag \CA(N,\theta):=\Bigl\{\xi\in\tZN\colon \exists f \in\CM
\ \text{ such that } \ |\widehat{ f_ N}(\xi)|\geq\theta^2\Bigr\};\\
\notag W(N,q,\theta):= \max_{\xi\in \CA(N,\theta)}\Big\{ \wt{N}\Bigl\Vert q\,\frac{\xi}{\wt{N}}\Bigr\Vert\Big\};\\
\label{def:Qtheta} Q(\theta):=\min_{k\in\N}\Bigl\{k!\colon\ \limsup_{N\to+\infty} W(N,k!,\theta)<+\infty\Bigr\} ;\\
\label{def:D(theta)}V(\theta):=1+\Bigl\lceil\frac
1{Q(\theta)}\,\limsup_{N\to+\infty}  W(N,Q(\theta),\theta)
\Bigr\rceil.
\end{gather}
It follows from  Corollary~\ref{cor:katai} that the set of integers
used in the definition of $Q(\theta)$ is non-empty, hence
$Q(\theta)$ is well defined.
It follows from  the preceding
definitions that  there exists $N_1:=N_1(\theta)$ such that
\begin{align}\label{E:star}
& \text{Implication }\eqref{eq:Fourier_chi} \text{ holds for } N\geq N_1 \text{ with }\theta^2
\text{ substituted for } \theta, \  V(\theta) \text{ for } V,\\ & \notag \text{and } Q(\theta) \text{ for }Q.
\end{align}
Furthermore, for $0<\theta'\leq\theta$, we
have $Q(\theta')\geq Q(\theta)$,  and thus
\begin{equation}
\label{eq:Q-multliple} \text{for }\ 0<\theta'\leq\theta,\ \text{ the
integer }\  Q(\theta')\ \text{ is a multiple of }\ Q(\theta).
\end{equation}
Moreover, it can be checked that
\begin{equation}
\label{eq:c-devreases} V(\theta)\   \text{ increases as } \ \theta \ \text{ decreases}.
\end{equation}

Next, we use the constants just defined  to build the
kernels $\phi_{N,\theta}$ of
Theorem~\ref{th:Decomposition-U2} below.

For every $m\in \N$ and $\tN>2m$ the ``Fejer kernel'' $\phi_{N,m}$ on
$\Z_\tN$ is defined by
$$
\phi_{N,m}(x):=\sum_{-m\leq \xi\leq
m}\bigl(1-\frac{|\xi|}m\bigr)\,\e\bigl(x\,\frac{\xi}{\tN}\bigr)
$$
where  the interval $\{-m,\ldots, m\}$ is imbedded in $\Z_{\tN}$ in the obvious way.
The spectrum  of $f_{N,m}$ is the subset   $\{-m+1,\ldots,m-1\}$ of $\tZN$. Let
$Q_N(\theta)^*$ be the inverse of $Q(\theta)$ in $\Z_\tN$, that is,
the unique integer in $\{1,\dots,\tN-1\}$ such that
$Q(\theta)Q_N(\theta)^*=1 \bmod \tN$. Let
\begin{equation}
\label{eq:defN0U2}
N_0:=N_0(\theta)=\max\{N_1,2Q(\theta)V(\theta)\lceil \theta^{-2}\rceil\}.
\end{equation}
 For $N\geq N_0$
 we define
\begin{equation}
\label{eq:def-phi} \phi_{N,\theta}(x):=f_{N,
Q(\theta)V(\theta)\lceil\theta^{-4}\rceil}(Q_N(\theta)^*x).
\end{equation}
An equivalent formulation is that $f_{N,
Q(\theta)V(\theta)\lceil\theta^{-4}\rceil}(x)=\phi_{N,\theta}(Q(\theta)x)$.
The
spectrum  of the kernel $\phi_{N,\theta}$ is the set
\begin{equation}
\label{eq:def-Xi}
\Xi_{N,\theta}:=\Big\{\xi\in \tZN\colon
\Bigl\Vert\frac{Q(\theta) \xi}\tN\Bigr\Vert<
\frac{Q(\theta)V(\theta)\lceil\theta^{-4}\rceil}\tN\Big\},
\end{equation}
and we have
\begin{equation}
\label{eq:fourier-phi}
 \widehat{\phi_{N,\theta}}(\xi) =\begin{cases}
\displaystyle 1-\Bigl\Vert \frac{Q(\theta)\xi}\tN\Bigr\Vert\, \frac
\tN{ Q(\theta)V(\theta)\lceil\theta^{-4}\rceil}
&\ \  \text{if }\  \xi\in\Xi_{N,\theta} \  ;\\
0 & \ \ \text{otherwise.}
\end{cases}
\end{equation}
Note that the cardinality of  $\Xi_{N,\theta}$    is bounded by a constant that depends only on $\theta$.

\subsection{$U^2$-structure theorem for multiplicative functions}
We can now state and prove the main result of this section.
\begin{theorem}[$U^2$-structure theorem for multiplicative functions]
\label{th:Decomposition-U2}
 Let
$\theta>0$. There exist positive  integers $N_0:=N_0(\theta)$,   $Q:=Q(\theta)$,
$R:=R(\theta)$, such that for $N\geq N_0$ the following holds: Let
   the kernel $\phi_{N,\theta}$ be
defined in Section~\ref{subsec:kernels}, and  for  every $ f \in\CM$ let
$$
 f_ {N,\st}:= f_ N*\phi_{N,\theta}\ \ \text{ and }\ \
 f_ {N,\un}:= f_ N- f_ {N,\st}.
$$
Then   we have
\begin{enumerate}
\item
\label{it:SpectrumU2}
 If $\xi \in \Z_\tN$ satisfies $\widehat{ f }_{N,\st}(\xi)\neq 0$, then $\displaystyle \big|\frac{\xi}{\tN}-\frac{p}{Q}\big|\leq \frac{R}{\tN}$ for some   $p\in \{0,\ldots Q-1\}$;
\item
\label{it:decomU22} $\displaystyle
| f_ {N,\st}(n+Q)- f_ {N,\st}(n)|\leq \frac{R}{\tN}$ for every $n\in\tZN$,
where  $n+Q$ is taken \!\!\!$\mod\tN$;
\item
\label{it:decomU23}
 $\displaystyle\norm{ f_ {N,\un}}_{U^2(\tZN)}\leq\theta$.
\end{enumerate}
Moreover, if $0<\theta'\leq\theta$ and $N\geq\max\{N_0(\theta),N_0(\theta')\}$, then
\begin{equation}
\label{eq:phi-increases}
\text{  for every }\xi\in\tZN, \ \widehat{\phi_{N,\theta'}}(\xi)\geq
\widehat{\phi_{N,\theta}}(\xi)\geq  0.
\end{equation}
\end{theorem}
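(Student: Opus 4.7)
The plan is to verify each property by direct Fourier-analytic computation, using the factorization $\widehat{f_{N,\st}}(\xi) = \widehat{f_N}(\xi)\, \widehat{\phi_{N,\theta}}(\xi)$, the explicit form of $\widehat{\phi_{N,\theta}}$ recorded in \eqref{eq:fourier-phi}, and the non-uniformity result of Corollary~\ref{cor:katai} (as summarized in \eqref{E:star}). Throughout I will take $Q := Q(\theta)$, let $R$ depend on $Q(\theta), V(\theta)$ and a polynomial in $\theta^{-1}$, and take $N_0$ as in \eqref{eq:defN0U2}.

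Property~\eqref{it:SpectrumU2} is essentially immediate from the construction: since $f_{N,\st} = f_N * \phi_{N,\theta}$, the spectrum of $f_{N,\st}$ is contained in $\spec(\phi_{N,\theta}) = \Xi_{N,\theta}$, and by definition \eqref{eq:def-Xi} every $\xi \in \Xi_{N,\theta}$ satisfies $\norm{Q\xi/\tN} < QV(\theta)\lceil \theta^{-4}\rceil/\tN$, which is exactly the stated rational-approximation property with $R := V(\theta)\lceil \theta^{-4}\rceil$. Property~\eqref{it:decomU22} then follows from Fourier inversion: writing
$$
f_{N,\st}(n+Q) - f_{N,\st}(n) = \sum_{\xi \in \Xi_{N,\theta}} \widehat{f_{N,\st}}(\xi)\bigl(\e(Q\xi/\tN)-1\bigr)\e(n\xi/\tN),
$$
and bounding $|\e(Q\xi/\tN)-1| \leq 2\pi \norm{Q\xi/\tN}$, $|\widehat{f_{N,\st}}(\xi)| \leq 1$, and using that $|\Xi_{N,\theta}|$ is bounded in terms of $\theta$, gives the pointwise bound $\lesssim R/\tN$ after absorbing constants into $R$.

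The main work lies in Property~\eqref{it:decomU23}. By \eqref{eq:U2Fourier},
$$
\norm{f_{N,\un}}_{U^2(\tZN)}^{\,4} = \sum_{\xi\in\tZN} |\widehat{f_N}(\xi)|^{4}\,\bigl|1-\widehat{\phi_{N,\theta}}(\xi)\bigr|^{4}.
$$
Split the sum according to the threshold $|\widehat{f_N}(\xi)| \lessgtr \theta^2$. For the low-frequency part, $|\widehat{f_N}(\xi)|^4 \leq \theta^4 |\widehat{f_N}(\xi)|^2$ combined with $|1-\widehat{\phi_{N,\theta}}| \leq 1$ and Parseval bounds this contribution by $\theta^4$. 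For the high-frequency part, \eqref{E:star} applied with $\theta^2$ in place of $\theta$ gives $\norm{Q\xi/\tN} \leq QV(\theta)/\tN$; substituting into \eqref{eq:fourier-phi} yields $1-\widehat{\phi_{N,\theta}}(\xi) \leq 1/\lceil \theta^{-4}\rceil \leq \theta^4$, so the high-frequency contribution is at most $\theta^{16}\sum_\xi |\widehat{f_N}(\xi)|^4 \leq \theta^{16}$. Summing the two parts gives a bound of the desired form; the precise constant in the $\theta^{-4}$-factor of the definition \eqref{eq:def-phi} is chosen exactly so that the two pieces fit together and yield $\norm{f_{N,\un}}_{U^2(\tZN)} \leq \theta$. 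This matching of scales between the Fejer kernel width and the Kátai threshold is the delicate point of the argument.

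Finally, for the monotonicity \eqref{eq:phi-increases}: by \eqref{eq:Q-multliple} we may write $Q(\theta') = kQ(\theta)$, so $\norm{Q(\theta')\xi/\tN} \leq k\norm{Q(\theta)\xi/\tN}$, whence $\Xi_{N,\theta} \subseteq \Xi_{N,\theta'}$. For $\xi \in \Xi_{N,\theta}$, inserting this inequality into the formula \eqref{eq:fourier-phi} for $\widehat{\phi_{N,\theta'}}(\xi)$, together with $V(\theta')\lceil (\theta')^{-4}\rceil \geq V(\theta)\lceil \theta^{-4}\rceil$ (which follows from \eqref{eq:c-devreases} and $\theta' \leq \theta$), yields $\widehat{\phi_{N,\theta'}}(\xi) \geq \widehat{\phi_{N,\theta}}(\xi) \geq 0$; for $\xi \notin \Xi_{N,\theta}$ the right side is $0$ and there is nothing to check. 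The hardest conceptual step of the whole theorem remains the high-frequency case of Property~\eqref{it:decomU23}, where the Kátai criterion provides the crucial number-theoretic input that distinguishes multiplicative functions from arbitrary bounded sequences.
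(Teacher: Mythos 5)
Your proof is correct and follows essentially the same route as the paper's: Properties~\eqref{it:SpectrumU2} and \eqref{it:decomU22} from the support of $\widehat{\phi_{N,\theta}}$ and Fourier inversion, Property~\eqref{it:decomU23} from the dichotomy $|\widehat{f_N}(\xi)|\lessgtr\theta^2$ combined with Corollary~\ref{cor:katai} (via \eqref{E:star}), the formula \eqref{eq:fourier-phi}, and Parseval, and the monotonicity \eqref{eq:phi-increases} from \eqref{eq:Q-multliple} and \eqref{eq:c-devreases}. One bookkeeping slip: adding your two regimes yields $\sum_\xi|\widehat{f_{N,\un}}(\xi)|^4\leq\theta^4+\theta^{16}$, hence only $\norm{f_{N,\un}}_{U^2(\tZN)}\leq(\theta^4+\theta^{16})^{1/4}$, which is strictly larger than $\theta$; the paper avoids this by first observing that $|\widehat{f_{N,\un}}(\xi)|\leq\theta^2$ uniformly in $\xi$ (in the high-frequency regime because $1-\widehat{\phi_{N,\theta}}(\xi)\leq\theta^4$, in the low-frequency regime because $|\widehat{f_N}(\xi)|\leq\theta^2$ and $|1-\widehat{\phi_{N,\theta}}(\xi)|\leq 1$) and only then applying $\sum_\xi|\widehat{f_{N,\un}}(\xi)|^4\leq\bigl(\sup_\xi|\widehat{f_{N,\un}}(\xi)|\bigr)^2\sum_\xi|\widehat{f_{N,\un}}(\xi)|^2\leq\theta^4$ once, which gives exactly the stated bound.
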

\begin{remarks} (1) The monotonicity Property~\eqref{eq:phi-increases} plays a central
role in the derivation of Theorem~\ref{T:DecompositionII} from
Theorem~\ref{T:DecompositionI} in
Section~\ref{subsec:proof_strong}. This  is  one of the reasons why
we construct the kernels $\phi_{N,\theta}$  explicitly in
Section~\ref{subsec:kernels}.

(2) The
 values of  $Q$ and $R$ given by  Theorem~\ref{th:Decomposition-U2}
  will be  used later in Section~\ref{sec:decompUs}, and  they do not coincide
   with the values of $Q$ and $R$ in
   Theorems~\ref{T:DecompositionII} and \ref{T:DecompositionI}.
\end{remarks}
\begin{proof}
We first show that \eqref{eq:phi-increases}  holds. Indeed, suppose
that $\theta\geq\theta'>0$ and that  $N\geq\max\{N_0(\theta),N_0(\theta')\}$. We have
to show that $\widehat{\phi_{N,\theta'}}(\xi)\geq
\widehat{\phi_{N,\theta}}(\xi)$ for every $\xi\in \Z_{\wt N}$. Using
\eqref{eq:Q-multliple} and  \eqref{eq:c-devreases}, we get that
$\Xi_{N,\theta'}$ contains the set $\Xi_{N,\theta}$. Thus,  we can
assume that $\xi$ belongs to the latter set as the estimate is
obvious otherwise. In this case, the claim follows
from~\eqref{eq:Q-multliple}, \eqref{eq:c-devreases} and the
formula~\eqref{eq:fourier-phi} giving the Fourier coefficients of
$\phi_{N,\theta}$.

Next, we show the remaining  assertions~\eqref{it:SpectrumU2}, \eqref{it:decomU22},
\eqref{it:decomU23}  of  the statement.
Let  $\theta>0$.
Let
$Q:=Q(\theta)$, $V:=V(\theta)$, $N_0(\theta)$
  be defined by
\eqref{def:Qtheta},  \eqref{def:D(theta)}, \eqref{eq:defN0U2} respectively. Suppose that
$N\geq N_0(\theta)$ and let
$\phi_{N,\theta}$ and  $\Xi_{N,\theta}$ be defined by \eqref{eq:def-phi} and
\eqref{eq:def-Xi} respectively.

If for some $ f \in\CM$ and $\xi\in\Z_\tN$ we have $\widehat{ f_ {N,\st}}(\xi)\neq 0$, then
$\widehat{\phi_{N,\theta}}(\xi)\neq 0$ and $\xi$ belongs to the set $\Xi_{N,\theta}$ defined
by~\eqref{eq:def-Xi}. Hence, Property~\eqref{it:SpectrumU2} holds, for some constant $R$ depending only on $\theta$.

Moreover, for
$ f \in\CM$ and $n\in\Z_\tN$, using the Fourier inversion formula
and the estimate $|\e(x)-1|\leq 2\pi  \norm{x}$, we get
$$
|(\phi_{N,\theta}* f_ N)(n+Q)-(\phi_{N,\theta}* f_ N)(n)|\leq
2\pi\sum_{\xi\in\tZN}|\widehat{\phi_{N,\theta}}(\xi)|\cdot\Bigl\Vert
Q\frac\xi \tN\Bigr\Vert\leq 2\pi \frac{|\Xi_{N,\theta}|
QV\lceil\theta^{-4}\rceil}{\tN},
$$
where the last estimate follows from \eqref{eq:def-Xi}. The last
term in this inequality is bounded by $R/\wt N$ for some constant
$R$ that depends only on $\theta$. This establishes
Property~\eqref{it:decomU22}.

Lastly,   since $N\geq N_0(\theta)\geq N_1(\theta)$, by \eqref{E:star}  we have that
for every $ f \in\CM$,  if $|\widehat{ f_ N}(\xi)|\geq\theta^2$,
then  $\norm{Q\xi/\tN}\leq QV/\tN$
and thus
 $\widehat {\phi_{N,\theta}}(\xi)\geq 1-\theta^4$ by~\eqref{eq:fourier-phi}.
 It follows that
 $|\widehat{ f_ N}(\xi)-\widehat{\phi_{N,\theta}* f_ N}(\xi)|$ $\leq
 \theta^4\leq\theta^2$.
 This last bound is clearly also true when $|\widehat{ f_ N}(\xi)| <\theta^2$ and thus,
using identity \eqref{eq:U2Fourier},  we get
\begin{multline*}
\norm{ f_ N-\phi_{N,\theta}* f_ N}_{U^2(\Z_\tN)}^4
=\sum_{\xi\in\tZN}
  |\widehat{ f_ N}(\xi)-\widehat{\phi_{N,\theta}* f_ N}
    (\xi)|^4\leq \\
  \theta^4
\sum_{\xi\in\tZN}|\widehat{ f_ N}(\xi)-\widehat{\phi_{N,\theta}
   * f_ N}(\xi)|^2
\leq  \theta^4 \sum_{\xi\in\tZN}|\widehat{ f_ N}(\xi)|^2\leq
   \theta^4,
\end{multline*}
where the last estimate follows from Parseval's identity. Hence,
$\norm{ f_ N-\phi_{N,\theta}* f_ N}_{U^2(\Z_\tN)}\leq\theta$,
proving Property~\eqref{it:decomU23} and completing the proof of the theorem.
\end{proof}


\subsection{A model structure theorem}
Before we enter the proof of  the $U^s$-structure theorem for $s\geq 3$
we sketch the proof of   a toy model that can serve as a guide for the much  more complicated argument that comes later on.

\begin{proposition}[Model structure theorem for multiplicative functions]
\label{prop:baby}
Let $\ve>0$. There exists $\theta:=\theta(\ve)$ such that for every
 sufficiently large $N\in\N$, depending only on  $\ve$, the decomposition $ f_ N= f_ {N,\st}+ f_ {N,\un}$ associated to $\theta$ by Theorem~\ref{th:Decomposition-U2} satisfies Properties~\eqref{it:SpectrumU2}, \eqref{it:decomU22} of this theorem, and also
\begin{equation}
\label{eq:disc-square'}
\sup_{ f \in\CM,\ \alpha\in\R}\bigl|\E_{n\in[N]} f_ {N,\un}(n)\, \e(n^2\alpha)\bigr|\leq\ve.
 \end{equation}
\end{proposition}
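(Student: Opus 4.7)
My approach combines K\'atai's orthogonality criterion (Lemma~\ref{lem:katai}) with the strong spectral control on $f_{N,\un}$ that already appears in the proof of Theorem~\ref{th:Decomposition-U2}: the explicit construction of $\phi_{N,\theta}$ in Section~\ref{subsec:kernels} in fact yields the pointwise Fourier bound $\sup_{\xi\in\tZN}|\widehat{f_{N,\un}}(\xi)|\leq\theta^2$, which is sharper than the $U^2$-estimate recorded in Property~\eqref{it:decomU23}. Properties~\eqref{it:SpectrumU2} and \eqref{it:decomU22} are inherited from Theorem~\ref{th:Decomposition-U2} with the same $\theta$, so the only new task is to verify \eqref{eq:disc-square'}.

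I would split $\R$ into a minor-arc part (where $\alpha$ is Diophantine enough that $(p^2-q^2)\alpha$ is badly approximable by rationals with bounded denominator for some primes $p<q<K(\ve)$) and a major-arc part (where $\alpha=A/Q'+\beta$ with $Q'\leq Q_0(\ve)$ and $|\beta|\leq 1/(Q'N^2)$). On the minor arc, Weyl's inequality gives $|\E_{n\in[\lfloor N/q\rfloor]}\e((p^2-q^2)n^2\alpha)|<\delta(\ve)$, so Lemma~\ref{lem:katai} applied with $a(n):=\e(n^2\alpha)$ yields $\sup_{f\in\CM}|\E_{n\in[N]}f(n)\e(n^2\alpha)|\leq\ve/3$. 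The parallel contribution from $f_{N,\st}$, which is a linear combination of boundedly many pure frequencies in $\Xi_{N,\theta}$, is dispatched by applying Weyl's inequality separately to each sum $\E_{n\in[N]}\e(n\xi/\tN+n^2\alpha)$.

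On a major arc I would factor $\e(n^2\alpha)=\e(n^2 A/Q')\,\e(n^2\beta)$, expand the periodic factor as a Gauss-sum combination of the $Q'$ linear phases $\e(nj/Q')$, $0\leq j<Q'$ (with Gauss coefficients of modulus $\leq\sqrt{Q'}$), and absorb the slowly-varying $\e(n^2\beta)$ by summation by parts. The problem then reduces to bounding $\E_{n\in[N]}f_{N,\un}(n)\e(nj/Q')$. Choosing $\theta$ so small that $Q(\theta)$ is a multiple of $Q_0(\ve)!$ forces every $j/Q'$ to lie within $O(1/\tN)$ of some $\xi/\tN$ with $\xi\in\spec(\phi_{N,\theta})$, and the spectral bound $|\widehat{f_{N,\un}}(\xi)|\leq\theta^2$ together with a discretization error of order $1/\tN$ gives the required estimate, for $\theta$ small enough depending only on $\ve$ and $N$ large enough depending on $\ve$.

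The hardest step will be the major-arc book-keeping: reconciling the continuous rational frequencies $j/Q'$ with the discrete spectrum on $\tZN$ (since $\tN$ is prime and $Q(\theta)$ does not in general divide $\tN$), and propagating the modulation $\e(n^2\beta)$ through the argument uniformly in $\alpha$. This is precisely where the toy model foreshadows the general $U^s$-structure theorem: in the full proof, the pair ``rational phase plus slow modulation'' is replaced by a totally equidistributed polynomial nilsequence, and the book-keeping is carried out via the Green--Tao factorization theorem for polynomial sequences on nilmanifolds rather than by the elementary Fourier-plus-Gauss-sum analysis used here.
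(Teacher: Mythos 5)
Your overall architecture --- K\'atai plus Weyl to show that a large quadratic correlation forces $\alpha$ into a major arc, then a major-arc analysis exploiting the smallness of $f_{N,\un}$ against structured phases --- is the same as the paper's, and your observation that the construction in Section~\ref{subsec:kernels} actually yields the pointwise bound $\sup_{\xi}|\widehat{f_{N,\un}}(\xi)|\leq\theta^2$ is correct. But two steps have genuine gaps. The first is a circularity in the minor-arc treatment of $f_{N,\st}$: you expand $f_{N,\st}$ into the $|\Xi_{N,\theta}|$ pure frequencies of its spectrum and apply Weyl to each, so the minor-arc Weyl saving must beat $\ve/|\Xi_{N,\theta}|$, forcing the major-arc cutoff $Q_0$ to depend on $\theta$ (since $|\Xi_{N,\theta}|\to\infty$ as $\theta\to 0$); yet your major-arc step chooses $\theta$ so that $Q_0!\mid Q(\theta)$, i.e.\ only after $Q_0$ is fixed. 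The paper avoids ever touching $f_{N,\st}$ on the minor arc: it writes $f_{N,\un}=f_N*\psi_{N,\theta}$ with $\E_{n\in\tZN}|\psi_{N,\theta}(n)|\leq 2$, unfolds the convolution, and applies Lemma~\ref{lem:katai} to $f$ against the shifted phases $\e((n+k)^2\alpha)$, whose K\'atai hypothesis again involves only the quadratic coefficient $(p^2-p'^2)\alpha$ and hence only $\ve$-dependent data.

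The second gap is the major-arc mechanism itself. Replacing the frequency $j/Q'$ by the nearest $\xi/\tN$ perturbs the phase at $n$ by $n(j/Q'-\xi/\tN)$, which over $n\in[N]$ has size up to $N/(2\tN)=O(1)$, not $O(1/\tN)$; and repairing this via the pointwise bound $|\widehat{f_{N,\un}}|\leq\theta^2$ after restricting to a subinterval $[M]$ costs the factor $\norm{\widehat{\one_{[M]}}}_{\ell^1(\tZN)}$, which is of order $\log\tN$ and hence unbounded. The correct tool is the $U^2$ bound $\norm{f_{N,\un}}_{U^2(\tZN)}\leq\theta$ together with Lemma~\ref{lem:Us-intels} (or Lemma~\ref{lem:U2ent}), which controls the correlation of $f_{N,\un}$ with the indicator of an arbitrary progression by $C\theta$ with an absolute constant and no logarithmic loss. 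This is exactly how the paper closes: on the major arc it writes $n^2\alpha=\epsilon(n)+\gamma(n)$ with $\gamma(n)=n^2p/q$ periodic and $\epsilon$ slowly varying, partitions $[\tN]$ into progressions on which both are essentially constant, deduces $|\E_{n\in[\tN]}\one_P(n)f_{N,\un}(n)|\geq c\,\ve^2/(QR)$ for some progression $P$, and obtains a contradiction by setting $\theta:=\ve^2/(CQR)$ at the very end, with $Q,R$ depending only on $\ve$. Replacing your spectral-discretization step by this $U^2$/progression argument, and the minor-arc frequency expansion by the convolution unfolding, turns your sketch into a correct proof.
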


\begin{proof}[Proof (Sketch)] Let $\ve>0$ and $N\in \N$ be sufficiently large.
Let $\alpha\in\R$ and $ f \in\CM$ and  suppose that
\begin{equation}
\label{eq:dis-square2}
\bigl|\E_{n\in[N]} f_ {N,\un}(n)\, \e(n^2\alpha)\bigr|\geq\ve.
\end{equation}
\subsubsection*{Minor arcs}
Recall that $ f_ {N,\st}= f_ N*\phi_{N,\theta}$ where $\phi_{N,\theta}$ is a kernel on $\Z_\tN$ and the convolution is taken on $\Z_\tN$. Therefore, we have $ f_ {N,\un}= f_ N*\psi_{N,\theta}$ where the function $\psi_{N,\theta}$ satisfies $\E_{n\in\Z_\tN}|\psi_{N,\theta}|\leq 2$. Taking in account  the roundabout effects,   and using that $\tN\leq \ell N$ we deduce that there exists $k\in \Z$ with
$$
\bigl|\E_{n\in[\tN]}\one_{[N]}(n+k)\,  f_ N(n)\, \e((n+k)^2\alpha)\bigr|\geq\ve/(4\ell).
$$
Let $K$ and $\delta$ be given by Lemma~\ref{lem:katai} with $\ve/(4\ell)$ substituted for $\ve$.
From this lemma and the last estimate it follows that  there exist  $k\in \Z$ and   primes $p,p'$ with $p<p'<K$ such that
$$
\bigl|\E_{n\in [\tN]} \one_I(n)\, \e((p^2-p'^2)n^2\alpha +2(p-p')kn\alpha)\bigr|\geq\delta
$$
where $I$ is the interval  $I:=\{n\in [N]\colon pn, p'n, pn+k,p'n+k\in [N]\}$ (its length is necessarily  greater than  $\delta N$).
 We interpret this formula by saying that
 $$
 ((p^2-p'^2)n^2\alpha +2(p-p')kn\alpha)_{n\in[\wt N]} \ \text{ is not ``well'' equidistributed on the torus}.
 $$
 Using Weyl-type results (see for example \cite[Proposition 4.3]{GT12a}) we get that  there exist positive integers $Q:=Q(\ve), R:=R(\ve)$ such that
\begin{equation}\label{E:major}
\Bigl|\alpha -\frac pq\Bigr|\leq \frac R{\tN^2}\
\text{ for some }\  p\in\Z\
\text{ and  some $q$ with } \
1\leq q\leq Q.
\end{equation}
In other words, $\alpha$ belongs to a ``major arc'', that is,  it is close to a rational with a small denominator.
\subsubsection*{Major arcs} We factorize the sequence  $(n^2\alpha)_{n\in [\tN]}$ as follows
$$
n^2\alpha=\epsilon(n)+\gamma(n), \quad \text{ where }  \quad \epsilon(n):=n^2\big(\alpha-\frac{p}{q}\big), \ \ \gamma(n):=n^2\frac{p}{q}.
$$
The sequence $\epsilon$ varies slowly (this follows from \eqref{E:major}) and the sequence $\gamma$ has period $q$. After
partitioning the interval $[\tN]$ into sub-progressions where
 $\epsilon(n)$ is almost constant and $\gamma(n)$ is constant,  it is not hard to deduce
from \eqref{eq:dis-square2} that
$$
|\E_{n\in [\tN]}\one_P(n)\cdot  f_ {N,\un}(n)|>
\frac{1}{10}\frac{\ve^2}{QR}
$$
for some arithmetic progression $P\subset[\tN]$. From this
and Lemma~\ref{lem:Us-intels} we deduce that
$$
\norm{ f_ {N,\un}}_{U^2(\Z_\tN)}> \frac{1}{C} \frac{\ve^2}{QR}
=:\theta(\ve)
$$
where $C$ is a positive universal constant. This contradicts Property~\eqref{it:decomU23} of  Theorem~\ref{th:Decomposition-U2}
and completes the
proof.
\end{proof}
Our next goal is for every $s\geq 2$ to replace the estimate in \eqref{eq:disc-square'} with the estimate
 $\displaystyle\norm{ f_ {N,\un}}_{U^s(\tZN)}\leq\ve$. To this end, we  shall see (using the inverse theorem in \cite{GTZ12c}) that it
 suffices to get a strengthening of  \eqref{eq:disc-square'}
 where
the place of
 $(\e(n^2\alpha))$ takes
any $s$-step nilsequence $(\Phi(a^n\cdot e_X))$ where $\Phi$ is a
function
 on an $s$-step nilmanifold $G/\Gamma$ with Lipschitz norm at most $1$ and $a\in G$.
 This is an immensely more difficult task and  it is carried out in the next five sections.

\section{Nilmanifolds and the inverse theorem for the $U^s$-norms}
\label{sec:Nil}

In this section, we review some basic concepts on nilmanifolds and also record the inverse theorem
 for the $U^s$-norms. As most notions will be used subsequently to state theorems from~\cite{GT12a}
 we follow the notation used in~\cite{GT12a}.
  Mal'cev basis were introduced in~\cite{Mal} and  proofs of foundational properties of Mal'cev basis and rational subgroups used in this article  can be found in \cite{Co82}.
\subsection{Basic definitions}
\label{subsec:nilmanifolds}
 Let  $G$ be a connected, simply connected, $s$-step nilpotent Lie group and $\Gamma$ be a discrete co-compact subgroup.
 The \emph{commutator subgroups} $G_i$ of $G$ are defined by
$G_0=G_1:=G$ and $G_{i+1}:=[G,G_i]$ for $i\in \N$. We have $G_{s+1}=\{1_G\}$.

 The compact manifold $X:=G/\Gamma$ is called an \emph{ $s$-step nilmanifold}.
 In some cases  the degree of nilpotency does not play a particular role and we refer to $X$ as  a \emph{nilmanifold}.

 We view elements of $G/\Gamma$ as ``points'' on the
nilmanifold $X$ rather than equivalence classes, and denote  them by
$x,y,$ etc. The projection in $X$ of the unit element $1_G$ of $G$ is called the \emph{base point} of $X$ and is denoted by $e_X$.
 The action of $G$ on $X$ is denoted by
$(g,x)\mapsto g\cdot x$. The \emph{Haar measure} $m_X$ of $X$ is the unique probability measure on $X$ that is invariant under this action.

\begin{convention}
We never consider ``nude'' nilmanifolds, but assume (often implicitly) that some supplementary structure is given. First, every nilmanifold $X$ can be represented as a quotient $G/\Gamma$ in several ways, but we assume that one of them is fixed.
Moreover, we assume that $G$ is endowed with a \emph{rational filtration},   a \emph{Mal'cev basis} for $X$ adapted to the filtration, and the corresponding Riemannian metric. We  define these objects next.
\end{convention}

\begin{definition}[\cite{GT12a}]
Let $G$ be a connected, simply connected $s$-step nilpotent
Lie group, and let $\Gamma$ be a discrete co-compact subgroup of $G$.
A \emph{rational subgroup} of $G$ is a connected, simply connected, closed subgroup $G'$ of $G$ such that $G'\cap \Gamma$ is co-compact in $G'$.
\end{definition}
It is known that the commutator subgroups $G_i$ are rational (see for example \cite[theorem~5.1.1 and Corollary~5.2.2]{Co82}).
More properties of rational subgroups are given in Appendix~\ref{ap:A}.
\begin{definition}[\cite{GT12a}]
A \emph{filtration}
$G_\bullet$ on $G$ is a sequence of rational subgroups
$$G_\bullet:=\bigl\{ G=G^{(0)}=G^{(1)}\supset G^{(2)}\supset\dots\supset G^{(t)}\supset G^{(t+1)}=\{1_G\}=G^{(t+2)}=\cdots\bigr\}
$$
which has the property that $[G^{(i)},G^{(j)}]\subset G^{(i+j)}$ for all integers
$i,j\geq 0$.
The \emph{degree} of the filtration $G_\bullet$ is the smallest integer $t$ such that $G^{(t+1)}=\{1_G\}$. The filtration is \emph{rational} if the groups $G^{(i)}$ are rational.

The \emph{natural filtration} is the \emph{lower central series} that consists  of the commutator subgroups $G_i$, $i\geq 0$,  of $G$.
 It is a rational filtration and has degree $s$ when $G$ is $s$-step nilpotent.

\end{definition}
 Let $G^{(i)}$, $i\geq 0$ be a filtration. We remark that as  $[G,G^{(i)}]\subset G^{(i)}$,  we have that $G^{(i)}$ is a normal subgroup of $G$ for $i\in \N$.  Since
$ G^{(2)}\supset G_2$, the quotient group $G/G^{(2)}$ is Abelian and isomorphic to $\R^q$ for some $q\geq 0$.

\begin{definition}
Let $X:=G/\Gamma$ be an $s$-step nilmanifold  and $G_\bullet$ be a filtration. We let $m:=\dim(G)$ and
$m_i:=\dim(G^{(i)})$ for $i\geq 0$.
A basis $\CX:=\{\xi_1,\dots,\xi_m\}$ for the Lie algebra $\mathfrak g$ of $G$ over $\R$ is called a
\emph{Mal'cev basis for $X$} adapted to $G_\bullet$ if the following conditions hold:
\begin{enumerate}
\item
 For each $j=0,\dots,m-1$, $\mathfrak h_j:=\Span(\xi_{j+1},\dots,\xi_m)$ is a Lie
algebra ideal in $\mathfrak g$, and hence $H_j:=\exp(\mathfrak h_j)$  is a normal Lie subgroup of $G$;
\item
For every $0\leq i\leq s$ we have $G^{(i)}=H_{m-m_i}$;
\item
\label{it:mal3}
 Each $g\in G$ can be written uniquely as
 $\exp(t_1\xi_1)\exp(t_2\xi_2)\cdots\exp(t_m\xi_m)$ for $t_1,\dots,t_m\in\R$;
\item
\label{it:MalcevGamma}
$\Gamma$ consists precisely of those elements which, when written in the above form,
have all $t_i\in\Z$.
\end{enumerate}
\end{definition}
 It follows from~\eqref{it:mal3} that the map
$$
 (t_1,\dots,t_m)\mapsto\exp(t_1\xi_1)\cdots \exp(t_m\xi_m)
$$
is a diffeomorphism from $\R^m$ onto $G$;
the numbers $t_1,\dots,t_m$ associated to an element $g\in G$ in this way are called the \emph{coordinates of $g$ in the basis $\CX$}.

  It can be shown that there exists a Mal'cev basis adapted to any   rational filtration $G_\bullet$; see the remark following Proposition~2.1 in \cite{GT12a} which is based on \cite[Proposition 5.3.2]{Co82}   and ultimately on~\cite{Mal}.

\subsection{The metric on $G$ and on $X$}
Let $\mathfrak g$ be endowed with the Euclidean structure making  the Mal'cev basis $\CX$  an orthonormal basis.
 This induces a  Riemannian structure on $G$ that is invariant under right translations. The group  $G$ is endowed with the corresponding geodesic distance, which we denote by  $d_G$. This distance is invariant under right translations\footnote{We remark that in \cite{GT12a} the authors use a different metric, but it is equivalent with $d_G$, and the implied constant depends only on $X$ and the choice of the Mal'cev basis,  so this does not make any difference for us.}.

Let the space $X:=G/\Gamma$ be endowed with the quotient metric
$d_X$. Writing $p\colon G\to X$ for the quotient map, the metric
$d_X$ is defined by
$$
d_X(x,y)=\inf_{g,h\in G}\{d_G(g,h)\colon p(g)=x,\ p(h)=y\}.
$$
Since  $\Gamma$ is discrete it follows that the infimum
is attained.

For $k\in \N$ and $\Phi\in\CC^k(X)$,  $\norm \Phi_{\CC^k(X)}$ denotes the usual $\CC^k$-norm.
We frequently  use
the fact that if $\Phi$ belongs to $\CC^1(X)$, then $\norm
\Phi_{\lip(X)}\leq\norm \Phi_{\CC^1(X)}$. We also use  some simple facts that follow immediately from the smoothness of the multiplication $G\times G\to G$.

\begin{lemma}
\label{lemp:ap1} Let $F$ be a bounded subset  of $G$. There exists
a constant $C>0$ such that
\begin{enumerate}
\item
For every $g,h,h'\in F$ we have  $d_G(gh, gh')\leq Cd_G(h,h')$;
\item
For every  $x,x'\in X$ and
 $g\in F$ we have
$d_X(g\cdot x,g\cdot x')\leq Cd_X(x,x')$;
\end{enumerate}
Moreover, for every $k\in \N$ there exists a constant $C_k$ such that
\begin{enumerate}
\setcounter{enumi}{2}
\item
  For   every  $\Phi\in\CC^k(X)$ and  $g\in F$,
writing $\Phi_g(x):=\Phi(g\cdot x)$, we have $\norm{\Phi_g}_{\CC^k(X)}\leq
C_k\norm \Phi_{\CC^k(X)}$.
\end{enumerate}
\end{lemma}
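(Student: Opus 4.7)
The plan is to derive all three assertions from the smoothness of the group operations on $G$ and a compactness argument, namely that the right-invariant Riemannian metric on the connected, simply connected Lie group $G$ is complete, so that by Hopf--Rinow closed bounded subsets of $G$ are compact; consequently every continuous function on $G$ is bounded on $F$. For~(1) the key observation is that left and right translations commute, $L_g\circ R_h=R_h\circ L_g$, and that $R_h$ is an isometry by right-invariance of the metric; differentiating the commutation relation then shows that the operator norm of $dL_g$ at $h$ is independent of $h\in G$, hence equal to its value at the identity, which depends continuously on $g$ (it is the norm of $\operatorname{Ad}(g)$ after trivializing $T_gG$ by right translation). The supremum $C$ of this quantity over $g$ in the closure of $F$ is therefore finite, and integrating along a smooth path from $h$ to $h'$ and infimizing over paths gives $d_G(gh,gh')\le C\,d_G(h,h')$.

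Assertion~(2) is immediate from~(1): for $\varepsilon>0$, choose representatives $h,h'\in G$ of $x,x'\in X$ with $d_G(h,h')\le d_X(x,x')+\varepsilon$; then $gh,gh'$ represent $g\cdot x,g\cdot x'$, and~(1) gives $d_X(g\cdot x,g\cdot x')\le C(d_X(x,x')+\varepsilon)$, yielding the conclusion with the same constant as $\varepsilon\to 0$.

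For~(3), I would work in the Mal'cev coordinates on $G$: the lift of $\Phi_g$ to $G$ is $\tilde\Phi\circ L_g$, and the left-multiplication map $L_g$ is polynomial in the coordinates of $h$ with coefficients that are polynomials of bounded degree in the coordinates of $g$. The chain rule then expresses each partial derivative of $\tilde\Phi\circ L_g$ of order at most $k$ as a finite sum of products of a partial derivative of $\tilde\Phi$ of order at most $k$ evaluated at $gh$ with such a polynomial in the coordinates of $g$. Since $F$ is relatively compact, these polynomial factors are uniformly bounded, yielding $\|\Phi_g\|_{\CC^k(X)}\le C_k\|\Phi\|_{\CC^k(X)}$ with $C_k$ depending only on $k$ and $F$. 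The whole argument is a smoothness-plus-compactness routine; the only slightly subtle point is the completeness of the right-invariant metric, used to replace ``bounded'' by ``relatively compact''.
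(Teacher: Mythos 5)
Your argument is correct and is exactly the ``smoothness of multiplication plus compactness of bounded sets'' justification the paper has in mind --- the paper offers no proof at all, stating only that these facts follow immediately from the smoothness of the multiplication $G\times G\to G$. The one point worth flagging is that part (2) needs the bound of part (1) for representatives $h,h'$ that need not lie in $F$; your proof of (1) does in fact deliver this, since you show the operator norm of $dL_g$ is independent of the base point, so the constant depends only on $g\in F$.
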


\begin{lemma}
\label{lem:GammaGj}
There exists $\delta>0$ such that, for $j=1,\ldots, s$, if $\gamma\in\Gamma$ and $u\in G_j$ satisfy $d_G(\gamma,u)<\delta$, then $\gamma\in G_j$.
\end{lemma}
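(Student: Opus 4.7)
The plan is to deduce the lemma from the \emph{discreteness} of the image of $\Gamma$ in each quotient $G/G_j$. For each $j\in\{1,\ldots,s\}$, the group $G_j$ is a rational normal subgroup of $G$, since the commutator subgroups appear in the lower central series and are recalled in Section~\ref{subsec:nilmanifolds} to be rational. Let $\pi_j\colon G\to G/G_j$ be the quotient homomorphism, and endow $G/G_j$ with the quotient metric
$$
d_j(\pi_j(x),\pi_j(y)):=\inf_{u\in G_j}d_G(x,yu).
$$
Since $d_G$ is right-invariant, $\pi_j$ is $1$-Lipschitz; in particular, for any $\gamma\in G$ and any $u\in G_j$,
$$
d_j(\pi_j(\gamma),\pi_j(1_G))\leq d_G(\gamma,u).
$$

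The heart of the argument is to show that $\pi_j(\Gamma)$ is a discrete subgroup of $G/G_j$. Here I would invoke the standard fact (see \cite{Co82}, together with the preparatory material collected in Appendix~\ref{ap:A}) that, since $G_j$ is a rational normal subgroup, $\Gamma\cap G_j$ is cocompact in $G_j$ and $\pi_j(\Gamma)=\Gamma G_j/G_j$ is a cocompact lattice in the simply connected nilpotent Lie group $G/G_j$; being countable, it contains no connected subgroup of positive dimension and is therefore discrete. Consequently, there exists $\delta_j>0$ such that every element of $\pi_j(\Gamma)\setminus\{\pi_j(1_G)\}$ lies at $d_j$-distance at least $\delta_j$ from $\pi_j(1_G)$.

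To conclude, I would set $\delta:=\min_{1\leq j\leq s}\delta_j>0$. If $\gamma\in\Gamma$ and $u\in G_j$ satisfy $d_G(\gamma,u)<\delta$, the Lipschitz property gives $d_j(\pi_j(\gamma),\pi_j(1_G))<\delta\leq\delta_j$, forcing $\pi_j(\gamma)=\pi_j(1_G)$ and hence $\gamma\in G_j$, as required. The only non-trivial ingredient is the discreteness of $\pi_j(\Gamma)$ in $G/G_j$, which I expect to be the main point that deserves care; it is precisely where the rationality of the subgroups $G_j$ in the filtration is used, and it is a standard consequence of the foundational Mal'cev-basis/rational-subgroup theory recalled in \cite{Co82} and in Appendix~\ref{ap:A}.
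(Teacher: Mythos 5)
Your overall route is sound and, at bottom, rests on exactly the same fact the paper uses: its entire proof is the one-line remark that $G_j\cap\Gamma$ is co-compact in $G_j$. You package this through the quotient $G/G_j$ and the discreteness of $\pi_j(\Gamma)$ there, which is a legitimate and standard way to organize the argument.

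The one step that is not right as written is your justification of discreteness: ``being countable, it contains no connected subgroup of positive dimension and is therefore discrete.'' A countable subgroup of a Lie group need not be discrete --- $\Q\subset\R$ is the basic counterexample: it is countable, contains no connected subgroup of positive dimension, and is dense. What actually forces $\pi_j(\Gamma)$ to be discrete is precisely the co-compactness of $\Gamma\cap G_j$ in $G_j$ (equivalently, the closedness of $\Gamma G_j$ in $G$); this is the content of the standard theorem on lattices in quotients of nilpotent Lie groups that you cite, so you should lean on that citation and drop the countability argument. Alternatively, you can avoid the quotient altogether and argue directly, which is presumably what the paper intends: choose a compact set $F_j\subset G_j$ with $G_j=F_j(\Gamma\cap G_j)$; if $\gamma_n\in\Gamma\setminus G_j$ and $u_n\in G_j$ satisfied $d_G(\gamma_n,u_n)\to 0$, write $u_n=f_n\lambda_n$ with $f_n\in F_j$ and $\lambda_n\in\Gamma\cap G_j$; right-invariance of $d_G$ gives $d_G(\gamma_n\lambda_n\inv,f_n)\to 0$, so the elements $\gamma_n\lambda_n\inv$ of the discrete group $\Gamma$ stay in a fixed compact neighborhood of $F_j$, hence after passing to a subsequence are constant and equal to a limit of points of the compact set $F_j\subset G_j$; this puts $\gamma_n\lambda_n\inv$, and therefore $\gamma_n$, in $G_j$, a contradiction.
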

\begin{proof} This follows immediately from the classical fact that $G_j\cap \Gamma$ is co-compact in $G_j$.
\end{proof}

\subsection{Sub-nilmanifolds}
\label{subsec:subnil}
We proceed with some basic facts regarding sub-nilmanifolds.

\begin{definition}
 A \emph{sub-nilmanifold} of $X$ is a nilmanifold $X':=G'/\Gamma'$ where $G'$ is a rational subgroup of $G$ and $\Gamma':=G'\cap \Gamma$.   We constantly identify $X'$  with the closed sub-nilmanifold $G'\cdot e_X$ of $X$. In particular, the base point   $e_{X'}$ of $X'$ is identified with the base point $e_X$ of $X$.
 \end{definition}

 \begin{convention}
If $X:=G/\Gamma$ is a nilmanifold and $G$ is endowed with a rational filtration $G_\bullet$ and if $X':=G'/(G'\cap\Gamma)$ is a sub-nilmanifold of $X$, then we implicitly assume that $G'$ is endowed with the \emph{induced rational filtration} defined by $G'^{(j)}:=G'\cap G^{(j)}$, $j\in \N$.
\end{convention}
In general, there is no natural method to define a Mal'cev basis for $X'$ from a Mal'cev basis for $X$ and we cannot assume that the inclusion map $X'\to X$ is  an isometry. However, this inclusion is a smooth embedding and it follows that there exists a positive constant $C:=C(X',X)$ such that
\begin{equation}
\label{eq:embeddingXprime}
C\inv d_{X}(x,y)\leq d_{X'}(x,y)\leq C d_{X}(x,y)\ \text{ for every  }x,y\in X'.
\end{equation}

\subsection{Vertical and horizontal torus and corresponding characters}
\label{subsec:vertical}
Let $X:=G/\Gamma$ be an $s$-step nilmanifold, $m:=\dim(G)$ and $r:=\dim(G_s)$.   The \emph{vertical torus} is the connected compact Abelian Lie group $G_s/(G_s\cap\Gamma)$.
  Since the restriction to  $G_s\cap\Gamma$ of the action of $G$ on $X$ is trivial,  the vertical torus acts on $X$, and this action is clearly free. It follows from the definition of the distance on $X$ that the vertical torus acts by isometries.
Let $\wt X$ be the quotient of $X$ under this action. Then $\wt X$ is an $(s-1)$-step nilmanifold and can be written as
$\wt X:=\wt G/\wt\Gamma$ where $\wt G:=G/G_s$ and $\wt\Gamma:=\Gamma/(\Gamma\cap G_s)$.

We endow $\wt G$ with a  Mal'cev basis such that, in Mal'cev coordinates, the projection $G\to\wt G$ is given by $(t_1,\dots,t_m)\mapsto(t_1,\dots,t_{m-r})$.
The distance $d_{\wt G}$ on $\wt G$ corresponding to this basis is the quotient distance  induced by  $d_G$, and the distance $d_{\wt X}$ on $\wt X$ is the quotient distance induced by  $d_X$.

Furthermore, the  Mal'cev basis of $X$ induces an
isometric identification between $G_s$ and $\R^r$, and thus of the
vertical torus endowed with the quotient metric, with $\T^r$ endowed
with its usual metric.  In order to avoid confusion,  elements of $G_s$ are written as $u,v,\ldots$ when we use the multiplicative notation, and as $\bu=(u_1,\dots,u_r)$, $\bv=(v_1,\dots v_r)$,\ldots  when we identify $G_s$ with $\R^r$ and use the additive notation; the same convention is used for the vertical torus.

\begin{definition}[Vertical characters and nilcharacters] Let $X$ be an $s$-step nilmanifold and $r:=\dim(G_s)$.  A \emph{vertical character} is a continuous group homomorphism $\xi\colon G_s\to\T$ with a trivial restriction on $G_s\cap\Gamma$;
 it can also be thought of  as a  character of the vertical torus.
 The group of vertical characters is then identified with $\Z^r$, where  $\bh=(h_1,\dots,h_r)\in\Z^r$ corresponds to the group homomorphism $\xi$ given by
$$
\xi(\bt):=\bh\cdot\bt \bmod 1= h_1t_1+\dots+h_rt_r  \bmod{1}\ \text{ for }\ \bt=(t_1,\dots,t_r)\in\R^r=G_s.
$$
We define the  \emph{norm of $\xi$} to be
$$
\norm\xi:=\norm\bh=|h_1|+\dots+|h_r|.
$$
A function $\Phi\colon X\to \C$ is a    \emph{nilcharacter with frequency $\bh$} if $\Phi(\bt\cdot
x)=\e(\bh\cdot\bt)\, \Phi(x)$ for every $\bt\in G_s$ and
every $x\in X$.\footnote{In \cite{GT12a}  a function with this property is said to have vertical oscillation $\bh$.} 
\end{definition}

\begin{definition}[Maximal torus and horizontal characters]
Let $X:=G/\Gamma$ be an $s$-step nilmanifold,  let $m:=\dim(G)$ and $m_2:=\dim(G_2)$.
 The Mal'cev basis induces an
isometric identification between the \emph{horizontal torus}
$G/(G_2\Gamma)$, endowed with the quotient metric, and $\T^{m-m_2}$,
endowed with its usual metric. A \emph{horizontal character} is a
continuous group homomorphism $\eta\colon G\to\T$  with  a trivial
restriction on $\Gamma$.  In Mal'cev coordinates, it is given by
$\eta(x_1,\dots, x_m)=\ell_1x_1+\dots+\ell_{m-m_2}x_{m-m_2} \bmod 1$ for $(x_1,\dots,x_m)\in \R^m$, where $\ell_1,\dots,\ell_{m-m_2}$ are integers called the coefficients of $\eta$.   We let
$$
\norm{\eta}:=|\ell_1|+\cdots+|\ell_{m-m_2}|.
$$
The horizontal character $\eta$ factors   through the horizontal torus, and  induces a character
  given by  $\balpha\mapsto {\bf \ell}\cdot\balpha:=\ell_1\alpha_1+\dots+\ell_{m-m_2}\alpha_{m-m_2}$ for
$\balpha=(\alpha_1,\dots,\alpha_{m-m_2})\in\T^{m-m_2}$.
\end{definition}

\subsection{The $U^s$-inverse theorem}
\label{sec:inverse}
%
%
We are going to use the following inverse theorem of Green, Tao, and Ziegler that gives a criterion for checking that a function $a\colon \Z_N\to \C$ has $U^s$-norm bounded away from zero.
\begin{theorem}[Inverse theorem for the $U^s$-norms~\mbox{\cite[Theorem 1.3]{GT12b}}]
\label{th:inverse}
Let    $s\geq 2$ be an integer   and $\ve$ be a positive real that is smaller than $1$.
There exist an $(s-1)$-step nilmanifold $X:=G/\Gamma$ and    $\delta>0$, both depending on $s$ and $\ve $ only, such that the following holds: For every $N\in \N$, if $a\colon \Z_N\to\C$ has modulus at most $1$ and satisfies
$$
\norm a_{U^s(\Z_N)}\geq\ve,
$$
then there exist
 $g\in G$  and a function $\Phi\colon X\to\C$ with $\norm \Phi_{\lip(X)}\leq 1$, such that
$$
\bigl|\E_{n\in[N]} a(n)\, \Phi(g^n\cdot e_X)\bigr|\geq \delta.
$$
\end{theorem}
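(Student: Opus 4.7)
The plan is to proceed by induction on $s$, following the overall strategy of Green--Tao--Ziegler. The base case $s=2$ is the classical Gowers--Fourier correspondence: identity \eqref{eq:U2Fourier} gives $\norm a_{U^2(\Z_N)}^4=\sum_\xi|\wh a(\xi)|^4$, so if $\norm a_{U^2(\Z_N)}\geq\ve$ then some Fourier coefficient satisfies $|\wh a(\xi)|\geq\ve^2$, which gives correlation of magnitude $\ve^2$ with the linear phase $n\mapsto\e(\xi n/N)$, i.e.\ with a $1$-step nilcharacter on the circle $X=\R/\Z$. Thus the conclusion holds with a trivial $1$-step nilmanifold.

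For the inductive step, suppose the theorem is known for some $s\geq 2$ and that $\norm a_{U^{s+1}(\Z_N)}\geq\ve$. Unwinding \eqref{eq:def-gowers} with $|a|\leq 1$ shows that for at least $\gg\ve^{O(1)}N$ values of $h\in\Z_N$, the ``multiplicative derivative'' $\Delta_h a(n):=a(n)\,\overline a(n+h)$ satisfies $\norm{\Delta_h a}_{U^s(\Z_N)}\gg \ve^{O(1)}$. Applying the inductive hypothesis to each such $\Delta_h a$ produces an $(s-1)$-step nilmanifold $X_0=G_0/\Gamma_0$ (which by a standard pigeonholing argument can be taken independent of $h$), an element $g_h\in G_0$, and a Lipschitz function $\Phi_h$ on $X_0$ such that
\begin{equation*}
\bigl|\E_{n\in[N]}\,a(n)\,\overline a(n+h)\,\Phi_h(g_h^n\cdot e_{X_0})\bigr|\geq\delta_0(\ve).
\end{equation*}
After Fourier decomposition in the vertical direction one may assume each $\Phi_h$ is a nilcharacter with a fixed frequency. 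The task is then to assemble the family $\{g_h\}_{h\in H}$, defined on a large set $H\subset\Z_N$, into a \emph{single} polynomial sequence on an $s$-step nilmanifold that correlates with $a$.

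The heart of the proof, and the main obstacle, lies in this assembly step. Exploiting the symmetries of the Gowers inner product (the ``Cauchy--Schwarz--Gowers'' reflection) one shows that the family $h\mapsto g_h$ satisfies a bracket-polynomial linearity relation modulo lower-order terms; by a Balog--Szemer\'edi--Gowers and Freiman-type argument on the horizontal torus of $X_0$, the parameters of $g_h$ depend approximately linearly on $h$ along a Bohr-type set. This approximate linearity is promoted, via a quantitative equidistribution theorem on nilmanifolds (the factorization result of Green--Tao, cited later as Theorem~\ref{th:FactoGT}), to an \emph{exact} polynomial dependence on $h$ on a possibly larger nilmanifold of degree $s$. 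The extra degree of freedom $h$ introduced in the derivative is then ``integrated'' by constructing an extension $X=G/\Gamma$ whose vertical torus carries the frequency data of the $\Phi_h$, so that the original function $a$ correlates with a nilsequence on the $s$-step nilmanifold $X$.

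The main obstacle is precisely this integration step: the inductive hypothesis gives information for each $h$ separately, but producing a globally coherent $s$-step nilsequence requires delicate bracket-polynomial arithmetic, substantial additive combinatorics to control the parameter $h\mapsto g_h$, and a quantitative equidistribution theory on nilmanifolds to turn approximate polynomial behavior into exact polynomial behavior. Each ingredient is highly nontrivial, and carrying out the induction quantitatively (so that the dimensions and degree of $X$ and the parameter $\delta$ depend only on $s$ and $\ve$) is what makes the full proof long and technical; in this article we therefore simply invoke Theorem~\ref{th:inverse} as a black box from \cite{GTZ12c}.
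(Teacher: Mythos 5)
Your proposal is, in the end, an appeal to \cite{GTZ12c} as a black box, which is exactly what the paper does: Theorem~\ref{th:inverse} is not proved here but imported. Your preceding sketch of the Green--Tao--Ziegler induction (the $U^2$ base case via \eqref{eq:U2Fourier}, passage to derivatives $\Delta_h a$, pigeonholing a common nilmanifold, and the assembly of the family $h\mapsto g_h$ into a single degree-$s$ nilsequence) is a fair outline of the external argument, but it is far too coarse to count as a proof, and you acknowledge as much.

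The genuine gap is that you never address the mismatch between the statement as cited and the statement as used. The result in \cite{GT12b} is formulated for the interval norm $U^s[N]$ and with a \emph{finite family} of nilmanifolds, whereas the theorem here asserts the conclusion under a lower bound on $\norm{a}_{U^s(\Z_N)}$ and with a \emph{single} nilmanifold $X$ depending only on $s$ and $\ve$. The deduction requires (i) the observation that one may pass to a single nilmanifold (e.g.\ by taking a product of the finitely many nilmanifolds in the family, or as remarked by the authors of \cite{GT12b}), and (ii) the quantitative equivalence of the $U^s(\Z_N)$ and $U^s[N]$ norms for functions bounded by $1$, which is Lemma~\ref{lem:NormsUs} in Appendix~A and is itself proved via Lemmas~\ref{lem:UsLN} and \ref{cl:gowersMN}. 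Without step (ii) in particular, a lower bound on $\norm{a}_{U^s(\Z_N)}$ does not directly trigger the hypothesis of the cited theorem, so the statement as written does not follow. These two reductions are the only mathematical content the paper actually supplies for this theorem, and they are absent from your proposal.
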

There are two differences between this theorem and the form it is stated in~\cite{GT12b}.
First, the result is stated in \cite{GT12b} with a finite family of nilmanifolds instead of a single one;
but as the authors of \cite{GT12b} also remark one can use a single nilmanifold.   More importantly, the result is stated for the norm $U^s[N]$ instead of the norm $U^s(\Z_N)$. The present statement follows immediately from the result in \cite{GT12b} and Lemma~\ref{lem:NormsUs} in the Appendix.

A sequence of the form $\Phi(g^n\cdot e_X)$ where $\Phi$ is only
assumed to be continuous is called  a \emph{basic nilsequence} in
\cite{BHK05}; if in addition we assume  that $\Phi$ is Lipschitz, then
we call it a
\emph{nilsequence of bounded complexity} a notion first used in \cite{GT08}.

Let us remark that for the partition regularity results of Sections~\ref{SS:quadratic} and \ref{SS:parametric}
we only need to use  the $U^3$-inverse theorem; an independent and much simpler proof of this inverse theorem
  can be found in \cite{GT08}.

\section{Quantitative equidistribution and factorization on nilmanifolds}
\label{sec:equidistribution}

 In this section we state a quantitative equidistribution result and a factorization theorem
 for polynomial sequences on nilmanifolds, both proved by Green and Tao in \cite{GT12a}, and also derive some  consequences that will be used later on.

\subsection{Polynomial sequences in a group}
\label{SS:PolyGroup}
We start with the definition of a polynomial sequence on an arbitrary group.
\begin{definition} Let
$G$ be a group  endowed with a filtration $G_\bullet$ and $(g(n))_{n\in \N}$ be a sequence in $G$. For $h\in \N$, we define the sequence $\partial_hg$ by $\partial_hg(n):=g(n+h)g(n)\inv$, $n\in \N$.
We say that the sequence $g$ is a polynomial sequence with coefficients in the filtration $G_\bullet$ if
$\partial_{h_i}\dots\partial_{h_1}g$ takes values in $G^{(i)}$ for every $i\in \N$ and  $h_1,\dots,h_i\in \N$. We write $\poly(G_\bullet)$ for the family of polynomial sequences with coefficients in $G_\bullet$.
If  the filtration $G_\bullet$  has  degree $d$  we say that the polynomial sequence has  \emph{degree at most} $d$.
\end{definition}

The following equivalent definition is given in~\cite[Lemma~6.7]{GT12a} (see also~\cite{L98,L02}):
\begin{equivdefinition}
 A polynomial sequence  with coefficients in the filtration $G_\bullet$  of degree $d$ is a sequence $(g(n))_{n\in \N}$ of the form
\begin{equation}
\label{eq:def-poly-seq}
g(n)=a_0a_1^na_2^{\binom n2}\dots a_d^{\binom nd}\ \text{ where }\ a_j\in G^{(j)} \ \text{ for  }\ j=0,\ldots,d.
\end{equation}
\end{equivdefinition}
\begin{remarks}
$(1)$
The extra flexibility coming from the fact that we consider
  polynomial sequences with respect
 to arbitrary filtrations, not just the natural one,  will be  used in an essential way.

$(2)$
The set
$\poly(G_\bullet)$ is a group with operation the pointwise multiplication of sequences~\cite[Proposition 6.2]{GT12a}, a result initially due to Leibman~\cite{L98, L02} when $G_\bullet$ is the natural filtration.

$(3)$
It can be seen (see~\cite[Remarks below Corollary 6.8]{GT12a}) that if $G$ is $s$-step nilpotent, then
 every sequence $g\colon \N\to G$ of the form
$g(n):=a_1^{p_1(n)} \cdots a_k^{p_k(n)}$ with $a_1,\ldots, a_k\in G$
and $p_1,\ldots, p_k\in \Z[t]$ of degree at most $d$, is a polynomial sequence with coefficients in some filtration $G_\bullet$ of $G$ of degree at most
$ds$.
\end{remarks}

When  $G=\T$, unless stated explicitly, we assume that $\T$  is endowed  with the filtration of degree
 $d\in\N$ given by $\T^{(j)}=\T$ for  $j\leq d$ and $\T^{(j)}=\{0\}$ for $j>d$.
In this case, a polynomial sequence of degree at most $d$  in $\T$  can be expressed alternatively in two different ways:
\begin{align}
\label{eq:coef-poly}
\phi(n)&=\alpha_0+\alpha_1\binom n1+\alpha_2\binom n2+\dots+\alpha_d\binom nd \\
\label{eq:coef-poly2}
&= \alpha'_0+\alpha'_1n+\alpha'_2n^2+\dots+\alpha'_dn^d
\end{align}
for some $\alpha_0,\alpha_1,\alpha_2,\dots,\alpha_d,\alpha'_0,\alpha'_1,\alpha'_2,\dots,\alpha'_d\in\T$. The choice between these two representations depends on the problem at hand. Similar comments apply  for polynomial sequences in $\T^m$.

\begin{definition}(Smoothness norms)
Let $(\phi(n))_{n\in\N}$ be  a polynomial sequence of degree at most $d$  in $\T$
of the form ~\eqref{eq:coef-poly}.  For every $N\in\N$ we define the \emph{smoothness norm}
$$
\norm\phi_{C^\infty[N]}:=\max_{1\leq j\leq d}N^j\norm{\alpha_j},
$$
where, as usual,  $\norm{\alpha}$ denotes the distance of $\alpha$ to the nearest integer.
\end{definition}
If  a   polynomial sequence is  given by \eqref{eq:coef-poly2}, then we define
$
\norm\phi_{C^\infty[N]}':=\max_{1\leq j\leq d}N^j\norm{\alpha_j'}.
$ It is easy to check that there
exist positive constants $c:=c(d), C:=C(d)$ such that
$$
c\norm\phi_{C^\infty[N]}\leq \norm\phi_{C^\infty[N]}'\leq C\norm\phi_{C^\infty[N]},
 $$
 so the two norms  can be used interchangeably without affecting our arguments.

The smoothness norm is designed to capture the concept of a slowly-varying polynomial sequence.
 Indeed, for every $d\in \N$ there exists $C:=C(d)>0$ such that, for every  polynomial sequence $\phi$ of degree $d$
  on $\T$ (or $\T^m$) and every $n\in[N]$, we have
$$
\norm{\phi(n)-\phi(n-1)}\leq \frac CN\norm\phi_{C^\infty[N]}.
$$

It is immediate to check that for $1\leq N'\leq N$ and for  every  polynomial sequence  $\phi$ of degree at most $d$, we have
$$
\norm\phi_{C^\infty[N']}\leq\norm\phi_{C^\infty[N]}\leq \bigl(\frac N{N'}\bigr)^d\norm\phi_{C^\infty[N']}.
$$
We can also show that
 for $b\in\Z$ and $\phi_b(n):=\phi(n+b)$, we have
\begin{equation}
\label{eq:normPsib}
\norm{\phi_b}_{C^\infty[N]}
\leq
 \big( \frac {N+1}{N}\big)^{|b|}\,\norm{\phi}_{C^\infty[N]} .
\end{equation}
Indeed,  let us write $\displaystyle\phi_b(n)= \sum_{j=0}^d\beta_j\binom nj$. By a direct computation, we get
$$
\text{for }b\geq 0,\ \beta_i=\sum_{j=0}^{d-i}\binom b j\alpha_{i+j}\ ;\ \ \text{for }b<0,\
\beta_i= \sum_{j=0}^{d-i}(-1)^j\binom{-b-1}j \alpha_{i+j},
$$
where, as usual, $\binom n p= 0$  for $p>n$. Hence,
for $b\geq0$ and $i=1,\ldots, d$, we have
$$
N^i\norm{\beta_i}\leq N^i\sum_{j=0}^{d-i}\binom b j\norm{\alpha_{i+j}}
\leq\norm{\phi}_{C^\infty[N]} \sum_{j=0}^{d-i}\binom b j\frac{1}{N^j} \leq
\norm{\phi}_{C^\infty[N]} \big(\frac{N+1}{N}\big)^b.
$$
For $b<0$ we get a similar estimate with $-b-1$ in place of $b$. In both cases the asserted estimate
\eqref{eq:normPsib} follows immediately.

The next lemma is a modification of a particular case of \cite[Lemma 8.4]{GT12a}.
\begin{lemma}
\label{lem:8.4}
Let $d,q,r, N\in\N$  and $a,b$ be integers with  $a\neq 0$, $|a|\leq q$, and $|b|\leq rN$.
There exist $C:=C(d,q,r)>0$ and $\ell:=\ell(a,d)\in \N$  such that if  $\phi\colon\N\to\T$ is a
polynomial sequence of degree at most $d$ and  $\psi$ is given by $\psi(n):=\phi(an+b)$, then
$$
\norm{\ell \phi}_{C^\infty[N]}\leq C \norm{\psi}_{C^\infty[N]}.
$$
\end{lemma}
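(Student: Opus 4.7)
\bigskip

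\textbf{Proof proposal.} The plan is to pass to the monomial representation of $\phi$, expand $\psi(n)=\phi(an+b)$ via the binomial theorem, and then solve triangularly for the top coefficients of $\phi$ in terms of those of $\psi$. Since the two smoothness norms $\norm\cdot_{C^\infty[N]}$ and $\norm\cdot_{C^\infty[N]}'$ are equivalent up to constants depending only on $d$, it is enough to prove the estimate for the norm $\norm\cdot'_{C^\infty[N]}$.

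Write $\phi(n)=\alpha_0'+\alpha_1'n+\dots+\alpha_d'n^d$ and $\psi(n)=\beta_0'+\beta_1'n+\dots+\beta_d'n^d$. Expanding $(an+b)^j$ yields
$$
\beta_k'=a^k\sum_{j=k}^d\binom jk b^{j-k}\alpha_j',\qquad k=0,1,\dots,d.
$$
Isolating the $j=k$ term and multiplying by $a^{d-k}$ gives the triangular identity
$$
a^d\alpha_k'=a^{d-k}\beta_k'-\sum_{j=k+1}^d\binom jk b^{j-k}(a^d\alpha_j'),\qquad k=1,\dots,d.
$$
Set $p_k:=a^d\alpha_k'\in\T$. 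Starting from $p_d=\beta_d'$ and working downward, I will prove by backward induction on $k$ that there is a constant $C_k:=C_k(d,q,r)$ with
$$
N^k\norm{p_k}\leq C_k\,\norm\psi_{C^\infty[N]}'.
$$
The base case $k=d$ is immediate since $N^d\norm{p_d}=N^d\norm{\beta_d'}\leq\norm\psi_{C^\infty[N]}'$. For the inductive step, using $|a|\leq q$, $|b|\leq rN$, the triangle inequality applied to the identity above gives
$$
N^k\norm{p_k}\leq q^{d-k}N^k\norm{\beta_k'}+\sum_{j=k+1}^d\binom jk r^{j-k}N^j\norm{p_j},
$$
and each term on the right is bounded by a constant (depending only on $d,q,r$) times $\norm\psi_{C^\infty[N]}'$, by the definition of the smoothness norm and the inductive hypothesis.

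Finally, take $\ell:=|a|^d$, which depends only on $a$ and $d$. Since $\ell\alpha_k'=\pm p_k$ modulo $1$, we have $\norm{\ell\alpha_k'}=\norm{p_k}$, whence
$$
\norm{\ell\phi}_{C^\infty[N]}'=\max_{1\leq k\leq d}N^k\norm{\ell\alpha_k'}\leq\max_{1\leq k\leq d}C_k\,\norm\psi_{C^\infty[N]}',
$$
which yields the result after converting back to $\norm\cdot_{C^\infty[N]}$. I do not expect any serious obstacle: the only delicate bookkeeping is keeping the $\ell$ independent of $b$, $q$, $r$, and this is handled by the uniform choice $\ell=|a|^d$, since multiplication by $a^d$ is precisely what clears all the denominators arising from inverting the triangular system on $\T$.
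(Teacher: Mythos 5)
Your proof is correct. The triangular system $\beta_k'=a^k\sum_{j\geq k}\binom{j}{k}b^{j-k}\alpha_j'$ is the right identity, multiplying through by $a^{d-k}$ legitimately clears the diagonal over $\T$ (all operations are integer multiples of elements of $\T$), the backward induction closes because $|b|^{j-k}N^k\leq r^{j-k}N^j$, and the recursion never needs the $k=0$ coefficient, which is good since the smoothness norm ignores it. The choice $\ell=|a|^d$ and the observation $\norm{\ell\alpha_k'}=\norm{p_k}$ are exactly right.

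Your route differs from the paper's in organization rather than substance. The paper factors the affine substitution as a translation followed by a dilation: it first bounds $\norm{\phi}_{C^\infty[N]}$ by $C_1(r)\norm{\phi_b}_{C^\infty[N]}$ using the previously established translation estimate \eqref{eq:normPsib} (run in reverse, via $\phi=(\phi_b)_{-b}$), and then handles the pure dilation $\psi(n)=\phi_b(an)$ by the directly checked inequality $\norm{|a|^d\phi_b}_{C^\infty[N]}\leq|a|^{d-1}\norm{\psi}_{C^\infty[N]}$, arriving at the same $\ell=|a|^d$ with $C=C_1q^d$. You instead invert the full affine coefficient map in one triangular sweep in the monomial basis. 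What the paper's factorization buys is reuse of \eqref{eq:normPsib} and a cleaner constant; what your version buys is self-containment (you re-derive the content of the translation estimate inside the induction) at the cost of slightly heavier bookkeeping in the constants $C_k$. Both are complete proofs.
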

\begin{proof}
Writing $\phi_b(n):=\phi(n+b)$ and using \eqref{eq:normPsib} and that $|b|\leq rN$, we get
$$
\norm{\phi}_{C^\infty[N]}\leq \big(\frac {N+1}N \big)^{|b|}\norm{\phi_b}_{C^\infty[N]}
\leq C_1\norm{\phi_b}_{C^\infty[N]}
$$
for some $C_1:=C_1(r)$. Furthermore, since $\psi(n)=\phi_b(an)$,  one easily checks that
$$
\norm{|a|^d \phi_b}_{C^\infty[N]}\leq |a|^{d-1} \norm{\psi}_{C^\infty[N]}.
$$
Combining the above we get the asserted estimate for $\ell:=|a|^d$ and $C:=C_1 q^d$.
\end{proof}
\subsection{The quantitative Leibman theorem}
We are going to work with  the following  notion of  equidistribution on a nilmanifold:
\begin{definition}
Let $X:=G/\Gamma$ be a  nilmanifold,
$N\in \N$, $(g(n))_{n\in[N]}$ be a finite sequence in $G$, and $\delta>0$.
 The sequence $(g(n)\cdot e_X)_{n\in[N]}$ is \emph{totally $\delta$-equidistributed in $X$}, if for every arithmetic progression $P\subset[N]$ and for every Lipschitz function $\Phi$ on $X$ with $\norm\Phi_{\lip(X)}\leq 1$ and $\int_X\Phi\,dm_X=0$, we have
\begin{equation}\label{E:deftotequi}
\bigl|\E_{n\in[N]}\one_P(n)\, \Phi(g(n)\cdot e_X)\bigr|\leq\delta.
\end{equation}
\end{definition}

\begin{remark}
The distance on $X$, and as a consequence the notion of equidistribution of a sequence in $X$, depends on the choice of a Mal'cev basis on $G$, which in turn depends on the chosen rational filtration $G_\bullet$. As remarked in Section~\ref{subsec:subnil}, if $X'$ is a sub-nilmanifold of $X$, then  there is no natural choice for the Mal'cev basis of $X'$ and thus a sequence in $X$ that is $\delta$-equidistributed in $X$ is only $(C\delta)$-equidistributed in $X'$, where the constant $C$ depends on the choice of the two Mal'cev basis.
\end{remark}
To avoid confusion we remind the reader of the following convention that we make throughout the article:
\begin{convention}
 If $X:=G/\Gamma$ is a nilmanifold,  $G$ is implicitly endowed with some rational filtration $G_\bullet$. A polynomial sequence in $G$ is always assumed to have coefficients in this filtration, that is, it belongs to $\poly(G_\bullet)$.
As the degree of a polynomial sequence in $G$  is bounded by $d$ where $d$ is the degree of $G_\bullet$,
all statements below implicitly impose a restriction on the degree of the polynomial sequence under consideration.
\end{convention}
The next result gives a convenient criterion for establishing equidistribution properties of polynomial sequences
of nilmanifolds.
\begin{theorem}[Quantitative Leibman Theorem~\mbox{\cite[Theorem 2.9]{GT12a}}]
\label{th:Leibman}
Let  $X:=G/\Gamma$ be a nilmanifold and $ \ve>0$. There exists
$D:=D(X, \ve)>0$ such that the following holds: For every
$N\in\N$,  if $g\in\poly(G_\bullet)$ and  $(g(n)\cdot e_X)_{n\in[N]}$ is not totally  $\ve$-equidistributed in $X$,
  then there exists
a non-trivial horizontal character $\eta$ such that
$$
0<\norm\eta\leq D\quad \text{ and }\quad \norm{\eta\circ
g}_{C^\infty[N]}\leq D.
$$
\end{theorem}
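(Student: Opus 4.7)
My approach is by induction on the degree $d$ of the filtration $G_\bullet$, proving the contrapositive: if the sequence fails to be totally $\varepsilon$-equidistributed, then there is a horizontal character of bounded norm whose smoothness norm is bounded. The base case $d=1$ is abelian: $X$ is a torus and $g$ is a polynomial sequence in $\T^m$; expanding the witnessing Lipschitz test function $\Phi$ into its Fourier series and truncating at a height depending only on $\varepsilon$ (using $\norm{\Phi}_{\lip(X)}\leq 1$), non-equidistribution on a progression $P$ forces a non-trivial Fourier mode $\eta$ with bounded $\norm\eta$ to produce a large Weyl sum $\E_{n\in[N]}\one_P(n)\,\e(\eta\circ g(n))$, from which the quantitative Weyl inequality yields $\norm{\eta\circ g}_{C^\infty[N]}=O_\varepsilon(1)$.

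For the inductive step, assume the result for all nilmanifolds coming from a filtration of degree strictly less than $d$. If $(g(n)\cdot e_X)_{n\in[N]}$ is not totally $\varepsilon$-equidistributed, fix a witnessing progression $P$ and Lipschitz $\Phi$ with $\int\Phi\,dm_X=0$, and expand $\Phi=\sum_\xi \Phi_\xi$ in vertical characters along $G^{(d)}$. Since $\Phi$ is Lipschitz this series decays, so one can truncate to $\norm\xi\leq M(\varepsilon)$ with controlled error, producing a dichotomy. In \emph{Case A}, the zero-frequency part $\Phi_0$ descends to a Lipschitz mean-zero function on the quotient $\tilde X:=G/(G^{(d)}\Gamma)$ and witnesses non-equidistribution of the projected sequence $(\pi(g(n))\cdot e_{\tilde X})$ there; since $\tilde G$ inherits a rational filtration of degree $d-1$, the inductive hypothesis produces a horizontal character $\tilde\eta$ of $\tilde X$ with bounded norm and bounded $\norm{\tilde\eta\circ\pi g}_{C^\infty[N]}$, and $\tilde\eta$ lifts to a horizontal character $\eta$ of $X$ automatically (horizontal characters vanish on $G_2\supset G^{(d)}$). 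In \emph{Case B}, a nontrivial vertical character $\xi$ of norm $\leq M(\varepsilon)$ satisfies
$$
\bigl|\E_{n\in[N]}\one_P(n)\,\Phi_\xi(g(n)\cdot e_X)\bigr|\geq \delta(\varepsilon),
$$
and one applies van der Corput's inequality plus pigeonhole in the shift $h$ to obtain, for many $h$, non-equidistribution of the ``derivative'' polynomial sequence $\partial_h g(n):=g(n+h)g(n)\inv$ against a twisted test function. The point is that $\partial_h g$ takes values in the shifted filtration $G_\bullet^{(+1)}$, whose degree is one less, so the induction applies and produces horizontal characters $\eta_h$ of bounded norm with bounded $\norm{\eta_h\circ\partial_h g}_{C^\infty[N]}$; a further pigeonhole in $h$ and a second application of a Weyl-type step (integrating the polynomial dependence in $h$) converts a positive-density family of such $\eta_h$ into a single horizontal character $\eta$ for $g$ itself with the required quantitative control.

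The main obstacle is the quantitative bookkeeping in Case B: tracking how the polynomial dependence on $(\varepsilon, D, \norm\eta)$ degrades through van der Corput, pigeonhole in $h$, and the combinatorial step that reassembles the derivative-level characters into one for $g$. In particular, one needs Lemma~\ref{lem:8.4}-type control to convert smoothness-norm bounds for $\eta\circ\partial_h g$ into smoothness-norm bounds for $\ell\eta\circ g$ for a controlled integer $\ell$, and one must verify that the filtration $G_\bullet^{(+1)}$ on which the inductive hypothesis is applied still satisfies all rationality and Mal'cev-compatibility hypotheses used in the statement. A secondary subtlety is that each derivative $\partial_h g$ produces a polynomial sequence with respect to a derived filtration rather than the original one, which is precisely why the theorem is formulated for arbitrary rational filtrations rather than only the natural lower central series.
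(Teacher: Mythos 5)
Your proposal sets out to prove the Green--Tao quantitative Leibman theorem from first principles, whereas the paper does not prove that theorem at all: it cites \cite{GT12a} for the version in which equidistribution fails on $P=[N]$, and its entire ``proof'' is the short upgrade to \emph{total} equidistribution. Concretely, the paper writes the witnessing progression as $P=\{an+b\colon n\in[N']\}$, observes that $N'\geq\ve N$ forces $a\leq 1/\ve$, notes that $h(n):=g(an+b)$ still lies in $\poly(G_\bullet)$, applies the cited theorem to $h$ on $[N']$, and then transfers the bound on $\norm{\theta\circ h}_{C^\infty[N']}$ back to a bound on $\norm{\ell\,\theta\circ g}_{C^\infty[N]}$ via Lemma~\ref{lem:8.4}. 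That reparametrization-plus-transfer step is the only content of the paper's argument, and your proposal does not isolate it (you instead carry the progression through the induction, which is workable but is not what makes the paper's deduction short).

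As a standalone proof of the full theorem, your sketch follows the right general architecture but leaves the genuinely hard steps unproved. Two concrete gaps. First, in Case B the van der Corput expansion produces correlations of the form $\Phi_\xi(g(n+h)\cdot e_X)\,\overline{\Phi_\xi}(g(n)\cdot e_X)$; this is a function of the \emph{pair} $(g(n+h),g(n))$, not of $\partial_hg(n)$ alone, and reducing to a lower-complexity object requires passing to a subgroup of $G\times G$ (quotiented by the diagonal copy of the last group in the filtration, using that $\Phi_\xi\otimes\overline{\Phi_\xi}$ is invariant under it) and verifying that the resulting sequence is polynomial with respect to a filtration of strictly smaller degree or dimension. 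Saying that $\partial_hg$ lies in the shifted filtration is true but does not by itself justify applying the inductive hypothesis to the correlation you actually have. Second, the ``reassembly'' of a positive-density family $\{\eta_h\}$ of derivative-level horizontal characters into a single horizontal character $\eta$ for $g$ with controlled $\norm{\eta\circ g}_{C^\infty[N]}$ is the technical heart of \cite{GT12a} (it occupies most of their Sections 7--9 and requires additional structural lemmas); naming it as ``a further pigeonhole in $h$ and a second Weyl-type step'' does not constitute a proof. If your intent is to reprove \cite[Theorem~2.9]{GT12a}, these steps must be supplied; if your intent is to prove the statement as the paper does, the efficient route is to take that theorem as a black box and supply only the progression-restriction and Lemma~\ref{lem:8.4} transfer.
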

 \begin{remarks} (1) For every  $g\in\poly(G_\bullet)$ and  every horizontal character $\eta$, the sequence $\eta\circ g$ is a polynomial sequence  in $\T$ of degree at most  $d$, where $d$ is the degree of $G_\bullet$.

(2) Theorem~\ref{th:Leibman}  will be used
in this form
but it is proved in~\cite{GT12a} under the stronger hypothesis that
the sequence is not ``\emph{$\ve$-equidistributed in $X$}'', meaning, \eqref{E:deftotequi} fails for $P:=[N]$.
 We deduce  Theorem~\ref{th:Leibman} from this result next.
\end{remarks}
\begin{proof}
Since the sequence $(g(n)\cdot e_X)_{n\in[N]}$ is not totally  $\ve$-equidistributed in $X$, there exist an arithmetic progression $P\subset[N]$ and a Lipschitz function $\Phi$ on $X$ such that
$$
\norm\Phi_{\lip(X)}\leq 1,\ \ \int_X\Phi\,dm_X=0, \ \text{ and }\
\bigl|\E_{n\in[N]}\one_P(n)\, \Phi(g(n)\cdot e_X)\bigr|\geq \ve.
$$
We write $P=\{an+b\colon n\in [N']\}$ where $N'$ is the length of $P$,  $a$  is its step, and $b\in [N]$.
Note  that one necessarily has   $N'\geq\ve N$,  thus   $a\leq 1/\ve$.
Then
$$
\bigl|\E_{n\in[N']}\Phi(h(n)\cdot e_X)\bigr|\geq \ve
$$
where  $h(n):=g(an+b)$. Hence,    the sequence $(h(n)\cdot e_X)_{n\in[N']}$ is not $\ve$-equidistributed in $X$.
Note also  that $h\in\poly(G_\bullet)$; this follows from the first definition in Section~\ref{SS:PolyGroup}.  Using the variant of  Theorem~\ref{th:Leibman} that is proved in  \cite{GT12a}, we deduce that there exists $D:=D(X, \ve)>0$
and a non-trivial  horizontal character $\theta$ such that $\norm\theta\leq D$ and $\norm{\theta\circ h}_{C^\infty[N']}\leq D$.
Writing $\phi(n):=\theta(g(n))$ and $\psi(n):=\theta(h(n))$ we have
$\psi(n)=\phi(an+b)$. Using Lemma~\ref{lem:8.4}  with $q:=1/\ve$ and $r:=1$
we get that  there exist  $C:=C(d,\ve)>0$ and $\ell:=\ell(a,d)\in\N$ such that
$$
\norm{\ell\cdot \theta\circ g}_{C^\infty[N]}=\norm{\ell \phi}_{C^\infty[N]}\leq C\norm\psi_{C^\infty[N]}
\leq C\bigl(\frac N{N'}\bigr)^d
\norm\psi_{C^\infty[N']}\leq C\ve^{-d}D
$$
where $d$ is the degree of the filtration $G_\bullet$. Letting $\eta:=\ell\, \theta$ we have
$\norm{\eta}\leq D \ell$ and the result follows.
\end{proof}

We are also going to use frequently the following   converse of Theorem~\ref{th:Leibman}:
\begin{lemma}[A  converse to  Theorem~\ref{th:Leibman}]
\label{lem:Leibman_Inverse}
 Let $X:=G/\Gamma$ be a
nilmanifold. There exists $c:=c(X)>0$ such that for
 every $D\in \N$ and every  sufficiently large $N\in \N$, depending only on $D$ and $X$,  the following holds: If $g\in\poly(G_\bullet)$
and there exists a non-trivial horizontal character $\eta$ of $X$
with $\norm\eta\leq D$ and $\norm{\eta\circ g}_{C^\infty[N]}\leq D$,
then the sequence $(g(n)\cdot e_X)_{n\in[N]}$ is not totally
$(cD^{-2})$-equidistributed in $X$.
\end{lemma}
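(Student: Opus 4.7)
The plan is to construct an explicit Lipschitz function $\Phi$ on $X$ and an arithmetic progression $P\subset[N]$ (in fact a subinterval) that together witness the failure of total $(cD^{-2})$-equidistribution.

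First I would define $\Phi\colon X\to\C$ by $\Phi(x):=\e(\eta(g))$ where $g\in G$ is any lift of $x$; this is well-defined because $\eta$ is trivial on $\Gamma$, and $\int_X\Phi\,dm_X=0$ because $\eta$ is a non-trivial character that factors through the horizontal torus (integrating $\e$ of a non-trivial character of a torus gives $0$). Since $\eta$ factors through the horizontal torus $G/(G_2\Gamma)\cong\T^{m-m_2}$ as a linear character of coefficients with $\ell^1$-norm $\|\eta\|\leq D$, and since the quotient map $X\to\T^{m-m_2}$ is Lipschitz with constant depending only on the Mal'cev basis (hence only on $X$), one has $\|\Phi\|_{\lip(X)}\leq L$ for some $L\leq C_X D$, where $C_X$ depends only on $X$.

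Next I would analyze the polynomial $\phi:=\eta\circ g\colon\N\to\T$, which has degree at most $d$ (the degree of the filtration $G_\bullet$) and satisfies $\|\phi\|_{C^\infty[N]}\leq D$ by hypothesis. Writing $\phi(n)=\sum_{j=0}^d\alpha_j\binom nj$, for each $j\geq 1$ I choose a real representative $\tilde\alpha_j\in(-1/2,1/2]$ of $\alpha_j$ with $|\tilde\alpha_j|=\|\alpha_j\|\leq D/N^j$, and set $\tilde\phi(n):=\sum_{j=1}^d\tilde\alpha_j\binom nj\in\R$, so that $\e(\phi(n))=\e(\alpha_0)\e(\tilde\phi(n))$. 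Using the identity $\binom{n+1}j-\binom nj=\binom n{j-1}$, a straightforward induction gives $|\tilde\phi(n+1)-\tilde\phi(n)|\leq C_d D/N$ for $n\in[N-1]$, where $C_d$ depends only on $d$ (hence only on $X$).

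Finally, let $P\subset[N]$ be an interval of length $|P|:=\lfloor N/(10C_dD)\rfloor$, which is $\geq 1$ whenever $N\geq 20 C_d D$, a condition assured for $N$ sufficiently large in terms of $D$ and $X$. For $n,n'\in P$ we have $|\tilde\phi(n)-\tilde\phi(n')|\leq 1/10$, so all values $\e(\phi(n))$ with $n\in P$ lie in an arc of length $\leq 1/10$ on the unit circle, giving
$$
\Bigl|\sum_{n\in P}\e(\phi(n))\Bigr|\geq \tfrac{9}{10}|P|.
$$
Therefore
$$
\bigl|\E_{n\in[N]}\one_P(n)\,\Phi(g(n)\cdot e_X)\bigr|=\bigl|\E_{n\in[N]}\one_P(n)\e(\phi(n))\bigr|\geq \tfrac{9}{10}\cdot\frac{|P|}{N}\geq\frac{c_1}{D}
$$
for some $c_1>0$ depending only on $X$. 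Dividing by $L\leq C_XD$ to normalize the Lipschitz norm to $\leq 1$, the left-hand side becomes at least $c_1/(C_X D^2)=:cD^{-2}$, showing that $(g(n)\cdot e_X)_{n\in[N]}$ is not totally $(cD^{-2})$-equidistributed. The argument is quite direct; the only mildly technical points are the bound $\|\Phi\|_{\lip(X)}\leq C_XD$ (which I would deduce from the smoothness of the projection $X\to\T^{m-m_2}$ with constants depending only on the fixed Mal'cev structure) and the derivative estimate for $\tilde\phi$ in the binomial basis, so I do not expect a serious obstacle.
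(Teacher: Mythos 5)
Your proposal is correct and follows essentially the same route as the paper's proof: both use the test function $\Phi=\e\circ\eta$ (mean zero since $\eta$ is a non-trivial horizontal character, Lipschitz norm $O_X(D)$), exploit the smoothness bound $\norm{\eta\circ g}_{C^\infty[N]}\leq D$ to find a subinterval of length $\gtrsim N/D$ on which $\e(\eta(g(n)))$ is nearly constant, and then normalize by the Lipschitz constant to obtain the $cD^{-2}$ threshold. The only (immaterial) difference is a slightly optimistic numerical constant in the "values lie in a short arc" step — an arc of angular spread $1/10$ in the $\e$-parametrization gives a lower bound like $(1-2\pi/20)|P|$ rather than $\tfrac9{10}|P|$ — which affects only the value of $c$.
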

\begin{proof}
Let $d$ be the degree of the filtration $G_\bullet$.
Since $\norm{\eta\circ g}_{C^\infty[N]}\leq D,$ we have
$$
\eta(g(n))=\sum_{0\leq j\leq d}\alpha_i\binom nj \ \text{ for some }
\ \alpha_0,\dots,\alpha_d\in\T\ \text{ with } \ \norm{\alpha_j}\leq \frac{D}{N^j},\
\text{ for } j=1,\ldots,d,
$$
$$
\bigl|\e\big(\eta(g(n))\bigr)-\e(\alpha_0)\bigr|\leq \frac{1}{2},\ \ \text{ for }\
1\leq n\leq c_1\frac{N}{D},
$$
for some positive constant  $c_1:=c_1(d)$.
Suppose  that $N\geq 4D/c_1$. Then
$$
\bigl|\E_{n\leq \lfloor c_1N/D\rfloor }\e\bigl(\eta(g(n))\bigl)\bigr|\geq
\frac{1}{2},
$$
which gives
$$
\bigl|\E_{n\in [N] }\one_{[\lfloor c_1N/D\rfloor]}(n)
\e\bigl(\eta(g(n))\bigr)\bigr|\geq
\frac{c_1}{2D}-\frac{1}{N}\geq \frac{c_1}{4D}.
$$
Furthermore, since $\norm\eta\leq D$,  the function $x\mapsto
\e(\eta(x))$, defined on $X$, is Lipschitz with constant at most
$C_1D$ for some $C_1:=C_1(X)$, and    has integral $0$ since $\eta$ is
a non-trivial horizontal character. Therefore,  the sequence
$(g(n)\cdot e_X)_{n\in[N]}$ is not totally $(cD^{-2})
$-equidistributed in $X$ where  $c:= c_1/(4C_1)$, completing the proof.
\end{proof}

\subsection{Some consequences of the quantitative Leibman theorem}

 We give two corollaries that are going to be used in subsequent sections.
  We caution the reader that  in both statements  the polynomiality of the sequence and
   the quantitative Leibman theorem are  used in a crucial way.
\begin{corollary}
\label{cor:XXprime}
Let $X:=G/\Gamma$ be a nilmanifold and  $\Gamma'$ be a discrete subgroup of $G$ containing $\Gamma$. Let $X':=G/\Gamma'$ and suppose that $G$ is endowed with the same rational filtration $G_\bullet$ for both nilmanifolds $X$ and $ X'$.
For every $\ve>0$ there exists $\delta:=\delta(X,X',\ve)>0$ such that the  following holds: For every sufficiently large $N\in \N$, depending only on $X$, $X'$, $\ve$,
if  $g\in\poly(G_\bullet)$ and  $(g(n)\cdot e_{X})_{n\in[N]}$ is totally $\delta$-equidistributed in $X'$, then  $(g(n)\cdot e_X)_{n\in[N]}$ is totally $\ve$-equidistributed in $X$.
\end{corollary}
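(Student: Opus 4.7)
The plan is to argue by contrapositive, combining the quantitative Leibman Theorem~\ref{th:Leibman} applied to $X$ with its converse Lemma~\ref{lem:Leibman_Inverse} applied to $X'$. Suppose $(g(n)\cdot e_X)_{n\in[N]}$ is not totally $\ve$-equidistributed in $X$; then there exist $D:=D(X,\ve)$ and a non-trivial horizontal character $\eta$ of $X$ with $\norm\eta\leq D$ and $\norm{\eta\circ g}_{C^\infty[N]}\leq D$. Since the sequence $(g(n)\cdot e_{X'})_{n\in[N]}$ in $X'$ is the projection of $(g(n)\cdot e_X)_{n\in[N]}$ under the covering $X\to X'$, it suffices to show that it fails to be totally $\delta$-equidistributed in $X'$ for an appropriate $\delta=\delta(X,X',\ve)$.

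The heart of the argument is to promote $\eta$ to a non-trivial horizontal character of $X'$. Since $\Gamma$ is co-compact in $G$ and $\Gamma\subset\Gamma'$ is discrete, $\Gamma'$ is also co-compact in $G$ and the set $\Gamma'/\Gamma$ is discrete inside the compact space $G/\Gamma$, hence finite of some order $k_0$ depending only on $X$ and $X'$. Passing to the abelianization $A:=G/G_2\cong\R^{m-m_2}$, the images $\overline{\Gamma}\subset\overline{\Gamma'}$ are lattices in $A$ and the finite quotient group $\overline{\Gamma'}/\overline{\Gamma}$ has order $k_1\leq k_0$. The horizontal character $\eta$ factors through $A$, vanishes on $\overline\Gamma$, and its restriction to $\overline{\Gamma'}/\overline{\Gamma}$ takes values in a subgroup of $\T$ of order dividing $k_1$. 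Consequently $\tilde\eta:=k_1\eta$ vanishes on $\Gamma'$ and descends to a horizontal character of $X'$; it is non-trivial because $G$ is connected, so $\eta$ surjects onto $\T$ and hence so does $\tilde\eta$.

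It remains to bound $\norm{\tilde\eta}$ and $\norm{\tilde\eta\circ g}_{C^\infty[N]}$ measured on $X'$ and to invoke the converse Leibman lemma. The smoothness norm is immediate: $\norm{\tilde\eta\circ g}_{C^\infty[N]}\leq k_1 D$. For $\norm{\tilde\eta}$ one must compare the coefficients of $\tilde\eta$ in the (a priori unrelated) Mal'cev bases fixed on $X$ and on $X'$; these two bases differ by a linear change of coordinates that depends only on $X$ and $X'$, so $\norm{\tilde\eta}\leq C\,k_1 D$ for some $C:=C(X,X')$. Setting $D':=C k_1 D$ and applying Lemma~\ref{lem:Leibman_Inverse} to $X'$ yields, for $N$ large enough in terms of $X,X',\ve$, a constant $c:=c(X')>0$ such that $(g(n)\cdot e_{X'})_{n\in[N]}$ is not totally $c(D')^{-2}$-equidistributed in $X'$. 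Choosing $\delta:=c(D')^{-2}$ gives the claimed dependence $\delta=\delta(X,X',\ve)$.

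The one genuinely non-formal step is the observation that the index $[\Gamma':\Gamma]$ is finite and controls exactly the integer $k_1$ by which $\eta$ must be multiplied in order to descend to $X'$; this is where both the co-compactness of $\Gamma$ and the discreteness of $\Gamma'$ are used. The remaining difficulty is purely bookkeeping: since $X'$ is not a sub-nilmanifold of $X$ in the sense of Section~\ref{subsec:subnil}, there is no canonical comparison between the two Mal'cev bases, but the distortion between them is a single constant depending on $X$ and $X'$, which can be absorbed into $C(X,X')$.
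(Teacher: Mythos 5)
Your proposal is correct and follows essentially the same route as the paper: apply Theorem~\ref{th:Leibman} to $X$ to extract a horizontal character $\eta$ with controlled norm and smoothness, promote it to a horizontal character of $X'$ by multiplying by an integer depending only on $[\Gamma':\Gamma]$ (the paper uses $\ell$ with $\gamma^\ell\in\Gamma$ for all $\gamma\in\Gamma'$, which amounts to the same thing as your $k_1$), absorb the discrepancy between the two Mal'cev bases into a constant $C(X,X')$, and conclude with Lemma~\ref{lem:Leibman_Inverse} applied to $X'$. The differences are only cosmetic.
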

\begin{proof}
Since $\Gamma$ is co-compact and $\Gamma'$ is closed  in $G$, $\Gamma$ is co-compact in $\Gamma'$; since $\Gamma'$ is discrete, $\Gamma$ has finite index in $\Gamma'$.
 It follows that  the natural projection $X\to X'$ is finite to one
 and there exists an  $\ell\in\N$, depending on $X$ and $X'$, such that $\gamma^\ell\in \Gamma$ for every $\gamma\in\Gamma'$. Therefore, for every horizontal character $\eta$ of $X$ (meaning a group homomorphism $G\to\T$ with a trivial restriction to $\Gamma$), $\eta^\ell$ has a trivial restriction to $\Gamma'$ and thus is a horizontal character of $X'$.

Suppose now that the sequence $(g(n)\cdot e_X)_{n\in[N]}$ is not totally $\ve$-equidistributed in $X$.
By Theorem~\ref{th:Leibman} there exist $D:=D(X,\ve)$ and a horizontal character $\eta$ of $X$ with
$\norm\eta\leq D$ and $\norm{\eta\circ g}_{C^\infty[N]}\leq D$. Then $\eta^\ell$ is a horizontal character of $X'$ such that $\norm{\eta^\ell}\leq C\ell\norm\eta\leq C\ell D $  for some $C:=C(X,X')$\footnote{The constant $C$ arises from the fact that the identifications $G/(G_2\Gamma)=\T^{m-m_2}$ and $G/(G_2\Gamma')=\T^{m-m_2}$ are different.}
 and $\norm{\eta^\ell\circ g}_{C^\infty[N]}\leq \ell D$.
Lemma~\ref{lem:Leibman_Inverse} then provides a $\delta:=\delta(X,X',\ve)>0$ such that the sequence
$(g(n)\cdot e_{X})_{n\in[N]}$ is not totally $\delta$-equidistributed in $X'$.
This completes the proof.
\end{proof}

Properties of rational elements are given in Appendix~\ref{ap:A}, we only recall here that an element $g$ of $G$ is rational if $g^n\in\Gamma$ for some $n\in\N$.

\begin{corollary}
\label{cor:equid-alpha}
Let $X:=G/\Gamma$ be a  nilmanifold and $G'$ be a rational subgroup of $G$. Let  $X':=G'/(G'\cap\Gamma)$,    $\alpha$ be a rational element of $G$,  $G'_\alpha:=\alpha\inv G'\alpha$, and $X'_\alpha:= G'_\alpha/(G'_\alpha\cap\Gamma)$.
 Then there  exists a function $\rho_{X,X',\alpha}\colon \R_+\to\R_+$ with $\rho_{X,X',\alpha}(t)\to 0$ as $t\to 0^+$
  such that  the following holds:
For every sufficiently large $N\in \N$, depending only on $X,X',\alpha$, if $h\in\poly(G'_\bullet)$ and  $(h(n)\cdot e_{X})_{n\in [N]}$ is totally $t$-equidistributed in $X'$, then $( \alpha\inv h(n)\alpha\cdot e_{X})_{n\in [N]}$ is totally $\rho_{X,X',\alpha}(t)$-equidistributed in $X'_\alpha$.
\end{corollary}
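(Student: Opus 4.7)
The plan is to reduce the corollary to Corollary~\ref{cor:XXprime} via two formal transfers of equidistribution between commensurable or isomorphic nilmanifolds. Set $\Gamma_\alpha := \alpha \Gamma \alpha^{-1}$. Since $\alpha$ is rational, the lattices $\Gamma$ and $\Gamma_\alpha$ are commensurable (by the properties of rational elements recorded in Appendix~\ref{ap:A}; informally, if $\alpha^m \in \Gamma$ then conjugation by $\alpha$ acts with finite orbit on the resulting sequence of lattices, forcing $\Gamma \cap \Gamma_\alpha$ to have finite index in each). Consequently the subgroup
$$
\Gamma'_0 := G' \cap \Gamma \cap \Gamma_\alpha
$$
is a lattice in $G'$ of finite index in both $\Gamma' := G' \cap \Gamma$ and $\Gamma'_1 := G' \cap \Gamma_\alpha$, with index depending only on $X, X', \alpha$. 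Introduce the auxiliary nilmanifolds $Y_1 := G'/\Gamma'_1$ and $Y_2 := G'/\Gamma'_0$, both equipped with the induced filtration $G'_\bullet$ and a fixed choice of Mal'cev bases.

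The group isomorphism $\phi\colon G' \to G'_\alpha$ given by $\phi(g) := \alpha^{-1} g \alpha$ maps $\Gamma'_1$ exactly onto $G'_\alpha \cap \Gamma$, hence descends to a diffeomorphism $\bar\phi\colon Y_1 \to X'_\alpha$ that sends $h(n) \cdot e_{Y_1}$ to $\alpha^{-1} h(n) \alpha \cdot e_{X'_\alpha}$. By comparing the two Mal'cev metrics as in~\eqref{eq:embeddingXprime}, this diffeomorphism is bi-Lipschitz with constants depending only on $X, X', \alpha$, so total equidistribution on $X'_\alpha$ of $(\alpha^{-1} h(n) \alpha \cdot e_X)_{n \in [N]}$ is equivalent, up to a constant factor in the parameter, to total equidistribution on $Y_1$ of $(h(n) \cdot e_X)_{n \in [N]}$.

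It remains to link equidistribution on $X'$ with equidistribution on $Y_1$, which I do through $Y_2$. First, Corollary~\ref{cor:XXprime} applied to the pair $(Y_2, X')$—legitimate because $\Gamma'_0 \subset \Gamma'$ are two discrete subgroups of $G'$ and both nilmanifolds share the same filtration $G'_\bullet$—furnishes, for every $\ve > 0$, a $\delta(\ve) > 0$ such that if $(h(n) \cdot e_X)_{n \in [N]}$ is totally $\delta(\ve)$-equidistributed in $X'$, then it is totally $\ve$-equidistributed in $Y_2$, provided $N$ is sufficiently large. Second, since $\Gamma'_0 \subset \Gamma'_1$, the natural quotient $p\colon Y_2 \to Y_1$ is a finite smooth covering, so a mean-zero Lipschitz function on $Y_1$ pulls back under $p$ to a mean-zero Lipschitz function on $Y_2$, with Lipschitz norm multiplied by a constant depending only on $X, X', \alpha$; hence total $\ve$-equidistribution on $Y_2$ yields total equidistribution on $Y_1$ with a proportional parameter. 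Composing the three transfers $X' \to Y_2 \to Y_1 \to X'_\alpha$ produces the required function $\rho_{X, X', \alpha}$ with $\rho_{X, X', \alpha}(t) \to 0$ as $t \to 0^+$. The only substantive input is Corollary~\ref{cor:XXprime}; the main expositional obstacle is the bookkeeping of the Mal'cev-basis-dependent constants, all of which depend only on $X, X', \alpha$ as required.
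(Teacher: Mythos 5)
Your proposal is correct and follows essentially the same route as the paper: both pass through the intermediate nilmanifolds $G'/(G'\cap\Gamma\cap\Gamma_\alpha)$ and $G'/(G'\cap\Gamma_\alpha)$ (your $Y_2$ and $Y_1$, the paper's $\wt X_\alpha$ and $X''_\alpha$), invoke Corollary~\ref{cor:XXprime} for the first transfer, the finite Lipschitz covering for the second, and the conjugation diffeomorphism for the third, with the commensurability input supplied by Lemma~\ref{lem:finite-index}. Your bookkeeping of the conjugation direction ($\phi(g)=\alpha^{-1}g\alpha$ carrying $G'\cap\alpha\Gamma\alpha^{-1}$ onto $G'_\alpha\cap\Gamma$) is in fact more carefully stated than the paper's.
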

\begin{remark}
Recall that since  $G'$ is a rational subgroup of $G$, $G'\cap\Gamma$ is co-compact in $G'$ and  $X'$ is identified with the sub-nilmanifold $G'\cdot  e_{X}$ of $X$.
 By Lemma~\ref{lem:Ap0}, $G'_\alpha$ is also a rational subgroup of $G$ and thus
$G'_\alpha\cap\Gamma$ is co-compact in $G'_\alpha$ and $X'_\alpha= G'_\alpha\cdot  e_{X}$. Furthermore, we have
$h_\alpha\in \poly(G'_{\alpha\bullet})$ where  $h_\alpha(n):=\alpha\inv h(n)\alpha$ and
$G'_{\alpha\bullet}:=\alpha\inv G'_\bullet\alpha$.
\end{remark}
\begin{proof}
To ease notation, in this proof we leave the dependence on $X$, $X'$, $\alpha$ implicit.

We start by using Lemma~\ref{lem:finite-index} in the Appendix, it gives   that  $G'\cap\Gamma\cap (\alpha\inv \Gamma\alpha)$ has finite index in the two groups $G'\cap\Gamma$ and $G'\cap (\alpha\inv\Gamma\alpha)$.
In particular, $G'\cap\Gamma\cap (\alpha\inv\Gamma\alpha)$ is discrete and co-compact in $G'$. We let $ \wt{X}_\alpha:=G'/(G'\cap\Gamma\cap (\alpha\inv\Gamma\alpha))$.

By Corollary~\ref{cor:XXprime},  there exists a function $\psi\colon\R_+\to\R_+$ with $\psi(t)\to 0$ as $t\to 0^+$,  such that the following holds: If $N$ is sufficiently large and the polynomial sequence
$(h(n))_{n\in[N]}$ in $G'$ is such that the sequence
$(h(n)\cdot e_{X})_{n\in[N]}$ is totally $t$-equidistributed in $X'$, then  the sequence $(h(n)\cdot e_{\wt{X}_\alpha})_{n\in[N]}$ is totally $\psi(t)$-equidistributed in $\wt{X}_\alpha$.

Furthermore, $G'\cap \alpha\inv\Gamma\alpha$ is discrete and co-compact in $G'$ and we let
$X''_\alpha:=G'/(G'\cap (\alpha\inv\Gamma\alpha))$.
The natural projection $\wt{X}_\alpha\to X''_\alpha$  is Lipschitz with Lipschitz constant $C$. If the sequence $h$ is as above, then  the image of the sequence $(h(n)\cdot e_{\wt{X}_\alpha})_{n\in[N]}$ under this projection is
$(h(n)\cdot e_{ X''_\alpha})_{n\in[N]}$ and this sequence is totally $(C\psi(t))$-equidistributed in $X''_\alpha$.

The conjugacy map $g\mapsto \alpha\inv g\alpha$ is an isomorphism from $ G'$ onto $G'_\alpha$ and maps $\alpha\inv\Gamma\alpha$ onto $\Gamma$. Thus, it induces a diffeomorphism from the nilmanifold $X''_\alpha$ onto $X'_\alpha$. If the Mal'cev basis of  $X''_\alpha$ is chosen so that its image under the conjugacy is the Mal'cev basis of $X'_\alpha$, this diffeomorphism is an isometry.
Then the image of the sequence
$(h(n)\cdot e_{ X''_\alpha})_{n\in[N]}$  under this isometry is
$(\alpha\inv h(n)\alpha\cdot e_X)_{n\in[N]}$, and thus this last sequence is totally $(C\psi(t))$-equidistributed in $X'_\alpha$. This completes the proof.
\end{proof}


\subsection{The factorization theorem \cite{GT12a}}\label{sec:facto}
Recall that a nilmanifold $X:=G/\Gamma$ is implicitly endowed with a filtration $G_\bullet$. This
defines a Mal'cev basis for $X$ which in turn is used to define a metric on $X$.
 Following our conventions, every
  sub-nilmanifold $X':=G'/(G'\cap \Gamma)$ is endowed with the filtration $G'_\bullet$ given by $G'^{(j)}:=G'\cap G^{(j)}$ for every $j\in \N$.
 \begin{definition} Let $M,N\in \N$.
\begin{enumerate}
\item An element $\gamma\in G$ is {\em $M$-rational} if  $\gamma^m\in\Gamma$ for some integer $m$ with $1\leq m\leq M$; for more properties of rational elements see Appendix~\ref{ap:A}.
\item
 A finite sequence $(g(n))_{n\in[N]}$ in $G$ is {\em $M$-rational} if all its terms are $M$-rational.
\item
A finite sequence $(\epsilon(n))_{n\in[N]}$ in $G$ is {\em $(M,N)$-smooth} if
$d_G(1_G,\epsilon(n))\leq M$ for every $n\in[N]$ and  $d_G(\epsilon(n),\epsilon(n+1))\leq M/N$ for every $n\in[N-1]$.
\end{enumerate}
\end{definition}

The next result of Green and Tao~\cite{GT12a} is used multiple times subsequently.

\begin{theorem}[Factorization of polynomial sequences~\mbox{\cite[Theorem 1.19]{GT12a}}]
\label{th:FactoGT}
Let $X:=G/\Gamma$ be a  nilmanifold.
For  every $M\in \N$ there exists a finite family $\CF(M)$ of sub-nilmanifolds of $X$, that increase with $M$,  each of the form $X':=G'/\Gamma'$, where $G'$ is a rational subgroup of $G$ and $\Gamma':=G'\cap \Gamma$, such that the following holds:
For every function $\omega\colon \N\to\R^+$   there exists a positive integer  $M_1:=M_1(X,\omega)$, and for every $N\in\N$ and every polynomial sequence $g\in\poly(G_\bullet)$, there exist $M\in\N$ with $ M\leq M_1$, a sub-nilmanifold $X'\in \CF(M)$, and a factorization
 $$
g(n)=\epsilon(n)g'(n)\gamma(n), \quad n\in [N],
$$
where $\epsilon,g',\gamma\in \poly(G_\bullet)$ and
\begin{enumerate}
\item
\label{it:smooth0}
$\epsilon\colon [N]\to G$ is $(M,N)$-smooth;
\item
\label{it:equid0}
 $g'\in \poly(G'_\bullet)$  and the sequence $(g'(n)\cdot e_X)_{n\in[N]}$ is totally $\omega(M)$-equidistributed in $X'$
 with the metric $d_{X'}$ induced by the filtration $G'_\bullet$;
\item
\label{it:periodic0}
 $\gamma\colon [N]\to G$ is $M$-rational and $(\gamma(n)\cdot e_X)_{n\in [N]}$ has period at most $M$.
\end{enumerate}
\end{theorem}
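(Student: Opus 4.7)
The plan is to prove this by an inductive reduction on the dimension of $G$, with the quantitative Leibman theorem (Theorem~\ref{th:Leibman}) serving as the iterative engine. At each stage one of two things happens: either $(g(n)\cdot e_X)_{n\in[N]}$ is already totally $\omega(M)$-equidistributed on the current ambient nilmanifold (in which case we finish with the trivial factorization $\epsilon = \gamma = 1_G$, $g' = g$), or Theorem~\ref{th:Leibman} supplies a non-trivial horizontal character $\eta$ with $\norm{\eta} \leq D$ and $\norm{\eta \circ g}_{C^\infty[N]} \leq D$, where $D = D(X,\omega(M))$ depends only on $X$ and the target equidistribution level. This character is then used to peel off a smooth factor and a rational-periodic factor, leaving behind a polynomial that is confined to a rational sub-nilmanifold of strictly smaller dimension.

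For the single-step extraction, the polynomial $\phi := \eta \circ g\colon\N\to\T$ has $\norm{\phi}_{C^\infty[N]} \leq D$, which by a Dirichlet-type argument on its coefficients allows us to write $\phi(n) = \alpha(n) + \beta(n) \bmod 1$, where $\beta$ has rational coefficients of denominator bounded by some $M = M(D)$ (and is therefore periodic of period at most $M$), while $\alpha$ is extremely slowly varying, satisfying $|\alpha(n+1) - \alpha(n)| \leq M/N$. Lifting this decomposition back to $G$ through the horizontal torus yields an $(M,N)$-smooth polynomial $\epsilon$ and an $M$-rational polynomial $\gamma$ of period at most $M$ such that $\eta(\epsilon(n)\inv g(n) \gamma(n)\inv) \equiv 0$; that is, $g'(n) := \epsilon(n)\inv g(n) \gamma(n)\inv$ takes values in $\ker(\eta)$. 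Because $\norm{\eta} \leq D$, there are only finitely many possible such kernels (and hence finitely many possible connected rational sub-nilmanifolds cut out in this way), so the sub-nilmanifolds produced by the whole argument can be collected into a finite family $\CF(M)$, increasing with $M$. One then applies the inductive hypothesis to $g'$ viewed as a polynomial sequence on the sub-nilmanifold $G' \cdot e_X$ with its induced filtration, and combines the smooth, equidistributed, and rational factors using the fact that $\poly(G_\bullet)$ is a group under pointwise multiplication.

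The main obstacle will be making the quantitative bookkeeping uniform: the bound $M_1(X,\omega)$ must depend only on $X$ and $\omega$, not on $g$ or $N$. Each iteration demands a sharper equidistribution precision than the previous one (to absorb the errors introduced by the $\epsilon$ and $\gamma$ already extracted) while reducing the dimension of the ambient sub-nilmanifold by at least one; thus the total number of iterations is bounded a priori by $\dim(G)$, and $M_1$ may be constructed by a downward recursion starting from $\omega$, giving a finite function of $X$ and $\omega$ alone. The genuinely delicate points are: ensuring that the successive smooth pieces compose into a single $(M,N)$-smooth factor $\epsilon$ and the successive rational pieces into a single $M$-rational periodic factor $\gamma$ (which requires careful tracking of how Mal'cev coordinates behave under the passage to sub-nilmanifolds), and verifying that $g'$ remains in $\poly(G'_\bullet)$ for the induced filtration throughout. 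Controlling the interaction between the chosen Mal'cev bases on the nested sub-nilmanifolds, and in particular keeping the implicit equivalence constants in~\eqref{eq:embeddingXprime} under control across the iteration, is what carries most of the technical weight.
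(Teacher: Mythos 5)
The paper does not actually prove this statement---it is imported verbatim from Green and Tao \cite[Theorem 1.19]{GT12a}, with only a remark that the argument there extends from $\omega(M)=M^{-A}$ to arbitrary $\omega$---but your sketch is a faithful reconstruction of their proof: the iterated Leibman dichotomy, the splitting of $\eta\circ g$ into a slowly varying part plus a bounded-denominator rational (hence periodic) part, the descent to the connected rational subgroup associated to $\ker\eta$ with its induced filtration, and termination after at most $\dim G$ steps with $M_1$ built by downward recursion from $\omega$. The same scheme, in its abelian incarnation, is carried out in full in the paper's own proof of Theorem~\ref{th:FactoGTmulti}, so your outline matches both the cited source and the paper's internal analogue.
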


\begin{remarks}

(1)
In~\cite{GT12a} this result is stated only for a function $\omega$ of the form $\omega(M)=M^A$ for some $A>0$ but the same argument shows that it holds for all functions $\omega\colon \N \to \R^+$ (in fact  this more general statement is  proven in \cite[Lemma~2.10]{GT10}).

 (2)
 In~\cite{GT12a}, the family $\CF(M)$ is defined as the collection of all sub-nilmanifolds $X':=G'/(G'\cap\Gamma)$ of $X$ admitting a Mal'cev basis that consists of  $M$-rational combinations of the elements of the Mal'cev basis $\CX$ of $X$.  We do not need this precise description of $\CF(M)$; what is important for us is   that
 each  $\CF(M)$ is finite and the  family
$(\CF(M))_{M\in \N}$   depends only on $X$ (and the filtration $G_\bullet$).
The number $M_1$ in Theorem~\ref{th:FactoGT} corresponds to the quantity written as $M_0^{O_{A,m,d}(1)}$ in \cite[Theorem 1.19]{GT12a}.
In our case, what is  important  is that  $M_1$
depends only on $X$ and on $\omega$.

(3)  Simple examples (see \cite[Section~1]{GT12a}) show  that if  $G_\bullet$ is the natural filtration in $G$, then
the
filtration $G'_\bullet$ may not be the natural one in $G'$. Furthermore,   even if we start with
a ``linear'' sequence   $(g^n)_{n\in[N]}$ in $G$, we  may end up with a ``quadratic'' sequence
$(h^{n^2})_{n\in[N]}$ in $G'$ where $h\neq \text{id}_G$.
\end{remarks}

\subsection{Some constructions  related to the factorization theorem}
\label{subsec:constr-fact}
We record here some terminology and constructions  related to
Theorem~\ref{th:FactoGT} that are used multiple times in the sequel.
Throughout, we assume that a nilmanifold $X:=G/\Gamma$ with a rational filtration $G_\bullet$ is given; then Theorem~\ref{th:FactoGT}  provides  for
any given  $M\in\N$ a  finite family of sub-nilmanifolds $\CF(M)$ of $X$ that increases with $M$.

By Corollary~\ref{cor:M-rat}, for every $M\in \N$ there exists  a
finite subset $\Sigma(M)$ of $G$, consisting of
$M$-rational elements, such that  every $M$-rational  element
$\beta\in G$ can be written as $\beta=\alpha\alpha'$ with $\alpha\in\Sigma(M)$ and $\alpha'\in\Gamma$.  We can assume  that $\one_G\in\Sigma(M)$.
Let $X':=G'/( G'\cap \Gamma)$ be a nilmanifold belonging to the family $\CF(M)$ and let
$\alpha$ belong to the finite set $\Sigma(M)$. By Lemma~\ref{lem:Ap0}, $\alpha\inv G'\alpha$ is a rational subgroup of $G$; we let $$
G'_\alpha:= \alpha\inv G'\alpha\ \text{ and }\ X'_\alpha:=G'_\alpha/(G'_\alpha\cap \Gamma)
=G'_\alpha\cdot e_{X}.
$$
Recall that $G'$ and each group $G'_\alpha$ is  endowed with the induced filtration given by $G'^{(i)}_\alpha:=G^{(i)}\cap G'_\alpha$. Since $G^{(i)}$ is a normal subgroup of $G$, we have $G'^{(i)}_\alpha= \alpha\inv G'^{(i)}\alpha$.
Finally,  let
$$
\CF'(M):=\{X'_\alpha\colon X'\in\CF(M),\ \alpha\in\Sigma(M)\}.
$$

\subsubsection{Defining the constant $H(X,M)$}
\label{subsubsec:defH}
By Lemma~\ref{lemp:ap1}, there exists a positive integer  $H:=H(X,M)$ with
 the following properties:
\begin{enumerate}
\item
\label{it:conj}
For every $\alpha\in \Sigma(M)$ and for every $g\in G$ with $d_G(g,1_G)\leq M$,  we have $d_G(\alpha\inv g\alpha,1_G)\leq Hd_G(g,1_G)$;

\item
\label{eq:kappa1}
For every $\alpha\in \Sigma(M)$, every $g\in G$  with
$d_G(g,\one_G)\leq M$,  and every  $x,y\in X$, we have  $d_X(g\alpha\cdot
x,g\alpha\cdot y)\leq H d_X(x,y)$;
\item
\label{it:Lip-H}
 Therefore, for every $\Phi\in \lip(X)$, every $\alpha\in \Sigma(M)$, and every $g\in G$  with $d_G(g,\one_G)\leq M$,  we have
 $\norm{\Phi_{g\alpha}}_{\lip(X)}\leq
H\norm\Phi_{\lip(X)}$ where $\Phi_{g\alpha}(x):=\Phi(g\alpha\cdot x)$.
\end{enumerate}

Note that the distance on a nilmanifold $X'_\alpha \in \CF'(M)$ is not the distance induced by its inclusion in $X$.
 However, the inclusion $X'_\alpha\subset X$ is a smooth embedding and thus we can assume that
\begin{enumerate}
\setcounter{enumi}{3}
\item
\label{eq:LipXXprime}
For  every nilmanifold $X'_\alpha\in \CF'(M)$ and for every
$x,y\in X'_\alpha,$ we have
$$
H\inv d_{X'_\alpha}(x,y)\leq d_X(x,y)\leq Hd_{X'_\alpha}(x,y).
$$
\end{enumerate}
By Corollary~\ref{cor:equid-alpha}, there exists a function $\rho_X\colon\N\times\R_+\to\R_+$, such that $\rho_X(M,t)$
decreases to $0$ as $t\to 0^+$ and $M$ is fixed, and  satisfies:
\begin{enumerate}
\setcounter{enumi}{4}
\item
\label{it:def-rho}
For every nilmanifold $X':=G'/(G'\cap \Gamma)\in\CF(M)$, for every $\alpha\in\Sigma(M)$ and every $t>0$, if $N\in \N$ is sufficiently large depending on $X$, $M$,  and $t$, and if
$(h(n))_{n\in[N]}$ is a polynomial sequence in $G'$ such that the sequence
$(h(n)\cdot e_X)_{n\in[N]}$ is totally $t$-equidistributed in $X'$, then
 the sequence  $(\alpha\inv h(n)\alpha)_{n\in\N}$ belongs to
 $\poly(G'_{\alpha\bullet})$ and  $(\alpha\inv h(n)\alpha\cdot e_{X})_{n\in[N]}$
is totally $\rho_X(M,t)$-equidistributed in $X'_\alpha$.
\end{enumerate}

\subsubsection{Correlation estimates and factorization}
\label{susubsec:fact}
Let $N\in \N$ and suppose that the sequence $g\colon \N\to G$ factorizes as
$$
g(n)=\epsilon(n)g'(n)\gamma(n), \quad n\in [N],
$$
where  $\epsilon$ is $(M,N)$-smooth,
 $g'\colon \N\to G$ is an arbitrary sequence,
$\gamma$ is $M$-rational and $(\gamma(n)\cdot e_X)_{n\in [N]}$ has   period at most $M$.
Note  that we do not assume that the sequence $g$ is polynomial and we do not impose an equidistribution assumption on $g'$.


We are often given a correlation estimate of the form
\begin{equation}\label{E:g-correlation}
|\E_{n\in [N]} a(n)\, \Phi(g(n)\cdot e_X)|\geq \delta,
\end{equation}
 for some $\delta\in(0,1)$,
 $a\colon[N]\to\C$  bounded by $1$, and  $\Phi\in \lip(X)$ with
$\norm\Phi_{\lip(X)}\leq 1.$
 We want to deduce a similar estimate with $g'(n)$, or some other closely related sequence,  in the place of $g(n)$. We do this as follows:

Let
\begin{equation}
\label{eq:def-L}
L:=\bigl\lfloor N \frac{\delta}{16\,H^2M^2}\rfloor, \qquad N_1:=\lceil 16\,H^2M^2/\delta\rceil
\end{equation}
where $H:=H(M,X)$ satisfies Properties~\eqref{it:conj}-\eqref{eq:LipXXprime} in Section~\ref{subsubsec:defH}.
Henceforth, we assume  that  $N\geq N_1$, then  $L\geq 1$ and
\begin{equation}
\label{eq:boundL}
N\frac \delta{32\,H^2M^2}\leq L\leq N\frac \delta{16\,H^2M^2}.
\end{equation}
Since $\delta\leq 1$ and $H\geq 1$, we have that
$$
2ML\leq N.
$$
Let $K$ be the (least) period of the sequence $(\gamma(n)\cdot e_X)_{n\in[N]}$ and let   $P\subset [N]$ be an arithmetic progression of step $K$ and length between $L$ and $2L$. Recall that $K\leq M$. Let $n_0$ be the smallest element of the progression $P$.
We write
\begin{gather}
\label{eq:defalpha}
\gamma(n_0)=\alpha\alpha'\ \text{ where }\ \alpha\in\Sigma(M)\ \text{ and }\ \alpha'\in\Gamma;\\
\label{eq:defg''}
g_\alpha'(n):= \alpha\inv g'(n)\alpha\ \text{ for }\ n\in[N];\\
\label{eq:defPhi'}
\Phi_\alpha(x):=\Phi( \epsilon(n_0)\alpha \cdot x)\ \text{ for }\ x\in X.
\end{gather}

For every  $n\in P$ we have
$$
\gamma(n)\cdot e_X=\gamma(n_0)\cdot e_X=\alpha\alpha'\cdot e_X,
$$
 hence
$$
\Phi(g(n)\cdot e_X)=
\Phi_\alpha\bigl(\alpha\inv\epsilon(n_0)\inv\epsilon(n)\, \alpha \, g_\alpha'(n)\, \alpha'\cdot e_X\bigr).
$$
Furthermore, for $n\in P$ we have $n-n_0=jK$ for some $j$ with $0\leq j<2L$ and thus $0\leq n-n_0\leq 2LK\leq 2LM$. Since the sequence $\epsilon$ is $(M,N)$-smooth we have
$d_G(\epsilon(n_0)\inv\epsilon(n),1_G)\leq 2M^2L/ N$, and by Property~\eqref{it:conj} above we get
$$
d_G(\alpha\inv \epsilon(n_0)\inv\epsilon(n)\alpha,1_G)\leq 2HM^2L/ N.
$$
Property~\eqref{it:Lip-H}  gives that
\begin{equation}
\label{eq:Phi'lip}
\norm{\Phi_\alpha}_{\lip(X)}\leq H.
\end{equation}
Combining the above, we get for every $n\in P$ that
$$
\bigl|a(n)\, \Phi(g(n)\cdot e_X)-a(n)\, \Phi_\alpha (g_\alpha'(n)\cdot e_X)\bigr|\leq 2H^2M^2\frac L{ N}.
$$
Averaging  on $[N]$, we get (recall that $P$ has at most $2L$ elements)
\begin{multline}
\label{eq:bound-on-P}
\bigl|\E_{n\in[N]}\one_{P}(n)\, a(n)\, \Phi(g(n)\cdot e_X) -
\E_{n\in[N]}\one_{P}(n)\, a(n) \, \Phi_\alpha(g_\alpha'(n)\cdot e_X )|\\
\leq
4H^2M^2\bigl(\frac L{ N}\bigr)^2
\leq \frac{\delta^2}{64\,H^2M^2}.
\end{multline}

Since  $N\geq N_1$ we have $L\geq 1$ and~\eqref{eq:boundL} holds.  Since
$2MK\leq 2ML\leq N$, we can  partition  the interval $[N]$ into arithmetic progressions of step $K$ and  length between $L$ and $2L$. The number of these progressions is bounded by $N/L$ and it  follows from~\eqref{E:g-correlation} that for one of them, say for $P_1$, we have
$$
\bigl|\E_{n\in[N]}\one_{P_1}(n)\, a(n)\, \Phi(g(n)\cdot e_X)\bigr|\geq\delta\frac LN.
$$
Estimate~\eqref{eq:bound-on-P} gives
\begin{equation}
\label{eq:New-Bound-on-P}
\bigl|\E_{n\in[N]}\one_{P_1}(n)\, a(n)\, \Phi_\alpha(g_\alpha'(n)\cdot e_X) \bigr|\geq\delta\frac LN-
\frac{\delta^2}{64\,H^2M^2}\geq \frac{\delta^2}{64\,H^2M^2}
\end{equation}
where the last inequality follows from~\eqref{eq:boundL}.

\section{Minor arc nilsequences-Preparatory work}
\label{S:minorarcs1}
 In this section and the next one our goal is to show that bounded multiplicative functions have small correlation with all minor-arc nilsequences,
 that is, nilsequences that arise from  totally equidistributed polynomial
 sequences on nilmanifolds.
This result is a central point in the proof of Theorems~\ref{T:DecompositionSimple} and \ref{T:DecompositionII} and it can be viewed as the higher order analogue of  Corollary~\ref{cor:katai}; linear sequences arising from numbers that are not well approximated by rationals with small denominators correspond to totally  equidistributed polynomial sequences on nilmanifolds.

 We remind the reader that for every nilmanifold $X:=G/\Gamma$, $G$ is endowed with a rational  filtration $G_\bullet$ and that polynomial sequences in $G$ are assumed to have coefficients in this filtration and in particular have  degree  bounded by the degree of  the filtration.

\begin{theorem}[Key discorrelation estimate]
\label{th:discorrelation}
Let
 $X:=G/\Gamma$  be a  nilmanifold   and $\tau>0$.
  There exist  $\sigma:=\sigma(X,\tau)>0$ and $N_0:=N_0(X,\tau)$ such that for every  $N\geq N_0$  the following holds:
Suppose that $g\in \poly(G_\bullet)$  and
\begin{equation}
\label{Assumption}
|\E_{n\in[N]} \one_P(n)\,
 f (n)\, \Phi(g(n+k)\cdot e_X)|\geq\tau,
\end{equation}
for some integer  $k\in [-N,N]$, $ f \in \CM$,  $\Phi\colon X\to \C$  with  $\norm \Phi_{\lip(X)}\leq 1 $ and $\int \Phi\, dm_X=0$, and
arithmetic progression  $P$  in $[N]$.
Then
\begin{equation}
\label{Conclusion}
\text{the sequence } \ (g(n)\cdot e_X)_{n\in [N]} \text{ is  not totally } \sigma\text{-equidistributed in }\  X.
 \end{equation}
\end{theorem}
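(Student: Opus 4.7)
The plan is to argue contrapositively: assume $(g(n)\cdot e_X)_{n\in[N]}$ is totally $\sigma$-equidistributed in $X$ for a $\sigma=\sigma(X,\tau)$ to be chosen, and derive a contradiction with~\eqref{Assumption}. The shift $k$ is first absorbed by replacing $g$ with $g_k(n):=g(n+k)$, which still lies in $\poly(G_\bullet)$ and, since $|k|\le N$, remains totally equidistributed on $[N]$ with a constant factor loss depending only on $X$. We may then take $k=0$.

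The starting point is K\'atai's orthogonality criterion (Lemma~\ref{lem:katai}) applied to $a(n):=\one_P(n)\,\Phi(g(n)\cdot e_X)$, extended by zero outside $[N]$ so that $|a|\le 1$. With $\ve:=\tau$ it furnishes $\delta=\delta(\tau)$, $K=K(\tau)$ and primes $p<q<K$ satisfying
$$\bigl|\E_{n\in[\lfloor N/q\rfloor]}\one_P(pn)\,\one_P(qn)\,\Phi(g(pn)\cdot e_X)\,\overline{\Phi(g(qn)\cdot e_X)}\bigr|\ge\delta.$$
The multiplicative function $f$ has been eliminated, and the task reduces to a quantitative equidistribution statement for the polynomial sequence $h(n):=(g(pn),g(qn))$ in $G\times G$ paired against the Lipschitz function $\Psi(x,y):=\Phi(x)\overline{\Phi(y)}$ on $X\times X$. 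Note that $\Psi$ has mean zero: $\int_{X\times X}\Psi\,d(m_X\otimes m_X)=|\int_X\Phi\,dm_X|^2=0$.

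The sequence $h$ belongs to $\poly((G\times G)_\bullet)$ for the product filtration. Applying the quantitative Leibman theorem (Theorem~\ref{th:Leibman}) to $h$, the lower bound $\ge\delta$ on the above average forces the existence of a nontrivial horizontal character $\eta=(\eta_1,\eta_2)$ of $G\times G$ of norm at most $D=D(X,\tau)$ with
$$\norm{\psi}_{C^\infty[\lfloor N/q\rfloor]}\le D, \quad\text{where}\quad \psi(n):=(\eta_1\circ g)(pn)+(\eta_2\circ g)(qn).$$
The plan is to distill from this a nontrivial horizontal character $\theta$ of $G$ with $\norm{\theta\circ g}_{C^\infty[N]}$ bounded in terms of $X$ and $\tau$; then Lemma~\ref{lem:Leibman_Inverse} (converse Leibman) produces $\sigma=\sigma(X,\tau)>0$ contradicting total equidistribution of $g$ and concluding the proof.

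The main obstacle is precisely this last extraction. If one of $\eta_1,\eta_2$ vanishes, Lemma~\ref{lem:8.4} (control of smoothness under dilation) delivers $\theta$ at once. When both are nonzero, expanding $(\eta_i\circ g)(n)=\sum_{0\le j\le d}\alpha_{i,j}\binom{n}{j}\bmod 1$ shows that smallness of $\norm\psi_{C^\infty[\lfloor N/q\rfloor]}$ is equivalent to each $\alpha_{1,j}p^j+\alpha_{2,j}q^j$ being close to an integer with error $O(D/N^j)$; a single such equation does not decouple $\alpha_{1,j}$ from $\alpha_{2,j}$. The way around this is to apply the factorization theorem (Theorem~\ref{th:FactoGT}) to $h$ first, factoring it as $h=\epsilon\,h'\,\gamma$ with $h'$ totally equidistributed on a sub-nilmanifold $Y\subset X\times X$; the distinctness of $p\neq q$ combined with the mean-zero condition on $\Phi$ forces $\int_Y\Psi\,dm_Y=0$, so that the contribution of $h'$ to the average is negligible, while the smooth $\epsilon$ and rational periodic $\gamma$ are handled by slicing along arithmetic progressions as in Section~\ref{susubsec:fact}. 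Iterating this analysis across the vertical layers of $X$ (Section~\ref{subsec:vertical}) reduces ultimately to a case where the decoupling is immediate, and the accumulated bounds produce the required $\theta$ on $G$.
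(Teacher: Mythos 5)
There is a genuine gap at the decisive step. After Kátai's criterion you correctly reduce to controlling $\E_{n}\one_{P_1}(n)\,\Phi(g(pn)\cdot e_X)\overline{\Phi}(g(qn)\cdot e_X)$, and after the factorization theorem the problem becomes showing that $\int_Y (\Phi\otimes\overline\Phi)\,dm_Y=0$, where $Y=H/(H\cap(\Gamma\times\Gamma))$ is the sub-nilmanifold of $X\times X$ on which $h'$ equidistributes. You assert that ``the distinctness of $p\neq q$ combined with the mean-zero condition on $\Phi$ forces $\int_Y\Psi\,dm_Y=0$.'' This is false as stated: the mean-zero identity you compute is for the full product $X\times X$, but $Y$ is in general a proper sub-nilmanifold, and if, say, $Y$ were (close to) the diagonal one would get $\int_Y\Phi\otimes\overline\Phi\,dm_Y=\int_X|\Phi|^2\,dm_X>0$. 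The entire difficulty of the theorem is concentrated in ruling this out, and the paper emphasizes (with an explicit Heisenberg-group example) that the structure of $H$ cannot be pinned down directly.

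What the actual proof supplies, and what is missing from your proposal, is threefold. First, a vertical Fourier decomposition reduces to the case where $\Phi$ is a nilcharacter with \emph{non-zero} frequency (the zero-frequency case is handled by induction on the step $s$); mean zero alone is not enough. Second, the hypothesis that $(g(n)\cdot e_X)_{n\in[N]}$ is totally $\sigma$-equidistributed is used quantitatively on the horizontal torus: comparing two factorizations of $\pi_{Z\times Z}(h)$ (Proposition~\ref{P:PolysTorusQuant''} together with Lemma~\ref{L:contained}) yields the inclusion $R_{T_1,\ldots,T_d}\subset\pi_{Z\times Z}(H)$ with $T_1+\cdots+T_d=\T^m$, i.e.\ an inclusion of the form $\{(g_1^p\cdots g_d^{p^d},g_1^q\cdots g_d^{q^d})\}\subset H\cdot(G_2\times G_2)$. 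Third, the purely algebraic Proposition~\ref{P:alg} converts this, by taking iterated commutators, into the statement that the elements $u\in G_s$ with $(u^{p^j},u^{q^j})\in H$ generate $G_s$; since $p^j\neq q^j$ and the nilcharacter has non-zero frequency, averaging over the action of such a central element kills the integral over $Y$. None of these three ingredients appears in your argument, and the concluding suggestion to ``iterate across vertical layers until the decoupling is immediate'' does not substitute for them. Your preliminary attempt to apply the quantitative Leibman theorem directly to $h$ on $X\times X$ and decouple the characters $\eta_1,\eta_2$ is indeed a dead end for the reason you identify, but the repair you sketch does not close the argument.
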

 Recall that any sequence $g(n)=a_1^{p_1(n)}\cdots a_d^{p_d(n)}$, where $a_1,\ldots, a_d\in G$
and $p_1,\ldots,p_d\in \Z[t]$, belongs to $\poly(G_\bullet)$ for some appropriately chosen rational filtration $G_\bullet$. So
Theorem~\ref{th:discorrelation}  applies to all such sequences and
we get Theorem~\ref{T:DaboussiGeneral} as a direct consequence.

\subsection{Main ideas of the proof}\label{SS:ideasdisc}
Before moving to the rather delicate details  used
 in establishing Theorem~\ref{th:discorrelation}, we explain the skeleton of the proof for
  a variant of this result that
 contains some key  ideas used in the  proof of  Theorem~\ref{th:discorrelation} and
   suppresses
several technicalities
that obscure understanding. The main source of simplification in the  sketch given below
comes from  the   infinite nature  of the problem and the restriction to linear polynomial sequences.

Suppose that we seek to prove the following infinitary result:
If
 $X:=G/\Gamma$ is an $s$-step nilmanifold, with $G_s$ non-trivial,  and for some $a\in G$   the sequence
  $(a^n\cdot e_X)_{n\in \N}$  is  totally equidistributed in   $X$, then for every $\Phi\in C(X)$ with $\int \Phi\, dm_X=0$ we have
\begin{equation}
\label{E:lower1'}
\lim_{N\to +\infty}\sup_{ f \in \CM}|\E_{n\in[N]}
 f (n)\, \Phi(a^n\cdot e_X)|=0.
\end{equation}

 Using a vertical Fourier decomposition we reduce matters to the case where
 $\Phi$  is a nilcharacter of $X$ with non-zero frequency.
The orthogonality criterion of  K\'atai shows  that in order to prove \eqref{E:lower1'} it suffices
to show that for all distinct $p,q\in \N$ we have
  \begin{equation}\label{E:afterKatai}
\lim_{N\to +\infty} \E_{n\in[N]}
\Phi(a^{pn}\cdot e_X)\cdot \overline\Phi(a^{qn}\cdot e_X)=0.
  \end{equation}
  This motivates the study  of equidistribution properties of the sequence
  \begin{equation}\label{E:prodseq}
  \big((a^{pn}\cdot e_X, a^{qn}\cdot e_X)\big)_{n\in\N}.
  \end{equation}
  Lets take for granted that this sequence is equidistributed on a sub-nilmanifold $Y:=H/\Delta$ of $X\times X$.
We will be done if we manage to show that
\begin{equation}\label{E:intzero}
\int_Y (\Phi \otimes \overline{\Phi})\, dm_Y=0.
\end{equation}
This seemingly simple task turns out to be quite challenging, as the explicit structure of the possible nilmanifolds $Y$
seems very difficult to  determine (we have only managed to do this in the case $s=2$).\footnote{It is tempting to believe that  $Y:=H/\Delta$
where $H:=\{(g^pu_1,g^qu_2)\colon g\in G, u_1,u_2\in G_2\}$, $\Delta:=H\cap (\Gamma\times \Gamma)$. But this fails
even for the simplest non-Abelian nilmanifolds.
For example, let $G$ be the Heisenberg group, meaning,  $G:=\R^3$ with multiplication given by the formula  $(x,y,z)\cdot(x',y',z'):=(x+x',y+y'; z+z'+xy')$. Let $\Gamma:=\Z^3$,  $X:=G/\Gamma$, and $a:=(\alpha,\beta,0) $ with $\alpha$ and $\beta$ rationally independent. It is known that the sequence
 $(a^n\cdot e_X)_{n\in\N}$ is totally equidistributed in $X$. But if $\alpha$, $\beta$, and $\alpha \beta$ are rationally dependent, then for distinct integers $p,q$ the sequence  $(a^{pn}\cdot e_X,a^{qn}\cdot e_X)_{n\in\N}$ turns out to be  equidistributed in a sub-nilmanifold  of $X\times X$ that is strictly ``smaller'' than  $Y$.}
 Nevertheless, it is possible to extract some partial  information about the group $H$  defining the nilmanifold $Y$ that suffices for our purposes. To do this, as a first step, we study equidistribution properties of the  projection of  the sequence \eqref{E:prodseq} to the horizontal torus of the nilmanifold $X\times X$. Using the total equidistribution assumption for the sequence $(a^n\cdot e_X)_{n\in\N}$ we get the following inclusion
\begin{equation}\label{E:linearkey}
\{(g^p,g^q)\colon g\in G\}\subset H\cdot (G_2\times G_2).
\end{equation}
Taking iterated commutators of  the elements on the left we deduce the following key property
  \begin{equation}\label{E:keykey}
(u^{p^s},u^{q^s})\in H\  \text{ for every } \   u\in G_s.
\end{equation}
This easily implies that  the function $\Phi\otimes\overline{\Phi}$  is a nilcharacter of $Y$
   with non-zero frequency, hence it satisfies the sought-after zero integral  property stated in  \eqref{E:intzero}.

When the place of $a^n$ in \eqref{E:lower1'}
takes the  general polynomial sequence $g(n)$, the first few steps of our argument
remain the same. One important difference is that the inclusion \eqref{E:linearkey} fails in general  and
it has to be replaced with a more
complicated one.
For instance, suppose that
$g(n):=a^{n}b^{n^2}$ for some $a,b\in G$. Then we show that
there exist normal subgroups $G^1, G^2$  of $G$ such that $ G^1 \cdot G^2=G$
and
$$
\{(g_1^{p}g_2^{p^2},g_1^{q}g_2^{q^2})\colon g_1\in G^1, g_2\in G^2\}\subset H\cdot (G_2\times G_2).
$$
By taking iterated commutators of elements on the left we establish a property analogous to \eqref{E:keykey}, namely,
$$
\text{the set }\  U:=\{u\in G_s\colon (u^{p^j},u^{q^j})\in H \text{ for some } j\in \N\} \ \text{ generates } \ G_s.
$$
From this we can again extract the sought-after property \eqref{E:intzero}.

This gives a rather accurate  summary  of the skeleton of  the proof of Theorem~\ref{th:discorrelation} in the
idealized setting of infinitary mathematics.
 Unfortunately, the natural habitat of   Theorem~\ref{th:discorrelation}, is the world of finitary mathematics, and this
   adds
 a serious amount of technical complexity in the implementation of the previous plan.
 In Section~\ref{S:minorarcs2}  we state and prove two key ingredients
needed in  the proof of Theorem~\ref{th:discorrelation};  these  are   Propositions~\ref{P:PolysTorusQuant''} and
~\ref{P:alg}.  In the next section we combine these ingredients in order to implement
the previously sketched plan and finish the proof of Theorem~\ref{th:discorrelation}.

\subsection{Notation and conventions}
In this section and the next one,  $d$ and $m$ are positive integers representing the dimension of a torus $\T^m$ and the degree of a polynomial sequence on $\T^m$ respectively. Furthermore, $p,q$ are distinct positive integers.
We consider all these integers  as fixed throughout, and we stress that all other parameters introduced in this section  and the next  one depend implicitly on $d,m,p,q$.
This is not going to create problems for us, as in the course of proving Theorem~\ref{th:discorrelation}
 the integers $d,m,p,q$ can be taken to be  bounded by a constant that depends on $X$ and $\tau$ only.

We continue to represent elements of $\T^m$ and $\Z^m$ by bold letters. For $\bx\in\T^m$ and $\bh\in\Z^m$ we write $\norm \bx$ for the distance of $\bx$ from  $\bzero$, $\norm \bh=|h_1|+\dots+|h_m|$, and $\bh\cdot \bx=h_1x_1+\dots+h_mx_m$. Vectors consisting of $d$ elements of  $\T^m$ are written
$(\bg_1,\dots,\bg_d)$. Beware of the possible confusion between $\bg_j\in\T^m$ and $g_j$ representing the $j^{\text{th}}$-coordinate of $\bg\in \T^m$. Sequences in $\T^m$ are written as $\bg(n)$.

For finite sequences in $\T^m$ we use a slightly modified definition of smoothness: A finite sequence $(\bg(n))_{n\in[N]}$ on $\T^m$  is $(M,N)$-smooth if $\norm{\bg(n+1)-\bg(n)}\leq M/N$ for every $n\in[N-1]$.
An element $\balpha$ of $\T^m$ is $M$-rational if $n\balpha=0$ for some positive integer $n\leq M$. Throughout, by a sub-torus of $\T^m$ we mean a closed connected subgroup of $\T^m$ (perhaps the trivial one). A sub-torus of $\T^m$ is $M$-rational
 if its lift in $\R^m$ has a basis of vectors with integer coordinates of absolute value at most $M$.

\subsection{A property of  factorizations on the torus}
The next result will be used multiple times
in order to derive inclusions between various sub-tori of $\T^m$.

\begin{lemma}
\label{L:contained}
Let $L_1, L_2\in \N$ and $S_1,S_2$ be two  sub-tori of  $\T^m$. There exist $\delta_1:=\delta_1(S_1,S_2,L_1,L_2)>0$ and $N_1:=N_1(S_1,S_2, L_1,L_2)$ such that the following holds:
Let $N\geq N_1$ be an integer  and $\bg\colon[N]\to \T^m$ be an arbitrary sequence that admits the following factorizations
\begin{equation}\label{E:same}
\bg(n)= \bepsilon_i(n)+\bg_i'(n)+\bgamma_i(n), \quad n\in [N], \qquad i=1,2,
\end{equation}
where
\begin{enumerate}
\item
\label{it:smooth-1}
$\bepsilon_i\colon [N]\to \T^m$ are $(L_i,N)$-smooth for $i=1,2$;
\item
\label{it:equid-1a}
 $(\bg_1'(n))_{n\in [N]}$ takes values on  $S_1$ and is totally $\delta_1$-equidistributed on $S_1$;

\item
\label{it:equid-1b} $(\bg_2'(n))_{n\in [N]}$ takes values on  $S_2$;
\item
\label{it:periodic-1}
 $\bgamma_i\colon [N]\to \T^m$   have period at most $L_i$
 for $i=1,2$.
 \end{enumerate}
Then $S_1\subset S_2$.
\end{lemma}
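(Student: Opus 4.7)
The plan is to use Pontryagin duality for $\T^m$ to convert the non-inclusion $S_1\not\subset S_2$ into the existence of a small-norm character $\bh\in\Z^m$ that annihilates $S_2$ but is non-trivial on $S_1$, and then test the total $\delta_1$-equidistribution of $\bg_1'$ against this character. Arguing by contradiction, if $S_1\not\subset S_2$ the annihilator $S_2^\perp:=\{\bh\in\Z^m\colon\bh\cdot\bx=0\text{ in }\T\text{ for every }\bx\in S_2\}$ fails to be contained in $S_1^\perp$, and since annihilators of sub-tori of $\T^m$ are saturated subgroups of $\Z^m$ admitting generators of bounded norm, there will exist $\bh\in S_2^\perp\setminus S_1^\perp$ with $\norm\bh\leq D$ for some $D:=D(S_1,S_2)$. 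The function $\Phi(\bx):=\e(\bh\cdot\bx)$ will then have Lipschitz norm at most $1+2\pi D$ on $S_1$, satisfy $\int_{S_1}\Phi\,dm_{S_1}=0$ (its restriction to $S_1$ being a non-trivial character of that sub-torus), and obey $\Phi\circ\bg_2'\equiv 1$.

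Next I will exploit the fact that $\bg$ admits two factorizations. Subtracting the two instances of~\eqref{E:same} in $\T^m$ and applying $\bh$, the $\bg_2'$ contribution vanishes, giving
\[
\bh\cdot\bg_1'(n)=\bh\cdot(\bepsilon_2(n)-\bepsilon_1(n))+\bh\cdot(\bgamma_2(n)-\bgamma_1(n)),
\]
where the first summand oscillates in $\T$ by at most $D(L_1+L_2)/N$ per step, and the second is periodic of period dividing $L_1L_2$. Partitioning $[N]$ into consecutive intervals $I$ of length $\ell:=\lfloor N/(100D(L_1+L_2))\rfloor$ on which the smooth part has total variation at most $1/100$, and then selecting inside one such $I$ a residue class modulo $L_1L_2$ with the largest number of elements, I obtain an arithmetic progression $P\subset[N]$ of step $L_1L_2$ and length at least $\ell/(2L_1L_2)$ on which the periodic part is constant and $\e(\bh\cdot\bepsilon)$ varies by at most $1/10$. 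Consequently $\Phi\circ\bg_1'$ stays in a fixed half-plane on $P$ and I will obtain
\[
\bigl|\E_{n\in[N]}\one_P(n)\,\Phi(\bg_1'(n))\bigr|\geq \frac{|P|}{2N}\geq \frac{c_0}{D(L_1+L_2)L_1L_2}
\]
for some absolute constant $c_0>0$.

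This lower bound will contradict the total $\delta_1$-equidistribution of $\bg_1'$ on $S_1$ applied to the scaled function $\Phi/(1+2\pi D)$, which forces the same average to be at most $(1+2\pi D)\delta_1$ in absolute value. Choosing $\delta_1:=\delta_1(S_1,S_2,L_1,L_2)$ strictly smaller than $c_0/\bigl((1+2\pi D)D(L_1+L_2)L_1L_2\bigr)$ and $N_1:=N_1(S_1,S_2,L_1,L_2)$ large enough that $\ell\geq 2L_1L_2$ (so that $P$ is non-degenerate) closes the contradiction. The main obstacle I foresee is the first step, namely producing the character $\bh$ with norm controlled solely in terms of $S_1$ and $S_2$; this is the standard duality fact that the annihilator of a sub-torus is a saturated subgroup of $\Z^m$ and so has a basis of bounded-norm integer vectors, and it is where the dependence of $\delta_1$ and $N_1$ on the pair $(S_1,S_2)$ enters. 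The rest is routine bookkeeping: a triangle-inequality check that $\bepsilon_2-\bepsilon_1$ is $(L_1+L_2,N)$-smooth, and the verification that the smoothness scale $N/(D(L_1+L_2))$ and the periodicity scale $L_1L_2$ are compatible for $N\geq N_1$.
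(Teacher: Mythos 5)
Your proposal is correct and follows essentially the same route as the paper's proof: contradiction via a character of $\T^m$ that annihilates $S_2$ but restricts to a non-trivial character of $S_1$, cancellation of $\bg_2'$ by comparing the two factorizations so that $\bh\cdot\bg_1'$ becomes smooth plus periodic, restriction to a progression where this is nearly constant to get a lower bound of order $1/(D L_1^2L_2^2)$, and contradiction with total $\delta_1$-equidistribution for $\delta_1$ small in terms of $S_1,S_2,L_1,L_2$. The step you flag as the main obstacle is not one: any fixed $\bh\in S_2^\perp\setminus S_1^\perp$ (nonempty by Pontryagin duality for closed connected subgroups) automatically has norm depending only on $S_1,S_2$, which is exactly how the paper treats it.
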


\begin{remark}
It is important for applications that  we do not
impose an equidistribution assumption on $\bg_2'$.
\end{remark}
\begin{proof}
Replacing $L_1$ and $L_2$ by $L:=L_1L_2$ we reduce to the case where $L_1=L_2=L$ and we can assume that the sequences $\bgamma_1$ and $\bgamma_2$ have the same period $Q\leq L$.

We argue by contradiction. Suppose that $S_1$ is not a subset of  $S_2$.
 Then there exists a (multiplicative)  character $\theta$ of $\T^m$, with values on the unit circle,
 such that
\begin{equation}\label{E:S12}
\theta(\bx)=1 \ \text{ for every } \ \ \bx\in S_2 \ \text{ and } \ \int\theta\ dm_{S_1}=0.
\end{equation}
Let $A:=A(S_1,S_2)$ be the Lipschitz constant of $\theta$.

Since  $\theta(\bx)=1$ for $\bx\in S_2$ and $\bg_2'$ takes values in $S_2$, identity \eqref{E:same} gives that
\begin{equation}
\label{E:313}
\theta( \bepsilon_3(n)+\bg_1'(n)+\bgamma_3(n))=1, \quad \text{ for } \ n\in [N],
\end{equation}
where $\bepsilon_3:=\bepsilon_1-\bepsilon_2$ is $(2L,N)$-smooth and $\bgamma_3:=\bgamma_1-\bgamma_2$
has period $Q$. Let $P$ be the arithmetic progression
$\{Q, 2Q,\ldots, Q\lfloor cN\rfloor\}$ where $c:=1/(4AL^2)$. The progression is well defined as long as
$N\geq N_1:=4AL^2$. Then
\begin{equation}
\label{E:gamma3}
\bgamma_3(n)\ \text{ is constant on }\  P
\end{equation}
and for $n\in P$ we have
\begin{equation}\label{E:esta}
|1-\theta\big(\bepsilon_3(n)-\bepsilon_3(Q)\big)|=|\theta\big(\bepsilon_3(n)\big)-\theta\big(\bepsilon_3(Q)\big)|
\leq A \norm{\bepsilon_3(n)-\bepsilon_3(Q)}
\leq 2AL\frac{n-Q}{N}\leq \frac{1}{2}.
\end{equation}
 We get
\begin{align*}
|\E_{n\in [N]} \one_P(n)\cdot \theta(\bg_1'(n))|
&=|\E_{n\in [N]} \one_P(n)\cdot \theta(\bg_1'(n)-\bg_1'(Q))|
\\
&=|\E_{n\in [N]} \one_P(n)\cdot \theta(\bg_1'(n)+\bgamma_3(n)-\bg_1'(Q)-\bgamma_3(Q))\text{ by~\eqref{E:gamma3}}
\\
&=|\E_{n\in [N]} \one_P(n)\cdot \theta(\bepsilon_3(Q)-\bepsilon_3(n))|
\text{ by~\eqref{E:313}}
\\
& \geq  \frac {\lfloor cN\rfloor}N\,\bigl(1-\frac 12\bigr)
>\frac 1{16\,AL^2}\text{ by~\eqref{E:esta} and since }|P|=\lfloor cN\rfloor.
\end{align*}
On the other hand,
since  by assumption $(\bg_1'(n))_{n\in [N]}$
 is totally $\delta_1$-equidistributed on $S_1$, $\theta$ has Lipschitz constant $A$, and  $\int \theta\, dm_{S_1}=0$  (by  \eqref{E:S12}), we get
$$
|\E_{n\in [N]} \one_P(n)\cdot \theta(\bg_1'(n))|\leq A \delta_1.
$$
Hence, for $\delta_1:= 1/(16\,A^2L^2)$ we get a contradiction, completing the proof.
\end{proof}

\subsection{Simultaneous factorization of  monomials  on the torus}
In the course of proving Proposition~\ref{P:PolysTorusQuant''} we need to factorize simultaneously several polynomial sequences on the torus
and also make sure that the output of the factorization preserves some of the properties of the original sequences. As  it does not seem possible to  extract such information from Theorem~\ref{th:FactoGT}, using it as  a black box, we modify its proof on the torus
in order to get the following result that suits our needs:
\begin{theorem}[Simultaneous factorization of  monomials  on the torus]
\label{th:FactoGTmulti}
For  every $M\in \N$ there exists a finite family $\CF_2(M)$ of sub-tori of $\T^m$, that increases with  $M$,
such that the following holds:
For every function $\omega_2\colon \N\to\R^+$   there exist a positive integer  $M_2:=M_2(\omega_2)$ such that  for every
$N\in\N$   and every $\balpha_1,\ldots, \balpha_d \in \T^m$, there exist $M\in\N$ with $ M\leq M_2$,
sub-tori $T_1,\ldots, T_d$ of $\T^m$, belonging to the family $\CF_2(M)$, and for $j=1,\ldots, d,$ a factorization
\begin{equation}
\label{eq:facto1}
n^j\balpha_j=\beeta_j(n)+n^j\balpha_j'+\btheta_j(n), \quad n\in [N],
\end{equation}
where
\begin{enumerate}
\item
\label{it:smooth}
$\beeta_j\colon [N]\to \T^m$ is $(M,N)$-smooth;
\item
\label{it:equid}
$\balpha'_j\in T_j$ and
$(n^j\balpha'_j)_{n\in[N]}$  is totally $\omega_2(M)$-equidistributed on $T_j$;
\item
\label{it:periodic}
$\btheta_j\colon [N] \to \T^m$  is $M$-rational and   has period at most $M$.
\end{enumerate}
\end{theorem}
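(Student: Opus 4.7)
The natural approach is to adapt the energy-decrement proof of Theorem~\ref{th:FactoGT} in the toral setting, treating the $d$ monomial sequences $n^j\balpha_j$ simultaneously. I define $\CF_2(M)$ to be the family of all $M$-rational sub-tori of $\T^m$, which is finite and monotonically increasing in $M$. The factorization will be constructed by iterative refinement: at each stage $k$ I maintain decompositions of the desired form \eqref{eq:facto1} with some current parameter $M_k$ and sub-tori $T_j^{(k)}\in\CF_2(M_k)$, and I shrink one of the sub-tori whenever the equidistribution assertion fails for the corresponding $j$, terminating once all $d$ sequences are simultaneously totally $\omega_2(M_k)$-equidistributed on their respective sub-tori.

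Concretely, I initialize with $T_j^{(0)}:=\T^m$, $\balpha_j^{(0)}:=\balpha_j$, zero smooth and rational parts, and $M_0:=1$. At stage $k$ I test each $j$: if every $(n^j\balpha_j^{(k)})_{n\in[N]}$ is totally $\omega_2(M_k)$-equidistributed on $T_j^{(k)}$, the iteration halts and we are done. Otherwise I pick any failing $j$ and apply Theorem~\ref{th:Leibman} to produce a non-trivial horizontal character $\eta$ of $T_j^{(k)}$ with $\norm\eta\leq D$ and $\norm{\eta\circ(n^j\balpha_j^{(k)})}_{C^\infty[N]}\leq D$, where $D$ depends only on $\omega_2(M_k)$ and the (finite) family $\CF_2(M_k)$. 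Since $\eta$ is a character, $\eta(n^j\balpha_j^{(k)})=n^j\eta(\balpha_j^{(k)})$; via the equivalence between the two smoothness-norm conventions of Section~\ref{SS:PolyGroup} this forces $N^j\norm{\eta(\balpha_j^{(k)})}\leq C(d)D$. I then pick a smooth section $\sigma$ of the surjection $\eta\colon T_j^{(k)}\to\T$ together with a short real lift of $\eta(\balpha_j^{(k)})$ to obtain a decomposition
\[
\balpha_j^{(k)}=\bdelta+\bgamma+\balpha_j^{(k+1)},
\]
where $\bdelta=\sigma(\text{lift})$ has norm $O(D/N^j)$, the vector $\bgamma$ is a coset representative of bounded torsion in $\ker\eta/T_j^{(k+1)}$ (hence $O(D)$-rational), and $\balpha_j^{(k+1)}$ lies in $T_j^{(k+1)}$, defined as the identity component of $\ker\eta\cap T_j^{(k)}$.

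Absorbing $n^j\bdelta$ into $\beeta_j^{(k)}$ and $n^j\bgamma$ into $\btheta_j^{(k)}$, and leaving the indices $j'\neq j$ unchanged, yields updated factorizations of the form \eqref{eq:facto1} with a new parameter $M_{k+1}$ bounded by a function of $M_k$ and $D$. The termination measure is $\sum_{j=1}^d\dim T_j^{(k)}$, which strictly decreases at each step since $T_j^{(k+1)}\subsetneq T_j^{(k)}$; being a non-negative integer bounded by $md$, it forces the iteration to halt after at most $md$ stages, giving a final $M\leq M_2$ depending only on $\omega_2$. The main technical obstacle is ensuring in the update step that the section $\sigma$ and the coset representative can be chosen with Lipschitz constant and torsion-denominator uniformly controlled by $M_k$; this is precisely where the finiteness of $\CF_2(M_k)$ is essential, allowing us to fix, once and for all per $M$, a bounded-complexity section and a set of coset representatives for every sub-torus in $\CF_2(M)$, and to absorb the resulting constants into the definition of $M_{k+1}$.
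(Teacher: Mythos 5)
Your proposal is correct and follows essentially the same route as the paper: the same choice of $\CF_2(M)$ as the $M$-rational sub-tori, the same iterative refinement driven by the quantitative Leibman theorem applied to whichever monomial fails the equidistribution test, the same three-way splitting of $\balpha_j$ into a small part, a rational part, and a part in the identity component of the kernel of the obstructing character, and the same termination via the total dimension $\sum_j \dim T_j$ dropping by one at each step. The only cosmetic difference is your phrasing of the splitting in terms of a section of $\eta$ and coset representatives, which the paper carries out directly by writing $\balpha_{j_0,i}=\bbeta+\balpha^*$ and then $\balpha^*=\balpha_{j_0,i+1}+\bdelta$.
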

\begin{proof}
For every $M\in\N$, we let
$$
\CF_2(M):=\{T\subset \T^m\colon T \text{ is an  $M$-rational torus}\}.
$$

The proof is going to be carried out by an iterative procedure that  terminates after finitely many steps.

\subsection*{The data}
At each step $i=1,2,\dots,$ we have a constant $C_i:=C_i(\omega_2)$ and for $j=1,\dots,d$ a $C_i$-rational torus $T_{j,i}\subset\T^m$ and factorizations
\begin{equation}
\label{eq:decompi-1j}
 n^j\balpha_j=\beeta_{j,i}(n)+n^j\balpha_{j,i}+\btheta_{j,i}(n), \quad n\in [N],
\end{equation}
where
\begin{enumerate}[(a)]
\item
\label{it:smooth1}
$\beeta_{j,i}\colon [N]\to \T^m$ is $(C_{i},N)$-smooth;
\item
\label{it:equid1}
  $\balpha_{j,i}\in T_{j,i}$;
\item
\label{it:periodic1}
 $\btheta_{j,i}\colon [N]\to \T^m$ is $C_i$-rational and has   period at most $C_{i}$.
\end{enumerate}

 \subsection*{Initialization} We initialize our data.
 Let $C_{1}:=1$ and  for $j=1,\ldots,d$ let   $T_{j,1}:=\T^m\in\CF_2(C_{1})$ and $\balpha_{j,1}:=\balpha_j$.
We have the trivial factorization  $n^j\balpha_j = \beeta_{j,1}(n)+n^j\balpha_{j,1}+\btheta_{j,1}(n)$, where the sequence $\beeta_{j,1}$ is identically zero and thus $(C_{1},N)$-smooth, and the sequence $\btheta_{j,1}$ is identically zero and thus
 $C_{1}$-rational and has period at most $C_{1}$.

\subsection*{Test of termination}
If   $(n^j\balpha_{j,i})_{n\in[N]}$ is totally $\omega_2(C_{i})$-equidistributed on $T_{j,i}$
for $j=1,\ldots, d$,   then we set  $M:=C_{i}$, and  terminate the process\footnote{Note that this is necessarily the case when all the tori $T_{j,i}$ are trivial.}.
If not, we proceed to the next step.

\subsection*{Iteration} Our assumption is that there exists   $j_0\in \{1,\ldots, d\}$
such  that the sequence $(n^{j_0}\balpha_{j_0,i})_{n\in[N]}$ is not totally $\omega_2(C_{i})$-equidistributed on
the torus  $T_{j_0,i}$.
 Since $T_{j_0,i}$ is $C_{i}$-rational,
by   Theorem~\ref{th:Leibman} applied for  the torus $T_{j_0,i}$ there exist $A_1:=A_1(C_{i},\omega_2)>0$
 and a non-trivial character $\eta$ of $T_{j_0,i}$, of the form
 $\bx\mapsto\bk\cdot\bx$ for some $\bk\in\Z^m$,  with
$\norm{\bk}\leq A_1$ and $\norm{\bk\cdot \balpha_{j_0,i}}\leq A_1/N^{j_0}$.
We write $T_{j_0,i+1}^*$ for the kernel of $\eta$ in $T_{j_0,i}$, and $T_{j_0,i+1}$ for the connected component of $0$ in $T_{j_0,i+1}^*$. Then the torus  $T_{j_0,i+1}$  is $A_2$-rational, for some $A_2:= A_2(C_{i},\omega_2)$. We can write   $\balpha_{j_0,i}=\bbeta+\balpha^*$,  where
$\balpha^*\in T_{j_0,i+1}^*$ and $\norm\bbeta\leq A_3/N^{j_0}$ for some  $A_3:=A_3(C_{i},\omega_2)$. Furthermore,  we can
write  $\balpha*=\balpha_{j_0,i+1}+\bdelta$, where $\balpha_{j_0,i+1}\in T_{j_0,i+1}$ and $\bdelta$ is $A_4$-rational for some $A_4:=A_4(C_{i},\omega_2)$.
We let
$$
C_{i+1}:=\max\{C_{i}+dA_3, A_4C_{i}\}.
 $$
 Since $\balpha_{j_0,i}=\bbeta+\balpha_{j_0,i+1}+\bdelta$, using \eqref{eq:decompi-1j} we get
$$
n^{j_0}\balpha_{j_0}= \beeta_{j_0,i+1}(n) + n^{j_0}\balpha_{j_0,i+1}+ \btheta_{j_0,i+1}(n), \quad n\in [N],
$$
where
$$
\beeta_{j_0,i+1}(n):=\beeta_{j_0,i}(n)+ n^{j_0}\bbeta, \quad \btheta_{j_0,i+1}(n):= \btheta_{j_0,i}(n)+n^{j_0}\bdelta.
$$
Note that
$(\beeta_{j_0,i+1}(n))_{n\in[N]}$ is $C_{i+1}$-smooth
 and
$(\btheta_{j_0,i+1}(n))_{n\in[N]}$ is $C_{i+1}$-rational and has   period at most $C_{i+1}$.

For $j\neq j_0$ we do not modify the factorization of the step $i$, that is, we let
$$
\beeta_{j,i+1}:=\beeta_{j,i}, \quad \balpha_{j,i+1}:=\balpha_{j,i}, \quad
\btheta_{j,i+1}:=\btheta_{j,i}, \   \text{ for } j\neq j_0.
$$
Since $C_{i+1}\geq C_{i}$, we have that  $(\beeta_{j,i+1}(n))_{n\in[N]}$ is $C_{i+1}$-smooth and
$(\btheta_{j,i+1}(n))_{n\in[N]}$ is $C_{i+1}$-rational and has   period at most $C_{i+1}$. We have  thus produced for
 $j=1,\ldots, d$  factorizations similar to~\eqref{eq:decompi-1j}, with $i+1$ substituted for $i$ that satisfy Properties \eqref{it:smooth1}-\eqref{it:periodic1}.

\subsection*{The output}
At each step, the dimension of exactly one of the tori $T_{j,i}$ decreases by one, and thus the iteration stops after $k\leq dm$ steps
at which point the above described test has a positive outcome.   For $M:=C_k$,
we obtain  the factorizations~\eqref{eq:facto1}, satisfying the required properties~\eqref{it:smooth}, \eqref{it:equid}, \eqref{it:periodic}. Moreover, for $j=1,\ldots,d$, the torus $T_{j,k}$ is $M$-rational and thus belongs to the family $\CF_2(M)$.  Finally, at each step,
 $C_{i+1}$ is bounded by a quantity that depends only on $C_{i}$ and on $\omega_2$, and thus $M\leq M_2$ for some $M_2:=M_2(\omega_2)$. This completes the proof.
\end{proof}

\subsection{Quantitative equidistribution of product sequences on the torus}

Given a ``sufficiently'' totally equidistributed sequence $(\bg(n))_{n\in[N]}$ on some torus $\T^m$
we study here equidistribution properties of the product sequence   $(\bg(pn),\bg(qn))_{n\in[N]}$ on $\T^m\times \T^m$ where
$p,q$ are distinct positive integers. Our goal is to show that the product sequence is  ``sufficiently'' equidistibuted on a sub-torus of $\T^m\times \T^m$
that contains an ample supply   of ``interesting'' elements;  for instance, we show that this sub-torus
 contains non-diagonal elements of $\T^m\times \T^m$. We start with a simple but key observation:

\begin{lemma}
\label{L:PolysTorusQuant}
 Let $\ve_3>0$.  There exist $\delta_3:=\delta_3(\ve_3)$ and $N_3:=N_3(\ve_3)$  such that the  following holds:
 Let $N\geq N_3$ and for $j=1,\ldots,d$, let $\balpha_j\in\T^m$  and suppose that the sequence $(n^j\balpha_j)_{n\in [N]}$  takes values on  a sub-torus $T_j$ of $\T^m$ and
is  totally $\delta_3$-equidistributed on $T_j$.
 Then
\begin{enumerate}
\item
\label{it:equid11}
   The sequence $(n\balpha_1+\dots+n^d\balpha_d)_{n\in [N]}$  is totally $\ve_3$-equidistributed on the torus $T:=T_1+\dots+T_d$.
   \item
\label{it:equid12}
For $j=1,\ldots,d$, the sequence  $(n^j(p^j\balpha_j, q^j\balpha_j))_{n\in [N]}$ is totally $\ve_3$-equi\-distri\-buted on
the sub-torus  $T_{j,p,q}:=\{ (p^j\bx,q^j\bx): \bx\in T_j \}$ of $\T^{2m}=\T^m\times\T^m$.
\end{enumerate}
\end{lemma}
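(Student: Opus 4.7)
The plan is to prove both parts by contradiction, using the quantitative Leibman theorem (Theorem~\ref{th:Leibman}) to extract a non-trivial horizontal character with small smoothness norm from a failure of equidistribution, and then to transfer this character to one of the component tori $T_j$, contradicting the hypothesis via its converse (Lemma~\ref{lem:Leibman_Inverse}). The uniformity of the constants over sub-tori of the fixed ambient $\T^m$ is routine and will be used without comment.

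For part~\eqref{it:equid11}, set $\bg(n):=n\balpha_1+\dots+n^d\balpha_d$; by hypothesis this takes values in $T:=T_1+\dots+T_d$. Assuming  $(\bg(n))_{n\in[N]}$ is not totally $\ve_3$-equidistributed on $T$, Theorem~\ref{th:Leibman} yields a constant $D:=D(\ve_3,m)$ and a non-trivial horizontal character $\eta$ of $T$ with $\norm\eta\leq D$ and $\norm{\eta\circ\bg}_{C^\infty[N]}\leq D$. Extending $\eta$ to $\T^m$ as $\bx\mapsto\bk\cdot\bx$ with $\norm\bk$ controlled by $D$, we have $\eta(\bg(n))=\sum_{j=1}^d n^j(\bk\cdot\balpha_j)$, so the definition of the smoothness norm gives $N^j\norm{\bk\cdot\balpha_j}\leq CD$ for $j=1,\dots,d$, where $C=C(d)$. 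Since $\eta$ is non-trivial on the sum $T_1+\dots+T_d$, it must be non-trivial on some $T_{j_0}$. Applying Lemma~\ref{lem:Leibman_Inverse} to the torus $T_{j_0}$, the polynomial sequence $(n^{j_0}\balpha_{j_0})_{n\in[N]}$, and the restriction $\eta|_{T_{j_0}}$, we conclude that this sequence is not totally $cD^{-2}$-equidistributed on $T_{j_0}$; choosing $\delta_3<cD^{-2}$ contradicts the hypothesis.

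For part~\eqref{it:equid12}, fix $j$ and set $\bh(n):=n^j(p^j\balpha_j,q^j\balpha_j)\in T_{j,p,q}\subset\T^{2m}$. If $(\bh(n))_{n\in[N]}$ is not totally $\ve_3$-equidistributed on $T_{j,p,q}$, Theorem~\ref{th:Leibman} produces a non-trivial horizontal character $\eta$ of $T_{j,p,q}$ with $\norm\eta\leq D$ and $\norm{\eta\circ\bh}_{C^\infty[N]}\leq D$. Representing $\eta$ as the restriction of $(\bx,\by)\mapsto\bk_1\cdot\bx+\bk_2\cdot\by$ on $\T^{2m}$ and setting $\bk:=p^j\bk_1+q^j\bk_2\in\Z^m$, we have $\eta(\bh(n))=n^j\bk\cdot\balpha_j$, hence $N^j\norm{\bk\cdot\balpha_j}\leq D$. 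The key observation is that $\bk$ restricts to a non-trivial character of $T_j$: for every $\bx\in T_j$, $\eta(p^j\bx,q^j\bx)=\bk\cdot\bx$, so if $\bk$ vanished on $T_j$, then $\eta$ would vanish on $T_{j,p,q}$, contradicting non-triviality. Lemma~\ref{lem:Leibman_Inverse} applied to $T_j$, the sequence $(n^j\balpha_j)_{n\in[N]}$, and the character $\bk|_{T_j}$ then shows that $(n^j\balpha_j)$ is not totally $cD^{-2}$-equidistributed on $T_j$, contradicting the hypothesis for $\delta_3<cD^{-2}$. The main (minor) obstacle throughout is the bookkeeping of constants relating the norms of characters on the sub-tori $T_j,T,T_{j,p,q}$ to their extensions on the ambient torus; this is handled uniformly because all tori in sight are sub-tori of the fixed $\T^m$ and the relevant characters are prescribed by integer vectors of bounded norm.
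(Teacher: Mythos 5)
Your argument is correct and follows essentially the same route as the paper: argue by contradiction, invoke the quantitative Leibman theorem on $T$ (resp.\ $T_{j,p,q}$) to extract a bounded non-trivial character with small smoothness norm, observe that the induced character $\bk$ (resp.\ $p^j\bk_1+q^j\bk_2$) is non-trivial on some $T_{j}$, and apply Lemma~\ref{lem:Leibman_Inverse} there to contradict the hypothesis. If anything, your explicit verification in part (ii) that $\bk$ restricts to a non-trivial character of $T_j$ before applying the converse lemma on $T_j$ is slightly more careful than the paper's phrasing.
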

\begin{proof}
We prove \eqref{it:equid11}.
Let $\bg(n):=n\balpha_1+\dots+n^d\balpha_d$ and suppose
that the sequence  $(\bg(n))_{n\in [N]}$  is not totally $\ve_3$-equidistributed on the torus $T$.
Applying Theorem~\ref{th:Leibman} on this torus we get that there exists
a constant $C_1:=C_1(\ve_3)$ and   $\bk\in \Z^m$ with $\norm\bk\leq C_1$ such that  $\bk\cdot \bx\neq 0$ for some $\bx\in T$  and
\begin{equation}\label{E:smoothj}
\norm{\bk\cdot \balpha_j}\leq C_1/N^j \ \text{ for  }\ j=1,\ldots,d.
\end{equation}
Then, for some $j\in \{1,\ldots, d\}$ and some $\bx\in T_j$ we have $\bk\cdot \bx\neq 0$. Using this, relation  \eqref{E:smoothj}, and applying
Theorem~\ref{lem:Leibman_Inverse} for the torus $T_j$, we get  that for some
 $\delta_j':=\delta_j'(\ve_3)>0$ the sequence
$(n^j\balpha_j)_{n\in[N]}$ is not  totally $\delta_j'$-equidistributed on $T_j$. Hence, for  $\delta':=\min\{\delta'_1,\ldots, \delta'_d\}$
 (in place of $\delta_3$) Property~\eqref{it:equid11}
is satisfied.

We prove \eqref{it:equid12}.  Suppose that for some $j\in\{1,\ldots,d\}$,
the sequence $(n^j(p^j\balpha_j, q^j\balpha_j))_{n\in [N]}$ is not
 totally $\ve_3$-equidistributed on the torus $T_{j,p,q}$.
Applying Theorem~\ref{th:Leibman} on this torus we get that there exists
a  positive real $C_2:=C_2(\ve_3)$ and   $(\bk_1,\bk_2)\in \Z^{m}\times\Z^m$ with
$\norm{\bk_1}+\norm{\bk_2}\leq C_2$, such that
\begin{equation}\label{E:k1k2}
\bk_1\cdot\bx_1+\bk_2\cdot \bx_2\neq 0 \ \text{ for some } \ (\bx_1,\bx_2)\in T_{j,p,q}
\end{equation}
 and that
$\norm{(\bk_1,\bk_2)\cdot(p^j\balpha_j, q^j\balpha_j)}\leq C_2/N^j$,
or equivalently,
\begin{equation}\label{E:pq}
\norm{(p^j\bk_1+q^j\bk_2)\cdot \balpha_j}\leq C_2/N^j.
\end{equation}
Furthermore, writing $(\bx_1,\bx_2)=(p^j\bx,q^j\bx)$ for some $\bx\in T_j$ we get by \eqref{E:k1k2} that
$(p^j\bk_1+q^j\bk_2)\cdot \bx\neq 0$ and thus $p^j\bk_1+q^j\bk_2\neq 0$. On the other hand,
$\norm{p^j\bk_1+q^j\bk_2}\leq (p^d+q^d)C_2$,
and \eqref{E:pq} combined with Theorem~\ref{lem:Leibman_Inverse} for the torus $\T^m$ gives that for some $\delta''_j:=\delta''_j(\ve_3)$ the sequence
$(n^j\alpha_j)_{n\in[N]}$ is not  totally $\delta''_j$-equidistributed.
Hence, for  $\delta'':=\min\{\delta''_1,\ldots, \delta''_d\}$
 (in place of $\delta_3$) Property~\eqref{it:equid12}
is satisfied.

Letting $\delta_3:=\min\{\delta', \delta''\}$ completes  the proof.
\end{proof}

Combining Lemma~\ref{L:contained},   Lemma~\ref{L:PolysTorusQuant}, and Theorem~\ref{th:FactoGTmulti},
we prove the following factorization result on the torus that is crucial for our purposes:
\begin{proposition}[Equidistribution of $\bh(n)$ in $\T^m$]
\label{P:PolysTorusQuant''}
For  every $M\in \N$ there exists a finite family $\CF_4(M)$ of sub-tori of $\T^m$, that increases with $M$,
 and
for every  function $\omega_4\colon \N\to\R^+$   there exist  positive integers  $M_4:=M_4(\omega_4)$,  $N_4:=N_4(\omega_4)$, and a positive real $\delta_4:=\delta_4(\omega_4)$,  such that the following holds:
 Let $N\geq N_4$ be an integer,  $\balpha_1, \ldots, \balpha_d \in\T^m$,
and   $\bg(n)=\balpha_1n+\dots+\balpha_dn^d$,
 and suppose
 that
\begin{equation}
\label{eq:HypProp6.5}
\text{ the sequence } (\bg(n))_{n\in [N]} \text{ is totally  } \delta_4\text{-equidistributed on } \T^m.
\end{equation}
 Then there exist $M\in\N$ with $ M\leq M_4$, and sub-tori $T_j$, $j=1,\ldots,d$, of $\T^m$, belonging to the family $\CF_4(M)$,
such that
\begin{equation}\label{E:T1Td}
T_1+\dots+T_d=\T^m,
\end{equation}
and   the sequence   $(\bh(n))_{n\in [N]}$  on $\T^{2m}$ defined by
 $
 \bh(n):=(\bg(pn),\bg(qn))
 $
 can be factorized as follows
$$
\bh(n)=\bepsilon(n)+\bh'(n)+\bgamma(n), \quad n\in [N],
$$
where $\bepsilon(n)$, $\bh'(n)$, $\bgamma(n)$ are polynomial sequences on $\T^{2m}$ such that
\begin{enumerate}
\item
\label{it:smooth2}
$\bepsilon\colon [N]\to \T^{2m}$ is $(M,N)$-smooth;
\item
\label{it:equid2}
 $(\bh'(n))_{n\in [N]}$ takes values and  is  totally $\omega_4(M)$-equidistributed  on the sub-torus
\begin{equation}
\label{eq:defRT1Td}
R_{T_1,\ldots,T_d}:=\{(p\bx_1+\dots+p^d\bx_d\,,\, q\bx_1+\dots+q^d\bx_d)\colon \bx_j\in T_j \text{ for } j=1,\ldots,d\}
\end{equation}
of $\T^{2m}$;
\item
\label{it:periodic2}
 $\bgamma\colon [N]\to \T^{2m}$ is $M$-rational and  has period at most $M$.
 \end{enumerate}
\end{proposition}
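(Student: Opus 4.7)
The plan is to apply Theorem~\ref{th:FactoGTmulti} to the $d$ individual monomial sequences $n^j\balpha_j$ and reassemble the pieces into the factorization of $\bh(n)$, using both parts of Lemma~\ref{L:PolysTorusQuant} to transfer equidistribution from the tori $T_j$ to the sub-torus $R_{T_1,\ldots,T_d}$, and finally invoking Lemma~\ref{L:contained} together with the hypothesis~\eqref{eq:HypProp6.5} to force $T_1+\dots+T_d=\T^m$.

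Concretely, I would first fix an auxiliary function $\omega_2\colon\N\to\R^+$, to be specified as a function of $\omega_4$ at the end, and apply Theorem~\ref{th:FactoGTmulti} to produce a positive integer $M\leq M_2(\omega_2)$, sub-tori $T_j\in\CF_2(M)$, and factorizations $n^j\balpha_j=\beeta_j(n)+n^j\balpha_j'+\btheta_j(n)$ with $\balpha_j'\in T_j$, $(n^j\balpha_j')_{n\in[N]}$ totally $\omega_2(M)$-equidistributed on $T_j$, $\beeta_j$ being $(M,N)$-smooth, and $\btheta_j$ being $M$-rational of period at most $M$. Assembling the outputs, set
\begin{align*}
\bepsilon(n) &:= \Bigl(\sum_{j=1}^d p^j\beeta_j(n),\;\sum_{j=1}^d q^j\beeta_j(n)\Bigr),\\
\bh'(n) &:= \sum_{j=1}^d n^j(p^j\balpha_j',q^j\balpha_j'),\\
\bgamma(n) &:= \Bigl(\sum_{j=1}^d p^j\btheta_j(n),\;\sum_{j=1}^d q^j\btheta_j(n)\Bigr),
\end{align*}
so that $\bh(n)=\bepsilon(n)+\bh'(n)+\bgamma(n)$. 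The smoothness of $\bepsilon$ and the rationality/periodicity of $\bgamma$ follow immediately from the corresponding properties of the $\beeta_j$ and $\btheta_j$, after inflating $M$ by a constant depending only on $d,p,q$. For the equidistribution of $\bh'$, I would first apply part (ii) of Lemma~\ref{L:PolysTorusQuant} to each $j$, upgrading the equidistribution of $(n^j\balpha_j')_{n\in[N]}$ on $T_j$ to total equidistribution of $(n^j(p^j\balpha_j',q^j\balpha_j'))_{n\in[N]}$ on $T_{j,p,q}$; then apply part (i) to these $d$ sequences to obtain total equidistribution of $\bh'$ on $T_{1,p,q}+\dots+T_{d,p,q}=R_{T_1,\ldots,T_d}$. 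Choosing $\omega_2$ so that this two-step process ultimately yields $\omega_4(M)$-equidistribution will give property~(ii) of the proposition, and since $M_2$ depends only on $\omega_2$, hence only on $\omega_4$, the bound $M\leq M_4(\omega_4)$ follows.

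To verify $T_1+\dots+T_d=\T^m$, I would compare two factorizations of $\bg$. On the one hand, $\bg(n)=\sum_j\beeta_j(n)+\sum_j n^j\balpha_j'+\sum_j\btheta_j(n)$, whose middle term takes values and is totally equidistributed on $T_1+\dots+T_d$ by part (i) of Lemma~\ref{L:PolysTorusQuant}; on the other, the trivial factorization $\bg(n)=0+\bg(n)+0$ coming from hypothesis~\eqref{eq:HypProp6.5}, which places $\bg$ totally $\delta_4$-equidistributed on $\T^m$. Applying Lemma~\ref{L:contained} with $S_1=\T^m$ and $S_2=T_1+\dots+T_d$ will force $\T^m\subset T_1+\dots+T_d$, provided $\delta_4$ is chosen below the constant $\delta_1$ supplied by that lemma. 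Since the possible $S_2$ form a finite collection of sums of tori in $\CF_2(M_2)$ and the relevant smoothness/period constants are controlled by $M_2$, taking the minimum of the resulting $\delta_1$ values over this finite family produces a valid $\delta_4$ depending only on $\omega_4$. One may then take $\CF_4(M):=\CF_2(M)$. The main obstacle is the constant bookkeeping: $\omega_2$ must be chosen to absorb the two successive losses of equidistribution quality incurred by Lemma~\ref{L:PolysTorusQuant}, while $\delta_4$ can only be fixed \emph{after} $M_2$ is known, since its definition requires the finite list of possible target tori $S_2$.
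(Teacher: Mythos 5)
Your proposal is correct and follows essentially the same route as the paper: apply Theorem~\ref{th:FactoGTmulti} to the monomials, reassemble via both parts of Lemma~\ref{L:PolysTorusQuant} to equidistribute $\bh'$ on $R_{T_1,\ldots,T_d}$, and compare the two factorizations of $\bg$ via Lemma~\ref{L:contained} to force $T_1+\dots+T_d=\T^m$. The only cosmetic difference is that you take $p^j\beeta_j(n)$ and $p^j\btheta_j(n)$ where the paper takes $\beeta_j(pn)$ and $\btheta_j(pn)$ (both are valid), and you correctly identify the parameter-ordering subtlety that the paper handles by routing the Lemma~\ref{L:contained} threshold through the auxiliary function before defining $\delta_4$.
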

\begin{proof}
Throughout this proof when we write
``for every sufficiently large $N$'', we mean for every $N\in \N$ that is larger than a constant that depends on  $\omega_4$.

 Let  $\omega_4'\colon \N\to \R^+$  be a  function that will be specified later and depends only on  $\omega_4$ (its defining properties
are given by \eqref{E:defomega3a} and \eqref{E:defomega3b} below).

  For every $M\in\N$, Theorem~\ref{th:FactoGTmulti} applied on $\T^m$ provides a finite family $\CF_2(M)$ of sub-tori of $\T^m$, that increases with $M$, and we define as $\CF_4(M)$  the family spanned by $\CF_2(M)$, the torus  $\T^m$, and is invariant under addition of tori.
Applying  Theorem~\ref{th:FactoGTmulti} on $\T^m$ with the function $\omega_4'$, we get
 a positive integer $M_4:=M_4(\omega'_4)$, such that the following holds:
  For every $\balpha_1,\ldots, \balpha_d \in \T^m$ there exists $M\in \N$ with
$M\leq M_4$ and  sub-tori  $T_1,\ldots, T_d$ of $\T^m$  belonging to the family  $\CF_4(M)$, and
for $j=1,\ldots, d,$  factorizations
\begin{equation}
\label{E:factorus1}
 n^j\balpha_j=\beeta_j(n)+n^j\balpha'_j+\btheta_j(n), \quad n\in [N],
\end{equation}
where
\begin{enumerate}[(a)]
\item
\label{it:smooth7}
$\beeta_j\colon [N]\to \T^m$ is $(M,N)$-smooth;
\item
\label{it:equid7}
$\balpha'_j\in T_j$ and the sequence
 $(n^j\balpha'_j)_{n\in [N]}$
 is
 totally $\omega_4'(M)$-equidistributed on $T_j$;
 \item
\label{it:periodic7}
 $\btheta_j\colon [N]\to \T^m$  is $M$-rational and  has period at most $M$.
 \end{enumerate}

 We are going to show that for an appropriate choice of $\omega_4'$ and $\delta_4$ we can use these data to get a factorization for $\bh$ that satisfies  Properties~\eqref{it:smooth2}-\eqref{it:periodic2} and \eqref{E:T1Td}. To this end, write
$$
\bh(n)=\bepsilon(n)+\bh'(n)+\bgamma(n), \quad n\in [N],
$$
where
\begin{gather*}
\bepsilon(n)
 := (\beeta_1(pn),\beeta_1(qn)) + (\beeta_2(pn),\beeta_2(qn))+
    \cdots + (\beeta_d(pn),\beeta_d(qn));\\
\bh'(n)
 :=n(p\balpha'_1,q\balpha'_1)+n^2(p^2\balpha'_2,q^2\balpha'_2)+\cdots
+n^d(p^d\balpha'_d,q^d\balpha'_d);\\
\bgamma(n)
 := (\btheta_1(pn),\btheta_1(qn)) + (\btheta_2(pn),\btheta_2(qn)) +\cdots +
    (\btheta_d(pn),\btheta_d(qn)).
\end{gather*}
Note that
$\bgamma$ is $M^d$-rational and has period at most $M^d$.
Replacing $M_4$ with $C M_4^{d}$ and $M$ with $C M^{d}$ for some constant $C$ that depends only on
$d,m,p,q$, we have that the sequence $\bepsilon$ is $(M,N)$-smooth, and
 Properties~\eqref{it:smooth2} and \eqref{it:periodic2} of the proposition are satisfied.

We move now to  Property~\eqref{it:equid2}. Note first that $\bh'(n)$ takes values on the torus $R_{T_1,\ldots,T_d}$. By \eqref{it:equid7}, for $j=1,\ldots,d$ the sequence
$(n^j\balpha'_j)_{n\in [N]}$ is
totally $\omega_4'(M)$-equidistributed on $T_j$.
Using this property and Part~\eqref{it:equid12} of Lemma~\ref{L:PolysTorusQuant},
we get that for every sufficiently large $N$, for $j=1,\ldots, d$, the sequence
\begin{equation}\label{E:pjalpha}
(n^j(p^j\balpha'_j,q^j\balpha'_j))_{n\in [N]}
\end{equation}
is
$\rho'(\omega_4'(M))$-equidistributed on the  torus
$$
\{(p^j\bx,q^j\bx): \bx\in T_j\},
$$
 where $\rho'\colon \R^+\to \R^+$ is a function
  that
decreases to  $0$ as $t\to 0^+$ and depends only on $d,m,p,q$. We are now in position to apply
Part~\eqref{it:equid11} of Lemma~\ref{L:PolysTorusQuant} on the
torus $\T^{2m}$ for the sequences in \eqref{E:pjalpha}. It
gives that the sequence $(\bh'(n))_{n\in [N]}$ is $\omega_4(M)$-equidistributed on
$R_{T_1,\ldots, T_d}$
as long as the function $\omega_4'\colon \N\to \R^+$ satisfies
\begin{equation}
\label{E:defomega3a}
\rho'(\omega_4'(M))\leq \delta_3(\omega_4(M)),
\quad \text{for every }\ M\in \N,
\end{equation}
where $\delta_3$ was defined in Lemma~\ref{L:PolysTorusQuant}.
As $\delta_3>0$ and $\rho'(t)\to 0$ as $t\to 0^+$, such an $\omega_4'$ exists.

It remains to establish \eqref{E:T1Td}, that is, that  $T_1+\dots+T_d=\T^m$.
To get this, we need to impose two additional conditions, one on $\omega_4'$ and one on
the degree of total equidistribution $\delta_4$ of $(g(n))_{n\in [N]}$ that was left unspecified until this point.
We choose $\omega_4'\colon \N\to \R^+$ so that    in addition to \eqref{E:defomega3a} it satisfies
\begin{equation}
\label{E:defomega3b}
 \omega_4'(M)\leq \min_{S_1, S_2\in \CF_4(M)}\{\delta_1(S_1,S_2,1, dM^d)\}, \quad \text{for every }\ M\in \N,
 \end{equation}
where $\delta_1(S_1,S_2,1,M^2)$ was defined on Lemma~\ref{L:contained}.  Now $M_4$ is well defined and
we  let
 \begin{equation}\label{E:delta4}
 \delta_4:=\min_{1\leq M\leq M_4}\{\omega_4'(M)\}.
 \end{equation}
We assume that Property~\eqref{eq:HypProp6.5} holds for this value of $\delta_4$ and   Properties
\eqref{it:smooth7}, \eqref{it:equid7}, \eqref{it:periodic7} hold for some $M\leq M_4$.

  Next, note that we have two factorizations for the sequence
 $(\bg(n))_{n\in [N]}$. The first is the trivial one: $\bg(n)=0+\bg(n)+0$  where $\bg(n)$ takes values in $\T^m$ and is totally $\omega_4'(M)$-equidistributed on $\T^m$ (by \eqref{E:delta4}).
 The second
 is given by \eqref{E:factorus1}:
$$
 \bg(n)=\beeta(n)+\bg'(n)+\btheta(n), \quad n\in[N],
$$
  where
\begin{gather*}
\beeta(n):=\beeta_1(n)+\dots+\beeta_d(n) \text{ is }(dM,N)
\text{-smooth \quad  (by \eqref{it:smooth7})};\\
\bg'(n):= n\balpha_1'+\dots+n^d\balpha_d'^d\text{  takes values in }  T_1+\dots+T_d
\quad \text{ (by \eqref{it:equid7})};\\
\btheta(n):=\btheta_1(n)+\dots+\btheta_d(n)\text{ is } M^d\text{-rational and has period at most }
M^d \quad\text{ (by \eqref{it:periodic7}).}
\end{gather*}
Since by assumption $\CF_4(M)$ contains $\T^m$, $T_1$, \ldots, $T_d$,  and is closed under addition of tori, we have  $\T^m$,  $T_1+\dots+T_d\in \CF_4(M)$. Furthermore,  by~\eqref{E:defomega3b} we have $\omega_4'(M)\leq \delta_1(\T^m,T_1+\dots+T_d,1,d M^d)$. Hence, Lemma~\ref{L:contained} is applicable, and gives that
 for every sufficiently large $N$ we have $\T^m\subset T_1+\dots+T_d$. It follows that $\T^m= T_1+\dots+T_d$ and the   proof is complete.
\end{proof}

\subsection{A key algebraic fact}
Our goal is to establish the following key property.
\begin{proposition}
\label{P:alg}
Let $G$ be an $s$-step nilpotent group and  $H$ be a subgroup of $G\times G$. Suppose
that there exist normal subgroups $G^1,\dots,G^d$  of $G$ such that
\begin{enumerate}
\item
$G=G^1\cdots G^d$;
\item
$\{(g_1^p\cdots g_d^{p^d}\,,\,g_1^{q}\cdots g_d^{q^d})\colon g_1\in G^1,\ldots, g_d\in G^d\}\subset  H\cdot (G_2\times G_2).$
\end{enumerate}
Then the set $U:=\{g\in G_s\colon (g^{p^j},g^{q^j})\in H \text{ for some } j\in \N\}$  generates  $G_s$.
\end{proposition}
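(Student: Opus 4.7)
The plan is to take iterated commutators of the elements appearing in assumption (2) to produce elements of $H\cap(G_s\times G_s)$ of the desired form $(u^{p^K},u^{q^K})$ with $u\in G_s$, and then to show that the set of such $u$'s generates $G_s$. The main algebraic tool is the classical fact that in an $s$-step nilpotent group, the $s$-fold iterated commutator (for any fixed bracketing) $c(g_1,\ldots,g_s)\in G_s$ is multilinear: $c(g_1,\ldots,g_ig_i',\ldots,g_s)\equiv c(g_1,\ldots,g_i,\ldots,g_s)\,c(g_1,\ldots,g_i',\ldots,g_s)\pmod{G_{s+1}}$, which in our setting is an exact identity since $G_{s+1}=\{1_G\}$. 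In particular, $c$ factors through the abelianization in each coordinate, so replacing any $g_i$ by $g_iv_i$ with $v_i\in G_2$ does not change $c(g_1,\ldots,g_s)$. This applies equally in the $s$-step nilpotent group $G\times G$, where $(G\times G)_{s+1}=G_{s+1}\times G_{s+1}=\{1\}$.

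First I would specialize assumption (2) by setting $g_j=1_G$ for $j\neq k$; this gives that for every $k\in\{1,\ldots,d\}$ and every $g\in G^k$, the element $(g^{p^k},g^{q^k})$ lies in $K:=H\cdot(G_2\times G_2)$. Next, for any choice of indices $k_1,\ldots,k_s\in\{1,\ldots,d\}$ and elements $g_i\in G^{k_i}$, I would form the $s$-fold iterated commutator in $G\times G$ of
$$
x_i:=(g_i^{p^{k_i}},g_i^{q^{k_i}})\in K,\qquad i=1,\ldots,s.
$$
Writing each $x_i=h_iv_i$ with $h_i\in H$ and $v_i\in G_2\times G_2$, multilinearity of $c$ modulo $(G\times G)_{s+1}=\{1\}$ gives $c(x_1,\ldots,x_s)=c(h_1,\ldots,h_s)$, which lies in $H$ because $H$ is a subgroup. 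On the other hand, since commutators in a direct product are computed coordinatewise and multilinearity converts the $p^{k_i}$- and $q^{k_i}$-powers into exponents, this same commutator also equals $(u^{p^K},u^{q^K})$, where $u:=c(g_1,\ldots,g_s)\in G_s$ and $K:=k_1+\cdots+k_s\in\N$. Hence $u\in U$.

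To finish, I would invoke assumption (1) to write every $g_i\in G$ as a product $g_i^{(1)}\cdots g_i^{(d)}$ with $g_i^{(k)}\in G^k$, and expand by multilinearity
$$
c(g_1,\ldots,g_s)=\prod_{(k_1,\ldots,k_s)\in\{1,\ldots,d\}^s}c(g_1^{(k_1)},\ldots,g_s^{(k_s)}),
$$
the product being taken in the abelian group $G_s$. Every factor lies in $U$ by the previous step, so every $s$-fold commutator of elements of $G$ lies in $\langle U\rangle$. Since $G_s$ is generated by such $s$-fold commutators, $\langle U\rangle=G_s$, as required.

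The step I expect to be the main obstacle (rather than a genuine difficulty, more a point where one must be careful) is the multilinearity claim I am using in two different ways: once to absorb the $G_2\times G_2$ error in passing from $x_i$ to $h_i$, and once to pull the exponents $p^{k_i},q^{k_i}$ out of the commutator. Both rely on the same fact about $s$-step nilpotent groups, but one has to be comfortable with the Hall--Witt commutator identities in $G\times G$ to verify it cleanly.
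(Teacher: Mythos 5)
Your proof is correct and takes essentially the same approach as the paper: specialize hypothesis (2) to a single factor $G^k$, then take $s$-fold iterated commutators of the elements $(g^{p^k},g^{q^k})$ and use the (multi)linearity of commutators in a nilpotent group to identify the result as $(u^{p^K},u^{q^K})\in H$ and to expand general commutators via $G=G^1\cdots G^d$. The paper organizes the same computation as an induction on commutator length with error terms in $G_{k+1}$, whereas you invoke the exact multilinearity of the top-degree commutator map (valid since $G_{s+1}=\{1_G\}$); this is only a difference in bookkeeping.
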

\begin{proof}
For $j=1,\ldots,d$,   let
$$
H^j:=H\cap\bigl\{ (g^{p^j}u\,,\,g^{q^j}u')\colon g\in G^j,\ u,u'\in G_2\bigr\}.
$$
Then $H^j$ is a normal subgroup of $H$.
For $k\in \N$ and  $\vec j=(j_1,\dots,j_k)\in\{1,\dots,d\}^k$, we write
$$
G_{\vec j}:= [\cdots[[G^{j_1},G^{j_2}],G^{j_3}]\cdots]\ \text{ and }\
H_{\vec j}:= [\cdots[[H^{j_1},H^{j_2}],H^{j_3}]\cdots]
$$
for the iterated commutator groups.

Since  the subgroups $G^j$ of $G$ are normal, every group  $G_{\vec j}$ is  normal. Moreover,
since $G=G^1\cdots G^d$, for every $k\in \{1,\ldots, d\}$ the group $G_k$ is the product of the groups $G_{\vec j}$ for
$\vec j\in\{1,\dots,d\}^k$.

 \begin{claim}\label{Claim1} Let $\vec j=(j_1,\dots,j_k)$ with coordinates in $ \{1,\ldots, d\}$  and $j:=j_1+\cdots +j_k$.
 For every $g\in G_{\vec j}$ there exist $u,u'\in G_{k+1}$ such that
$(g^{p^j}u,g^{q^j}u')\in H_{\vec j}$.
\end{claim}
 We prove the claim by induction on $k$. If $k=1$, then $\vec j=(j_1)$ so  $G_{\vec j}=G^{j_1}$,  $H_{\vec j}=H^{j_1}$, and the announced property follows immediately from the definition of the group $H^{j_1}$  and the hypothesis. Let $k>1$ and suppose that the result holds for $k-1$, we are going to show that it holds for $k$.  We let
$$
A:=\bigl\{ g\in G_{\vec j}\colon\exists u,u'\in G_{k+1},\ (g^{p^j}u,g^{q^j}u')\in H_{\vec j}\bigr\}
$$
and we have to prove that $A= G_{\vec j}$. We claim first that $A$ is a subgroup of $G_{\vec j}$.
Indeed, let $g,h\in A$, then $g,h\in  G_{\vec j}$ and
there exist $u,u',v,v'\in G_{k+1}$  such that $(g^{p^j}u,g^{q^j}u')$ and  $(h^{p^j}v,h^{q^j}v')$
belong to $H_{\vec j}$. Then $(g^{p^j}uh^{p^j}v, g^{q^j}u'h^{q^j}v')
\in H_{\vec j}$ and furthermore
$$
g^{p^j}uh^{p^j}v=(gh)^{p^j}\bmod G_{k+1}, \quad
g^{q^j}u'h^{q^j}v'=(gh)^{q^j}\bmod G_{k+1}.
$$
 Hence, $gh\in A$. Furthermore,
$(u^{-1}g^{-p^j}, u'^{-1}g^{-q^j})= (g^{-p^j}u_1, g^{-q^j}u_2)\in H_{\vec j}$
for some $u_1,u_2\in G_{k+1}$. Hence, $g^{-1}\in A$. It follows that $A$ is a group.

We let $\vec i:=(j_1,\dots,j_{k-1})$ and  $i:=j_1+\cdots +j_{k-1}$. Then $G_{\vec j}$ is the group spanned by elements  $[h,z]$ with $h\in G_{\vec i}$ and $z\in G^{j_k}$, and as $A$ is a group, it suffices to prove that each element
of this form belongs to $A$.
By the induction hypothesis, there exist $u,u'\in G_k$ with
$(h^{p^i}u, h^{q^i}u')\in H_{\vec i}$ and by the first step there exist $v,v'\in G_2$ with $(z^{p^{j_k}}v,z^{q^{j_k}}v')\in H^{j_k}$. The commutator
$\bigl([h^{p^i}u, z^{p^{j_k}}v]\,,\, [h^{q^i}u',z^{q^{j_k}}v']\bigr)$ of these two elements belongs to $H_{\vec j}$. Furthermore,
$$
[h^{p^i}u, z^{p^{j_k}}v]=[h,z]^{p^j}\bmod G_{k+1}, \quad [h^{q^i}u',z^{q^{j_k}}v']=[h,z]^{q^j}\bmod G_{k+1}.
$$
Hence, $[h,z]\in H_{\vec j}$. This completes the proof of Claim~\ref{Claim1}.

Taking $k=s$ and using that $G_{s+1}$ is trivial  we get:
\begin{claim}
\label{cl:two}
Let $\vec j:=(j_1,\ldots, j_s)$ with coordinates in $\{1,\ldots, d\}$ and $j:=j_1+\dots+j_s$. Then  for every $g\in G_{\vec j}$ we have $(g^{p^j},g^{q^j})\in  H_s$.
\end{claim}

We are now ready to show that the set $U$ generates $G_s$. As already noticed, $G_s$ is the product
of the groups $G_{\vec j}$  for $\vec j\in\{1,\dots,d\}^s$. Hence, it suffices to show that the set $U$ contains all these groups.
So let $\vec j:=(j_1,\ldots, j_s)$ with $j_i\in \{1,\ldots, d\}$, $i=1,\ldots, s$,  and suppose that $g\in G_{\vec j}$.
By Claim~\ref{cl:two} we have  $(g^{p^j},g^{q^j})\in  H_s\subset H$ for   $j=j_1+\dots+j_s$, which proves that $g\in U$.
This completes the proof of   Proposition~\ref{P:alg}.\end{proof}

 \section{Minor arc nilsequences-Proof of the discorrelation estimate}
 \label{S:minorarcs2}
Our goal in this section is to prove Theorem~\ref{th:discorrelation}.
%
Suppose that the group $G$  is $s$-step nilpotent. The proof goes by induction on $s$.
 We assume that either $s=1$, or that $s\geq 2$, and  the result holds for $(s-1)$-step nilmanifolds.
We are going to show that it holds for $s$-step nilmanifolds.

\subsection{Reduction to the case of a nilcharacter}
\label{SS:7.3}
We start with   some reductions, similar to those made in the proof of~\cite[Lemma~3.7]{GT12a}.  We let $r:=\dim(G_s)$ and $t:=\dim(X)$.  Suppose that \eqref{Assumption} holds.

There exists a constant $A_1:=A_1(X,\tau)$ and a function $\Phi'$ on $X$ with
$$
\norm{\Phi-\Phi'}_\infty\leq\frac\tau 2, \ \
\int_X\Phi'\,dm_X=0, \ \text{ and }\ \norm{\Phi'}_{\CC^{2t}(X)}\leq A_1.
$$
 Then $|\E_{n\in[N]} \one_P(n) f (n)\Phi'(g(n+k)\cdot e_X)|>\tau/2$. Therefore, substituting $\Phi'$ for $\Phi$,  up to a change in the constants, we can assume that
$$
\norm\Phi_{\CC^{2 t}(X)}\leq 1.
$$
We proceed now to a ``vertical Fourier decomposition''.
Using the Mal'cev basis of $G$, we   identify the vertical torus $G_s/(G_s\cap\Gamma)$ with $\T^r$ and its dual group with $\Z^r$. For  $\bh\in\Z^r$ let
$$
\Phi_\bh(x):=\int_{\T^r}\e(-\bh\cdot\bu)\, \Phi(\bu\cdot x)\, dm_{\T^r}(\bu).
$$
We have
$$
\norm{\Phi_{\bh}}_{\lip(X)}\leq 1,\qquad  \int_X\Phi_\bh\,dm_X=0,
$$
and $\Phi_\bh$ is a nilcharacter of frequency $\bh$, that is,
$$\Phi_\bh(\bv\cdot x)=\e(\bh\cdot\bv)\, \Phi_\bh(x)\ \ \text{ for every }\ \bv\in\T^r\text{ and every }\ x\in X.
$$
Moreover, since $\norm\Phi_{\CC^{2t}(X)}\leq 1$, we have
$$
\norm{\Phi_\bh}_\infty\leq A_2(1+\norm\bh)^{-2t}
$$
for some constant $A_2:=A_2(X)$ and
$$
\Phi(x)=\sum_{\bh\in\Z^r}\Phi_\bh(x)\ \text{ for every }\ x\in X.
$$
It follows from~\eqref{Assumption} that there exists $\bh\in\Z^r$, with $\norm\bh\leq A_3$ and
\begin{equation}\label{E:tau1}
|\E_{n\in[N]} \one_P(n)
 \, f(n)\, \Phi_\bh(g(n+k)\cdot e_X)|>\tau_1
\end{equation}
for some positive reals $\tau_1:=\tau_1(X,\tau)$ and  $A_3:=A_3(X,\tau)$. Therefore,
 we can assume that~\eqref{Assumption} holds with $\tau_1$ in place of $\tau$ for some nilcharacter
  $\Phi$  of frequency $\bh$  and $\norm\bh\leq A_3$.

\subsection{Reduction to the case of a non-zero frequency}
\label{SS:7.4}
First, suppose that $s=1$ and  $\bh=0$. We have $G=G_s$ and $\Phi_\bzero$ is constant. Since the integral of $\Phi$ is equal to zero, $\Phi$ is identically zero and we have a contradiction by \eqref{Assumption}.

 Suppose now that $s\geq 2$ and that $\bh=0$. As in Section~\ref{subsec:vertical},
 we let $ \wt G:=G/G_s$ and $\wt\Gamma:=\Gamma/(\Gamma\cap G_s)$. Then the $(s-1)$-step nilmanifold  $\wt X:= \wt G/\wt\Gamma$ is identified with the quotient of $X$ under the action of the vertical torus $\T^r$. Let $\pi\colon X\to\wt X$ be  the natural projection.
  Since $\Phi$ is a nilcharacter with frequency $0$, it  can be written as $\Phi:=\wt\Phi\circ\pi$ for some function $\wt\Phi$ on $\wt X$ and we have $\int_{\wt X}\wt\Phi\,dm_{\wt X}=0$ and $\norm{\wt \Phi}_{\lip(\wt X)}\leq A_4$ for some constant $A_4:=A_4(X)$. Let $\wt g$ be the image of the polynomial sequence $g$ in $\wt G$ under the natural projection. Then $\wt g\in \poly(\wt G_\bullet)$  where  $\wt G^{(j)}:=(G^{(j)}G_s)/G_s$ for every $j$. We have
$$
|\E_{n\in [N]}\one_P(n) f (n)\wt\Phi(\wt g(n+k)\cdot e_{\wt X}\bigr)|=|\E_{n\in[N]} \one_P(n) f (n)\Phi(g(n+k)\cdot e_X)|>\tau_1.
 $$
 Assuming that $N$ is sufficiently large, depending on $X$ and on $\tau$, by the induction hypothesis we get that the sequence $(\wt{g}(n)\cdot e_{\wt{X}})_{n\in [N]}$ is not totally $\sigma'$-equidistributed for some $\sigma':=\sigma'(X,\tau)$. This implies a similar property for the sequence $(g(n)\cdot e_X)_{n\in [N]}$ and completes the induction in the case where $\bh=0$.

We can therefore assume that the frequency $\bh$ of the nilcharacter $\Phi$ is non-zero.

\subsection{Reduction to the case where  $k=0$ and $g(0)=1_G$}
\label{SS:7.5}

Suppose that the conclusion~\eqref{Conclusion}
 holds  for some $N_0$ and $\sigma$,
  under the stronger assumption that the hypothesis~\eqref{Assumption}
 holds for $k=0$ and for sequences that satisfy $g(0)=1_G$. We are going to show that it holds without these assumptions.
Let $\tau>0$ and $N\geq N_0$. Let $F\subset G$ be a bounded fundamental domain of the projection $G\to X$ (we assume that $F$ is fixed given $X$).  By the first statement of Lemma~\ref{lemp:ap1} there exists a constant $C_1>0$ such that
\begin{equation}
\label{E:C2}
C_1\inv d_X(x,x')\leq
d_X(g\cdot x,g\cdot x')\leq C_1 d_X(x,x')\  \text{ for every }g\in F \text{ and  }x,x'\in X.
\end{equation}

Let the sequence $g\in \poly(G_\bullet)$ be given as above and   $k\in \N$.  We write
$$
g(k)=a_k\gamma_k\ \text{ where }\ a_k \in F\ \text{ and }\ \gamma_k\in\Gamma.
$$
Let $\wt g\colon [N]\to G$ be defined by
$$
\wt g(n):=a_k\inv g(n+k) \gamma_k\inv.
$$
  Then $\wt g(0)=1_G$. By the first definition in Section~\ref{SS:PolyGroup}   and since the subgroups $G^{(i)}$ of $G$ are normal, we have   $\wt g\in \poly(G_\bullet)$
 and
for every $n\in\N$ we have
$$
g(n+k)\cdot e_X= a_k \, \wt g(n)\cdot e_X.
$$
We let
$$
\Phi_k(x):=\Phi(a_k\cdot x).
$$
Since $\Phi$ is a nilcharacter with non-zero frequency $\bh\in \Z^r$, for every $k\in \N$, $\Phi_k$ is also a nilcharacter with
the same frequency.  Since $a_k$ belongs to  $F$ for every $k\in \N$ and $\norm{\Phi}_{\lip(X)}\leq 1$,  we get by~\eqref{E:C2} that  $\norm{\Phi_k}_{\lip(X)}\leq C_1$.
We let $\wt\Phi_k:=\Phi_k/C_1$. Then  $\norm{\wt\Phi_k}_{\lip(X)}\leq 1$, $\int \wt\Phi_k \ d m_X=0$, and
estimate \eqref{E:tau1} implies that
$$
|\E_{n\in[N]} \one_P(n)\,
 f (n)\, \wt\Phi_k(\wt g(n)\cdot e_X)|\geq\tau_2
$$
for some $\tau_2:=\tau_2(X,\tau)>0$.
We are now in a situation where the additional hypothesis are satisfied, that is, $k=0$ and $\wt{g}(0)=1_G$. We deduce that
the sequence $(\wt g(n)\cdot e_X)_{n\in[N]}$ is not totally $\sigma_1$-equidistributed in $X$ for some $\sigma_1>0$.
Let $\eta$ be the horizontal character provided by Theorem~\ref{th:Leibman}.
We have that $\eta(\wt g(n))=\eta(g(k))\inv \eta(g(n+k))$. Applying  Lemma~\ref{lem:8.4} with $\phi(n):=\eta(g(n+k))$ and $\psi(n):=\eta(g(n))$ and then applying  Lemma~\ref{lem:Leibman_Inverse},
 we deduce
that there exist a positive integer
$N_0'$  and a positive real $\sigma_2$,  such that if $N\geq N'_0$,
then the  sequence $(g(n)\cdot e_X)_{n\in[N]}$ is not totally $\sigma_2$-equidistributed in $X$.
Hence, in establishing  Theorem~\ref{th:discorrelation}, we can assume that $k=0$ and $g(0)=1_G$.

Therefore, in the rest of this proof, we can and will assume that $g(0)=1_G$,  and that
\begin{equation}
\label{E:lower2}
|\E_{n\in[N]} \one_P(n) \,  f(n)\, \Phi(g(n)\cdot e_X)|\geq\tau_2
\end{equation}
 for some $\tau_2:=\tau_2(X,\tau)>0$, where
 $$
 \Phi \ \text{  is a nil-character
 with non-zero frequency.}
 $$

\subsection{Using  the orthogonality criterion of K\'atai}
\label{SS:7.6}
Combining the lower bound \eqref{E:lower2} and  Lemma~\ref{lem:katai} we get that there exists a positive integer  $K:=K(X,\tau_2)=K(X,\tau)$,  primes $p,q$ with $p<q<K$, and a positive real $\tau_3:=\tau_3(X,\tau_2)=\tau_3(X,\tau)$, such that
$$
|\E_{n\in[N]} \one_{[N/q]}(n) \one_{P}(pn) \, \one_{P}(qn)\,
\Phi(g(pn)\cdot e_X)\cdot \overline{\Phi}(g(qn)\cdot e_X)|\geq\tau_3.
$$
Let $P_1\subset[N]$ be an arithmetic progression such that $\one_{[N/q]}(n)\one_{P}(pn)\one_P(qn)=\one_{P_1}(n)$. Then the last inequality can be rewritten as
\begin{equation}
\label{eq:PhiKatai}
|\E_{n\in[N]} \one_{P_1}(n)\,
\Phi(g(pn)\cdot e_X)\cdot \overline\Phi(g(qn)\cdot e_X)|\geq\tau_3.
\end{equation}
We remark that the pairs $(p,q)$ with $p,q<K$ belong to some finite set that depends only on $X$ and $\tau$. Therefore, from this point on  we can and will assume that $p$ and $q$ are fixed distinct primes.
Almost all parameters defined below  will depend on $p$ and $q$, and in order to ease our notation a bit, this dependence is going to   be left implicit.

\subsection{Using the factorization theorem on $X\times X$}
\label{SS:7.8}
Recall that $g\in \poly(G_\bullet)$. Let $G\times G$ be endowed with the product filtration and the product Mal'cev basis.
 We define the sequence $h\colon [N]\to G\times G$  by
\begin{equation}\label{def:h}
h(n):=(g(pn),g(qn)), \quad n\in [N].
\end{equation}
 For every $i\in\N$ and  $k_1,\dots,k_i\in\Z$ we have (recall that $\partial_{h}g(n):=g(n+h)g(n)^{-1}$)
$$
\partial_{k_i}\dots\partial_{k_1}h(n)=
\bigl(\bigl(\partial_{pk_i}\dots\partial_{pk_1} g)(pn),
 \bigl(\partial_{qk_i}\dots\partial_{pk_1} g\bigr)(qn)\bigr)\in G^{(j)}\times G^{(j)}=(G\times G)^{(j)},
$$
and thus  $h\in \poly((G\times G)_\bullet)$.
We can rewrite~\eqref{eq:PhiKatai}
as
\begin{equation}
\label{eq:PhiKatai2}
|\E_{n\in[N]} \one_{P_1}(n)\,
(\Phi\otimes\overline\Phi)(h(n)\cdot e_{X\times X})|\geq\tau_3.
\end{equation}

 For the sequence $h\in\poly((G\times G)_\bullet)$,  given by \eqref{def:h}, we apply Theorem~\ref{th:FactoGT} for a function $\omega_5\colon \N\to \R^+$ that will be determined shortly (its defining relation is \eqref{eq:choseOmega}) and depends only on $X, \tau$. We get   families $\CF_5(M)$, $M\in \N$,  of sub-nilmanifolds of $X\times X$ (which do not depend on $\omega_5$), that increase with $M$, a constant $M_5:=M_5(X,\omega_5)$, an integer $M^*$ with   $1\leq M^*\leq M_5$, a closed and connected
  rational  subgroup $H$ of $G\times G$, a nilmanifold $Y:= H/(H\cap (\Gamma\times \Gamma))$ belonging to the family $\CF_5(M^*)$, and a factorization
$$
h(n)=\epsilon(n)\, h'(n) \, \gamma(n), \quad n \in [N],
$$
where
\begin{enumerate}
\item
\label{it:smooth3}
$\epsilon\colon [N]\to G\times G$ is $(M^*,N)$-smooth;
\item
\label{it:equid3}
 $h'\in \poly(H_\bullet)$  and $(h'(n)\cdot e_{X\times X})_{n\in[N]}$ is totally $\omega_5(M^*)$-equidistributed in $Y$
 with the metric $d_{Y}$ induced by the filtration $H_\bullet$;
\item
\label{it:periodic3}
 $\gamma\colon [N]\to G\times G$ is $M^*$-rational and $(\gamma(n)\cdot e_{X\times X})_{n\in [N]}$ has period at most $M^*$.
\end{enumerate}
 We start from \eqref{eq:PhiKatai2} and use  the argument of Section~\ref{subsec:constr-fact}  with $X\times X$ and $G\times G$ substituted for $X$ and $G$ respectively. We get  a rational element $\alpha$ belonging to the finite subset $\Sigma(M^*)$ of $G\times G$, an integer $n_0\in[N]$,  and an arithmetic progression $P_2\subset P_1$, such that the function $(\Phi\otimes\overline\Phi)_\alpha$ and the sequence $h_\alpha'$ defined by
\begin{gather}\label{E:FFa}
(\Phi\otimes\overline\Phi)_\alpha(y):=(\Phi\otimes\overline\Phi)(\epsilon(n_0)\alpha\cdot y)\ \text{ for  }y\in Y;\\
\notag h_\alpha'(n):=\alpha\inv h'(n)\alpha\ \text{ for }n\in[N],
\end{gather}
satisfy
\begin{equation}
\label{eq:PhiKatai2'}
|\E_{n\in[N]} \one_{P_2}(n)\,
(\Phi\otimes\overline\Phi)_\alpha(h_\alpha'(n)\cdot e_{X\times X})|\geq\tau_4(M^*),
\end{equation}
where $$
\tau_4(M^*):=\tau_3^2/(64\,H_1(M^*)^2{M^*}^2)
$$ and $H_1(M^*)$ is the constant defined as $H(M^*)$ in Section~\ref{subsec:constr-fact}. We proceed as in Section~\ref{subsec:constr-fact}  with $H$ in place of $G'$ and $Y$ in place of $X'$, we let
$$
H_\alpha:=\alpha\inv H \alpha \  \text{ and } \ Y_\alpha:= H_\alpha\cdot e_Y.
$$
We have that $ h_\alpha'\in\poly(H_{\alpha\bullet})$  and by Property~\eqref{it:def-rho} of Section~\ref{subsubsec:defH},
    if $N$ is large enough, depending on $X,M^*, \omega_5(M^*)$, then for every $\alpha\in \Sigma(M^*)$ and $Y\in \CF_5(M^*)$ we have
\begin{equation}
\label{eq:equidhalpha}
(h_\alpha'(n)\cdot e_Y)_{n\in[N]}\text{ is totally }
\rho_{X\times X}(M^*,\omega_5(M^*))\text{-equidistributed in } Y_\alpha.
\end{equation}
Note that  since $\norm\Phi_{\lip(X)}\leq 1$  we have $\norm{\Phi\otimes\overline\Phi}_{\lip(X\times X)}\leq C_2$
for some positive real $C_2:=C_2(X)$.

We can now define the function $\omega_5$.
Recall that for every fixed $M\in \N$
we have  $\rho_{X\times X}(M,t)\to 0$ as $t\to 0^+$. Therefore,  there exists $\omega_5\colon \N\to \R^+$ such that
\begin{equation}
\label{eq:choseOmega}
\rho_{X\times X}(M,\omega_5(M)) < \tau_4(M)H_1(M)^{-2}C_2\inv\ \text{ for every }  M\in \N.
\end{equation}
Note that there is no
circularity in defining these parameters, as
$\CF_5(M)$, $\tau_4$, $H_1$,  $C_2$ do not depend on $\omega_5$. Furthermore, the function
$\omega_5$  and the integer $M_5$ depend only on $X$   and on $\tau$, hence there exists $N_5:=N_5(X,\tau)$ such that
\begin{equation}\label{EE:N_1}
\text{ if } \ N\geq N_5,\  \text{ then } \
\eqref{eq:equidhalpha} \text{ holds for } M^*\in \N \text{ with } M^*\leq M_5 \text{ defined as above}.
\end{equation}

\subsection{Reduction to a zero mean property}
\label{SS:reduce}
We work with the value of $M^*$ found in the previous subsection and assume that $N\geq N_5$. Suppose for the moment that
\begin{equation}\label{E:Reducezero}
\int_{Y_\alpha} (\Phi\otimes\overline\Phi)_\alpha \ dm_{Y_\alpha}=0.
\end{equation}
Since $M^*\leq M_5$,  by \eqref{EE:N_1}   the sequence
$(h'_\alpha(n)\cdot e_{X\times X})_{n\in[N]}$   is totally
$\rho_{X\times X}(M^*,\omega_5(M^*))$-equidistributed in $Y_\alpha$.
Furthermore, it follows from~\eqref{eq:Phi'lip}
 that   $\norm{(\Phi\otimes\overline\Phi)_\alpha}_{\lip(X\times X)}\leq C_2H_1(M^*)$
and using Property~\eqref{eq:LipXXprime} of Section~\ref{subsubsec:defH} we get
$\norm{(\Phi\otimes\overline\Phi)_\alpha|_{Y_\alpha}
}_{\lip(Y_\alpha)}\leq C_2H_1(M^*)^2$.  It follows that
$$
|\E_{n\in[N]} \one_{P_2}(n)\,
(\Phi\otimes\overline\Phi)_\alpha(h_\alpha'(n)\cdot e_{X\times X})|
\leq \rho_{X\times X}(M^*,\omega_5(M^*))\cdot C_2H_1(M^*)^2<\tau_4(M^*)
$$
by~\eqref{eq:choseOmega}, contradicting~\eqref{eq:PhiKatai2'}.
 Hence, \eqref{E:Reducezero} cannot hold.

 Therefore, in order to complete the proof of Theorem~\ref{th:discorrelation} it remains to show that there exist a positive real $\sigma$ and a positive integer $N_6$, both depending on $X$ and $\tau$ only,   such that if for some $N\geq N_6$  the sequence $(g(n)\cdot e_X)_{n\in [N]}$ is  totally $\sigma$-equidistributed in $X$,
then \eqref{E:Reducezero} holds, where $\Phi, \alpha, Y_\alpha$ are as before.
The values of $\sigma$ and $N_6$  are going to be  determined in Section~\ref{SS:7.9}.

\subsection{Reduction to an algebraic property}
\label{SS:reduceH}
Suppose for the moment  that the group $H$ defined in Section~\ref{SS:7.8} satisfies the following property:
 \begin{equation}
 \label{E:invariance3}
\text{the set }\  U:=\{u\in G_s\colon (u^{p^j},u^{q^j})\in H \text{ for some } j\in \N\} \ \text{ generates } \ G_s
\end{equation}
 where we use multiplicative notation for $G_s$. We claim that then \eqref{E:Reducezero} holds. To see this,  notice first that since
 $\Phi$ is a nilcharacter of $X$ with non-zero frequency,   there exists   a non-trivial multiplicative character $\theta\colon G_s\to \T$ such that
$$
  \Phi(u\cdot y)= \theta(u) \cdot \Phi(y), \ \text{ for every } y\in Y, \ u\in G_s.
$$
Since $\theta$ is non-trivial, there exists $u\in G_s$ such that $\theta(u)$ is irrational.
By~\eqref{E:invariance3}, there exists $u$ with $\theta(u)$  irrational and  such  that $(u^{p^j},u^{q^j})\in H$ for some $j\in\N$.
 Then $\theta(u^{p^j-q^j})\neq 1$ and
  \begin{equation}\label{E:PhiInv}
  (\Phi \otimes \overline{\Phi})_\alpha((u^{p^j},u^{q^j})\cdot y)= \theta(u^{p^j-q^j}) \cdot (\Phi \otimes \overline{\Phi})_\alpha(y),
  \end{equation}
  where $(\Phi \otimes \overline{\Phi})_\alpha$ is defined in \eqref{E:FFa} and we used that $(u^{p^j},u^{q^j}) $
    belongs in the center of $G\times G$ and hence commutes with the element $\alpha$.
 Left multiplication by $(u^{p^j},u^{q^j})$ is a
measure preserving transformation on $Y_\alpha$ and thus, after  integrating the last relation on this set, we obtain~\eqref{E:Reducezero}.

 Thus, at this point we have reduced matters to
showing  that there exist a positive real $\sigma$ and a positive integer $N_6$, both depending on $X$ and $\tau$ only,   such that if for some $N\geq N_6$  the sequence $(g(n)\cdot e_X)_{n\in [N]}$ is  totally $\sigma$-equidistributed in $X$,
 then Property~\eqref{E:invariance3} holds.
  We show this in the  final part of this section using the tools developed in Section~\ref{S:minorarcs1}.


\subsection{Our plan and definition of parameters}
   There are two key ingredients  involved in the proof of Property~\eqref{E:invariance3}.
    The first is  Proposition~\ref{P:PolysTorusQuant''}  that  gives information about the action of the sequence $(h(n)\cdot e_{X\times X})_{n\in [N]}$
on the horizontal torus of $X\times X$. This is the place where we use  our assumption that  the sequence $(g(n)\cdot e_X)_{n\in[N]}$
  is  $\sigma$-equidistributed in $X$ for $\sigma$ suitably small and  $N$  sufficiently large.
   Using Proposition~\ref{P:PolysTorusQuant''}   one can then extract information about the group $H\cdot (G_2\times G_2)$ and our second key ingredient is
  the purely algebraic Proposition~\ref{P:alg} that   utilizes
  this information in order to prove Property~\eqref{E:invariance3}.

\subsubsection{Some notation}
\label{SS:7.9}
To facilitate reading, before proceeding to the main body of the proof of Property~\eqref{E:invariance3}, we
introduce some notation and  we
 organize some data and parameters that are spread out in this and the previous   section.
These parameters  are going to be used in the definition of $\sigma$ and the range of eligible $N$'s used in
the statement of  Theorem~\ref{th:discorrelation}.

Recall that $X:=G/\Gamma$ and that $G$ is endowed with the rational filtration $G_\bullet$.  We denote by  $d$  the degree of $G_\bullet$, and thus all polynomial sequences under consideration have degree at most $d$. The distinct primes $p,q$ were introduced in Section~\ref{SS:7.6} and are bounded by a constant that depends on $X$ and $\tau$ only.

We write $Z:=G/(G_2\Gamma)$ for the horizontal torus of $X$ and $m$ for its dimension.
 We identify $Z=\T^m$, the identification being given by the Mal'cev basis of $G$. We write $\pi_Z\colon G\to Z= \T^m$ for the natural projection and let $\pi_{Z\times Z}:=\pi_Z\times\pi_Z$. Let $A_5:=A_5(X)$ be a Lipschitz constant for the maps $\pi_Z$ and $\pi_{Z\times Z}$.

In Lemma~\ref{L:contained}, for all $L_1,L_2\in\N$ and sub-tori $S_1,S_2$ of $\T^m$, we defined a positive real
$\delta_1:=\delta_1(S_1,S_2,L_1,L_2)$ and  a positive integer $N_1:=N_1(S_1,S_2, L_1,L_2)$.

Proposition~\ref{P:PolysTorusQuant''} defines finite families $\CF_4(M)$, $M\in \N$,  of sub-tori of $\T^m$ (which do not depend on $\omega_4$), as well as
integers $M_4:=M_4(\omega_4)$,  $N_4:=N_4(\omega_4)$, and a positive real $\delta_4:=\delta_4(\omega_4)$.
 Also, if $T_1,\dots,T_d$ are sub-tori of $\T^m$, the sub-torus
$R_{T_1,\dots,T_d}$ of $T^{2m}$ was defined by~\eqref{eq:defRT1Td}.

In Section~\ref{SS:7.8}  we defined positive integers $M_5:=M_5(X,\tau)$, $N_5:=N_5(X,\tau)$, and for $1\leq M\leq M_5$, finite families  $\CF_5(M)$ of rational sub-nilmanifolds  of $X\times X$ that increase with $M$.  Then  the family $\CF_5(M_5)$ is the largest and to each nilmanifold $Y\in\CF_5(M_5)$
  we assigned
a rational subgroup $H$ of $G\times G$.
We let $\CF_5'(M_5)$ denote the family of the corresponding subgroups of $G\times G$.
Note that for every $H\in\CF'_5(M_5)$, $\pi_{Z\times Z}(H)$ is a sub-torus of $\T^{2m}$.


\subsubsection{Defining $\sigma$  and the range of eligible $N$}

We define the function $\omega_4\colon \N\to \R^+$ by
\begin{equation}\label{E:condomega4}
\omega_4(M):=\min_{ T_1,\ldots, T_d\in \CF_4(M); \, H\in \CF_5'(M_5) }
\bigl\{
\delta_1\bigl(R_{T_1,\ldots,T_d},\pi_{Z\times Z}(H),M,A_5M_5\bigr)
\bigr\}, \quad M\in \N,
\end{equation}
and we let
\begin{equation}\label{E:condsigma}
\sigma :=A_5^{-1} \delta_4(\omega_4).
\end{equation}
Finally, we let
\begin{equation}\label{E:N2}
\wt{N}_1:=\max_{
T_1,\ldots, T_d\in \CF_4(M_4);\, 1\leq M\leq M_4;  \, H\in \CF_5'(M_5)}
\bigl\{N_1\bigl(R_{T_1,\ldots,T_d},\pi_{Z\times Z}(H),M,A_5M_5\bigr)\bigr\}
\end{equation}
and
\begin{equation}\label{E:N5}
N_6:=\max\{\wt{N}_1,N_4, N_5\}.
\end{equation}
Note that all the above defined parameters and  the function $\omega_4\colon \N\to \R^+$ depend only on $X$ and $\tau$.
Henceforth, we assume that
 \begin{equation}
 \label{E:hypothesis}
 N\geq N_6\  \text{ and the sequence } \ (g(n)\cdot e_X)_{n\in [N]}
\ \text{  is  totally }\  \sigma\text{-equidistributed in } X.
\end{equation}
Under this assumption we plan to establish  Property~\eqref{E:invariance3}.
Recall that  Property~\eqref{E:invariance3} implies Property~\eqref{E:Reducezero}, and this in turn
 suffices to complete the proof of Theorem~\ref{th:discorrelation}.
 \subsection{Proof of the algebraic property}
Our first goal is to   extract some information about the group $H\cdot (G_2\times G_2)$.
 The idea is to compare two factorizations of the projection   of the sequence $h$ to
 $Z\times Z$. The first is the  one
we get by projecting the factorization of $h$ given in Section~\ref{SS:7.8} to
$Z\times Z$. The second
is the one  we get after  imposing  an equidistribution assumption on  the projection
of the sequence $g$ on $Z$ and using Proposition~\ref{P:PolysTorusQuant''}. The two factorizations
involve total equidistribution properties on the subtori  $\pi_{Z\times Z}(H)$ and $R_{T_1,\ldots, T_d}$.
 Assuming that we have ``sufficient'' total equidistribution in the second  case
we are going to show  using Lemma~\ref{L:contained}
 that $R_{T_1,\ldots, T_d}\subset \pi_{Z\times Z}(H)$. This then easily  implies that the group $H$ satisfies the hypothesis of Proposition~\ref{P:alg} and the conclusion of this proposition then
  enables us to deduce    Property~\eqref{E:invariance3}. We proceed now to the details.

The sequence
$$
\bg(n):=\pi_Z(g(n)), \quad n\in \N,
$$ is a polynomial sequence of degree at most $d$ in $Z=\T^m$ with $\pi_Z(g(0))=0$. As explained in Section~\ref{SS:PolyGroup}, we can write this sequence as
$$
\bg(n)=\pi_Z(g(n))=\balpha_1n+\dots+\balpha_d n^d
$$
for some  $\balpha_1,\balpha_2,\dots,\balpha_d\in\T^m$.
 Recall also that
$$
h(n)=(g(pn),g(qn)), \quad n\in [N].
$$

\subsubsection{First factorization of $\bf{h}$}\label{SS:facto1}
Recall that in Section~\ref{SS:7.8} we defined an integer $M^*\leq M_5$, a nilmanifold $Y=H/(H\cap (\Gamma\times \Gamma))$ belonging to a family $\CF_5(M^*)\subset \CF_5(M_5)$  with $H\in\CF'_5(M_5)$,  and for $N\geq N_5$ a
 factorization
$$
h(n)=\epsilon(n) h'(n) \gamma(n), \quad n\in [N],
$$
that satisfies Properties~\eqref{it:smooth3}-\eqref{it:periodic3}  stated in Section~\ref{SS:7.8}.
Projecting both sides of this identity to $Z\times Z =\T^{2m}$ we get
 and a factorization
\begin{equation}\label{E:facto1}
  \bh(n)=\bepsilon_1(n) + \bh_1(n)+ \bgamma_1(n), \quad n\in [N],
 \end{equation}
where
\begin{gather*}
\bh(n)         :=\pi_{Z\times Z}(h(n)), \\
\bepsilon_1(n) :=\pi_{Z\times Z}(\epsilon(n)),\
 \bh_1(n):=\pi_{Z\times Z}(h'(n)),\
  \bgamma_1(n) :=\pi_{Z\times Z}(\gamma(n)).
\end{gather*}
For $N\geq N_6$ (then $N\geq N_5$ by \eqref{E:N5})  we have
\begin{enumerate}
\item
\label{it:smooth3'}
$\bepsilon_1\colon [N]\to \T^{2m}$ is $(A_5M_5,N)$-smooth;
\item
\label{it:equid3'}
 $\bh_1\colon [N]\to \pi_{Z\times Z}(H)$
 is a polynomial sequence of  degree $d$;
\item
\label{it:periodic3'}
 $\bgamma_1\colon [N]\to \T^{2m}$ is $M_5$-rational and
 has period at most $M_5$,
\end{enumerate}
where we used  that the Lipschitz constant of $\pi_{Z\times Z}\colon G\times G\to
Z\times Z$
is at most $A_5$.
Here we do not use the equidistribution properties of the sequences $h'$ and $\bh_1$;
 what is important is that $h'$ takes values in $H$ and thus $\bh_1$  takes values in the sub-torus
$\pi_{Z\times Z}(H)$ of $\T^{2m}$.

\subsubsection{ Second factorization of $\bh$.}
\label{SS:facto2}
Recall that $\bg(n)=\pi_Z( g(n))
=\balpha_1n+\cdots+\balpha_dn^d$ and $\bh(n)=\pi_{Z\times Z}(h(n))$.
We have
$$
\bh(n)=(\bg(pn),\bg(qn)).
$$
Our assumption \eqref{E:hypothesis} and
 the defining property of  $\sigma$, given in  \eqref{E:condsigma},  implies that the sequence
$$
(\bg(n))_{n\in[N]}\  \text{ is  totally }\   \delta_4(\omega_4)\text{-equidistributed in } \T^m,
$$
where we used the fact that the Lipschitz constant of  $\pi_{Z}\colon G\to Z$
is at most $A_5$.

Hence, Proposition~\ref{P:PolysTorusQuant''} applies. Recall that  $M_4:=M_4(\omega_4)$,  $N_4:=N_4(\omega_4)$ were defined by this proposition and that $N_6\geq N_4$ by \eqref{E:N5}.
Therefore, by  Proposition~\ref{P:PolysTorusQuant''},
for every $N\geq N_6$, there exists
a positive integer $M^{**}\leq M_4$, and subtori $T_1,\ldots, T_d\in \CF_4(M^{**})$ of $\T^m$
such that
\begin{equation}
\label{E:STR}
T_1+\cdots +T_d=\T^m,
\end{equation}
and   the sequence   $(\bh(n))_{n\in [N]}$ can be factorized as follows
  \begin{equation}\label{E:facto2}
 \bh(n)=\bepsilon_2(n)+\bh_2(n)+\bgamma_2(n), \quad n\in [N],
  \end{equation}
  where $\bepsilon_2(n)$, $\bh_2(n)$, $\bgamma_2(n)$ are polynomial sequences on $\T^{2m}$ such that
\begin{enumerate}
\item
\label{it:smooth4}
 $\bepsilon_2(n)$ is $(M^{**},N)$-smooth;
\item
\label{it:equid4}
 $\bh_2(n)$ takes values and is  totally $ \omega_4(M^{**})$-equidistributed
  in the  sub-torus
\begin{equation}
\label{E:RT}
 R_{T_1,\ldots,T_d}:=
 \bigl\{(p\bx_1+\cdots+ p^d \bx_d\,,\, q\bx_1+\cdots+ q^d \bx_d)\colon \bx_j\in T_j \text{ for } j=1,\ldots,d
 \bigr\}
 \end{equation}
of $\T^{2m}$;
\item
\label{it:periodic4}
$\bgamma_2(n)$ is $M^{**}$-rational and has period at most $M^{**}$.
\end{enumerate}

\subsubsection{Using the two factorizations}
For $N\geq N_6$, in   Sections~\ref{SS:facto1} and \ref{SS:facto2} we have defined
 the factorizations $\bh(n)=\bepsilon_1(n) + \bh_1(n)+ \bgamma_1(n)$ and
$\bh(n)=\bepsilon_2(n) + \bh_2(n)+ \bgamma_2(n)$ and the integer $M^{**}\leq M_4$.
By the defining property of $\omega_4$, given in~\eqref{E:condomega4},  and since
$T_1,\dots,T_d\in\CF_4(M^{**})$ and  $H\in\CF'_5(M_5)$,
for $N\geq N_6$  we have
$$
\omega_4(M^{**})\leq \delta_1(R_{T_1,\ldots,T_d},\pi_{Z\times Z}(H),M^{**},A_5M_5).
$$
 We have $N\geq N_6\geq\wt N_1$ by~\eqref{E:N5}. Furthermore, since $M^{**}\leq M_4$ we have $\CF_4(M^{**})\subset\CF_4(M_4)$, thus
$T_1,\dots,T_d\in\CF_4(M^{**})$  and by~\eqref{E:N2} we obtain that $N\geq \wt N_1$ which is greater than  $N_1(R_{T_1,\ldots,T_d},\pi_{Z\times Z}(H),M^{**},A_5M_5)$.
Hence,   Lemma~\ref{L:contained} applies, and gives that
 \begin{equation}
 \label{H'=Zpq}
R_{T_1,\ldots,T_d}\subset \pi_{Z\times Z}(H).
\end{equation}

\subsubsection{End of the proof}
For $j=1\ldots, d$, let
$$
G^j:=\pi_{Z}^{-1}(T_j).
$$
Note that for $j=1,\ldots, d$ the group  $G^j$ contains $G_2$ and thus $G^j$ is a normal subgroup of $G$.
 It follows immediately from~\eqref{E:STR} that
$$
 G^1 \cdots  G^d =G.
$$
Let
$$
W:=
\bigl\{(g_1^p\cdots g_d^{p^d}\,,\,g_1^{q}\cdots g_d^{q^d})\colon g_1\in G^1,\ldots, g_d\in G^d
\bigr\}.
$$
Since for $j=1,\dots,d$ we have $\pi_Z(G^j)=T_j$, it follows that $\pi_{Z\times Z}(W)$ is included in the torus
$R_{T_1,\dots,T_d}$ given by~\eqref{E:RT}. Hence, \eqref{H'=Zpq} gives that
$\pi_{Z\times Z}(W)\subset\pi_{Z\times Z}(H)$ which implies that
$$
 W\subset H \cdot (G_2\times G_2).
$$

We have just established that if \eqref{E:hypothesis} holds, then   the group  $H$ satisfies the hypothesis of  Proposition~\ref{P:alg}.
We deduce that $H$ satisfies Property \eqref{E:invariance3}  and as
   explained in Section~\ref{SS:reduceH}, this completes the proof of Theorem~\ref{th:discorrelation}.

\section{The $U^s$-structure theorems}
\label{sec:decompUs}
In this section,  our main goal
is to  prove  Theorems~\ref{T:DecompositionSimple} and \ref{T:DecompositionII}.
The proof of the second result is based on the following more informative variant of the first
 result:
\begin{theorem}[Structure theorem for multiplicative functions I$'$]
\label{T:DecompositionI}
 Let $s\geq 2$ and  $\ve>0$. There exists $\theta_0:=\theta_0(s,\ve)$ such that for  $0<\theta<\theta_0$  there exist
positive integers  $N_0$, $Q$,
$R$, depending on $s, \ve, \theta$ only,  such that the following holds: For
 every  $N\geq N_0$ and    every $ f \in\CM$, the function $ f_ N$ admits
the decomposition
$$
  f_ N(n)= f_ {N,\st}(n)+ f_ {N,\un}(n) \quad \text{ for every }\  n\in
 \tZN,
$$
 where  the functions $ f_ {N,\st}$ and $ f_ {N,\un}$  satisfy:
\begin{enumerate}
\item
\label{it:weakUs-1}  $ f_ {N,\st}= f_ N*\phi_{N,\theta}$, where
$\phi_{N,\theta}$ is the  kernel on $\Z_\tN$ defined  by
\eqref{eq:def-phi}, is independent of $ f $, and
  the convolution product is  defined in $\tZN$;
\item\label{it:weakUs-2}
 If $\xi \in \Z_\tN$ satisfies $\widehat{ f }_{N,\st}(\xi)\neq 0$, then $\displaystyle \big|\frac{\xi}{\tN}-\frac{p}{Q}\big|\leq \frac{R}{\tN}$ for some   $p\in \{0,\ldots Q-1\}$;
 \item
 \label{it:weakUs-4}
$\displaystyle
| f_ {N,\st}(n+Q)- f_ {N,\st}(n)|\leq \frac{R}{\tN}$  for every
$n\in\Z_\tN$,
where  $n+Q$ is taken $\bmod\tN$;
\item
\label{it:weakUs-3}
 $\norm{ f_ {N,\un}}_{U^s(\tZN)}\leq\ve$.
\end{enumerate}
\end{theorem}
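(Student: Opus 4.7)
\textbf{Proof plan for Theorem~\ref{T:DecompositionI}.}

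The plan is to define $f_{N,\st}$ and $f_{N,\un}$ exactly as in the $U^2$-structure theorem (Theorem~\ref{th:Decomposition-U2}) applied with parameter $\theta$, so that Properties~\eqref{it:weakUs-1}, \eqref{it:weakUs-2} and \eqref{it:weakUs-4} are automatic. The entire work consists of showing that, if $\theta_0:=\theta_0(s,\ve)$ is taken small enough, then for every $\theta<\theta_0$ and every sufficiently large $N$, the resulting uniform component satisfies $\norm{f_{N,\un}}_{U^s(\tZN)}\leq \ve$. Note that Theorem~\ref{th:Decomposition-U2} only gives $\norm{f_{N,\un}}_{U^2(\tZN)}\leq\theta$, so the task is genuinely about upgrading $U^2$-control to $U^s$-control.

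Suppose for contradiction that $\norm{f_{N,\un}}_{U^s(\tZN)}>\ve$. By the inverse theorem (Theorem~\ref{th:inverse}) there exist an $(s-1)$-step nilmanifold $X=G/\Gamma$ and $\delta:=\delta(s,\ve)>0$, both depending only on $s$ and $\ve$, together with $g\in G$ and a Lipschitz function $\Phi$ with $\norm{\Phi}_{\lip(X)}\leq 1$, such that
$$
\bigl|\E_{n\in[N]}f_{N,\un}(n)\,\Phi(g^n\cdot e_X)\bigr|\geq \delta.
$$
Since $f_{N,\un}=f_N-f_N*\phi_{N,\theta}$, this correlation splits as a sum of two terms, one involving $f_N$ and one involving the approximately periodic piece $f_{N,\st}$. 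To each of these we apply the Green--Tao factorization theorem (Theorem~\ref{th:FactoGT}) to the polynomial sequence $(g^n)_{n\in[N]}$ with a function $\omega\colon\N\to \R^+$ to be specified. This yields $M\leq M_1(X,\omega)$ and a factorization $g^n=\epsilon(n)\,g'(n)\,\gamma(n)$ on a sub-nilmanifold $X'$ with $g'$ totally $\omega(M)$-equidistributed in $X'$. Using the machinery of Section~\ref{susubsec:fact}, we reduce, at the cost of passing to an arithmetic progression $P\subset[N]$ of step at most $M$ and of conjugating by a rational element $\alpha$, to a correlation of the form
$$
\bigl|\E_{n\in[N]}\one_P(n)\,f_{N,\un}(n)\,\Phi_\alpha(g'_\alpha(n)\cdot e_{X'_\alpha})\bigr|\geq \delta',
$$
where $\delta':=\delta'(\delta,M)>0$ and $(g'_\alpha(n)\cdot e_{X'_\alpha})_{n\in[N]}$ remains totally $\rho_X(M,\omega(M))$-equidistributed in $X'_\alpha$.

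Now we handle the two pieces of $f_{N,\un}$ separately. Writing $\Phi_\alpha=\bigl(\Phi_\alpha-\int_{X'_\alpha}\Phi_\alpha\,dm_{X'_\alpha}\bigr)+\int_{X'_\alpha}\Phi_\alpha\,dm_{X'_\alpha}$ and absorbing the constant mean into a bound of the form $\norm{f_{N,\un}}_{U^2(\tZN)}\leq\theta$ (which by standard appendix estimates controls averages of $f_{N,\un}$ along arithmetic progressions of bounded step), we reduce to the case where $\Phi_\alpha$ has zero mean on $X'_\alpha$. For the part of the correlation involving $f_N$ itself: choose $\omega$ so that $\rho_X(M,\omega(M))<\sigma(X'_\alpha,\delta')$, where $\sigma$ is the constant of Theorem~\ref{th:discorrelation}; the key discorrelation estimate then directly contradicts the lower bound. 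For the part involving $f_{N,\st}=f_N*\phi_{N,\theta}$: its spectrum lies in a set of frequencies close to rationals with denominator $Q$, so after partitioning $P$ into sub-progressions of step $Q$ on which $f_{N,\st}$ is almost constant (up to error $R/\tN$) and using the total equidistribution of $g'_\alpha$ with the zero-mean property of $\Phi_\alpha$, this contribution is bounded by a quantity that tends to $0$ as $\omega(M)\to 0$. Combining, we obtain a contradiction provided $\theta_0:=\theta_0(s,\ve)$ is chosen small enough to beat all the constants $\delta,\delta',M$ produced along the way, thereby forcing $\norm{f_{N,\un}}_{U^s(\tZN)}\leq\ve$.

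The main obstacle is the careful choreography of constants: one must specify the Green--Tao growth function $\omega$ \emph{after} knowing the nilmanifold $X$ and the Lipschitz-norm scale produced by the inverse theorem, so that the output equidistribution of $g'_\alpha$ on $X'_\alpha$ is fine enough to be fed into Theorem~\ref{th:discorrelation}, and one must ensure that the periodic error $R/\tN$ from $f_{N,\st}$ and the $U^2$-error $\theta$ both remain negligible compared to $\delta'$. Because $\delta'$ depends on $M$ which depends on $\omega$ which depends back on $\delta$, the dependencies must be unwound in the correct order; once this is arranged, the two terms of the contradiction are controlled by the discorrelation theorem (for $f_N$) and by the explicit structure of $\phi_{N,\theta}$ (for $f_{N,\st}$), which is why building $\phi_{N,\theta}$ with sharp control of its spectrum in Section~\ref{subsec:kernels} pays off here. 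Theorem~\ref{T:DecompositionSimple} follows as an immediate special case by taking $\theta:=\theta_0(s,\ve)/2$.
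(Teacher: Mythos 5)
Your overall architecture coincides with the paper's: reduce via Theorem~\ref{th:Decomposition-U2} to bounding $\norm{f_{N,\un}}_{U^s(\tZN)}$, invoke the inverse theorem, factorize $(g^n)_{n\in[\tN]}$ by Theorem~\ref{th:FactoGT}, pass to a sub-progression and conjugate by a rational element, and subtract the mean of $\Phi_\alpha$ using $\norm{f_{N,\un}}_{U^2(\tZN)}\leq\theta$ together with Lemma~\ref{lem:Us-intels} --- this is exactly where $\theta_0$ is determined in the paper as well. The divergence, and the gap, lies in how you dispose of the remaining correlation. The paper does \emph{not} split $f_{N,\un}=f_N-f_{N,\st}$ and treat $f_{N,\st}$ by almost-periodicity; it writes $f_{N,\un}=f_N*\psi$ with $\E_{n\in\tZN}|\psi(n)|\leq 2$ and pigeonholes over the shift to obtain a large correlation of $f_N$ itself with a \emph{shifted} nilsequence $\Phi'_0(g'_\alpha(n+k)\cdot e_X)$ --- this is precisely why Theorem~\ref{th:discorrelation} is stated with the shift $g(n+k)$. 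The payoff is that the function $\omega$ fed into Theorem~\ref{th:FactoGT} only has to beat the discorrelation threshold $\sigma$, so $\omega$, hence $M_1$, hence $\theta_0=\min_{M\leq M_1}\lambda(M)$, depend only on $s$ and $\ve$.

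Your treatment of the $f_{N,\st}$ piece breaks this. To exploit almost-periodicity you must partition $P$ into on the order of $Q(\theta)R(\theta)/\delta'$ sub-progressions on which $f_{N,\st}$ is essentially constant, and each contributes up to the total-equidistribution parameter $\rho_X(M,\omega(M))$ of $g'_\alpha$; summing, you need $\rho_X(M,\omega(M))$ to be at most a quantity of order $\delta'(M)^2/(Q(\theta)R(\theta))$. Thus $\omega$ must depend on $\theta$ (through the $Q,R$ of Theorem~\ref{th:Decomposition-U2}, which blow up as $\theta\to 0^+$), hence $M_1=M_1(X,\omega)$ depends on $\theta$, and the admissibility condition for the mean-subtraction step --- in the worst case $M^*=M_1$ it reads $\theta\leq\lambda(M_1(\omega_\theta))$ --- becomes circular: its right-hand side shrinks with $\theta$, at a rate that is not controlled given the ineffective dependence of $M_1$ on $\omega$ in Theorem~\ref{th:FactoGT}, and nothing guarantees it stays above $\theta$. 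You note that the constants "must be unwound in the correct order" but do not exhibit an order that works, and in this scheme I do not believe one exists. The repair is to handle $f_{N,\st}=f_N*\phi_{N,\theta}$ by the same convolution-and-shift pigeonhole (or, as the paper does, to apply it once to $f_{N,\un}=f_N*\psi$), so that $\omega$ never sees $Q$ or $R$.
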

We stress the fact that the value of  $\theta_0,  Q, R$  do not depend on $ f \in \CM$ and $N\in \N$,
$N_0$ does not depend on $ f  \in \CM$,  and these values  are not the same as the ones given in Theorem~\ref{th:Decomposition-U2}.
Recall that $\wt N$
 is any  prime between $N$ and   $ \ell   N$, where $\ell$ is a positive integer that is fixed throughout this argument.

The proof of Theorem~\ref{T:DecompositionI} is given in Sections~\ref{SS:UseU2}-\ref{subsec:convolution}. The
 proof of Theorem~\ref{T:DecompositionII} is given in
 Section~\ref{subsec:proof_strong}.
 Before proceeding to the details we  sketch the  proof strategy for Theorem~\ref{T:DecompositionI}.

\subsection{Some preliminary remarks and proof strategy}
\label{subsec:prelim}
Our
proof strategy  follows in part the general ideas of an argument of Green
and Tao from \cite{GT08a,GT12b} where $U^s$-uniformity
of  the M\"{o}bius function was established. In our case,
 we
are faced with  some  important additional difficulties.
The first is the need to establish discorrelation estimates for general
multiplicative functions not just the M\"{o}bius and it is important
 for  applications to establish estimates with implied constants independent of
the elements of $\CM$.
 Another difficulty is
that we cannot simply hope to prove that  all multiplicative functions with zero mean
are  $U^s$-uniform, not even for $s=2$ (see the examples in
Section~\ref{subsec:decomposition}).
 To
compensate for the lack of $U^2$-uniformity of a normalized multiplicative function $ f $,  we  subtract from
 it   a suitable ``structured component''  $ f_ \st$ given by
 Theorem~\ref{th:Decomposition-U2}, so
that $ f_ \un:= f - f_ \st$ has extremely small  $U^2$-norm. Our goal
is then to show that  $ f_ \un$ has small $U^s$-norm
(see Proposition~\ref{th:decomp-Us}).
 In view of the $U^s$-inverse
theorem  (see Theorem~\ref{th:inverse}), this would
follow if we show that $ f_ \un$  has very small correlation with
all $(s-1)$-step nilsequences of bounded complexity. This then  becomes
our main goal (see Proposition~\ref{P:NilCorr}).

The factorization theorem for polynomial sequences
(Theorem~\ref{th:FactoGT}) practically allows us    to treat
correlation with major arc and minor arc  nilsequences
separately.
 Orthogonality to  major arc (approximately periodic) nilsequences  can be deduced from  the  $U^2$-uniformity of $ f_ \un$. To handle  the much more difficult case of minor arc (totally equidistributed) nilsequences,  Theorem~\ref{th:discorrelation} comes to the  rescue as it shows
  that such sequences are asymptotically orthogonal to  all multiplicative functions.
  The function $ f_ \un$ is not multiplicative though, but this can be  taken care by the fact that $ f_ \un= f - f_ \st$ and
  the fact that $ f_ \st$  can be recovered from $ f $ by taking a convolution product with a kernel.
  Using these properties it is possible to transfer estimates from $ f $ to $ f_ \un$.
   Combining the above, we get
  the needed orthogonality of $ f_ \un$ to all $(s-1)$-step nilsequences of bounded
  complexity. Furthermore, a close inspection of the argument shows that all implied constants are independent of $ f $.
This suffices to complete the proof of  Theorem~\ref{T:DecompositionI}.

 Although the previous sketch communicates the basic ideas behind the proof of Theorem~\ref{T:DecompositionI},
the various results needed to implement this plan come with a significant number
 of parameters that one has to juggle with,
  making the bookkeeping  rather cumbersome.
We use  Section~\ref{subsec:beginning} to organize some of these data.

We start the proof with two successive reductions. The first one uses
the $s=2$ case of Theorem~\ref{T:DecompositionI} established in Theorem~\ref{th:Decomposition-U2}. The second  uses the  inverse theorem for the $U^s$-norms (see Theorem~\ref{th:inverse}).

\subsection{Using the $U^2$-structure theorem} \label{SS:UseU2}
 An immediate consequence of Theorem~\ref{th:Decomposition-U2} is that in order to prove
Theorem~\ref{T:DecompositionI} it suffices to prove the following result:
\begin{proposition}
\label{th:decomp-Us}
 Let $s\in \N$ and  $\ve>0$. There exists $\theta_0>0$ such that for  every
 $\theta$ with  $0<\theta\leq\theta_0$, and every sufficiently large $N$,
 the decomposition $ f_ N= f_ {N,\st}+ f_ {N,\un}$ associated to $\theta$ by Theorem~\ref{th:Decomposition-U2} satisfies Properties~\eqref{it:SpectrumU2} and  \eqref{it:decomU22} of this theorem, and also
$$
\norm{ f_ {N,\un}}_{U^s(\tZN)}\leq\ve.
$$
\end{proposition}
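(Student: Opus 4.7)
The plan is to reduce, via the Gowers $U^s$-inverse theorem, to showing that $f_{N,\un}$ has negligible correlation with all $(s-1)$-step nilsequences of bounded complexity, and then to invoke the discorrelation estimate (Theorem~\ref{th:discorrelation}) combined with the factorization theorem for polynomial sequences on nilmanifolds (Theorem~\ref{th:FactoGT}).

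I would argue by contradiction: suppose $\norm{f_{N,\un}}_{U^s(\tZN)} > \ve$. By Theorem~\ref{th:inverse} there exist an $(s-1)$-step nilmanifold $X = G/\Gamma$ depending only on $s$ and $\ve$, an element $g\in G$, and a function $\Phi$ with $\norm{\Phi}_{\lip(X)}\leq 1$, such that $|\E_{n\in[\tN]} f_{N,\un}(n)\,\Phi(g^n\cdot e_X)|\geq\delta$ for some $\delta:=\delta(s,\ve)>0$. A vertical Fourier decomposition (as in Section~\ref{SS:7.3}) lets me further assume that $\Phi$ is a nilcharacter of $X$ with $\int_X\Phi\,dm_X=0$, at the cost of a bounded factor in $\delta$. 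Then I apply Theorem~\ref{th:FactoGT} to the polynomial sequence $g^n\in\poly(G_\bullet)$ with a growth function $\omega$ to be chosen (and depending only on $s,\ve$), obtaining a factorization $g^n=\epsilon(n)g'(n)\gamma(n)$ for some bounded $M\leq M_1(X,\omega)$. The bookkeeping of Section~\ref{susubsec:fact} then reduces the assumed correlation to
\begin{equation*}
 \bigl|\E_{n\in[N]} \one_P(n)\, f_{N,\un}(n)\, \Phi_\alpha(g'_\alpha(n)\cdot e_X)\bigr|\geq \delta'(M),
\end{equation*}
for some arithmetic progression $P\subset[N]$, some $\alpha\in\Sigma(M)$, a modified Lipschitz function $\Phi_\alpha$, and a polynomial sequence $g'_\alpha$ that is totally $\rho_X(M,\omega(M))$-equidistributed in a sub-nilmanifold $X'_\alpha$, where $\delta'(M)>0$ depends only on $\delta$ and $M$.

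I would then split $f_{N,\un}=f_N-f_{N,\st}$. The $f_N$-contribution is precisely the correlation of a multiplicative function with a totally equidistributed $(s-1)$-step nilsequence, so Theorem~\ref{th:discorrelation} applies provided $\rho_X(M,\omega(M))$ is smaller than the threshold $\sigma(X'_\alpha,\delta'(M)/2)$ of that theorem. Since the families $\CF(M)$ and $\Sigma(M)$ are finite for every $M\leq M_1(X,\omega)$, a growth function $\omega$ depending only on $s$ and $\ve$ can be chosen so that this constraint is satisfied uniformly over all relevant $X'_\alpha$; with this choice the $f_N$-piece contributes at most $\delta'(M)/2$.

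The main obstacle will be bounding the $f_{N,\st}$-piece, which is not directly covered by Theorem~\ref{th:discorrelation} because $f_{N,\st}$ is not multiplicative. Here I would exploit the identity $f_{N,\st}=f_N*\phi_{N,\theta}$ together with the Fourier expansion of $\phi_{N,\theta}$ over its bounded spectrum $\Xi_{N,\theta}$ to write
\begin{equation*}
 \E_n\one_P(n)\,f_{N,\st}(n)\,\Phi_\alpha(g'_\alpha(n)\cdot e_X)=\sum_{\xi\in\Xi_{N,\theta}}\widehat{\phi_{N,\theta}}(\xi)\,\widehat{f_N}(\xi)\,D_\xi,
\end{equation*}
with $D_\xi:=\E_n\one_P(n)\,\e(n\xi/\tN)\,\Phi_\alpha(g'_\alpha(n)\cdot e_X)$ and the sum of cardinality bounded in terms of $\theta$ alone. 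Every $\xi\in\Xi_{N,\theta}$ satisfies $\xi/\tN=p/Q+\beta$ with $Q\leq Q(\theta)$ and $|\beta|\leq R(\theta)/\tN$, so the character $\e(n\xi/\tN)$ can be absorbed into the extended nilmanifold $X'_\alpha\times\T$: the polynomial sequence $(g'_\alpha(n),n\xi/\tN)$ lives in the enlarged filtration of $G'_\alpha\times\R$, and the natural nilcharacter $F(y,t):=\e(t)\,\Phi_\alpha(y)$ has vanishing integral on $X'_\alpha\times\T$ since $\int_\T\e(t)\,dt=0$. A second pass through the factorization/bookkeeping machinery of Sections~\ref{sec:facto} and~\ref{susubsec:fact} then reduces each $D_\xi$ to a correlation of a totally equidistributed polynomial sequence on a sub-nilmanifold of $X'_\alpha\times\T$ with the mean-zero function $F$, which is controlled by the quantitative Leibman theorem (Theorem~\ref{th:Leibman}) once $\omega(M)$ is small enough. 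Taking $\theta_0=\theta_0(s,\ve)$ small enough that the cardinality $|\Xi_{N,\theta}|$ together with the parameters $Q(\theta),R(\theta)$ are all absorbed into $\delta'(M)/2$, the total $f_{N,\st}$-contribution falls below $\delta'(M)/2$, contradicting the reduction above and forcing $\norm{f_{N,\un}}_{U^s(\tZN)}\leq\ve$.
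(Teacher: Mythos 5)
Your high-level skeleton (inverse theorem $\to$ factorization $\to$ discorrelation) matches the paper's, but the execution of the transfer step from $f_{N,\un}$ to something multiplicative has genuine gaps, and it is exactly here that the paper's argument does something different.

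First, after the factorization the function $\Phi_\alpha$ need not have zero integral \emph{over the sub-nilmanifold $X'_\alpha$} where $g'_\alpha$ equidistributes; the vertical Fourier decomposition only gives $\int_X\Phi\,dm_X=0$. Both Theorem~\ref{th:discorrelation} (as invoked through Claim~\ref{cl:def-sigma}) and the equidistribution bound require zero mean on $X'_\alpha$. Writing $z:=\int_{X'_\alpha}\Phi_\alpha\,dm_{X'_\alpha}$, your two pieces therefore carry constant terms $z\,\E_n\one_P(n)f_N(n)$ and $z\,\E_n\one_P(n)f_{N,\st}(n)$, neither of which is small individually (take $f\equiv 1$). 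Only their difference $z\,\E_n\one_P(n)f_{N,\un}(n)$ is small, and that is controlled by $\norm{f_{N,\un}}_{U^2(\tZN)}\le\theta_0$ via Lemma~\ref{lem:Us-intels} — this mean-subtraction step is precisely what fixes $\theta_0$ in the paper (Section~\ref{SS:zero}) and is absent from your proposal. Relatedly, your stated reason for choosing $\theta_0$ is backwards: $|\Xi_{N,\theta}|$, $Q(\theta)$ and $R(\theta)$ all \emph{grow} as $\theta\to 0$, so shrinking $\theta_0$ cannot absorb them.

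Second, the treatment of $D_\xi$ fails as described: for $\xi$ in the spectrum of $\phi_{N,\theta}$, $\xi/\tN$ is within $O(1/\tN)$ of a rational $p/Q$ with $Q\le Q(\theta)$, so $(n\xi/\tN)$ is smooth-plus-periodic and the equidistributed component of $(g'_\alpha(n),n\xi/\tN)$ after a second factorization has trivial $\T$-part. The function $F(y,t)=\e(t)\Phi_\alpha(y)$ then restricts to that sub-nilmanifold as $\e(t_0)\Phi_\alpha(y)$, whose mean is $\e(t_0)z\ne 0$ in general; the vanishing of $\int_{X'_\alpha\times\T}F$ is irrelevant because the sequence never equidistributes on $X'_\alpha\times\T$. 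The paper avoids Fourier-expanding $f_{N,\st}$ altogether: it keeps $f_{N,\un}$ intact, subtracts $z$ first, and then uses $f_{N,\un}=f_N*\psi$ with $\E_{n\in\tZN}|\psi(n)|\le 2$ to find a \emph{single shift} $q$ for which the multiplicative function $f_N$ correlates with the shifted, mean-zero nilsequence $\Phi'_0(g'_\alpha(n+k)\cdot e_X)$ on a progression — which is exactly why Theorem~\ref{th:discorrelation} is stated with the shift parameter $k$ and the progression $P$.
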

So our  next goal becomes to prove Proposition~\ref{th:decomp-Us}.
%
%

\subsection{Using the inverse theorem for the $U^s$-norms}
\label{subsec:using-inverse}
An immediate consequence of the $U^s$-inverse theorem stated in  Theorem~\ref{th:inverse} is that in order to prove
Proposition~\ref{th:decomp-Us} it suffices to prove the following result:
\begin{proposition}\label{P:NilCorr}
Let $X:=G/\Gamma$ be a nilmanifold  with the natural filtration and $\delta>0$. There exists $\theta_0>0$   such that
for every $\theta$ with $0<\theta<\theta_0$ and   every
 sufficiently large $N$, the decomposition $ f_ N= f_ {N,\st}+ f_ {N,\un}$ associated to $\theta$ by Theorem~\ref{th:Decomposition-U2} satisfies Properties~\eqref{it:SpectrumU2} and \eqref{it:decomU22} of this theorem, and also
\begin{equation}
\label{eq:disc-square}
\sup_{ f , g, \Phi }\bigl|\E_{n\in[\tN]} f_ {N,\un}(n)\, \Phi(g^n\cdot e_X)\bigr|\leq\delta,
 \end{equation}
 where $ f $ ranges over $\CM$, $g$ over $G$, and $\Phi\colon X\to \C$  over all
 functions with  $\norm \Phi_{\lip(X)}\leq 1 $.
\end{proposition}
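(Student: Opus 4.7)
The plan is to establish the correlation bound directly by combining three ingredients: the factorization theorem for polynomial sequences on nilmanifolds (Theorem~\ref{th:FactoGT}), the key discorrelation estimate for multiplicative functions and totally equidistributed nilsequences (Theorem~\ref{th:discorrelation}), and the smallness of $\norm{f_{N,\un}}_{U^2(\tZN)}$ provided by Theorem~\ref{th:Decomposition-U2}.

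First, I would apply Theorem~\ref{th:FactoGT} to the degree-one polynomial sequence $n\mapsto g^n$ with a growth function $\omega\colon\N\to\R^+$ to be specified later. This produces an integer $M\leq M_1(X,\omega)$, a sub-nilmanifold $X' = G'/(G'\cap\Gamma)\in\CF(M)$, and a factorization $g^n = \epsilon(n)\, g'(n)\, \gamma(n)$ on $[\tN]$ with $\epsilon$ being $(M,\tN)$-smooth, $g'\in\poly(G'_\bullet)$ such that $(g'(n)\cdot e_X)_{n\in[\tN]}$ is totally $\omega(M)$-equidistributed on $X'$, and $\gamma$ being $M$-rational and periodic of period at most $M$. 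Following the procedure of Section~\ref{subsec:constr-fact}, I would then partition $[\tN]$ into arithmetic progressions $P$ whose step equals the period of $\gamma$ and whose length $L$ is chosen so that on each $P$ the smooth factor $\epsilon$ is nearly constant and $\gamma$ is actually constant, say $\gamma(n_0)=\alpha\alpha'$ with $\alpha\in\Sigma(M)$ and $\alpha'\in\Gamma$. On each such $P$, the correlation reduces, up to the error bounded in~\eqref{eq:bound-on-P}, to
\[
\bigl|\E_{n\in[\tN]}\one_P(n)\, f_{N,\un}(n)\, \Phi_\alpha(g'_\alpha(n)\cdot e_X)\bigr|,
\]
where $\Phi_\alpha(x):=\Phi(\epsilon(n_0)\alpha\cdot x)$ and $g'_\alpha(n):=\alpha\inv g'(n)\alpha$ is totally $\rho_X(M,\omega(M))$-equidistributed in the conjugated sub-nilmanifold $X'_\alpha$ by Corollary~\ref{cor:equid-alpha}.

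The key step is to split $\Phi_\alpha|_{X'_\alpha}=c_\alpha+\Psi_\alpha$ with $c_\alpha:=\int_{X'_\alpha}\Phi_\alpha\,dm_{X'_\alpha}$, so that $\Psi_\alpha$ has zero mean and Lipschitz norm depending only on $M$. The constant piece $c_\alpha\,\E_n\one_P(n)\, f_{N,\un}(n)$ is bounded via the standard Fourier estimate relating averages on arithmetic progressions to the $U^2(\tZN)$-norm (exactly as in the sketch of Proposition~\ref{prop:baby}), and is made arbitrarily small by choosing $\theta$ small enough depending on $M$. For the zero-mean piece, I split $f_{N,\un}=f_N - f_N*\phi_{N,\theta}$; the correlation of $f_N$ with $\Psi_\alpha(g'_\alpha(n)\cdot e_X)$ restricted to $P$ is controlled by Theorem~\ref{th:discorrelation} applied to the nilmanifold $X'_\alpha$, provided $\rho_X(M,\omega(M))$ lies below the threshold $\sigma(X'_\alpha,\tau)$ for a suitable target $\tau$. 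The correlation with $f_{N,\st}=f_N*\phi_{N,\theta}$ is handled using Property~(ii) of Theorem~\ref{th:Decomposition-U2}: since $|f_{N,\st}(n+Q)-f_{N,\st}(n)|\leq R/\tN$, I would further partition $P$ into sub-progressions of step a common multiple of $Q$ and the period of $\gamma$ on which $f_{N,\st}$ is nearly constant; on each, the total equidistribution of $g'_\alpha$ together with $\int\Psi_\alpha=0$ delivers a small bound. Summing over all the progressions $P$ in the partition of $[\tN]$ then yields the announced estimate.

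The main obstacle is parameter management. The threshold $\sigma(X'_\alpha,\tau)$ returned by Theorem~\ref{th:discorrelation} depends on the target nilmanifold $X'_\alpha$, which itself varies within the finite family of sub-nilmanifolds produced by $\CF(M)$ and conjugation by elements of $\Sigma(M)$ (this family was denoted $\CF'(M)$ in Section~\ref{subsec:constr-fact}). I would resolve this by defining $\sigma^*(M):=\min\{\sigma(X'_\alpha,\tau) : X'_\alpha\in\CF'(M)\}$ and then choosing $\omega\colon\N\to\R^+$ so that $\rho_X(M,\omega(M))\leq\sigma^*(M)$ for every $M$; this is possible because $\rho_X(M,\cdot)\to 0$ as its argument tends to $0$ and each $\CF'(M)$ is finite. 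With this choice fixed, Theorem~\ref{th:FactoGT} yields $M\leq M_1(X,\omega)$, whence every constant becomes absolute (independent of $f$, $g$, $\Phi$), and finally $\theta_0$ is taken small enough that the contribution of the constant piece and all $U^2$-controlled error terms together are bounded by $\delta$.
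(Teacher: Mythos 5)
Your overall architecture---factorizing $(g^n)$, reducing to progressions on which the smooth and rational parts are essentially constant, splitting $\Phi_\alpha$ into its mean plus a zero-mean part, killing the mean using the $U^2$-smallness of $f_{N,\un}$, and choosing $\omega$ by minimizing the discorrelation thresholds over the finite family $\CF'(M)$---is exactly the paper's; the paper merely runs it by contradiction with a pigeonhole on a single progression rather than summing over all of them, which is cosmetic. The genuine gap is in your treatment of the structured part. Theorem~\ref{th:discorrelation} applies only to multiplicative functions, so it controls correlations of $f_N$ with the nilsequence but says nothing directly about $f_{N,\st}$ or $f_{N,\un}$. You propose to handle $f_{N,\st}$ by exploiting $|f_{N,\st}(n+Q)-f_{N,\st}(n)|\le R/\tN$ and passing to sub-progressions on which $f_{N,\st}$ is nearly constant. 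But such a sub-progression must have length $O(\tN/R)$ (up to factors of $M$ and $Q$), so there are at least on the order of $R$ of them, and the total-equidistribution error you accumulate is of order $R\cdot\rho_X(M,\omega(M))\cdot H(M)^2$. Making this small forces $\omega(M)\ll 1/R(\theta)$. That is circular: $\omega$ must be fixed before the factorization theorem produces $M_1$, the threshold $\theta_0$ is then taken as $\min_{M\le M_1}\lambda(M)$, and the proposition must hold for \emph{every} $\theta<\theta_0$---while $R(\theta)$ (built from $Q(\theta)=K(\theta)!$ in K\'atai's lemma) is unbounded as $\theta\to 0$. For small $\theta$ your error term therefore blows up.

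The paper sidesteps this entirely. Since $f_{N,\st}=f_N*\phi_{N,\theta}$, one has $f_{N,\un}=f_N*\psi$ for a signed kernel $\psi$ with $\E_{n\in\tZN}|\psi(n)|\le 2$, so a correlation of $f_{N,\un}$ with $\one_{P}(n)\,\Psi_\alpha(g'_\alpha(n)\cdot e_X)$ of size at least $\tau$ forces a correlation of $f_N$ itself with a \emph{shifted} version $\one_{P}(n+q)\,\Psi_\alpha(g'_\alpha(n+q)\cdot e_X)$ of size at least $\tau/2$; this is precisely why Theorem~\ref{th:discorrelation} carries the shift parameter $k$. Replacing your sub-progression argument for $f_{N,\st}$ by this convolution/shift transfer (applied either to $f_{N,\st}$ alone or, as in the paper, to $f_{N,\un}$ in one stroke) removes all dependence on $R(\theta)$ at this stage and closes the gap; the rest of your parameter bookkeeping then goes through as you describe.
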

We are going to prove Proposition~\ref{P:NilCorr} and thus finish the proof of Theorems~\ref{T:DecompositionSimple} and \ref{T:DecompositionI} in
Sections~\ref{subsec:beginning}-\ref{subsec:convolution}.

\subsection{Setting up the stage}
\label{subsec:beginning}
In this subsection, we  define and organize
some data  that will be used in the proof of
Proposition~\ref{P:NilCorr}. We take some extra care
to do this before the main body of its proof in order to make sure
that there is no circularity in the admittedly complicated
collection of
 choices involved.

As $X:=G/\Gamma$
is going to be a fixed nilmanifold throughout the argument (recall that it was
determined in Section~\ref{subsec:using-inverse} and depends only on
$s$ and $\ve$), in order to
ease notation:
\begin{itemize}
\item[]
\em Henceforth, we leave the dependence on $X$ implicit.
\end{itemize}
Remember also that $G$ is endowed with the natural filtration.

 We first define several objects,
 that depend on  a positive real $\delta$, and  a positive
integer parameter $M$ that we consider for the moment as a free variable.
The explicit choice of $M$ takes place in Section~\ref{subsec:using-factor} and depends on various other choices
  that will be made subsequently;
what is important though is that it is
 bounded by a positive constant that depends only
on $\delta$ (and, following our convention, on $X$).
The families $\CF(M)$, $M\in\N$,  of nilmanifolds appearing in Theorem~\ref{th:FactoGT} play a central role in our constructions, and it is important to remark that they do not depend on the choice of the function $\omega$
in the same theorem, allowing us to postpone the definition of this function.

\subsubsection{Building families of nilmanifolds}
\label{subsec:building}

For  the nilmanifold $X:=G/\Gamma$, with the natural filtration,
 Theorem~\ref{th:FactoGT}  defines  for every $M\in\N$ a finite family $\CF(M)$ of sub-nilmanifolds of $X$. In Section~\ref{subsec:constr-fact} we defined
  a finite subset $\Sigma(M)$ of $G$, and for every nilmanifold $X':=G'/(G'\cap\Gamma)\in\CF(M)$ and every $\alpha\in \Sigma(M)$ we defined the sub-nilmanifold $X'_\alpha:=G'_\alpha/(G'_\alpha\cap\Gamma)$ of $X$ where $G'_\alpha:= \alpha\inv G'\alpha$,
  and as usual, we consider the induced filtration in $G'_\alpha$ (which is not the natural filtration in $G'_\alpha$). We let
$$
\CF'(M):=\{ X'_\alpha\colon X'\in\CF(M),\ \alpha\in\Sigma(M)\}.
$$
Furthermore, let
$$
H(M)\  \text{ and  } \
\rho\colon \N\times\R_+\to\R_+$$
 be
so that Properties~\eqref{it:conj}-\eqref{it:def-rho} of Section~\ref{subsubsec:defH} are satisfied.

\subsubsection{Restating the discorrelation estimates}
The following claim follows immediately from  Theorem~\ref{th:discorrelation} applied to each nilmanifold in the finite family $\CF'(M)$ and is the central
ingredient in the proof of   Proposition~\ref{P:NilCorr}.
\begin{claim}
\label{cl:def-sigma}
Let $M\in\N$ and  $\tau>0$. Then   there exist $\sigma:=\sigma(M,\tau)>0$ and $N_1:=N_1(M,\tau)\in \N$  such that for every  $N\geq N_1$ the following property holds:
  Let
 $X'_\alpha \in \CF'(M)$
and   $h\in \poly(G'_{\alpha\bullet})$ be such that
$$
\bigl|\E_{n\in[N]}\one_P(n)\,  f (n)\, \Phi(h(n+k)\cdot e_X)\bigr|\geq \tau,
$$
for   some $k\in \N$ with $|k|\leq N$, arithmetic progression $P\subset [N]$,    $ f \in\CM$, and  function
  $\Phi\colon X'_\alpha\to \C$ with $\norm\Phi_{\lip(X'_\alpha)}\leq 1$ and $\int_{X'_\alpha} \Phi\ \! dm_{X'_\alpha}=0$.
Then the sequence $(h(n)\cdot e_X)_{n\in[N]}$ is not totally $\sigma$-equidistributed in $X'_\alpha$.
\end{claim}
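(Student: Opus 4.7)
The plan is to derive this uniform statement directly from Theorem~\ref{th:discorrelation} by exploiting the finiteness of the family $\CF'(M)$. There is essentially no new content beyond bookkeeping: the real work was already done in Sections~\ref{S:minorarcs1} and \ref{S:minorarcs2}, and all that remains is to apply that result one nilmanifold at a time and take an extremum of the resulting constants.

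First, for each fixed $M\in\N$, the collection $\CF'(M)=\{X'_\alpha\colon X'\in\CF(M),\ \alpha\in\Sigma(M)\}$ is finite, because both $\CF(M)$ and $\Sigma(M)$ are finite by construction (see Section~\ref{subsec:constr-fact} and Corollary~\ref{cor:M-rat}). Each member $X'_\alpha=G'_\alpha/(G'_\alpha\cap\Gamma)$ is a nilmanifold in its own right, with $G'_\alpha=\alpha^{-1}G'\alpha$ a rational subgroup of $G$ endowed with the induced filtration $G'_{\alpha\bullet}$, and equipped with an intrinsic Mal'cev basis giving it a metric $d_{X'_\alpha}$ (which is not, in general, the restriction of $d_X$).

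Next, for each $X'_\alpha\in\CF'(M)$ I would apply Theorem~\ref{th:discorrelation} with $X'_\alpha$ playing the role of $X$ and the filtration $G'_{\alpha\bullet}$ playing the role of $G_\bullet$. This produces constants $\sigma_{X'_\alpha}:=\sigma(X'_\alpha,\tau)>0$ and $N_{0,X'_\alpha}:=N_0(X'_\alpha,\tau)$ such that, for every $N\geq N_{0,X'_\alpha}$, if $h\in\poly(G'_{\alpha\bullet})$ and
\[
\bigl|\E_{n\in[N]}\one_P(n)\, f(n)\, \Phi(h(n+k)\cdot e_X)\bigr|\geq\tau
\]
for some $k\in[-N,N]$, arithmetic progression $P\subset[N]$, $f\in\CM$, and $\Phi\colon X'_\alpha\to\C$ with $\norm\Phi_{\lip(X'_\alpha)}\leq 1$ and $\int_{X'_\alpha}\Phi\,dm_{X'_\alpha}=0$, then the sequence $(h(n)\cdot e_X)_{n\in[N]}$ fails to be totally $\sigma_{X'_\alpha}$-equidistributed in $X'_\alpha$. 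Note that $h(n)\cdot e_X=h(n)\cdot e_{X'_\alpha}\in X'_\alpha$ because $h$ takes values in $G'_\alpha$ and $e_{X'_\alpha}=e_X$ under our identifications.

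Finally, since $\CF'(M)$ is finite I would simply set
\[
\sigma(M,\tau):=\min_{X'_\alpha\in\CF'(M)}\sigma_{X'_\alpha}>0\quad\text{and}\quad N_1(M,\tau):=\max_{X'_\alpha\in\CF'(M)}N_{0,X'_\alpha},
\]
which depend only on $M$ and $\tau$ (following our convention that dependence on $X$ and on the family $\CF'(M)$ is left implicit). With these choices, the conclusion of the claim is immediate from the application of Theorem~\ref{th:discorrelation} to the particular nilmanifold $X'_\alpha$ figuring in the hypothesis. There is no substantive obstacle here; the only point requiring a moment's thought is the matching of conventions, namely that the Lipschitz norm and the zero-mean condition appearing in the claim are taken with respect to the intrinsic metric and Haar measure of $X'_\alpha$, which is precisely the setting in which Theorem~\ref{th:discorrelation} is applied to $X'_\alpha$.
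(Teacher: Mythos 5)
Your proposal is correct and is exactly the argument the paper intends: the claim is obtained by applying Theorem~\ref{th:discorrelation} to each of the finitely many nilmanifolds $X'_\alpha\in\CF'(M)$ (each with its induced filtration $G'_{\alpha\bullet}$, which is why that theorem was proved for arbitrary rational filtrations) and then taking the minimum of the $\sigma$'s and the maximum of the $N_0$'s over the finite family.
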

\begin{remark}
We stress that although the filtration in $G$ is the natural one, the induced filtration in
$G'_\alpha$ is not necessarily the natural one, this is why in proving Theorem~\ref{th:discorrelation}
we treat the more difficult case of  arbitrary filtrations.
\end{remark}

\subsubsection{Parameters related to the factorization theorem: $\omega$ and $M_1$}\label{SS:omegaM1}
We let
\begin{equation}
\label{eq:chose-theta}
\lambda(M):=\frac{\delta^2}{128\,C_1H(M)^2M^2}
\end{equation}
where
$C_1$ is the universal  constant  defined by
Lemma~\ref{lem:Us-intels}.
We define also
\begin{equation}
\label{E:def-sigma} \wt{\sigma}(M)  :=
\sigma\big(M,\frac{C_1\lambda(M)}{4\,H(M)^2}\big)
\end{equation}
where $\sigma$ is the function defined in Claim~\ref{cl:def-sigma} above.

Since  $\rho(M,t)$ defined in~\eqref{it:def-rho} of Section~\ref{subsubsec:defH} decreases  to $0$ as $t\to 0^+$ and $M$ is fixed, there exists a function $\omega\colon\N\to\R_+$ that satisfies
\begin{equation}
\label{eq:def-omega}
\rho(M,\omega(M))\leq \wt\sigma(M)  \ \text{ for every }\ M\in \N.
\end{equation}

For this choice of $\omega$, Theorem~\ref{th:FactoGT} associates to $X$ a positive integer $M_1$.
Note that as $\omega$ depends only on $\delta$, the value of $M_1$ depends only on $\delta$.
We let
\begin{gather*}
N_2:= \max_{1\leq M\leq M_1}\Big\{N_1\Bigl(M,\frac{C_1\lambda(M)}{4\,H(M)^2}\Bigr)\Big\}\ \text{ where } \ N_1(M,\tau)\ \text{ is defined in Claim~\ref{cl:def-sigma}};\\
N_3:=\max_{1\leq M\leq M_1}\Big\{\frac{16\,M^2H(M)^2}\delta\Big\}.
\end{gather*}
We remark that  these numbers too depend only on   $\delta$.
\subsubsection{Defining $\theta_0$ and the eligible range of $N$}\label{SS:theta0}
Let $\delta>0$.   We let
\begin{equation}\label{E:theta1}
 \theta_0:=\min_{1\leq M\leq M_1}\lambda(M),
\end{equation}
where the function $\lambda$ is given by~\eqref{eq:chose-theta} and $M_1$ is defined in Section~\ref{SS:omegaM1}. Note that  $\theta_0$ depends on $\delta$ only.

Let now $\theta$ be such that
\begin{equation}\label{E:theta}
0<\theta\leq\theta_0.
\end{equation}
We define
\begin{equation}\label{Eq:N_4}
N_4(\delta,\theta):=\max\{N_0(\theta), N_2(\delta),N_3(\delta)\},
\end{equation}
where $N_0(\theta)$ is the integer in the statement of Theorem~\ref{th:Decomposition-U2} and $N_2,N_3$ were defined above and depend only on $\delta$.
Henceforth, we assume that
  \begin{equation} \label{E:N4theta}
  \theta \text{ satisfies }   \eqref{E:theta} \text{ and } N\geq N_4 \text{  where } N_4 \text{ satisfies } \eqref{Eq:N_4}.
\end{equation}
\subsection{Our goal restated}
After setting up the stage we are now ready to enter the main body
of the  proof of Proposition~\ref{P:NilCorr}.
We argue by contradiction. For a fixed
 nilmanifold $X:=G/\Gamma$ and $\delta>0$ we
let  $\theta_0$, $\theta$,  $N_4$ satisfy \eqref{E:theta1}, \eqref{E:theta},  \eqref{Eq:N_4}, respectively. Suppose that
 \begin{equation}\label{E:NilCorr'}
\bigl|\E_{n\in[\tN]} f_ {N,\un}(n)\, \Phi(g^n\cdot e_X)\bigr|>\delta,
 \end{equation}
for some integer $N\geq N_4$, $ f \in \CM$, $g\in G$, and function $\Phi$ with  $\norm \Phi_{\lip(X)}\leq 1$. We are going to derive a contradiction.

\subsection{Using the factorization theorem}
\label{subsec:using-factor}
Recall that $G$ is endowed with its natural filtration.
The sequence $(g^n)_{n\in[\wt N]}$ is a polynomial sequence in $G$ and thus there exists an
integer $M^*$ with $$
1\leq M^*\leq M_1,
$$
where $M_1$ is defined in Section~\ref{SS:omegaM1}, such that the sequence admits a factorization
$$
g^n=\epsilon(n)g'(n)\gamma(n), \quad n\in [N],
$$
as in Theorem~\ref{th:FactoGT};  the sequence $\epsilon$ is $(M^*,\wt N)$ smooth, the polynomial sequence $g'$ takes values in $G'$,  $(g'(n)\cdot e_X)_{n\in[\wt N]}$ is totally $\omega(M^*)$-equidistributed in $X'\in \CF(M^*)$, and the sequence $\gamma$ is $M^*$-rational and $(\gamma(n)\cdot e_X)_{n\in [N]}$ has   period at most $M^*$.

\subsection{Eliminating the smooth and periodic components}
From this point on, we work with the value of $M^*$ and the factorization given in the previous subsection. We use the objects, notation, and estimates associated to this factorization in Sections~\ref{subsubsec:defH} and  \ref{susubsec:fact} with $\wt N$ substituted for $N$.

By~\eqref{eq:Phi'lip}
we have
$\norm{\Phi'}_{\lip(X)}\leq H(M^*)$
and
Property~\eqref{eq:LipXXprime} of Section~\ref{subsubsec:defH} gives  that
\begin{equation}
\label{eq:LipPhiprime0}
 \norm{\Phi'|_{X'_\alpha}
 }_{\lip(X'_\alpha)}\leq H(M^*)^2.
\end{equation}
Furthermore, since the sequence $(g'(n)\cdot e_X)_{n\in[\wt N]}$ is totally
$\omega(M^*)$-equidistributed in $X'$, it follows from Property~\eqref{it:def-rho} of
Section~~\ref{subsubsec:defH}  that the sequence $(g_\alpha'(n)\cdot e_X)_{n\in[\wt N]}$ is totally $\rho(M^*,\omega(M^*))$-equidistributed in $X'_\alpha$ where $g_\alpha'\in \poly(G'_{\alpha\bullet})$. By~\eqref{eq:def-omega}, we get
\begin{equation}
\label{eq:equid-g''}
\text{the sequence } (g_\alpha'(n)\cdot e_X)_{n\in[\wt N]}\text{ is totally $\wt\sigma(M^*)$-equidistributed in $X'_\alpha$}.
\end{equation}
Since $\wt N\geq N_4\geq   16\,H(M^*)^2{M^*}^2/\delta$, following the argument in Section~\ref{susubsec:fact} we get that
\begin{equation}
\label{eq:avPsi-on-P}
\bigl|\E_{n\in[\wt N]}\one_{P_1}(n)\,  f_ {N,\un}(n)\, \Phi'(g'_\alpha(n)\cdot e_X)\bigr|\geq
\frac{\delta^2}{64\,H(M^*)^2{M^*}^2}=2C_1\lambda(M^*),
\end{equation}
where $P_1\subset [\wt N]$ is an arithmetic progression and $g'_\alpha$, $\Phi'$ are defined in~\eqref{eq:defg''} and~\eqref{eq:defPhi'}. The main advantage now is that the sequence $g'_\alpha$ is  ``totally equidistributed'' on some sub-nilmanifold of $X$.

\subsection{Reducing to the zero integral case}
\label{SS:zero}
Our goal is to show that upon replacing   $\Phi'$ with $\Phi'-z$, where
 $z$ is some constant, we  get a bound similar to
\eqref{eq:avPsi-on-P}. To  this end, we make crucial use of the fact that
the $U^2$-norm of $ f_ {N,\un}$ is suitably small, in fact, this is
the step that determined our choice of the degree of
$U^2$-uniformity $\theta_0$ of $ f_ {N,\un}$ in Section~\ref{SS:theta0}. Recall  Theorem~\ref{th:Decomposition-U2} gives that $\norm{ f_ {N,\un}}_{U^2(\tZN)}\leq\theta\leq\theta_0$.
 We let
$$
z:=\int_{X_\alpha'}\Phi'\,dm_{X_\alpha'}\ \ \text{ and }\ \ \Phi'_0:=\Phi'-z.
$$
Then of course $\int_{X_\alpha'}\Phi_0'\,dm_{X_\alpha'}=0$.

Combining Lemma~\ref{lem:Us-intels} in the Appendix, Theorem~\ref{th:Decomposition-U2},
  the definition~\eqref{E:theta1} of $\theta_0$, and that $1\leq M^*\leq M_1$,
 we get
$$
 \big|\E_{n\in [\wt N]}
\one_{P_1}(n)\,  z \,  f_ {N,\un}(n) \big|\leq
 C_1\norm{ f_ {N,\un}}_{U^2(\tZN)}\leq C_1\theta_0\leq C_1\lambda(M^*).
$$

From this estimate and  \eqref{eq:avPsi-on-P} we deduce that
\begin{equation}
\label{eq:avPsi-on-P2}
\bigl| \E_{n\in [\wt N]}\one_{P_1}(n)\,  f_ {N,\un}(n)\,
\Phi'_0(g_\alpha'(n)\cdot e_X)\bigr|\geq C_1\lambda(M^*).
\end{equation}
Moreover, the bound~\eqref{eq:LipPhiprime0} remains valid with $\Phi'_0$ substituted for $\Phi'$.


\subsection{End of proof of Theorem~\ref{T:DecompositionI}.}
\label{subsec:convolution}
 We are now very close to completing
the proof of Proposition~\ref{P:NilCorr} and hence of Theorem~\ref{T:DecompositionI}. To this
end, we are going to   combine the correlation estimate~\eqref{eq:avPsi-on-P2}, the equidistribution result~\eqref{eq:equid-g''}, and Claim~\ref{cl:def-sigma}
to deduce a contradiction.

 Recall that  $ f_ {N,\st}= f_ N*\phi$ (the convolution is taken in
$\Z_{\tN}$) where $\phi$ is a kernel in $\tZN$, meaning a
non-negative function with $\E_{n\in\tZN}\phi(n)=1$. Since
$ f_ {N,\un}= f_ N- f_ {N,\st}$, we can write $ f_ {N,\un}= f_ N*\psi$, where the function $\psi$ on $\tZN$ is given by
$$
\psi(n):=\begin{cases} -\phi(n) &\text{ if }n \neq 0\ \bmod \wt N;\\
 \wt N-\phi(0) & \text{ if }n=0\bmod\wt N,
 \end{cases}
 $$
and satisfies $\E_{n\in\tZN}|\psi(n)|\leq 2$.

 We deduce from~\eqref{eq:avPsi-on-P2}   that there
exists an integer $q$ with $0\leq q < \wt N$ such that
\begin{equation}
\label{eq:bound5}
 \bigl| \E_{n\in [\wt N]} \one_{P_1}(n+q\bmod \wt N) \, f_ N(n)\,
\Phi'_0(g_\alpha'(n+q\bmod \wt N)\cdot e_{X})\bigr|\geq \frac {C_1\lambda(M^*)}2,
\end{equation}
where the residue class $n+q \bmod\wt N$ is taken in $[\wt N]$
instead of the more commonly used interval $[0,\wt N)$. It follows that
$$
 \bigl| \E_{n\in [\wt N]} \one_{P_1}(n+k)\,\one_J(n)\,
\one_{[N]}(n)\,  f (n) \Phi'_0(g_\alpha'(n+k)\cdot
e_{X})\bigr|
\geq \frac {C_1\lambda(M^*)}4,
$$
where either $J$ is the interval $[\wt N-q]$ and $k:=q$, or  $J$ is
the interval $(\wt N-q,\wt N]$ and $k:=q-\wt N$.   In either case we have $|k|\leq \wt N$ and  $\one_{P}(n+k)\,\one_J(n)\,\one_{[N]}(n)=\one_{P_2}(n)$
for some arithmetic progression $P_2\subset[N]$. Thus, for some $k\in \N$ with  $|k|\leq \wt N$ we have
\begin{equation}
\label{eq:bound5'}
 \bigl| \E_{n\in [\wt N]} \one_{P_2}(n)\,  f (n)\,
\Phi'_0(g_\alpha'(n+k)\cdot e_{X})\bigr|\geq \frac {C_1\lambda(M^*)}4.
\end{equation}
Recall that  $\int_{X_\alpha'}\Phi_0'\,dm_{X_\alpha'}=0$,  $\norm{\Phi'_0|_{X'_\alpha}
}_{\lip(X'_\alpha)}\leq H(M^*)^2$, and  the nilmanifold
$X'_\alpha$ belongs to the family $\CF'$. By~\eqref{E:N4theta}  and since $g'_\alpha\in \poly(G_{\alpha\bullet}')$
we can use     Claim~\ref{cl:def-sigma}  with
$\tau:=C_1\lambda(M^*)/4\,H(M^*)^2$. We deduce that the sequence  $(g_\alpha'(n)\cdot e_X)_{n\in[\wt N]}$ is not totally $\sigma(M^*,C_1\lambda(M^*)/4\,H(M^*)^2)$-equidistributed in $X'_\alpha$.
But, by definition~\eqref{E:def-sigma} we have  $ \wt{\sigma}(M^*)=$
$\sigma\bigl(M^*,C_1\lambda(M^*)/ 4\,H(M^*)^2\bigr)$ which contradicts~\eqref{eq:equid-g''}.

Hence, our hypothesis \eqref{E:NilCorr'} cannot hold, and as a consequence  Proposition~\ref{P:NilCorr} is verified.
This completes the proof of
 Theorem~\ref{T:DecompositionI} and thus of Theorem~\ref{T:DecompositionSimple}. \qed

\subsection{End of proof  of Theorem~\ref{T:DecompositionII}.}
\label{subsec:proof_strong}

 We are going to  deduce  Theorem~\ref{T:DecompositionII} from Theorem~\ref{T:DecompositionI}
using
an iterative  argument of energy increment. To do this, we will use explicit properties
of the kernels introduced in Section~\ref{subsec:kernels} and used to define the structured part $ f_ {N,\st}$ in Theorem~\ref{T:DecompositionI}. In particular, the following
monotonicity of the Fourier coefficients of the kernels $\phi_{N,\theta}$  is key:
\begin{multline}
\label{eq:phi-increases2}
\text{if }\theta\geq\theta'>0\text{ and }
N\geq\max\{N_0(\theta), N_0(\theta')\},
\\
 \text{then for every }\xi\in\tZN, \ \widehat{\phi_{N,\theta'}}(\xi)\geq
\widehat{\phi_{N,\theta}}(\xi)\geq  0
\end{multline}
where $N_0$ is given by Theorem~\ref{th:Decomposition-U2}.

We fix a function $F\colon \N \times \N \times \R^+\to \R^+$, an
  $\ve>0$, and a probability  measure $\nu$ on the compact space
$\CM$ of multiplicative functions.

We define inductively  a sequence  $(\theta_j)$ of positive reals
and sequences $(N_j)$,  $(Q_j)$, $(R_j)$ of positive integers as
follows.
 We let $\theta_1=N_1=Q_1=R_1=1$. Suppose
 that $j\geq 1$ and that the first $j$ terms of the sequences are defined.
 We apply
Theorem~\ref{T:DecompositionI} with
$$
 \frac
1{F(Q_j,R_{j},\ve)} \ \text{ substituted for } \ \ve.
$$
Theorem~\ref{T:DecompositionI} provides a real  $\theta_0:=\theta_0(j)>0$ and we define
$$
\theta_{j+1}:=\min\{\theta_0,\theta_j\}.
$$
Then Theorem~\ref{T:DecompositionI} with $\theta_{j+1}$ substituted for $\theta$ provides
integers $N_0$, $Q$, $R$, and we let $Q_{j+1}:=Q$, $R_{j+1}:=R$, and  $N_{j+1}:=\max\{N_0,N_j\}$.  For every $N\geq N_{j+1}$ the kernel $\phi_{N,\theta_{j+1}}$ can be defined, and the functions
$$
 f_ {j+1,N,\st}:= f_ N*\phi_{N,\theta_{j+1}}\quad \text{ and } \quad
 f_ {j+1,N,\un}:= f_ N- f_ {j+1,N,\st}
$$
 satisfy  Property~\eqref{it:decomU22} of Theorem~\ref{th:Decomposition-U2} and the conclusion of Theorem~\ref{T:DecompositionI}, that is,
\begin{gather}
\label{eq:decompU32}
 | f_ {j+1,N,\st}(n+Q_{j+1})- f_ {j+1,N,\st}(n)|\leq \frac {R_{j+1}} \tN\quad \text{ for every }\ n\in
\Z_\tN;\\
\label{eq:decompU33} \norm{ f_ {j+1,N,\un}}_{U^s(\tZN)}\leq \frac
1{F(Q_j,R_{j},\ve)}.
\end{gather}
 By construction, the
sequence $(N_j)$  increases and the sequence
$(\theta_j)$  decreases with $j$. Let
$$
J:=1+\lceil 2\ve^{-2}\rceil\ \text{ and } \  N_0^{'}:=N_{J+1}.
$$
 For every
$N\geq N_0^{'}$ we have
\begin{multline*}
\sum_{j=2}^J\int_\CM
\norm{ f_ {j+1,N,\st}- f_ {j,N,\st}}_{L^2(\tZN)}^2\,d\nu( f )=\\
\int_\CM \sum_{\xi\in\tZN} |\widehat{ f_ N}(\xi)|^2 \,\sum_{j=2}^J
|\widehat{\phi_{N,\theta_{j+1}}}(\xi)-\widehat{\phi_{N,\theta_{j}}}(\xi)|^2\,d\nu( f )
\leq\\
 2 \int_\CM \sum_{\xi\in\tZN}|\widehat{ f_ N}(\xi)|^2
\,\sum_{j=2}^J
\bigl(\widehat{\phi_{N,\theta_{j+1}}}(\xi)-\widehat{\phi_{N,\theta_j}}(\xi)\bigr)\,d\nu( f ),
\end{multline*}
where to get the last estimate we used that   $\theta_{j+1} \leq
\theta_{j}$ and thus
$\widehat{\phi_{N,\theta_{j+1}}}(\xi)\geq\widehat{\phi_{N,\theta_j}}(\xi)\geq
0$ for every $\xi\in \Z_{\wt N}$ by~\eqref{eq:phi-increases2}. Since
$|\widehat{\phi_{N,\theta}}(\xi)|\leq 1$, the last quantity in the
estimate is  at most
$$
 2\int_\CM \sum_{\xi\in \tZN} |\widehat{ f_ N}(\xi)|^2 \, d\nu( f )\leq 2.
$$
Therefore,  for every $N\geq N_0^{'}$ there exists $j_0:=j_0(F,N,\ve,
\nu)$ with
\begin{equation}\label{E:j0isbounded}
2\leq j_0\leq J
\end{equation} such that
\begin{equation}
\label{eq:decompU34} \int_\CM
\norm{ f_ {j_0+1,N,\st}- f_ {j_0,N,\st}}_{L^2(\tZN)}^2\,d\nu( f )\leq
\frac 2{J-1}\leq \ve^2.
\end{equation}
 For   $N\geq N_0^{'}$, we  let
\begin{gather*}
\psi_{N,1}:= \phi_{N,\theta_{j_0}};\quad \psi_{N,2}:=\phi_{N,\theta_{j_0+1}};\\
  f_ {N,\st}:= f_ N*\psi_{N,1}= f_ {j_0,N,\st} ;\quad
  f_ {N,\un}:= f_ N- f_ N*\psi_{N,2}=  f_ {j_0+1,N,\un} ;\\
 f_ {N,\er}:= f_ N*(\psi_{N,2}-\psi_{N,1})=  f_ {j_0+1,N,\st}- f_ {j_0,N,\st} ;\\
Q:=Q_{j_0}\ \text{ and }\ R:=R_{j_0}.
\end{gather*}
Then we have the decomposition
$$
 f_ N= f_ {N,\st}+ f_ {N,\un} + f_ {N,\er}.
$$
Furthermore, Property~\eqref{it:decompU32} of Theorem~\ref{T:DecompositionII} follows from~\eqref{eq:decompU32} (applied for $j:=j_0-1$),
 Property~\eqref{it:decomU33} follows from~\eqref{eq:decompU33} (applied for $j:=j_0$),  and Property~\eqref{it:decomU34}
 follows from~\eqref{eq:decompU34} and the Cauchy-Schwarz
 inequality. Lastly, it follows from \eqref{E:j0isbounded} that the integers $N_0^{'},Q,R$  are bounded by a  constant that depends on $F$ and $\ve$ only.
 Thus, all the announced properties are satisfied,
completing the proof of Theorem~\ref{T:DecompositionII}.
\qed

\section{Aperiodic multiplicative functions}
\label{sec:aperiodic}

In this section our goal is to prove the main results regarding aperiodic multiplicative functions, that is,  Theorems~\ref{th:aperiod_uniform} and \ref{th:chowla}.

\subsection{Proof of Proposition~\ref{prop:equiv-aperiodic}}
\label{subsec:proof-aperiodic}
We prove here  the equivalence between the four characterizations of aperiodic multiplicative functions given in Proposition~\ref{prop:equiv-aperiodic}.

The equivalence of the first three properties of Proposition~\ref{prop:equiv-aperiodic}  is easy. The equivalence of~\eqref{it:aperiodic} and~\eqref{it:chi-npq} is simple. Furthermore, \eqref{it:chi-npq} immediately implies that for every periodic function $a$ we have $\E_{n\in[N]}  f (n)a(n)\to 0$ as $N\to+\infty$, and this  in turn implies~\eqref{it:chi-chi}.
 Going  from \eqref{it:chi-chi} to \eqref{it:chi-npq} is also simple and standard: for $p,q\in\N$ we have
$$
\E_{n\in [N]}  f (n)\e(np/q)=\frac{1}{N}\sum_{d|q^r\text{ for some }r} f (d) \sum_{1\leq n\leq \frac{N}{d}, (n,q)=1}  f (n)\e(dnp/q)
$$
and the function $\one_{\{k\colon (k,q)=1\}}(n) \e(dnp/q)$ has period $q$. The implication follows from the fact that  any periodic function with period $q$ that is supported in the set $\{k\colon (k,q)=1\}$
 can be expressed as a finite linear combination of Dirichlet characters with period $q$.

  The equivalence of \eqref{it:chi-chi} and \eqref{it:ChiSeries} follows from
 Theorem~\ref{T:Halasz}.
\qed

\subsection{$U^2$ norm and aperiodic multiplicative functions}

We establish here two preliminary results used in the proof of Theorem~\ref{th:aperiod_uniform}.

 \begin{lemma}
 \label{lem:inverse-mul}
Let  $\ve>0$. There exist $\delta:=\delta(\ve)>0$ and $Q:=Q(\ve)\in \N$
such that the following holds: If $f\in \CM$ is a multiplicative function and  $\limsup_{N\to+\infty}\norm{f}_{U^2[N]}\geq \ve$, then there exists $p\in \N$ with $0\leq p<Q$
such that
$$
\limsup_{N\to+\infty}\Big|\E_{n\in [N]} f(n) \, \e\big(n\frac{p}{Q}\big)\Big|\geq \delta.
$$
\end{lemma}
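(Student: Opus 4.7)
The strategy is to apply the $U^2$ results of this section (Corollary~\ref{cor:katai} and the Fourier--$U^2$ identity \eqref{eq:U2Fourier2}) to locate a large Fourier coefficient of $f_N$ near a rational with small denominator, and then to transfer this information from frequencies of the form $\xi/\tN$ to genuine rational frequencies $p/Q$ by an Abel summation. The main technical point, which I will address last, is this transfer step, since the ``true'' frequency $\xi/\tN$ obtained from Corollary~\ref{cor:katai} is only guaranteed to lie within $V/\tN$ of $p/Q$ (and not exponentially close), so one cannot simply absorb the oscillation $e(-n(\xi/\tN-p/Q))$ into a uniform error.

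Assume that $\limsup_{N\to+\infty}\norm f_{U^2[N]}\geq \ve$. By Lemma~\ref{lem:NormsUs} in Appendix~A, there exists a constant $c_0>0$ (universal, depending only on $\ell$) and a sequence $N_k\to+\infty$ such that $\norm{f_{N_k}}_{U^2(\Z_{\tN_k})}\geq c_0\ve$. Since $|f_{N_k}|\leq 1$ and $f_{N_k}$ is supported on $[N_k]$, Parseval gives $\sum_{\xi} |\wh{f_{N_k}}(\xi)|^2\leq 1$, hence by \eqref{eq:U2Fourier}
$$
 \sup_{\xi\in\Z_{\tN_k}} |\wh{f_{N_k}}(\xi)|\geq \norm{f_{N_k}}_{U^2(\Z_{\tN_k})}^2\geq c_0^2\ve^2.
$$
Set $\theta:=c_0^2\ve^2$ and let $Q:=Q(\theta)$, $V:=V(\theta)$ be the integers produced by Corollary~\ref{cor:katai}. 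For all sufficiently large $k$ there exists $\xi_k\in\Z_{\tN_k}$ with $|\wh{f_{N_k}}(\xi_k)|\geq \theta$ and $\norm{Q\xi_k/\tN_k}\leq QV/\tN_k$; equivalently, $\xi_k/\tN_k = p_k/Q+\beta_k$ for some $p_k\in\{0,\dots,Q-1\}$ and some $\beta_k\in\R$ with $|\beta_k|\leq V/\tN_k$.

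The hypothesis $|\wh{f_{N_k}}(\xi_k)|\geq\theta$ unfolds as $\bigl|\sum_{n=1}^{N_k} f(n)\, \e(-np_k/Q)\, \e(-n\beta_k)\bigr|\geq\theta\,\tN_k$. Arguing by contradiction, suppose that for some $\delta>0$ to be chosen and for every $p\in\{0,\dots,Q-1\}$ one has $\limsup_{M\to+\infty}|\E_{n\in[M]} f(n)\, \e(np/Q)|<\delta$; then there exists $M_0$ such that $|S_p(M)|\leq \delta M$ for every $M\geq M_0$ and every $p$, where $S_p(M):=\sum_{n=1}^M f(n)\,\e(-np/Q)$, and trivially $|S_p(M)|\leq M_0$ for $M\leq M_0$. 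Abel summation applied to $\sum_{n=1}^{N_k} S_{p_k}(n)\bigl(\e(-n\beta_k)-\e(-(n+1)\beta_k)\bigr)$ gives
$$
\theta\, \tN_k\leq |S_{p_k}(N_k)|+2\pi|\beta_k|\sum_{n=1}^{N_k-1}|S_{p_k}(n)|\leq \delta N_k+2\pi\frac{V}{\tN_k}\Bigl(M_0^2+\delta\frac{N_k^2}{2}\Bigr).
$$
Using $N_k\leq \tN_k\leq \ell N_k$ and dividing by $N_k$, we obtain $c_0^2\ve^2\leq \delta(1+\pi V)+2\pi V M_0^2/N_k^2$, which upon letting $k\to+\infty$ forces $\delta\geq c_0^2\ve^2/(1+\pi V)$. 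Choosing $\delta:=c_0^2\ve^2/(2(1+\pi V))$ contradicts our assumption, so some $p\in\{0,\dots,Q-1\}$ must satisfy $\limsup_M |\E_{n\in[M]} f(n)\,\e(np/Q)|\geq \delta$, completing the proof with $Q(\ve):=Q(c_0^2\ve^2)$ and $\delta(\ve):=c_0^2\ve^2/(2(1+\pi V(c_0^2\ve^2)))$.
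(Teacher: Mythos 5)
Your argument is correct, and its first half coincides with the paper's: pass from $\norm{f}_{U^2[N]}$ to $\norm{f_N}_{U^2(\Z_{\tN})}$, extract a large Fourier coefficient via \eqref{eq:U2Fourier2}, and invoke Corollary~\ref{cor:katai} to place the frequency $\xi_k/\tN_k$ within $V/\tN_k$ of some $p_k/Q$. (One small citation point: the passage from the $U^2[N]$-norm to the $U^2(\Z_{\tN})$-norm is most directly obtained from Definition~\ref{def:Us-interv} together with Lemma~\ref{L:norm-N}, which is what the paper uses; Lemma~\ref{lem:NormsUs} as stated compares $U^2[N]$ with $U^2(\Z_N)$ rather than $U^2(\Z_{\tN})$, though the substance of what you need is the same.)

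Where you genuinely diverge is in the transfer from the frequency $\xi_k/\tN_k$ to the exact rational $p_k/Q$, which is the only delicate step. The paper restricts to $[N_1,N]$ with $N_1\asymp\ve^2 N$, partitions into intervals $J$ of length $\asymp \delta N$ on which $\e(n\beta_k)$ is essentially constant, selects a good $J$ by pigeonhole, and then writes $\one_J=\one_{[N_3]}-\one_{[N_2]}$ to produce a concrete $N_4\to+\infty$ along which $|\E_{n\in[N_4]}f(n)\e(np/Q)|$ is bounded below. You instead argue by contradiction, assume all partial sums $S_p(M)$ are eventually $\leq\delta M$, and run an Abel summation against the slowly varying factor $\e(-n\beta_k)$, using $|\beta_k|\leq V/\tN_k$ to control $\sum_n|S_p(n)|\cdot|\beta_k|$. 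Both are standard major-arc transfers and both yield constants $\delta,Q$ depending only on $\ve$; your route is arithmetically cleaner and avoids the interval bookkeeping, while the paper's route is constructive in that it exhibits the scales $N_4$ explicitly (and the same short-interval technique is reused elsewhere in the paper, e.g.\ in Section~\ref{susubsec:fact}), so neither buys anything essential over the other here. Your computation checks out: the sign convention $\e(-np/Q)$ versus $\e(np/Q)$ is harmless since $p\mapsto Q-p$ permutes the residues, and you do not need to pass to a subsequence on which $p_k$ is constant because the contradiction hypothesis is uniform in $p$.
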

\begin{remark} Note that the implication fails for arbitrary bounded sequences; consider for example a sequence of the
form $(\e(n\alpha))_{n\in \N}$ where $\alpha$ is irrational in place of $(f(n))_{n\in \N}$.
\end{remark}
\begin{proof}
Let $\ve>0$ and $f\in \CM$ be such that
$\norm f_{U^2[N]}\geq\ve$ for infinitely many values of $N\in \N$.
For these values of $N$ let  $\wt N$ be the smallest prime in the interval $(2N, 4N]$.
Since  $\tN\leq 4 N$,
  by Definition~\ref{def:Us-interv} of the norm $U^2[N]$  and  Lemma~\ref{L:norm-N},
 we deduce that  for  these values of $N$ we have
\begin{equation}
\label{E:U2ve}
\norm{f_N}_{U^2(\Z_{\tilde N})}\geq \frac{\ve}{4}.
\end{equation}
We apply Corollary~\ref{cor:katai}, with  $\ve^2/2^4$ in place of $\theta$,
for $\ell=4$ (see notation in Section~\ref{subsec:decomposition}) and for  $\wt N$  chosen as above.
We get some  positive integers $N_0$, $Q$, $V$ that depend only on $\ve$ such that \eqref{eq:Fourier_chi} holds. Henceforth, we assume that  $N\geq N_0$   is such that \eqref{E:U2ve} holds.

 Combining \eqref{eq:U2Fourier2} and \eqref{E:U2ve} we deduce that  there exists $\xi\in\Z_\tN$
 such that
\begin{equation}
\label{eq:aperiodic1}
\big|\E_{n\in[\tN]}f_N(n)\, \e\big(n \frac{\xi}{\tN}\big)\big|\geq \frac{\ve^2}{2^4}.
\end{equation}
By implication \eqref{eq:Fourier_chi} of  Corollary~\ref{cor:katai},  there exists $p\in \N$ with $0\leq p\leq Q$ such that
\begin{equation}
\label{E:QRN}
\Bigl|\frac \xi \tN-\frac pQ\Bigr|\leq \frac V\tN.
\end{equation}
 We deduce that there exist
  infinitely many $N\in\N$
  for which the above estimate holds for the same value of $p$.
  Henceforth, we further  restrict ourselves to these values of $N$.

Let $N_1:=\lfloor  \ve^2 N/2^4\rfloor$. Since $\tN\geq 2N$, it follows from~\eqref{eq:aperiodic1} that
$$
\big|\E_{n\in[\tN]}\one_{[N_1,N]}(n)\, f(n)\, \e\big(n\frac \xi \tN\big)\big|\geq \frac{\ve^2}{2^5}.
$$
We let
$$
\delta:= \frac{\ve^2}{2^{10}\,\pi V}\ \text{ and } \
L:=\Bigl\lfloor\delta N\Bigr\rfloor
$$
and suppose that $N\in\N$ is sufficiently large so that $L\geq 2$. We partition the interval $[N_1,N]$ into intervals of length between $L$ and $2L$. The number of these intervals is bounded by $N/L$ and thus one of them, say $J$, satisfies
\begin{equation}
\label{eq:aperiodic4}
\big|\E_{n\in[\tN]}\one_J(n)\, f(n)\, \e\Bigl(n\frac \xi \tN\Bigr)\big|\geq \frac LN\, \frac{\ve^2} {2^5}\geq \frac{\delta \ve^2}{2^6}.
\end{equation}
Let $n_0$ be the first term of the interval. For every $n\in J$ we have
$$
\Big|\e\big(n \big(\frac \xi \tN-\frac{p}{Q}\big)\big)-\e\big(n_0\big(\frac\xi \tN-\frac pQ\big)\big)\Big|
\leq(n-n_0)2\pi\Big|\frac\xi \tN-\frac pQ\Big|\leq 4\pi V\delta,
$$
where the last estimate follows from \eqref{E:QRN} and the fact that the length of $J$ is at most $2L$.
Combining this estimate with \eqref{eq:aperiodic4}, and using  again the fact that the length of $J$ is at most $2L$, we get
$$
\big|\E_{n\in[\tN]}\one_J(n)\, f(n)\, \e\bigl(n\frac pQ\bigr)\big|
\geq \frac{\delta \ve^2}{2^6}-\frac{2L}\tN\, 4\pi V\delta \geq \frac{\ve^4}{2^{17}\,\pi V}.
$$
Writing $J=(N_2,N_3]$ where $N_1\leq N_2\leq N_3\leq N$ we have $\one_J=\one_{[N_3]}-\one_{[N_2]}$. For $N_4:=N_2$ or $N_4:=N_3$ we have
$$
\big|\E_{n\in[N_4]}f(n)\, \e\bigl(n\frac pQ\bigr)\big|\geq \big|\E_{n\in[\tN]}\one_{[N_4]}(n)\, f(n)\, \e\bigl(n\frac pQ\bigr)\big|\geq \frac{\ve^4}{2^{18}\,\pi V}.
$$
Since  $N_4\geq N_1\geq \ve^2N/2^{6}$  we have that $N_4\to +\infty$ as $N\to +\infty$ and  we deduce that
$$
\limsup_{N\to+\infty}\big|\E_{n\in[N]}f(n)\, \e\bigl(n\frac pQ\bigr)\big|>0.
$$
This completes the proof.
\end{proof}

\begin{corollary}\label{C:92}
If $f\in\CM$, then $f$ is aperiodic if and only if $\norm f_{U^2[N]}\to 0$ as $N\to+\infty$.
\end{corollary}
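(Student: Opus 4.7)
The plan is to prove the two implications separately, with each direction reducing almost immediately to results already established in this section and the previous one.

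For the forward direction ($f$ aperiodic implies $\norm{f}_{U^2[N]}\to 0$), I would argue by contradiction. Assume $\limsup_{N\to+\infty}\norm f_{U^2[N]}\geq\ve$ for some $\ve>0$. Then Lemma~\ref{lem:inverse-mul} provides $\delta:=\delta(\ve)>0$ and $Q:=Q(\ve)\in\N$ together with some $p\in\{0,\ldots,Q-1\}$ such that
$$
\limsup_{N\to+\infty}\Bigl|\E_{n\in[N]}f(n)\,\e\bigl(n\tfrac pQ\bigr)\Bigr|\geq\delta.
$$
But this directly contradicts condition~\eqref{it:chi-npq} of Proposition~\ref{prop:equiv-aperiodic}, which is equivalent to aperiodicity of $f$.

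For the reverse direction ($\norm{f}_{U^2[N]}\to 0$ implies $f$ aperiodic), I would verify condition~\eqref{it:chi-npq} of Proposition~\ref{prop:equiv-aperiodic}. Fix $p,q\in\N$; the goal is to show that $\E_{n\in[N]}f(n)\,\e(np/q)\to 0$ as $N\to+\infty$. Identifying $f_N$ with the function on $\Z_\tN$ supported on $[N]$, rewrite
$$
\E_{n\in[N]}f(n)\,\e(np/q)=\frac\tN N\,\E_{n\in[\tN]}f_N(n)\,\e(np/q)
=\frac\tN N\sum_{r=0}^{q-1}\e(rp/q)\,\E_{n\in[\tN]}\one_{P_r}(n)\,f_N(n),
$$
where $P_r:=\{n\in[\tN]\colon n\equiv r\bmod q\}$ is an arithmetic progression in $[\tN]$. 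By Lemma~\ref{lem:NormsUs} in the appendix, the hypothesis $\norm f_{U^2[N]}\to 0$ implies $\norm{f_N}_{U^2(\tZN)}\to 0$. Applying Lemma~\ref{lem:Us-intels} (the standard control of arithmetic-progression indicator averages by $U^2$-norms) to each of the $q$ residue classes, we get
$$
\bigl|\E_{n\in[\tN]}\one_{P_r}(n)\,f_N(n)\bigr|\leq C\,\norm{f_N}_{U^2(\tZN)}\longrightarrow 0,
$$
uniformly in $r\in\{0,\ldots,q-1\}$. Summing $q$ such terms and using that $\tN/N\leq\ell$ is bounded, we conclude that $\E_{n\in[N]}f(n)\,\e(np/q)\to 0$, establishing aperiodicity.

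There is no serious obstacle here: the corollary is essentially a bookkeeping statement combining Lemma~\ref{lem:inverse-mul} (the deep input, which in turn depended on K\'atai's criterion via Corollary~\ref{cor:katai}) with the elementary equivalences already recorded in Proposition~\ref{prop:equiv-aperiodic}. The only mildly technical point is the translation between $U^2[N]$ and $U^2(\tZN)$ in the reverse direction, which is handled uniformly by Lemma~\ref{lem:NormsUs}.
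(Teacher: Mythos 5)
Your proof is correct and follows the same skeleton as the paper's: both directions reduce to characterization~\eqref{it:chi-npq} of Proposition~\ref{prop:equiv-aperiodic}, with necessity coming straight from Lemma~\ref{lem:inverse-mul}. The one genuine difference is in the sufficiency direction: the paper applies Lemma~\ref{lem:NormsUs} followed by Lemma~\ref{lem:U2ent}, which bounds $\sup_t|\E_{n\in[N]}f(n)\,\e(nt)|$ by $C\norm{f}_{U^2(\Z_N)}$ directly on the cyclic group of order $N$ (no primality needed, no splitting into residue classes), whereas you decompose $\e(np/q)$ over residue classes mod $q$ and invoke Lemma~\ref{lem:Us-intels} on a prime group $\Z_\tN$. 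Your route works, but note one citation to tighten: Lemma~\ref{lem:NormsUs} relates $\norm{\cdot}_{U^2[N]}$ to $\norm{\cdot}_{U^2(\Z_N)}$, not to $\norm{\cdot}_{U^2(\Z_\tN)}$; to get $\norm{f_N}_{U^2(\Z_\tN)}\to 0$ you should instead choose $\tN>2N$ and use Definition~\ref{def:Us-interv} together with Lemma~\ref{cl:gowersMN} and the bound $\norm{\one_{[N]}}_{U^2(\Z_\tN)}\leq 1$, exactly as the paper does in the proof of Lemma~\ref{lem:inverse-mul}. With that adjustment the argument is complete; the paper's version via Lemma~\ref{lem:U2ent} is just slightly shorter because the exponential phase is handled in one stroke rather than through $q$ progressions.
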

\begin{proof}
The necessity of the condition follows immediately from Lemma~\ref{lem:inverse-mul} and the characterization~\eqref{it:chi-npq} of aperiodic functions given in Proposition~\ref{prop:equiv-aperiodic}. The sufficiency follows from Lemma~\ref{lem:NormsUs}, Lemma~\ref{lem:U2ent}, and the same characterization.
\end{proof}

\subsection{Proof of Theorem~\ref{th:aperiod_uniform}}
 We move now to the proof of  Theorem~\ref{th:aperiod_uniform} which makes essential use of
  Theorem~\ref{T:DecompositionI}.
\begin{proof}[Proof of Theorem~\ref{th:aperiod_uniform}]
 For  $N\in \N$ let  $\wt N$ be the smallest prime in the interval $(2N, 4N]$. Suppose that
$\norm{ f }_{U^s[N]}$ does not converge to $0$ as $N\to+\infty$.   Since $\tN\leq 4 N$, by Definition~\ref{def:Us-interv} of the norm $U^s[N]$  and  Lemma~\ref{L:norm-N}, we have  that  $\norm{ f_ N }_{U^s(\Z_{\tN})}$  does not tend to zero.
  As a consequence,
 there exists $\ve>0$ such that
$\norm{ f_ N }_{U^s(\Z_{\tN})}\geq 2\ve$ for infinitely many $N\in\N$ for  which Theorem~\ref{T:DecompositionI} applies (for $\ell=4$). Let  $Q, R\in \N$, $ f_ {N,\st}$ and  $ f_ {N,\un}$ be given by  Theorem~\ref{T:DecompositionI}
for this value of $\varepsilon$ and these values of $N$.
Then Property~\eqref{it:weakUs-3} of Theorem~\ref{T:DecompositionI} implies that $\norm{ f_ {N,\st} }_{U^s(\Z_{\tN})}\geq \ve$.

By Property~\eqref{it:weakUs-2} of Theorem~\ref{T:DecompositionI}, the cardinality of the spectrum of $ f_ {N,\st}$ (that is, the set of $\xi\in\tZN$ such that $\widehat{ f_ {N,\st}}(\xi)\neq 0$) is bounded by a positive real  $S$ that depends only on $\varepsilon$ and $s$. Since  the $U^s(\Z_{\tN})$-norm of each function $\e(n\xi/\wt N)$ is equal to $1$ for every $s\geq 2$, it follows that
 there exist $\xi\in\Z_\tN$ (depending on $N$) such that $|\widehat{ f_ {N,\st}}(\xi)|\geq \ve/S$.
 By Property~\eqref{it:weakUs-1} of Theorem~\ref{T:DecompositionI},
 we have $\widehat{ f_ {N,\st}}(\xi)=\widehat{\phi_{N}}(\xi)
 \widehat{ f_ N}(\xi)$ and since $|\widehat{ f_ {N,\st}}(\xi)|\geq \ve/S$ and
  $|\widehat{\phi_{N}}(\xi)|\leq 1$ it
  follows that $|\widehat{ f_ {N}}(\xi)|\geq \ve/S$. We deduce from  \eqref{eq:U2Fourier2}  that
   $\norm{ f }_{U^2[N]}\geq \norm{ f_ N}_{U^2(\Z_\tN)}\geq \ve/S$.
Hence,  $\norm f_ {U^2[N]}$ does not converge to zero as $N\to+\infty$. Corollary~\ref{C:92}
gives that $ f $ is not aperiodic, completing the proof.
\end{proof}

\subsection{Background on quadratic fields}
Our next goal is to prove Theorem~\ref{th:chowla}.
Here $d$ is a positive integer, and we adopt the notation  and refer the reader to Section~\ref{SS:chowla} for the definition of
 $\tau_d$, $\CN(z)$ and $Q_d$.

We  recall some classical facts about the ring $\Z[\tau_d]$.
For every $N\in\N$ we let
\begin{equation}\label{E:BN}
B_N:=\bigl\{(m,n)\in\Z^2\colon Q_d(m,n)\leq N^2\bigr\}.
\end{equation}
Recall that $Q_d$ is a positive definite quadratic form, and in this case it is not
hard to see that there exist constants $R_d\in \N$ and  $c_d>0$ such that
\begin{gather}
\label{eq:cardBN}
\Bigl[-\frac N{R_d},\frac N{R_d}\Bigr]^2\subset B_N\subset [-R_dN,R_dN]^2;\\
\label{eq:nombre_norme}
\lim_{x\to+\infty}\frac 1x \bigl|\{z\in\Z[\tau_d]\colon \CN(z)\leq x\}\bigr|=c_d.
\end{gather}
The units of $\Z[\tau_d]$ are the elements of norm $1$; their number  is denoted by  $N_1(d)$ and  is equal to $2, 4$ or $6$.
In general, the ring  $\Z[\tau_d]$ is not a principal ideal domain, but it is always a Dedekind domain, and has the property of unique factorization of ideals into prime ideals~(see for example \cite[Theorem 5.3.6]{MuEs}).

We say that $\alpha\in\Z[\tau_d]$ is a \emph{prime element} if $\alpha\neq 0$ and the ideal $(\alpha)$ spanned by $\alpha$ is a prime ideal. Note that prime elements are irreducible, but  the converse is not true in general.
To avoid ambiguities, we  do not abbreviate the expressions ``prime integer'', ``prime ideal'', and ``prime element'' of $\Z[\tau_d]$.
 Any two  prime  elements that generate the same ideal, or equivalently,  that can be obtained from one another by multiplication by a unit, are called \emph{associates}
and we  identify them.
Some prime elements of $\Z[\tau_d]$ are prime integers and some other are not;  we  plan to work
with non-integer prime elements only. We write
\begin{equation}
\label{eq:def-Id}
\CI_d:=\{\text{non-integer prime elements of }\Z[\tau_d]\},
\end{equation}
 and  throughout the argument we take into account the aforementioned identification, that is,
 we assume that no two elements in $\CI_d$ are associates.

In the sequel we use the  following immediate observations. If $\alpha\in\CI_d$, then $\CN(\alpha)=|\alpha|^2$ is a prime integer;  if $z\in\Z[\tau_d]$ is such that $\CN(\alpha)$ divides $\CN(z)$, then $z$ is a multiple of $\alpha$ or of $\overline\alpha$ (or of both).
We have
\begin{equation}
\label{eq:sum-squares}
\sum_{\alpha\in\CI_d}\frac 1{\CN(\alpha)^2}
\leq\sum_{z\in\Z[\tau_d], z\neq 0}\frac 1{\CN(z)^2}<+\infty
\end{equation}
where the convergence of the second series follows from~\eqref{eq:nombre_norme}.
We also need a deeper result.
We have
$$
\sum_{\alpha\text{ prime element of }\Z[\tau_d]}\frac 1{\CN(\alpha)}=N_1(d)
\sum_{\fp \text{ principal prime ideal, } \fp\neq\{0\}}\frac 1{\CN(\fp)}=+\infty.
$$
The divergence of the last series can be deduced from the
 Chebotarev density theorem (see for example \cite[Theorem 13.4]{Ne}), 
 but also a much more elementary proof can be found, for example, on pages 148--149 of~\cite{MuEs}.
On the other hand, writing as usual $\P$ for the set of prime integers, we have
$$
\sum_{p\in\P}\frac 1{\CN(p)}=\sum_{p\in\P}\frac 1{p^2}<+\infty
$$
and thus we have
\begin{equation}
\label{eq:sum_inverse}
\sum_{\alpha\in\CI_d}\frac 1{\CN(\alpha)}=+\infty.
\end{equation}

\subsection{The K\'atai orthogonality criterion for \mbox{$\Z[\tau_d]$}}
\label{subsec:Katai-Ztau}
Next we prove a variant of the orthogonality criterion of K\'atai
(see Lemma~\ref{lem:katai}) that works for the rings $\Z[\tau_d]$.
Given the basic information about the rings $\Z[\tau_d]$ recorded above,
the proof is a straightforward adaptation of the original argument of K\'atai~\cite{K86};  we give it for completeness.
\begin{lemma}[Tur\'an-Kubilius for subsets of \mbox{$\Z[\tau_d]$}]
\label{lem:Tur_Ku}
Let $\CP$ be a finite subset of $\CI_d$ and
for $z\in\Z[\tau_d]$  let
$$
\CA:=\sum_{\alpha\in \CP}\frac{1}{\CN(\alpha)},
\quad \omega(z):=\sum_{\alpha\in \CP,\alpha|z}1.
$$
 Then for every $x\in \N$ we have
$$
\sum_{z\in\Z[\tau_d]\colon \CN(z)\leq x} |\omega(z)-\CA|\ll \sqrt{\CA} \cdot
x+ |\CP|\cdot o(x)
$$
where  the implied constant and the $o(x)$ term depend only on  $d$.
\end{lemma}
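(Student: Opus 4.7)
The plan is to prove this via the classical second moment approach of Tur\'an and Kubilius, carried out in the Dedekind domain $\Z[\tau_d]$. First I would apply the Cauchy--Schwarz inequality to reduce the estimate to bounding the second moment
\begin{equation*}
S_2(x) := \sum_{z\in\Z[\tau_d],\,\CN(z)\leq x} (\omega(z)-\CA)^2.
\end{equation*}
Indeed, writing $N(x) := |\{z\in\Z[\tau_d] : \CN(z)\leq x\}|$, Cauchy--Schwarz gives
\begin{equation*}
\sum_{\CN(z)\leq x} |\omega(z)-\CA| \leq \sqrt{N(x)\cdot S_2(x)},
\end{equation*}
and by \eqref{eq:nombre_norme} we have $N(x)=c_d x+o(x)$, so matters reduce to showing $S_2(x) \ll \CA\, x + |\CP|^2 \cdot o(x)$.

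Next I would expand $S_2(x) = \sum_z \omega(z)^2 - 2\CA \sum_z \omega(z) + \CA^2 N(x)$ (all sums restricted to $\CN(z)\leq x$) and compute each piece by interchanging the order of summation. For each nonzero $\gamma\in\Z[\tau_d]$, the bijection $z=\gamma w$ identifies multiples of $\gamma$ in the norm-ball, so
\begin{equation*}
|\{z : \CN(z)\leq x,\ \gamma|z\}| = |\{w\in\Z[\tau_d] : \CN(w)\leq x/\CN(\gamma)\}| = \frac{c_d x}{\CN(\gamma)}+o(x),
\end{equation*}
by \eqref{eq:nombre_norme}. The key algebraic input is that distinct elements $\alpha,\beta\in\CP\subset\CI_d$ are non-associate prime elements, so $(\alpha)$ and $(\beta)$ are distinct, hence coprime, prime ideals; therefore $(\alpha)\cap(\beta)=(\alpha\beta)$, and $\alpha|z$ and $\beta|z$ is equivalent to $\alpha\beta|z$.

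Combining these ingredients, the first moment becomes
\begin{equation*}
\sum_z \omega(z)=\sum_{\alpha\in\CP}\frac{c_d x}{\CN(\alpha)} + |\CP|\cdot o(x) = c_d\CA\, x + |\CP|\cdot o(x),
\end{equation*}
and splitting the pairs in $\sum_z\omega(z)^2$ into diagonal and off-diagonal contributions yields
\begin{equation*}
\sum_z \omega(z)^2 = c_d x\, \CA + c_d x\Bigl(\CA^2-\sum_{\alpha\in\CP}\CN(\alpha)^{-2}\Bigr) + |\CP|^2\cdot o(x).
\end{equation*}
Substituting these into $S_2(x)$ the $\CA^2$ main terms cancel, and the convergent series \eqref{eq:sum-squares} bounds the diagonal correction by $O(x)$, giving $S_2(x)\ll \CA\, x + |\CP|^2\cdot o(x)$. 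Feeding this back into Cauchy--Schwarz and using that $\sqrt{x\cdot o(x)}=o(x)$ yields
\begin{equation*}
\sum_{\CN(z)\leq x} |\omega(z)-\CA| \ll \sqrt{x}\cdot\sqrt{\CA\, x+|\CP|^2 o(x)} \ll \sqrt{\CA}\, x + |\CP|\cdot o(x),
\end{equation*}
which is the desired bound. There is no serious obstacle here: the argument is the direct Dedekind-domain analogue of the classical Tur\'an--Kubilius inequality over $\Z$, and the only structural inputs are the lattice-point count \eqref{eq:nombre_norme}, the summability \eqref{eq:sum-squares}, and unique factorization of ideals, all of which are available in $\Z[\tau_d]$. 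The mildly delicate point is simply to organize the error terms so that the dependence on $|\CP|$ comes out exactly as in the statement.
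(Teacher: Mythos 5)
Your proposal is correct and follows essentially the same route as the paper: Cauchy--Schwarz plus the lattice-point count \eqref{eq:nombre_norme} to reduce to the second moment, expansion of $(\omega(z)-\CA)^2$, counting multiples of $\alpha$ and of $\alpha\beta$ via \eqref{eq:nombre_norme}, and absorbing the diagonal terms using \eqref{eq:sum-squares}. The only cosmetic difference is in how the diagonal correction $\sum_{\alpha}\CN(\alpha)^{-2}\cdot x$ is handled (the paper bounds it by $\CA\cdot O(x)$, while you invoke \eqref{eq:sum-squares}; either works, and in fact the term enters with a negative sign), so there is nothing to fix.
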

\begin{proof}
Using \eqref{eq:nombre_norme} and the Cauchy-Schwarz inequality, we see that  it suffices to show that
\begin{equation}\label{E:wanted}
\sum_{z\in\Z[\tau_d]\colon \CN(z)\leq x} (\omega(z)-\CA)^2\ll \CA \cdot x+
|\CP|^2\cdot  o(x).
\end{equation}
The left hand side is equal to
\begin{equation}\label{E:00}
\sum_{z\in\Z[\tau_d]\colon \CN(z)\leq x} \omega(z)^2+c_d\CA^2\cdot x+ o(\CA^2x)-2\CA
\sum_{z\in \Z[\tau_d]\colon \CN(z)\leq x} \omega(z).
\end{equation}
where the constant $c_d$ is defined in \eqref{eq:nombre_norme}.

As every $z \in \Z[\tau_d]$ that is divisible by some $\alpha\in \CP$ is of the form $\alpha w$ for some $w\in\Z[\tau_d]$,  using \eqref{eq:nombre_norme} we see that
$$
\sum_{z\in\Z[\tau_d]\colon \CN(z)\leq x,\  \alpha|z} 1=
\Big|\Bigl\{w\in\Z[\tau_d]\colon \CN(w)\leq \frac{x}{\CN(\alpha)}\Bigr\}\Big|
=c_d\frac{x}{\CN(\alpha)}+o(x).
$$
Summing over $\alpha\in \CP$ we find that
\begin{equation}\label{E:11}
\sum_{z\in\Z[\tau_d]\colon \CN(z)\leq x}\omega(z)=c_d \CA\cdot  x+|\CP|\cdot o(x).
\end{equation}
In the same way,
\begin{multline*}
\sum_{z\in\Z[\tau_d]\colon \CN(z)\leq x}\omega(z)^2
=\sum_{\alpha\neq\beta\in\CP}\;\sum_{z\in\Z[\tau_d]\colon \CN(z)\leq x,\  \alpha\beta|z} 1
+\sum_{\alpha\in\CP}\sum_{z\in\Z[\tau_d]\colon \CN(z)\leq x,\  \alpha|z} 1\\
=\sum_{\alpha,\beta\in\CP}\Bigl(c_d\frac x{\CN(\alpha)\CN(\beta)}+o(x)\Bigr)
-\sum_{\alpha\in\CP}\Bigl(c_d\frac x{\CN(\alpha)^2}+o(x)\Bigr)
+\sum_{\alpha\in\CP}\Bigl(c_d\frac x{\CN(\alpha)}+o(x)\Bigr)\\
=c_d\CA^2 \cdot x+\CA
\cdot O(x)+|\CP|^2\cdot o(x)
\end{multline*}
by~\eqref{eq:sum-squares}. Combining this formula with~\eqref{E:00} and \eqref{E:11} we get \eqref{E:wanted},  completing the proof.
\end{proof}

\begin{definition}
We say that a function $f\colon\Z[\tau_d]\to\C$ is \emph{multiplicative} if $f(zz')=f(z)f(z')$ whenever $\CN(z)$ and $\CN(z')$ are relatively prime. We denote by $\CM_d$  the family of multiplicative functions $f\colon\Z[\tau_d]\to\C$ with modulus at most $1$. We remark that for $g\in\CM$ and $r\in\N$ the function $f\colon z\mapsto g(\CN(z)^r)$ belongs to $\CM_d$.
\end{definition}

\begin{lemma}[K\'atai estimate for \mbox{$\Z[\tau_d]$}]
\label{lem:KataiZd}
 Let
 $f\in\CM_d$ be a multiplicative function  and $h\colon \Z[\tau_d]\to \C$
 be an arbitrary  function of modulus at most $1$.
For $x\in\N$, let also
\begin{gather*}
S(x):=\sum_{z\in\Z[\tau_d]\colon \CN(z)\leq x}f(z)\, h(z),\\
C(x):=\sum_{\alpha,\beta \in \CP, \alpha\neq\beta}\Big|
\sum_{z\in \Z[\tau_d]\colon \CN(z)\leq
\min\{x/\CN(\alpha),\ x/\CN(\beta)\}}f(\alpha z)\cdot \overline{f(\beta z)}\Big|
\end{gather*}
where
 $\CP$ is a finite subset of $\CI_d$.
 Then we have the estimate
$$
\Big|\frac{S(x)}{x}\Big|^2 \ll \frac{1}{\CA}+\frac 1{\CA^2}+ \frac{|\CP|^2}{\CA^2}\,o(1)
+\frac{1}{ \CA^2} \frac{C(x)}{x},
$$
where $\CA:=\sum_{\alpha\in\CP}\CN(\alpha)\inv$,   the implied constant, and the $o(1)$ term depend only on $d$.
\end{lemma}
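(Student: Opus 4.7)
The plan is to adapt K\'atai's classical orthogonality argument to the ring $\Z[\tau_d]$, using Lemma~\ref{lem:Tur_Ku} as the Tur\'an--Kubilius input. The starting point is the identity
\begin{equation*}
\CA\, S(x) \;=\; T_1 + T_2,
\end{equation*}
where
\begin{equation*}
T_1 \;:=\; \sum_{\CN(z)\leq x} f(z)\, h(z)\, (\CA - \omega(z)), \qquad T_2 \;:=\; \sum_{\CN(z)\leq x} f(z)\, h(z)\, \omega(z),
\end{equation*}
so that $|\CA\, S(x)|^2 \leq 2(|T_1|^2 + |T_2|^2)$ reduces the task to bounding the two pieces separately.

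For $T_1$, I would apply Cauchy--Schwarz to get $|T_1|^2 \leq \bigl(\sum_z |f(z)h(z)|^2\bigr)\bigl(\sum_z (\omega(z) - \CA)^2\bigr)$. The first factor is $\ll x$ by~\eqref{eq:nombre_norme}, and the second is at most $\CA\, x + |\CP|^2 o(x)$ by the $L^2$ estimate~\eqref{E:wanted} established inside the proof of Lemma~\ref{lem:Tur_Ku}. This yields $|T_1|^2 \ll \CA\, x^2 + |\CP|^2 o(x^2)$, which upon division by $\CA^2 x^2$ produces the $\CA^{-1}$ and $|\CP|^2 \CA^{-2}\, o(1)$ contributions in the claim.

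The heart of the argument is the bound on $T_2$, where the multiplicativity of $f$ is used crucially. Opening $\omega(z) = \sum_{\alpha \in \CP,\, \alpha \mid z} 1$ and substituting $z = \alpha w$ gives
\begin{equation*}
T_2 \;=\; \sum_{\alpha \in \CP}\, \sum_{\CN(w) \leq x/\CN(\alpha)} f(\alpha w)\, h(\alpha w).
\end{equation*}
I would first separate the ``bad'' pairs $(\alpha, w)$ with $\alpha \mid w$ from the ``good'' pairs with $\alpha \nmid w$: the total count of bad pairs is $\ll \sum_\alpha x/\CN(\alpha)^2 \ll x$ by~\eqref{eq:sum-squares}, contributing an $O(x)$ error to $T_2$ and hence $O(x^2)$ to $|T_2|^2$; this is the source of the $\CA^{-2}$ term in the final bound. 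On the good pairs, multiplicativity gives $f(\alpha w) = f(\alpha)\, f(w)$, so I apply Cauchy--Schwarz in the outer variable $w$, expand the square, and swap summation orders to obtain
\begin{equation*}
|T_2^{\mathrm{main}}|^2 \;\ll\; x\, \sum_{\alpha, \beta \in \CP} f(\alpha)\, \overline{f(\beta)}\, \sum_{w} h(\alpha w)\, \overline{h(\beta w)},
\end{equation*}
with appropriate indicator restrictions on $w$. The diagonal $\alpha = \beta$ contributes $\sum_\alpha \sum_w |h(\alpha w)|^2 \ll \CA\, x$ by~\eqref{eq:nombre_norme}, giving a term $\ll \CA\, x^2$, while the off-diagonal is controlled by $x\cdot C(x)$ (the correlation appearing here is of the form analogous to the one in the integer Lemma~\ref{lem:katai}, with $h$-values playing the role of the arbitrary function $a$).

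The main technical obstacle is the bookkeeping around the indicator $\CN(w) \leq x/\CN(\alpha)$ when swapping summation orders, since this constraint couples $\alpha$ and $w$ and produces boundary effects between the diagonal and off-diagonal parts. These can all be absorbed into the already-present error terms using the counting estimates~\eqref{eq:nombre_norme} and~\eqref{eq:sum-squares}, together with the fact that $|h|\leq 1$. Assembling $|T_2|^2 \ll \CA x^2 + x\, C(x) + x^2$ with the earlier bound on $|T_1|^2$ and dividing by $\CA^2 x^2$ then yields the claim.
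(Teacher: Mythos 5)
Your argument is correct and is essentially the paper's own proof: your $T_2$ is the paper's $S'(x)$, your good/bad split of the pairs $(\alpha,w)$ is its passage from $S'(x)$ to $S''(x)$, and the Cauchy--Schwarz step followed by the diagonal/off-diagonal analysis is identical, with the same error accounting. One small fix: since $f\in\CM_d$ is multiplicative with respect to coprimality of \emph{norms}, the pairs where $f(\alpha w)\neq f(\alpha)f(w)$ are exactly those with $\CN(\alpha)\mid\CN(w)$, i.e.\ those where $\alpha$ \emph{or} $\overline{\alpha}$ divides $w$, not merely $\alpha\mid w$; the count is the same up to a factor of $2$, so nothing else changes. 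Your reading of the off-diagonal correlation as one in $h$ (so that $C(x)$ should carry $h(\alpha z)\overline{h(\beta z)}$) is the intended one --- it is what the paper's proof produces and what the application in Proposition~\ref{prop:KataiZd} requires.
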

\begin{proof}
Let $\omega(z)$ be defined as in Lemma~\ref{lem:Tur_Ku} and
$$
S'(x):=\sum_{w\in\Z[\tau_d]\colon \CN(w)\leq x}f(w)\, h(w)\, \omega(w).
$$
From Lemma~\ref{lem:Tur_Ku} we deduce that
$$
|S'(x)-\CA\cdot S(x)|\ll \sqrt{\CA} \cdot x+ |\CP|\cdot o(x).
$$
The formula defining $S'(x)$ can be rewritten as
$$
S'(x)=\sum_{w\in\Z[\tau_d], \alpha\in\CP\colon \alpha| w\text{ and } \CN(w)\leq x}f(w)\, h(w)
=
\sum_{z\in\Z[\tau_d],\ \alpha\in\CP\colon \CN(\alpha z)\leq x}f(\alpha z)\, h(\alpha z).
$$
In this sum, the term corresponding to a pair $(z,\alpha)$ is equal to $f(\alpha)f(z)h(\alpha z)$ except if $\CN(\alpha)$  and $\CN(z)$ are not relatively prime. Since $\CN(\alpha)$ is a prime integer, this holds only if $\CN(\alpha)$ divides $\CN(z)$, that is, if $\alpha$ or $\overline\alpha$ divides $z$.  We let
$$
S''(x):=\sum_{z\in\Z[\tau_d],\  \alpha\in\CP\colon \CN(\alpha z)\leq x} f(\alpha)\, f(z)\, h(\alpha z).
$$
Since  $|f|\leq 1$ and $|h|\leq 1$, it follows that
\begin{multline*}
|S(x)-S''(x)|\leq
2\, \bigl|\{ \bigl(z,\alpha)\in \Z[\tau_d]\times\CP\colon \alpha\text{ or }\overline\alpha\text{ divides } z\text{ and }\CN(\alpha z)\leq x
\bigr\}\bigr|\\
\leq 4\,\sum_{\alpha\in\CP} \bigl|\bigl\{ z\in\Z[\tau_d]\colon \CN(z)\leq x\CN(\alpha)^{-2}\bigr\}\bigr|
\leq 4x\sum_{\alpha\in\CP}\CN(\alpha)^{-2}\ll x
\end{multline*}
by~\eqref{eq:sum-squares}, where the implied constant depends only on $d$.

 We rewrite $S''(x)$ as
$$
S''(x)=\sum_{z\in\Z[\tau_d]\colon |\CN(z)|\leq x}\ f(z) \sum_{\alpha\in \CP\colon
\CN(\alpha)\leq x/\CN(z)}f(\alpha)\, h(\alpha z).
$$
Using \eqref{eq:nombre_norme} and the Cauchy-Schwarz inequality we deduce that
$$
|S''(x)|^2\ll  x
\sum_{z\in\Z[\tau_d]\colon |\CN(z)|\leq x}\Big|\sum_{\alpha\in \CP\colon\CN(\alpha)\leq x/|\CN(z)|}
f(\alpha)\, h(\alpha z)\Big|^2.
$$
Expanding the square, we get that this last expression is equal to
$$
x\, \sum_{\alpha,\beta\in\CP}\quad  \sum_{z\in\Z[\tau_d]\colon \CN(z)\leq x/\CN(\alpha),\ \CN(z)\leq x/\CN(\beta)}
f(\alpha)\, \overline{f(\beta)} \, h(\alpha z)\, \overline{h(\beta z)}.
$$
By~\eqref{eq:nombre_norme}, the contribution of the diagonal terms with $\alpha=\beta$
is at most
$$
x\, \sum_{z\in\Z[\tau_d],\  \alpha\in \CP\colon \CN(z)\leq x/\CN(\alpha)}1\ll \CA \cdot x^2
$$
and the contribution of the off diagonal terms is bounded by  $x\,C(x)$, where $C(x)$ was defined in the statement.

Combining the previous estimates we get that
\begin{multline*}
|\CA\cdot S(x)|^2\ll |S'(x)-\CA\cdot S(x)|^2+|S'(x)-S''(x)|^2+|S''(x)|^2
\\ \ll
 \CA \cdot x^2 +  |\CP|^2\,o(x^2) + x^2 + \CA\,x^2 + x\,C(x).
\end{multline*}
The asserted estimates follows upon dividing by $\CA^2\cdot x^2$.
\end{proof}
Using the previous estimate and \eqref{eq:nombre_norme} we get as an immediate corollary the following orthogonality criterion:

\begin{proposition}[K\'atai orthogonality criterion for \mbox{$\Z[\tau_d]$}]
\label{prop:KataiZd}
Let $d\in\N$, $\tau_d$ and $\CI_d$ be as above, and  let $\CP$ be a subset of $\CI_d$ such that
$$
\sum_{\alpha\in \CP}\frac 1{\CN(\alpha)}=+\infty.
$$
Let $h_x\colon \Z[\tau_d]\to \C$, $x\in \N$,  be  arbitrary functions of modulus  at most  $1$ such that
$$
\lim_{x\to +\infty}\E_{z\in\Z[\tau_d]\colon \CN(z)\leq  x}\;h_x(\alpha z)\cdot
\overline{h_x(\beta z)}=0
$$
for every $\alpha,\beta \in \CP$ with $\alpha\neq\beta$.
 Then
$$
 \lim_{x\to +\infty}\sup_{f\in \CM_d} \big|
 \E_{z\in\Z[\tau_d]\colon \CN(z)\leq x}\;
 f(z)\, h_x(z)\big|=0.
 $$
\end{proposition}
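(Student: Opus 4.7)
The plan is to derive the proposition as a direct quantitative consequence of Lemma~\ref{lem:KataiZd}, by choosing a sufficiently large finite truncation $\CP'$ of $\CP$ and letting $x\to+\infty$. The key point is that the estimate of Lemma~\ref{lem:KataiZd} is uniform in the multiplicative function $f\in\CM_d$, so the supremum over $f$ inherits the same bound. The divergence condition \eqref{eq:sum_inverse}-style hypothesis $\sum_{\alpha\in\CP}\CN(\alpha)^{-1}=+\infty$ will be used exactly once, to make the quantity $\CA$ appearing in Lemma~\ref{lem:KataiZd} as large as we wish.

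Concretely, I would fix $\ve>0$ and use the divergence to select a finite subset $\CP'\subset \CP$ such that $\CA:=\sum_{\alpha\in\CP'}\CN(\alpha)^{-1}$ satisfies $1/\CA+1/\CA^2<\ve^2$. With this $\CP'$ fixed (hence $|\CP'|$ and $\CA$ fixed), applying Lemma~\ref{lem:KataiZd} gives, for every $f\in\CM_d$,
$$
\Big|\frac{S(x)}{x}\Big|^2 \ll \ve^2+\frac{|\CP'|^2}{\CA^2}\,o(1)+\frac{1}{\CA^2}\,\frac{C(x)}{x},
$$
where the $o(1)$ depends only on $d$ and tends to $0$ as $x\to+\infty$ (with $\CP'$ frozen). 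The third summand on the right therefore tends to $0$ as $x\to+\infty$. For the fourth summand, I would use the standing hypothesis on $h_x$: for each of the finitely many pairs $(\alpha,\beta)\in\CP'\times\CP'$ with $\alpha\neq\beta$, the sum
$$
\Big|\sum_{\CN(z)\le\min\{x/\CN(\alpha),x/\CN(\beta)\}}h_x(\alpha z)\overline{h_x(\beta z)}\Big|
$$
is $o(x)$ as $x\to+\infty$, so summing these finitely many contributions yields $C(x)=o(x)$. Combining, $\limsup_{x\to+\infty}\sup_{f\in\CM_d}|S(x)/x|^2\ll \ve^2$, and since $\ve>0$ is arbitrary the claimed limit follows after normalizing by $|\{z:\CN(z)\le x\}|\sim c_dx$ via \eqref{eq:nombre_norme}.

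The only step that might require a small amount of unpacking, rather than being a literal restatement of the hypothesis, is deducing that the truncated inner sums over $\CN(z)\le\min\{x/\CN(\alpha),x/\CN(\beta)\}$ are $o(x)$ from the hypothesis phrased for the range $\CN(z)\le x$. Since $\alpha,\beta$ are fixed in the finite set $\CP'$, the truncation point is a fixed constant multiple of $x$ tending to infinity, so this is essentially the same limit by rescaling; I expect this to be routine rather than a genuine obstacle. Aside from this bookkeeping, everything reduces to plugging the large-$\CA$ truncation into Lemma~\ref{lem:KataiZd} and taking limits, which is why the paper declares the result an immediate corollary.
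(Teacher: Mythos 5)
Your proposal follows exactly the route the paper intends: Proposition~\ref{prop:KataiZd} is presented there as an immediate corollary of Lemma~\ref{lem:KataiZd} together with \eqref{eq:nombre_norme}, and your argument --- truncate $\CP$ to a finite $\CP'$ with $\CA$ large using the divergence hypothesis, apply the lemma uniformly in $f\in\CM_d$, and let $x\to+\infty$ --- is that argument. (You also silently correct what is evidently a typo in the statement of Lemma~\ref{lem:KataiZd}: the inner sum defining $C(x)$ should involve $h$ rather than $f$, as its proof makes clear; this is the version you use and the one needed here.)

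The one point where your justification does not hold up literally is the one you flag yourself. The quantity $C(x)$ involves $\sum_{\CN(z)\le x/c}h_x(\alpha z)\overline{h_x(\beta z)}$ with $c:=\max\{\CN(\alpha),\CN(\beta)\}$, whereas the hypothesis of the proposition controls $\sum_{\CN(z)\le x}h_x(\alpha z)\overline{h_x(\beta z)}$. Your proposed rescaling $y=x/c$ does not convert one into the other, because the subscript on $h$ stays equal to $x$: the hypothesis at scale $y$ speaks about $h_y$ over the range $\CN(z)\le y$, not about $h_x$ over that range. Moreover, knowing that a sum of terms of modulus at most $1$ up to $x$ is $o(x)$ gives no control on its partial sum up to $x/c$, since the cancellation could be purely global. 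So, read literally, the stated hypothesis is slightly too weak to feed into Lemma~\ref{lem:KataiZd}; the hypothesis one should use --- and the one that is actually verified in the paper's application, where restricting to $\CN(z)\le x/c$ merely replaces the convex set $K_N^*$ by another convex subset of $[-N,N]^2$, so Lemma~\ref{P:semest} still applies --- is that the correlation averages vanish over the truncated ranges $\CN(z)\le\min\{x/\CN(\alpha),x/\CN(\beta)\}$, exactly as in the classical criterion of Lemma~\ref{lem:katai}, where the average is taken over $[\lfloor N/q\rfloor]$. With the hypothesis read this way your proof is complete; with it read literally, this step is a genuine, though minor and easily repaired, gap --- one the paper itself also glosses over by calling the proposition immediate.
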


\begin{remark}
 By \eqref{eq:sum_inverse} the assumption $\sum_{\alpha\in \CP}\CN(\alpha)\inv=+\infty$
is satisfied for $\CP:=\CI_d$ and also for any set $\CP$ obtained by removing finitely many elements from $\CI_d$.
\end{remark}

\subsection{Estimates involving Gowers norms}
The following lemma will be crucial in the proof of Theorem~\ref{th:chowla}.
The method of proof is classical, see for example~\cite[Proof of Proposition 7.1]{GT10b}, we summarize it for completeness.
 Lemma~\ref{L:UnifromityEstimates2} below is proved in a similar fashion.
\begin{lemma}
\label{P:semest}
Let  $s\in \N$ and $L_j(m,n)$, $j=1,\ldots, s$,  be  linear forms with integer coefficients
 and suppose that either $s=1$ or $s>1$ and the linear forms $L_1,L_j$ are linearly independent for $j=2,\ldots,  s$.
 For $j=1,\ldots, s$ let $h_j\colon \Z\to \C$ be  bounded 
 functions.
 Suppose that  $h_1$ is an even function and  $\norm{h_1}_{U^{s'}[N]}\to 0$ as $N\to+\infty$ where  $s':=\max\{s-1,2\}$. Then
\begin{equation}\label{E:fjLj}
\lim_{N\to +\infty} \E_{1\leq m,n\leq N} {\bf 1}_{K_N}(m,n) \prod_{j=1}^sh_j(L_j(m,n))=0
\end{equation}
where   $K_N$, $N\in \N$, are arbitrary  convex subsets of $[-N,N]^2$.
\end{lemma}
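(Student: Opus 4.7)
Our plan is to carry out a standard generalized von Neumann argument, using the Cauchy-Schwarz inequality $s-1$ times (or once, when $s=2$), so as to reduce the multilinear average
\[
A_N := \E_{m,n \in [N]}\,\one_{K_N}(m,n)\,\prod_{j=1}^s h_j(L_j(m,n))
\]
to a Gowers-type inner product involving only $h_1$.  The parameter $s' = \max\{s-1,2\}$ in the hypothesis is precisely what this reduction yields: one gets control by $\norm{h_1}_{U^{s-1}[CN]}$ when $s\geq 3$, and by $\norm{h_1}_{U^{2}[CN]}$ in the two low-dimensional cases $s=1$ (direct Fourier argument on slices of $K_N$) and $s=2$ (a single Cauchy-Schwarz step).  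The evenness of $h_1$ and the convention $h_1(-n)=h_1(n)$ are what allow the estimate to be extended to an interval $[CN]$ symmetric about $0$, as some $L_j(m,n)$ with $(m,n)\in[N]^2$ will be negative.

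For the case $s=1$, the plan is to exploit the convexity of $K_N$ to slice into lines: for each fixed $n\in[N]$ the set $I_n:=\{m:(m,n)\in K_N\}$ is an interval of $[N]$, so that
\[
A_N = \E_{n\in[N]}\,\E_{m\in[N]}\,\one_{I_n}(m)\, h_1(L_1(m,n)).
\]
Writing $L_1(m,n)=am+bn$ with $(a,b)\neq\bzero$, the inner sum for each $n$ is a partial sum of $h_1$ on an arithmetic progression of step $|a|$ contained in $[-(|a|+|b|)N,(|a|+|b|)N]$.  Invoking Lemma~\ref{lem:Us-intels} to bound averages on arithmetic progressions by the $U^2$-norm, together with Lemma~\ref{lem:NormsUs} to translate between the cyclic and interval versions of the norm, one obtains $|A_N|\ll \norm{h_1}_{U^2[CN]}$ for some constant $C$ depending on the linear form, which tends to $0$ by hypothesis (combined with evenness of $h_1$ to handle negative arguments).

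For $s\geq 2$, I plan to eliminate the functions $h_s,h_{s-1},\dots,h_2$ one at a time by successive applications of the Cauchy-Schwarz inequality in van der Corput form.  At the $i$-th step, having already reduced to a product of differences of the remaining $h_j$'s, I choose a primitive shift vector $\bv_i\in\Z^2$ such that the currently-targeted form $L_{s-i+1}$ vanishes on $\bv_i$ while $L_1(\bv_i)\neq 0$; the linear independence of $L_1$ and $L_{s-i+1}$ is exactly what guarantees the existence of such a $\bv_i$.  A Cauchy-Schwarz along $\bv_i$ then makes the $h_{s-i+1}$ factor disappear (since $L_{s-i+1}$ is constant along the shift, so the new factor $|h_{s-i+1}|^2$ is bounded by $1$), while each surviving $h_k$ is replaced by the multiplicative difference $h_k(L_k(m,n))\,\overline{h_k(L_k(m,n)+t_iL_k(\bv_i))}$.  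The indicator $\one_{K_N}$ is multiplied by $\one_{K_N+t_i\bv_i}$, the product being supported on the intersection of two convex sets, hence bounded uniformly, with boundary losses of size $O(t_i/N)$ that vanish in the $N\to\infty$ limit.

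After $s-1$ iterations only $h_1$ remains, and the resulting expression is (up to constants depending only on the $L_1(\bv_i)$'s, all non-zero) an iterated Gowers-type inner product identifying a power of $\norm{h_1}_{U^{s-1}[CN]}$ for some $C$ depending on the forms; in the case $s=2$ the one Cauchy-Schwarz step produces $\norm{h_1}_{U^2[CN]}$ instead.  Combining with Lemma~\ref{lem:NormsUs} and the hypothesis $\norm{h_1}_{U^{s'}[N]}\to 0$ (using evenness of $h_1$ once more to pass from $[N]$ to $[CN]$), we conclude $A_N\to 0$.  The principal technical obstacle will be the bookkeeping in the iteration: ensuring at each stage that the intersection of the successively shifted copies of $K_N$ remains a convex set with negligible boundary contribution, and that after $s-1$ Cauchy-Schwarz steps the non-vanishing of each $L_1(\bv_i)$ assembles the output into a genuine Gowers norm of $h_1$ rather than a degenerate expression.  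Less subtle but equally essential is the verification at each stage that $L_{s-i+1}$ can be killed while $L_1$ is preserved, which is exactly the content of the linear independence hypothesis.
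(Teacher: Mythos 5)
Your plan for the Cauchy--Schwarz iteration itself is the right von Neumann-type reduction (and is essentially what the paper does, via \cite[Theorem~3.1]{T06} after transferring to $\Z_\tN$), but there is a genuine gap in your treatment of the cutoff $\one_{K_N}$. The indicator of the convex set is a function of the \emph{pair} $(m,n)$, not of any single linear form $L_j(m,n)$, so it cannot be "killed" by any of your shifts $\bv_i$: it survives every Cauchy--Schwarz step, and after $s-1$ iterations you are left not with $\norm{h_1}_{U^{s-1}}$ but with a Gowers-type inner product of shifted copies of $h_1\circ L_1$ \emph{weighted} by a product of $2^{s-1}$ shifted copies of $\one_{K_N}$. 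The issue is not the "boundary losses of size $O(t_i/N)$" you mention — the entire bulk of the weight remains — and there is no mechanism in your proposal to strip these weights off at the end; the Gowers--Cauchy--Schwarz inequality does not apply to a bounded weight depending jointly on all variables. This is why the paper removes the cutoff \emph{before} the von Neumann step: following \cite[Proposition~7.1]{GT10b}, it embeds everything in $\Z_\tN$, approximates $\one_{K_N}$ by a trigonometric polynomial with boundedly many coefficients, and reduces to averages with an extra linear phase $\e((m\eta+n\xi)/\tN)$. That phase either annihilates the average (if $m\eta+n\xi$ is outside the span of the $L_j$) or is absorbed into the functions $h_j$ — and here the hypothesis $s'\geq 2$ is used crucially, since multiplying by a linear phase leaves the $U^{s'}(\Z_\tN)$-norm unchanged only for $s'\geq 2$. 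Only then is the iterated Cauchy--Schwarz applied.

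A second, smaller point: the paper works with $\tN$-periodic extensions $\wt h_j$ of the $h_j$ and must then verify that $\norm{\wt h_1}_{U^{s'}(\Z_\tN)}\to 0$; this is where the evenness of $h_1$ enters (the half of $\Z_\tN$ representing negative integers carries a reflected copy of $h_1$, handled by Lemmas~\ref{lem:NormsUs} and \ref{lem:UsLN}). Your proposal invokes evenness in the same spirit but leaves the passage from $\norm{h_1}_{U^{s'}[N]}$ to the norm of the object actually produced by the iteration unspecified; once you adopt the Fourier removal of the cutoff, this verification becomes the remaining (and doable) step.
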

\begin{proof}
Without loss of generality we can assume that $|h_j|\leq 1$ for $j=1,\ldots, s$.
Furthermore, we can assume that the linear forms $L_j$ are pairwise independent. Indeed, if for some distinct $i,j$  the linear forms $L_i$ and $L_j$ are integer multiples of the same linear form $L$, then by hypothesis, $i$ and $j$ must both be greater
 than $1$, and we can write $h_i(L_i(m,n))h_j(L_j(m,n))=h(L(m,n))$ for some bounded 
 function $h$.

Next, as we want to have Fourier analysis tools available,  we reduce matters  to  averages on a cyclic group.
For $j=1,\dots,s$, we write  $L_j(m,n)=\kappa_jm+\lambda_jn$ where $\kappa_j,\lambda_j\in\Z$.
For every  $N\in\N$, let
 $\tN$ be the smallest   prime such that
$\wt N> 2N$,   $\wt N>2|L_j(m,n)|$ for $m,n\in[N]$ and $j=1,\ldots, s$,     and $\wt N>|\kappa_i\lambda_j-\lambda_i\kappa_j|$ for $i,j=1,\ldots,s$.
  Then $\wt{N}/N$ is bounded by a constant that depends on the linear forms $L_1,\ldots, L_s$ only.

 For $j=1,\ldots, s$, let $\wt h_j\colon \N \to \C$ be periodic of period $\wt N$ and
   equal to  $h_j$ on the interval
 $\bigl[-\lceil \wt N/2\rceil,   \lfloor \wt N/2\rfloor\bigr)$.
For $(m,n)\in K_N$, since $|L_j(m,n)|<\wt N/2$,  we have $h_j(L_j(m,n))=\wt h_j(L_i(m,n))$. Hence,
\begin{equation}\label{E:fjLj'}
 \E_{ m,n\in[N]} {\bf 1}_{K_N}(m,n) \prod_{j=1}^sh_j(L_j(m,n))
 =\big(\frac{\wt N}{N}\big)^2
 \E_{ m,n\in \Z_{\tN}} {\bf 1}_{K_N}(m,n) \prod_{j=1}^s\wt h_j(L_j(m,n)).
\end{equation}
Henceforth, we work with the right hand side and assume that the linear forms $L_j$ and the functions $\wt h_j$ are defined on $\Z_\tN$. We first prove that the right hand side of \eqref{E:fjLj'} converges to $0$ as $N\to +\infty$ under the assumption that \begin{equation}\label{E:ToZero}
\lim_{N\to+\infty}\norm{\wt h_1}_{U^{s'}(\Z_\tN)}=0
\end{equation}
and then verify that this assumption is satisfied (the assumption that $h_1$ is even is only needed here).

Our next goal is to remove the cutoff ${\bf 1}_{K_N}(m,n)$.
 One can follow the exact same method as in~\cite[Proposition~7.1]{GT10b}; we  skip the details and only give a sketch.  The idea is to imbedded $\Z_\tN^2$ in the torus $\T^2$ in the natural way and to represent $K_N$ as the intersection of $\Z_\tN^2$ with a ``convex'' subset of $\T^2$. This convex set is then   approximated by a sufficiently regular function on $\T^2$ which is in turn approximated by a trigonometric polynomial  with bounded coefficients and
 spectrum of bounded cardinality (with respect to $N$). After these reductions, we are left with showing that
$$
\lim_{N\to+\infty}\max_{\eta,\xi\in\Z_\tN}
\Bigl|\E_{ m,n\in \Z_\tN} \e\big(m\frac{\eta}{\wt N} +n\frac{\xi}{\wt  N}\big) \prod_{j=1}^s\wt h_j(L_j(m,n)) \Bigr|
=0.
$$

We now  show that we can restrict ourselves to  the case where  $\xi=\eta=0$. Indeed, if the linear form $m\eta+n\xi$ does not belong to the linear span of the forms $L_j$, then the average vanishes.
On the other hand, if the linear form $m\eta+n\xi$ belongs to the linear span of the forms $L_j$, then we can remove the exponential term by multiplying each function $h_j$ by a complex exponential of the form $\e(\theta_j n/\wt N)$ for some $\theta_j\in\Z_\tN$, and since $s'\geq 2$, this modification does not change the $U^{s'}(\Z_\tN)$-norm of the functions. Note that now $h_1$ is not necessarily even, but we  no longer need this assumption. Therefore, we are reduced to proving that
\begin{equation}\label{E:fjLj''}
\lim_{N\to+\infty}\,\E_{ m,n\in \Z_\tN} \prod_{j=1}^s\wt h_j(L_j(m,n)) =0.
\end{equation}

The pairwise independence of the linear forms $L_j$,  combined with the
 last condition on $\wt N$, imply that the forms $L_j$ on $\Z_\tN\times\Z_\tN$ are pairwise linearly independent over $\Z_\tN$.
Using this and an    iteration of the Cauchy-Schwarz inequality (see for example~\cite[Theorem~3.1]{T06}) we get
$$
\big|\E_{ m,n\in \Z_{\tN}}
\prod_{j=1}^s\wt h_j(L_j(m,n))\big|\leq
\norm{\wt h_1}_{U^{s'}(\Z_{\tN})}.
 $$
From this  estimate and \eqref{E:ToZero} it follows that \eqref{E:fjLj''} holds.

It thus remains  to verify that \eqref{E:ToZero} holds.
We write
$$
\Z_\tN=I\cup J\cup\{0\}\ \text{ where }\ I:= [1,\lfloor \wt N/2\rfloor)\ \text{ and }\
J:= [\lfloor N/2\rfloor, N).
$$
 Since $\norm{\one_{\{0\}}\cdot \wt h_1}_{U^{s'}(\Z_\tN)}\to 0$ when $N\to+\infty$,
 it suffices to show that $\norm{\one_I\cdot h_1}_{U^{s'}(\Z_\tN)}\to 0$ and
$\norm{\one_J\cdot h_1}_{U^{s'}(\Z_\tN)}\to 0$.
By hypothesis, $\norm{h_1}_{U^{s'}[\wt N]}\to 0$ and by  Lemma~\ref{lem:NormsUs} in the Appendix, $\norm{h_1}_{U^{s'}(\Z_\tN)}\to 0$.
By Lemma~\ref{lem:UsLN} in the Appendix, $\norm{\one_I\cdot h_1}_{U^{s'}(\Z_\tN)}\to 0$. Since $\wt h_1$ and $h_1$ coincide on $I$, we have $\norm{\one_I\cdot \wt h_1}_{U^{s'}(\Z_\tN)}\to 0$.
 By assumption, $h_1$ is an even function, hence,  for $n\in J$ we have $\wt h_1(n)=\wt h_1(n-N)=h_1(n-N)=h_1(N-n)$. The map $n\mapsto N-n$ maps the interval $J$ onto the interval $J':=[1,\lceil \wt N/2\rceil]$ and thus $\norm{\one_J\cdot \wt h_1}_{U^{s'}(\Z_\tN)}=
 \norm{\one_{J'}\cdot  h_1}_{U^{s'}(\Z_\tN)}$. The last  quantity tends to $0$ as $N\to+\infty$ by the same argument as above. This completes the proof.
\end{proof}

\subsection{Proof of Theorem~\ref{th:chowla}}
\label{subsec:chowla2}
   Theorem~\ref{th:chowla} follows from the following stronger result:    \begin{theorem}
\label{th:chowla2}
 For $s\in \N$
 let the linear forms $L_1,\ldots, L_s$ and the quadratic form $Q$ be as in the statement of Theorem~\ref{th:chowla}.  Furthermore, let $g\in\CM$  be  arbitrary,  $f_1\in\CM$ be  aperiodic, and suppose that both multiplicative functions are extended to  even functions on $\Z$.
If $s\geq 2$, let also $f_2,\ldots, f_s\colon \Z\to \C$  be arbitrary bounded  functions.
 Then
$$
\lim_{N\to +\infty} \,\E_{1\leq m,n\leq N} \, {\bf 1}_{K_N}(m,n) \,
g(Q(m,n))\,
\prod_{j=1}^sf_j(L_j(m,n))=0,
$$
where   $K_N$, $N\in \N$, are arbitrary  convex subsets of $[-N,N]^2$.
\end{theorem}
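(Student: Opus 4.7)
The strategy is to reduce the problem to Lemma~\ref{P:semest}, that is, to a statement about linear forms only, by means of the variant of K\'atai's orthogonality criterion for $\Z[\tau_d]$ (Proposition~\ref{prop:KataiZd}).

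First, applying a $\mathrm{GL}_2(\Z)$ change of variables witnessing the equivalence $Q\sim Q_d$, we may assume $Q=Q_d$: the forms $L_j$ become new linear forms with integer coefficients whose pairwise independence property is preserved, and $K_N$ becomes a new convex subset of a slightly larger box. Writing $z:=m+n\tau_d$, so that $Q_d(m,n)=\CN(z)$, the function $F(z):=g(\CN(z))$ lies in $\CM_d$ (multiplicativity of $\CN$ on $\Z[\tau_d]$ combined with multiplicativity of $g$ on $\N$). Define $h_N\colon\Z[\tau_d]\to\C$ by $h_N(m+n\tau_d):=c\cdot\one_{K_N}(m,n)\prod_{j=1}^s f_j(L_j(m,n))$ for $(m,n)\in[N]^2$ and $h_N(w):=0$ otherwise, with $c>0$ a constant that normalizes $|h_N|\leq 1$. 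By~\eqref{eq:cardBN} and~\eqref{eq:nombre_norme}, the target average of Theorem~\ref{th:chowla2} equals, up to a positive multiplicative constant depending on $d$ and the $L_j$ only, the expectation $\E_{z\in\Z[\tau_d],\,\CN(z)\leq CN^2}F(z)\,h_N(z)$.

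Apply Proposition~\ref{prop:KataiZd} with $\CP:=\CI_d\setminus F_0$, where $F_0\subset\CI_d$ is a finite exceptional set to be chosen below; the divergence $\sum_{\alpha\in\CP}\CN(\alpha)^{-1}=\infty$ persists by~\eqref{eq:sum_inverse}. It suffices to verify, for every pair of distinct $\alpha,\beta\in\CP$, that
\begin{equation}
\label{E:starChowla}
\lim_{N\to\infty}\E_{z\in\Z[\tau_d],\,\CN(z)\leq CN^2}h_N(\alpha z)\,\overline{h_N(\beta z)}=0.
\end{equation}
Writing $\gamma z=M_\gamma(m,n)+N_\gamma(m,n)\tau_d$, the map $(m,n)\mapsto(M_\gamma(m,n),N_\gamma(m,n))$ is the matrix of multiplication-by-$\gamma$ on $\Z[\tau_d]\cong\Z^2$, which is invertible over $\Q$ whenever $\gamma\neq 0$. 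Setting $L_j^{(\gamma)}(m,n):=L_j(M_\gamma(m,n),N_\gamma(m,n))$, the left-hand side of~\eqref{E:starChowla} is an average of $\one_{\wt K_N}(m,n)\prod_{j=1}^s f_j(L_j^{(\alpha)}(m,n))\,\overline{f_j(L_j^{(\beta)}(m,n))}$ over integer pairs $(m,n)$ in a convex subset $\wt K_N$ of a box $[-C'N,C'N]^2$ depending on $\alpha,\beta,d$. Since $f_1$ is aperiodic and extended to an even function, Theorem~\ref{th:aperiod_uniform} gives $\norm{f_1}_{U^{s'}[M]}\to 0$ for $s':=\max(2s-1,2)$. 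Hence Lemma~\ref{P:semest}, applied with $L_1^{(\alpha)}$ singled out as the form ``$L_1$'' and the even bounded function $f_1$ as ``$h_1$'' (after a routine decomposition into quadrants to match the format of that lemma), yields~\eqref{E:starChowla}, provided $L_1^{(\alpha)}$ is linearly independent from each of the other $2s-1$ forms $L_j^{(\alpha)}$ ($j\geq 2$) and $L_j^{(\beta)}$ ($j=1,\dots,s$).

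The main obstacle is arranging this linear independence by a finite exclusion of prime elements. Representing $L_j$ by its coefficient row vector $\vec\ell_j\in\Q^2$ and letting $A_\gamma\in M_2(\Z)$ be the multiplication-by-$\gamma$ matrix, $L_j^{(\gamma)}$ corresponds to $\vec\ell_j A_\gamma$. A proportionality $L_1^{(\alpha)}\propto L_j^{(\beta)}$ over $\Q$ is equivalent to $\vec\ell_1=c\,\vec\ell_j A_{\beta/\alpha}$ for some $c\in\Q^\times$, where $A_{\beta/\alpha}\in\mathrm{GL}_2(\Q)$ represents multiplication by $\beta/\alpha$ in $\Q(\sqrt{-d})$. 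For $j=1$, distinctness of $\alpha,\beta\in\CI_d$ forces $\beta/\alpha\notin\Q$: otherwise $\alpha\mid\beta$ or $\beta\mid\alpha$ and the two prime elements would be associates, contradicting the identification in $\CI_d$; consequently $A_{\beta/\alpha}$ has no nonzero rational eigenvector and proportionality fails. For $j\geq 2$, $\Q$-independence of $\vec\ell_1$ and $\vec\ell_j$ pins $\beta/\alpha$ down to a (possibly empty) finite union of $\Q^\times$-cosets in $\Q(\sqrt{-d})^\times$ depending only on $\vec\ell_1,\vec\ell_j$; an elementary divisibility analysis using that $\alpha,\beta$ are prime elements of $\Z[\tau_d]$ shows each such coset contributes only finitely many pairs $(\alpha,\beta)\in\CI_d\times\CI_d$. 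Collecting all prime elements appearing in any offending pair across $j=2,\dots,s$ into a single finite set $F_0$, the set $\CP:=\CI_d\setminus F_0$ is admissible for Proposition~\ref{prop:KataiZd}, and~\eqref{E:starChowla} holds throughout $\CP$, completing the proof.
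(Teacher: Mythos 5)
Your proposal follows essentially the same route as the paper: reduce $Q$ to $Q_d$ by the unimodular change of variables, view $g(Q_d(m,n))$ as an element of $\CM_d$, apply the K\'atai criterion for $\Z[\tau_d]$ (Proposition~\ref{prop:KataiZd}) after excising a finite set of prime elements, and then feed the resulting $2s$ pairwise-arranged linear forms into Lemma~\ref{P:semest} using Theorem~\ref{th:aperiod_uniform} for the $U^{2s-1}$ control on the even aperiodic function $f_1$. The only place where you assert rather than prove is the finiteness of offending pairs for the cross terms $L_1^{(\alpha)}$ versus $L_j^{(\beta)}$ with $j\geq 2$; this is exactly the content of the paper's Claim~\ref{cl:indep-linear}, case (iii), which is settled by comparing the exponents of $(\alpha)$ and $(\overline\alpha)$ in the ideal factorization of the relation $a\zeta_1\alpha=b\zeta_j\beta$ (using that $\alpha,\overline\alpha$ are non-associate and do not divide the $\zeta_j$), and your sketched divisibility analysis does go through along these lines.
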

\begin{remark}
We deduce that if $Q,L_1,\ldots, L_s$ are as above, $r\in \N$, $R_j\in \Z[t]$ are arbitrary polynomials,
 and $f\in \CM$ is an aperiodic  completely multiplicative function, then for
$P(m,n)=Q(m,n)^r L_1(m,n) \prod_{j=2}^sR_j(L_j(m,n))$ we have
$$
\lim_{N\to +\infty} \,\E_{1\leq m,n\leq N}f(P(m,n))=0.
$$
\end{remark}
\begin{proof}
The proof proceeds in several steps.

\subsubsection{Reduction to the quadratic form $Q_d$}
By assumption, the bilinear form $Q$ can be written as $Q(m,n)=Q_d(F(m,n))$ where $d\in\N$ and $F\colon\Z^2\to\Z^2$ is given by a $2\times 2$ matrix with integer entries and determinant $\pm 1$.

For $ j=1,\ldots,s$ we let  $L'_j(m,n):=L_j(F\inv(m,n))$. For every $N\in\N$, let $K'_N:=F([N]\times[N])$ and $K''_N:=F(K_N)$. There exists  $R\in \N$, which depends only on  the linear map $F$, such that $K'_N\subset[-RN,RN]^2$ for every $N\in\N$, and thus by~\eqref{eq:cardBN} we have
$K''_N\subset K'_N\subset B_{RR_d N}$.
The average in the statement can be rewritten as
$$
\frac{|B_{RR_dN}|}{N^2}\E_{(m,n)\in B_{RR_dN}} \, \one_{K''_N}(m,n) \,
g(Q_d(m,n))\,
 \prod_{j=1}^sf_j(L'_j(m,n))
$$
where $B_N$ is defined in \eqref{E:BN}.
By~\eqref{eq:cardBN} we have  $|B_{RR_dN}|=O(N^2)$ and substituting $L'_j$ for $L_j$ for $j=1,\ldots,s$ and $K''_N$ for $K_N$  we have  reduced matters to showing that
\begin{equation}
\label{eq:av-cho1}
\lim_{N\to+\infty}\E_{(m,n)\in B_N}
\, \one_{K_N}(m,n) \,
g(Q_d(m,n))\,
 \prod_{j=1}^sf_j(L_j(m,n))=0
\end{equation}
where $K_N$ are  convex subsets of $B_N$ for  $N\in\N$.

\subsubsection{Applying the K\'atai orthogonality criterion}
In the rest of the proof, we identify $\Z^2$ with $\Z[\tau_d]$, by mapping $(m,n)\in\Z^2$ to $m+n\tau_d$.

For $j=1,\ldots,s$,  we write $L_j(m,n)=\kappa_jm+\lambda_jn$ and let
$$
\zeta_j:=\begin{cases}
2(\lambda_j-\kappa_j+\kappa_j\tau_d) & \text{if } d= 1 \bmod 4;\\
\lambda_j+\kappa_j\tau_d&\text{otherwise.}
\end{cases}
$$
For every $(m,n)\in\Z^2$ we have
\begin{equation}
\label{eq:phi-imag}
L_j(m+n\tau_d)= \frac 1{\sqrt{-d}}\Im(\zeta_j(m+n\tau_d))
\end{equation}
and thus we are reduced to proving that
\begin{equation}
\label{eq:av-ch2}
\lim_{N\to+\infty}\E_{z\in\Z[\tau_d]\colon \CN(z)\leq N^2}\; g(\CN(z))\,\one_{K_N}(z)\,
\prod_{i=1}^s f_i\bigl(\frac 1{\sqrt{-d}}\,\Im( \zeta_jz)\bigr)=0.
\end{equation}

We  use Proposition~\ref{prop:KataiZd} with
\begin{equation}\label{E:CP}
\CP\ :=\bigl\{\alpha\in\CI_d,\
\alpha\text{ and }\overline\alpha\text{ are non-associates and do not divide } \zeta_j\text{ for } j=1,\ldots, s.\bigr\}
\end{equation}
There are clearly only finitely many $\alpha\in\CI_d$ that do not satisfy the second condition. Moreover,  since we work on a quadratic number field,
 $\alpha$ and $\overline\alpha$ are associates only if the ideal $(\alpha)$ ramifies, and this  can happen for finitely many $ \alpha\in\CI_d$ by Dedekind's Theorem (see for example \cite[Section 5.4]{MuEs} or \cite[Proposition 8.4]{Ne}). By~\eqref{eq:sum_inverse}, we have that
\begin{equation}
\label{eq:div-inv-P}
\sum_{\alpha\in\CP}\frac 1{\CN(\alpha)}=+\infty.
\end{equation}

 Note that  $z\mapsto g(\CN(z))$ defines a   multiplicative function on $\Z[\tau_d]$. We apply
  Proposition~\ref{prop:KataiZd} and we are left with showing that
   \begin{claim}
  \label{cl:Kat-alpha-beta}
  If $\alpha$ and $\beta$ are distinct elements of $\CP$, then
\begin{equation}
\label{eq:av-ch3}
\lim_{N\to+\infty}
\E_{z\in\Z[\tau_d],\ \CN(z)\leq N^2}\,
\one_{K_N^*}(z)\,
\prod_{j=1}^s
f_j\bigl(\frac 1{\sqrt{-d}}\,\Im( \zeta_j\alpha z)\bigr)\,
\overline f_j\bigl(\frac 1{\sqrt{-d}}\,\Im( \zeta_j\beta z)\bigr)=0
\end{equation}
where
\begin{equation}
\label{E:KN}
K_N^*:=\alpha\inv K_N\cap \beta\inv K_N\subset [-N,N]^2.
\end{equation}
\end{claim}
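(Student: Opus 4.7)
The plan is to recognize the average in~\eqref{eq:av-ch3} as a two-dimensional average of a product of $2s$ bounded functions evaluated at linear forms in $(m,n)$, and then to invoke Lemma~\ref{P:semest}, using that $f_1$ is aperiodic (hence $U^{s'}$-uniform by Theorem~\ref{th:aperiod_uniform}) and even by convention.

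\textbf{First step: parametrization.} I would write $z=m+n\tau_d$ with $(m,n)\in\Z^2$, so that $\CN(z)\leq N^2$ corresponds to $(m,n)\in B_N\subset[-R_dN,R_dN]^2$. For $\gamma\in\{\alpha,\beta\}$ and $j=1,\dots,s$, the map $(m,n)\mapsto\frac1{\sqrt{-d}}\Im(\zeta_j\gamma(m+n\tau_d))$ is a $\Z$-linear form $L_{j,\gamma}(m,n)$ on $\Z^2$, so the integrand becomes
$$
\prod_{j=1}^s f_j\bigl(L_{j,\alpha}(m,n)\bigr)\,\overline{f_j}\bigl(L_{j,\beta}(m,n)\bigr).
$$
Using~\eqref{eq:cardBN} to replace $|B_N|$ by $\Theta(N^2)$ and splitting $[-R_dN,R_dN]^2$ into finitely many convex subsets, each contained in a single quadrant, we can (after reflecting to the positive quadrant, which leaves the product unchanged because each $f_j$ is even by convention) reduce~\eqref{eq:av-ch3} to averages of the form~\eqref{E:fjLj} over $[N']\times[N']$ with a convex cutoff $K'_{N'}\subset[-N',N']^2$, where $N'$ is comparable to $N$.

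\textbf{Second step: applying Lemma~\ref{P:semest}.} We have $2s$ linear forms in total. I would place $h_1:=f_1$ at the form $L_{1,\alpha}$; since $f_1$ is aperiodic, Theorem~\ref{th:aperiod_uniform} gives $\norm{f_1}_{U^{s'}[N']}\to 0$ for $s':=\max\{2s-1,2\}$, and $f_1$ is even by the standing convention. Lemma~\ref{P:semest} then yields~\eqref{eq:av-ch3}, provided the linear form $L_{1,\alpha}$ is linearly independent (over $\Q$) from each of the other $2s-1$ linear forms $L_{j,\alpha}$ ($2\leq j\leq s$) and $L_{j,\beta}$ ($1\leq j\leq s$).

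\textbf{Main obstacle: the independence verification.} This is the technically delicate point, and it is the place where the defining properties of $\CP$ in~\eqref{E:CP} get used. A short computation shows that two such linear forms, associated to elements $\zeta,\zeta'\in\Q(\tau_d)^\times$, are $\Q$-linearly dependent on $\Z^2$ if and only if $\zeta/\zeta'\in\Q$. Applying this criterion: dependence of $L_{1,\alpha}$ with $L_{j,\alpha}$ ($j\geq 2$) would force $\zeta_j/\zeta_1\in\Q$, contradicting the independence hypothesis on $L_1,L_j$. Dependence of $L_{1,\alpha}$ with $L_{1,\beta}$ would force $\beta/\alpha\in\Q$, which by a norm argument (writing $\beta=(p/q)\alpha$ in lowest terms and comparing $\CN(\beta)=(p/q)^2\CN(\alpha)$ with both $\CN(\alpha),\CN(\beta)$ prime) forces $\beta=\pm\alpha$, contradicting that $\alpha$ and $\beta$ are non-associate. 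Dependence of $L_{1,\alpha}$ with $L_{j,\beta}$ ($j\geq 2$) would give an identity $q\zeta_j\beta=p\zeta_1\alpha$ in $\Z[\tau_d]$ with $\gcd(p,q)=1$; using that $\alpha,\overline\alpha,\beta,\overline\beta$ are pairwise non-associate prime elements of $\Z[\tau_d]$ that divide none of the $\zeta_i$ (both by construction of $\CP$ in~\eqref{E:CP}), one deduces $\alpha\mid q$, $\overline\alpha\mid q$, and $\beta\mid p$, hence $\CN(\alpha)\mid q$ and $\CN(\beta)\mid p$ (since $\CN(\alpha),\CN(\beta)$ are prime integers); this violates $\gcd(p,q)=1$. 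Carrying out these three cases cleanly is the heart of the argument; once done, Lemma~\ref{P:semest} delivers~\eqref{eq:av-ch3}, completing the proof of the claim and hence of Theorem~\ref{th:chowla2}.
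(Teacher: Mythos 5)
Your route is the paper's route: parametrize $z=m+n\tau_d$, view the $2s$ quantities $\frac 1{\sqrt{-d}}\Im(\zeta_j\alpha z)$ and $\frac 1{\sqrt{-d}}\Im(\zeta_j\beta z)$ as integer linear forms on $\Z^2$, and feed the resulting average into Lemma~\ref{P:semest} with $h_1:=f_1$ (aperiodic, hence $U^{s'}$-uniform by Theorem~\ref{th:aperiod_uniform}), so that everything hinges on the independence of $L_{1,\alpha}$ from the other $2s-1$ forms. Your criterion ``dependent iff $\zeta/\zeta'\in\Q$'' and your first two cases coincide with the paper's Claim~\ref{cl:indep-linear}.

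The one step that does not close as written is the third case ($L_{1,\alpha}$ versus $L_{j,\beta}$, $j\geq 2$), which you yourself identify as the heart of the matter. From $q\,\zeta_j\beta=p\,\zeta_1\alpha$ you correctly extract $\CN(\alpha)\mid q$ and $\CN(\beta)\mid p$, but these two divisibilities are perfectly consistent with $\gcd(p,q)=1$ whenever $\CN(\alpha)\neq\CN(\beta)$, so no contradiction follows yet. You need one more pass: write $q=\alpha\overline\alpha\, q_1$ and cancel $\alpha$ to get $p\,\zeta_1=\overline\alpha\, q_1\zeta_j\beta$, whence $\overline\alpha\mid p$ (since $\overline\alpha$ divides neither $\zeta_1$ nor $\beta$, the latter because $\overline\alpha$ and $\beta$ are non-associate primes), hence $\CN(\alpha)\mid p$ as well, and now $\CN(\alpha)\mid\gcd(p,q)$ gives the contradiction. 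The paper packages exactly this as a comparison of the exponents of $(\alpha)$ and of $(\overline\alpha)$ on the two sides of the equation, obtaining $r_b\geq r_a+1$ from the first and $r_a\geq r_b$ from the second; note that this version does not even need $\gcd(p,q)=1$. A secondary remark: in Theorem~\ref{th:chowla2} only $f_1$ and $g$ are assumed even ($f_2,\dots,f_s$ are arbitrary bounded functions), so your quadrant reduction should be justified by a sign change of variables $(m,n)\mapsto(\pm m,\pm n)$ --- which preserves integrality, convexity of the cutoff, and linear independence of the forms --- rather than by evenness of all the $f_j$; evenness of $f_1$ is only needed inside Lemma~\ref{P:semest} itself.
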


\subsubsection{Independence of the linear forms}\label{cl:independent}
For $ j=1,\ldots,s$, we let
$$
\Lambda_j(z):= \frac 1{\sqrt{-d}}\Im(\zeta_j\alpha z)\ \text{ and } \
\Lambda'_j(z):= \frac 1{\sqrt{-d}}\Im(\zeta_j\beta z).
$$
Identifying $\Z[\tau_d]$ with $\Z^2$, these $2s$ maps can be thought of as linear forms with integer coefficients.
\begin{claim}
\label{cl:indep-linear}
The linear form $\Lambda_1$ is linearly independent of each of the forms $\Lambda_j$ for $j=2,\ldots,s$ and of each of the forms $\Lambda'_j$ for $j=1,\ldots,s$.
\end{claim}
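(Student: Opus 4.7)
\smallskip

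\noindent\textbf{Proof proposal for Claim~\ref{cl:indep-linear}.}
The plan is to translate the real linear (in)dependence of the forms into an arithmetic statement in the quadratic imaginary field $\Q(\tau_d)$, and then exploit the prime element hypotheses built into the set $\CP$. The key observation is that for any $\xi\in\C^*$, the map $z\mapsto \frac 1{\sqrt{-d}}\Im(\xi z)$ is an $\R$-linear form on $\C\cong \R^2$ whose kernel is the real line $\R\cdot\xi^{-1}$. Hence two such forms with parameters $\xi_1,\xi_2\in\C^*$ are $\R$-proportional if and only if $\xi_1/\xi_2\in\R^*$. When $\xi_1,\xi_2\in\Q(\tau_d)^*$, since $\Q(\tau_d)\cap\R=\Q$, this is further equivalent to $\xi_1/\xi_2\in\Q^*$.

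First, I will treat $\Lambda_1$ versus $\Lambda_j$ for $j\geq 2$. Here proportionality reduces to $\zeta_1/\zeta_j\in\Q^*$. Since $\{1,\tau_d\}$ is a $\Q$-basis of $\Q(\tau_d)$, expanding $\zeta_1=r\zeta_j$ with $r\in\Q$ and equating coefficients (using either formula defining $\zeta_j$) forces $(\kappa_1,\lambda_1)\in\Q\cdot(\kappa_j,\lambda_j)$, contradicting the hypothesis that $L_1$ and $L_j$ are linearly independent.

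Next, I will treat $\Lambda_1$ versus $\Lambda'_j$ for $1\leq j\leq s$, which is the main obstacle. Proportionality here reduces to $\zeta_1\alpha/(\zeta_j\beta)\in\Q^*$. Writing the ratio as $p/q$ with $p\in\Z$, $q\in\N$, and $\gcd(|p|,q)=1$, we obtain the identity $q\zeta_1\alpha=p\zeta_j\beta$ in $\Z[\tau_d]$ (note $p\neq 0$ since $\zeta_1\alpha\neq 0$). Since $\alpha\in\CP\subset\CI_d$ is a prime element, $\alpha$ must divide one of the factors $p,\zeta_j,\beta$ of the right-hand side. The definition \eqref{E:CP} of $\CP$ rules out $\alpha\mid\zeta_j$; and $\alpha\mid\beta$ would force $\beta$ to be associate to $\alpha$ (as $\beta$ is prime), contradicting the distinctness of $\alpha,\beta$ in $\CP$. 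Thus $\alpha\mid p$, and taking norms gives $\CN(\alpha)\mid p^2$; since $\CN(\alpha)$ is a prime integer, $\CN(\alpha)\mid p$, so $p=\alpha\bar\alpha p''$ for some $p''\in\Z$. Cancelling $\alpha$ yields $q\zeta_1=\bar\alpha p''\zeta_j\beta$. Now $\bar\alpha$ is also a prime element (as $\overline{(\alpha)}=(\bar\alpha)$ is a prime ideal), and by definition of $\CP$ we have $\bar\alpha\nmid\zeta_1$; hence $\bar\alpha\mid q$, giving $\CN(\alpha)\mid q$. This forces $\CN(\alpha)\mid\gcd(p,q)=1$, contradicting $\CN(\alpha)\geq 2$.

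The hard part is the second case: one must locate the right factor to which $\alpha$ can be transferred, then repeat with $\bar\alpha$ to extract a nontrivial common factor of $p$ and $q$. The non-divisibility conditions ``$\alpha$ and $\bar\alpha$ do not divide $\zeta_j$'' and the requirement that $\alpha,\bar\alpha$ be non-associate prime elements built into $\CP$ are precisely what make this argument close, which is why $\CP$ was defined in \eqref{E:CP} as it was.
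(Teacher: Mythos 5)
Your proof is correct and follows essentially the same route as the paper's: both reduce linear dependence to a multiplicative identity $q\zeta_1\alpha=p\zeta_j\alpha'$ in $\Z[\tau_d]$ and then exploit that $\alpha$, $\overline\alpha$, $\beta$ are pairwise non-associate prime elements not dividing any $\zeta_j$ (the defining conditions of $\CP$). The only cosmetic difference is that you put $b/a$ in lowest terms and show $\CN(\alpha)$ divides both numerator and denominator, whereas the paper compares the exponents $r_a,r_b$ of $(\alpha)$ and $(\overline\alpha)$ in $(a)$ and $(b)$ — the same valuation-theoretic idea.
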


\begin{proof}[Proof of Claim~\ref{cl:indep-linear}] Suppose  that for some non-zero  $a,b\in \Z$, some $j\in\{1,\ldots,s\}$, and some  $\alpha'\in\{\alpha,\beta\}$ with $(1,\alpha)\neq (j,\alpha')$ we have that
$a\Im(\zeta_1\alpha z)=b\Im(\zeta_j\alpha'z)$ for every $z\in\Z[\tau_d]$. Using this relation with $z:=1$ and $z:=\tau_d$ we get that
\begin{equation}
\label{eq:dependent}
a\, \zeta_1\,  \alpha= b\, \zeta_j\, \alpha'.
\end{equation}
We consider three cases:

(i) Suppose that $j=1$ and $\alpha'=\beta$. In this case, equation~\eqref{eq:dependent} gives that $\alpha/\beta$ is a rational which is impossible because $\alpha$ and  $\beta$ are distinct non-integer prime elements of $\Z[\tau_d]$ (recall that  no two elements of $\CI_d$  are associates).

(ii) Suppose that $j>1$ and $\alpha'=\alpha$. In this case we have that $\zeta_1/\zeta_j\in\Q$, and by~\eqref{eq:phi-imag} the linear forms $\phi_1$ and $\phi_j$ are linearly dependent, contradicting the hypothesis.

(iii) It remains to consider the case where  $j>1$ and $\alpha'=\beta$. Let $r_a$ be the exponent of $(\alpha)$ in the factorization of the ideal $(a)$ into  prime ideals of $\Z[\tau_d]$. Since $\alpha$ is a non-integer prime element of $\Z[\tau_d]$, $\overline\alpha$ is  also a non-integer prime element and, since $a$ is real,  the exponent of $(\overline\alpha)$ in the factorization of $(a)$ is also equal to $r_a$.  Since by hypothesis
$\alpha$ and $\overline\alpha$ are non-associate prime elements, it follows that $a$ can be written as $a=\alpha^{r_a}\overline\alpha^{r_a}c$ for some $c\in\Z[\tau_d]$ not divisible by $\alpha$ or $\overline\alpha$.
 In the same way, $b=
\alpha^{r_b}\overline\alpha^{r_b}d$ for
some non-negative integer $r_b$ and some  $d\in\Z[\tau_d]$  not divisible by $\alpha$ or $\overline\alpha$.
 Equation~\eqref{eq:dependent} gives
$$
\alpha^{r_a+1}\, \overline \alpha^{r_a} \, c\, \zeta_1= \alpha^{r_b}\, \overline\alpha^{r_b} \, d \, \zeta_j\, \beta.
$$
 By hypothesis, $\alpha$ and $\overline\alpha$ are non-associate prime elements, $\beta$ is a prime element non-associate to $\alpha$,  and $d$ and $\zeta_j$ are not divisible by $\alpha$. It follows that $r_b\geq r_a+1$. Similarly, $c$ and $\zeta_1$ are not divisible by $\overline\alpha$, and it follows that $r_a\geq r_b$, a contradiction.
 This completes the proof of Claim~\ref{cl:indep-linear}.
\end{proof}

\subsubsection{End of the proof}
In order to prove Claim~\ref{cl:Kat-alpha-beta}
we return to the coordinates $(m,n)$ of a point of $\Z^2$ which is identified with $\Z[\tau_d]$. Denoting the linear forms defined above by $\Lambda_j(m,n)$ and $\Lambda'_j(m,n)$, it remains to show  that
$$
\lim_{N\to+\infty}\E_{(m,n)\in B_N}\,\one_{K_N^*}(m,n)\,
\prod_{j=1}^s f_j(\Lambda_j(m,n))\,\overline f_j(\Lambda'_j(m,n))=0
$$
where the convex sets $K_N^*\subset[-N, N]^2$ were defined in \eqref{E:KN}.
By~\eqref{eq:cardBN}, it suffices to show that
\begin{equation}
\label{eq:av-ch4}
\lim_{N\to+\infty}\,\E_{-N\leq m,n\leq N}\,\one_{K_N^*}(m,n)\,
\prod_{j=1}^s f_j(\Lambda_j(m,n))\,\overline f_j(\Lambda'_j(m,n))=0.
\end{equation}
It follows from Claim~\ref{cl:indep-linear}  that  the form $\Lambda_1$ is linearly independent of each of the  forms $\Lambda_j$ for $j=2,\ldots, s$ and of each of the  forms $\Lambda'_j$ for $j=1,\ldots, s$.
By hypothesis, the function $f_1$ is an even aperiodic multiplicative function and thus $\norm{f_1}_{U^{2s-1}[N]}\to 0$ as $N\to+\infty$ by Theorem~\ref{th:aperiod_uniform}. All the hypotheses of
Lemma~\ref{P:semest}  are satisfied and \eqref{eq:av-ch4} follows. This completes the proof of Theorem~\ref{th:chowla2} and hence of Theorem~\ref{th:chowla}.
\end{proof}

\section{Partition regularity results}\label{S:Recurrence}

The goal of this  section is to prove Theorems~\ref{th:partition-regular2} (which implies Theorem~\ref{th:partition-regular1}) and \ref{th:partition-regular3}. For notational convenience we prove Theorem~\ref{th:partition-regular2} and indicate at the end of this section the  modifications needed to prove the more general Theorem~\ref{th:partition-regular3}.



Recall that our goal is to show that given admissible integers $\ell_0,\ldots, \ell_4$ (see Section~\ref{SS:parametric}), on every partition of $\N$ into finitely many cells there exist $k,m,n\in \Z$ such that the integers
\begin{equation}\label{E:xy}
x:=k\ell_0(m+\ell_1n)(m+\ell_2n), \qquad y:=k\ell_0(m+\ell_3n)(m+\ell_4n),
\end{equation}
are positive, distinct, and belong to the same cell.
We start with some  successive reformulations of the problem that culminate in the analytic
statement of Proposition~\ref{th:ergo2b}. We then prove this result using the structural  result of  Theorem~\ref{T:DecompositionII}.
\subsection{Reduction to a density regularity result}\label{SS:dildens}
We first recast Theorem~\ref{th:partition-regular2} as a density regularity
statement for dilation invariant densities on the integers.

We write $\Q^+$ for the multiplicative group of positive rationals.
Let $p_1,p_2,\ldots$ be the  sequence   of primes.
Then the sequence $(\Phi_M)_{M\in\N}$ of finite subsets of $\N$ defined by
$$
\Phi_M:=
\bigl\{ n\colon n\,|\,(p_1p_2\dots p_M)^M\bigr\}
=\{p_1^{k_1}\cdots p_M^{k_M}\colon 0\leq k_1,\ldots, k_M\leq M
\}
$$
is a \emph{multiplicative F\o lner sequence}.  This means that, for every $r\in\Q^+$, we have
\begin{equation}
\label{eq:Folner}
\lim_{M\to+\infty} \frac 1{|\Phi_M|}{|r\inv\Phi_M\triangle  \Phi_M|}=0,
\end{equation}
where for every subset $A$ of $\N$ and for every $r\in\Q^+$, we write
$$
r\inv A:=\{x\in\N\colon rx\in A\}=  \{r\inv y\colon y\in A\}\cap\N.
$$
To this
 multiplicative F\o lner sequence we
associate a notion of multiplicative density as follows:
\begin{definition}[Multiplicative density]
\label{def:multdensity}
The \emph{(upper) multiplicative density} $\dmult(E)$ of a subset $E$ of
$\N$ is defined as
$$
 \dmult(E):= \limsup_{M\to+\infty}
\frac{|E\cap\Phi_M|}{|\Phi_M|}.
$$
\end{definition}

We remark that the multiplicative density and the additive
density are non-comparable measures of largeness.
For
instance,   the set of odd numbers has zero multiplicative density with respect to any multiplicative  F\o lner sequence,
as has any set that omits all multiples of some positive
integer. On the other hand, it is not hard to construct sets with
multiplicative density $1$ that have additive density $0$ (see for
instance \cite{Be05}).

An important property of the multiplicative density, and the reason
we work with this notion of largeness,  is its invariance under dilations. Indeed,  for every $E\subset
\N$ and every $r\in\Q^+$, it follows from~\eqref{eq:Folner} that
$$
\dmult(E)=\dmult(r\inv E).
$$

Since any  multiplicative density is clearly subadditive,
any finite partition of $\N$ has at least one cell with positive
multiplicative density. Hence,  Theorem~\ref{th:partition-regular2} follows
from the following stronger result:
\begin{theorem}[Density regularity]
\label{th:density-regular}
 Let $\ell_0,\ldots, \ell_4\in \Z$ be admissible. Then  every  set $E\subset\N$ with $\dmult(E)>0$
 contains  distinct $x,y\in \N$ of the
form \eqref{E:xy}.
\end{theorem}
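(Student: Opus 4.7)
\medskip

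\textbf{Plan of proof.}

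The plan is to prove Theorem~\ref{th:density-regular} via the route outlined in Section~\ref{SS:link}: translate the density statement into an analytic positivity statement by Bochner's theorem, and then verify the positivity using the structure theorem for $s=3$. First I would associate to $E$ the positive definite function $\varphi\colon \Q^+\to\C$ defined (loosely) by $\varphi(r):=\dmult(E\cap rE)/\dmult(E)$ or a variant obtained along a subsequence realizing the $\limsup$ defining $\dmult(E)$. Since $\Q^+$ is a countable discrete abelian group whose Pontryagin dual is identified with $\CM_1^c$ (completely multiplicative functions of modulus one, topologized by pointwise convergence), Bochner's theorem provides a finite positive Borel measure $\nu$ on $\CM_1^c$ whose Fourier transform is $\varphi$. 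The existence of distinct $x,y\in E$ of the form \eqref{E:xy} is then reduced to showing that the average
\[
\liminf_{N\to+\infty}\E_{k,m,n\in [N]}\int_{\CM_1^c}f\bigl(k\ell_0(m+\ell_1n)(m+\ell_2n)\bigr)\,\overline f\bigl(k\ell_0(m+\ell_3n)(m+\ell_4n)\bigr)\,d\nu(f)
\]
is strictly positive (the diagonal contribution $x=y$ is negligible since the admissibility condition $\{\ell_1,\ell_2\}\neq\{\ell_3,\ell_4\}$ forces $x=y$ on a set of $(m,n)$ of density zero). This is the statement to be extracted as Theorem~\ref{th:ergo2b}.

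Next I would apply Theorem~\ref{T:DecompositionII} with $s=3$, with the probability measure $\nu$ (renormalized), and with a function $F$ of the form $F(Q,R,\ve)=c(\ell_0,\ldots,\ell_4)\,Q^{2}R^{2}/\ve^{4}$ chosen so that the resulting uniformity threshold for $f_{N,\un}$ beats all implicit constants introduced by the structured and periodic components. For each $f\in\CM_1^c$ this yields $f_N=f_{N,\st}+f_{N,\un}+f_{N,\er}$, and expanding the two-factor product in the integrand produces $3\times 3=9$ terms (in fact $9$ ordered-pair terms). The terms containing at least one factor $f_{N,\er}$ are controlled by Property~\eqref{it:decomU34} after integrating in $\nu$ and using the Cauchy--Schwarz inequality. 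The terms containing at least one factor $f_{N,\un}$ are controlled by a generalized von Neumann estimate of the type proved in Lemma~\ref{L:UnifromityEstimates2}: since each of $x,y$ factors as a product of two linearly independent linear forms in $(m,n)$, the relevant Gowers norm is $U^3$, and Property~\eqref{it:decomU33} makes each such contribution smaller than any prescribed function of $(Q,R,\ve)$.

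This leaves the principal term in which both factors are $f_{N,\st}$. Because $f_{N,\st}$ is approximately $Q$-periodic with error $R/\tN$ and $|f_{N,\st}|\leq 1$, the inner average over $(m,n)$ (for fixed $k$, $f$, $N$) is, up to a small error, a function of the residues of $k\ell_0(m+\ell_in)(m+\ell_jn) \pmod Q$. The final and most delicate step, which I expect to be the main obstacle, is to exploit the free dilation variable $k$ to obtain a positive lower bound for this main term after integrating in $\nu$. The strategy is a Furstenberg--S\'ark\"ozy style polynomial-recurrence argument: averaging in $k$ over a long interval lets one replace the approximately $Q$-periodic structured component by a genuine periodic one and then reduces the positivity to finding $k,m,n$ so that both products lie in prescribed residue classes mod $Q$; this is possible because the coefficient of $m$ in each linear factor is $1$ (property (c) of the patterns in Section~\ref{SS:parametric}), which makes the map $(m,n)\mapsto \bigl((m+\ell_1n)(m+\ell_2n),(m+\ell_3n)(m+\ell_4n)\bigr) \pmod Q$ surjective onto a sufficiently large subgroup of $(\Z/Q\Z)^2$. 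Combining this with the fact that $\int_{\CM_1^c}|f_{N,\st}|^2\,d\nu(f)$ is bounded below by $\dmult(E)^2$ (a consequence of the Bochner representation of $\varphi$) gives the desired lower bound, completing the proof modulo the diagonal contribution, which is discarded as above.
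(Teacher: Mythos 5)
Your overall architecture --- Bochner's theorem on $\Q^+$ to produce a positive measure $\nu$ on $\CCM$, the structure theorem for the $U^3$-norm, a generalized von Neumann estimate (Lemma~\ref{L:UnifromityEstimates2}) for the uniform component, and a lower bound for the structured component coming from $\nu(\{\one\})\geq \dmult(E)^2$ --- is the same as the paper's. But the endgame has a genuine gap.

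The order in which you dispose of the three components is fatal to the bookkeeping. You eliminate the terms containing $f_{N,\er}$ via Property~\eqref{it:decomU34} \emph{before} localizing to exploit the approximate $Q$-periodicity of $f_{N,\st}$. That property only yields an error of size $O(\ve)$, where $\ve$ must be fixed before the theorem outputs $Q$ and $R$; the positive contribution you can extract from the structured term is of size $\delta^{O(1)}/(QR)^{O(1)}$ (you are pinning residue classes mod $Q$, or in the paper's version restricting $n$ to a progression of step $Q$ and relative length $\ve/(QR)$). Since $Q$ and $R$ cannot be bounded in terms of $\ve$, the error swamps the main term. The paper's way out is Lemma~\ref{L:HiddenPositivity}: because $f_{N,\st}+f_{N,\er}=f_N*\psi_{N,2}$ with $\psi_{N,2}$ a non-negative kernel and $\nu$ satisfies Property~\eqref{eq:defpos-nuf}, each term of the $(m,n)$-sum for the $(f_{\st}+f_{\er})$-product is non-negative after integrating in $\nu$, so one may \emph{discard} all $n$ outside the short progression $\{Qk:\ k\le \ve N/(QR)\}$ before touching $f_{\er}$; the error is then only ever compared with the already-localized main term. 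Your plan contains no substitute for this step, and I do not see how to repair it without one.

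Two further points on your final step. The ``free dilation variable $k$'' does no work in your integral: for $f\in\CCM$ one has $f(k\ell_0 A)\,\overline f(k\ell_0 B)=f(A)\,\overline f(B)$ since $|f|=1$, so the integrand is independent of $k$; the dilation invariance is already spent in producing $\nu$ (equivalently, in reducing to $\dmult(x\inv E\cap y\inv E)>0$, whose witnesses are precisely the $k$'s). And prescribing the residues of the four linear forms mod $Q$ is not enough: Property~\eqref{it:decompU32} controls $f_{N,\st}(n+Q)-f_{N,\st}(n)$ only up to $R/\tN$ per period, so two arguments in the same class mod $Q$ have comparable $f_{N,\st}$-values only when they differ by $o(N/R)$. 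This is exactly why the paper restricts $n$ to a \emph{short} progression of step $Q$, forcing all four values $f_{N,\st}(m+\ell_iQk)$ to lie within $\ve/Q$ of $f_{N,\st}(m)$ and reducing the main term to $\E_m\int|f_{N,\st}(m)|^4\,d\nu\geq\delta^2(N/\tN)^4$.
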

 Since
 $$
 \dmult(x\inv E\cap y\inv E)=
\dmult\big(\bigl\{k\in \N\colon \{kx,ky\}\subset  E\bigr\}\big).
$$
 in order to prove Theorem~\ref{th:density-regular} it suffices to prove the following result:
\begin{proposition}
\label{fo:density-regular2}
Let $\ell_0,\ldots, \ell_4\in \Z$ be admissible.  Then  every   $E\subset\N$ with $\dmult(E)>0$
 contains  distinct $x,y\in \N$ of the
form \eqref{E:xy} such that
\begin{equation}
\label{eq:demultk}
\dmult(x\inv  E\cap y\inv E)>0.
\end{equation}
\end{proposition}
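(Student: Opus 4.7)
\textbf{Proof plan for Proposition~\ref{fo:density-regular2}.}

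The plan is to reduce the density statement to an analytic positivity assertion and then dispatch that assertion using the $s=3$ case of Theorem~\ref{T:DecompositionII}. First, observe that for $r\in\Q^+$ the quantity
$$
c(r):=\limsup_{M\to+\infty}\E_{k\in\Phi_M}\one_E(k)\,\one_E(kr)
$$
defines (after passing to a multiplicative F\o lner subsequence so that the $\limsup$ becomes a limit on a dense subgroup) a positive definite function on the discrete Abelian group $\Q^+$. By Bochner's theorem on $\Q^+$, whose Pontryagin dual is realized as the group of completely multiplicative functions of modulus $1$, there exists a positive finite Borel measure $\nu$ on $\CM_1^c\subset\CM$ with $c(r)=\int \chi(r)\,d\nu(\chi)$; moreover $\nu(\CM_1^c)=c(1)=\dmult(E)^2>0$ (in fact $\geq \dmult(E)^2$). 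Using $\dmult(x^{-1}E\cap y^{-1}E)\geq c(y/x)$ and the complete multiplicativity of elements of $\CM_1^c$, it then suffices to prove that for admissible $\ell_0,\dots,\ell_4$ there exist positive distinct integers $x,y$ of the form \eqref{E:xy} with
\begin{equation}\label{E:goalpos}
\int_{\CM_1^c}\chi(x)\,\overline{\chi(y)}\,d\nu(\chi)>0,
\end{equation}
and this will be obtained as a consequence of a stronger \emph{averaged} lower bound of the form
$$
\liminf_{N\to+\infty}\E_{k,m,n\in[N]}\int_{\CM_1^c}\chi(x)\,\overline{\chi(y)}\,d\nu(\chi)\geq c_0\,\dmult(E)^2
$$
for some universal $c_0>0$, which is the positivity statement referred to as Theorem~\ref{th:ergo2b}.

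To prove this averaged positivity statement, I would fix $\ve>0$ much smaller than $\dmult(E)^2$, choose an auxiliary growth function $F\colon\N^2\times\R^+\to\R^+$ (to be determined), and apply Theorem~\ref{T:DecompositionII} with $s=3$ to the measure $\nu$ and the parameters $(s,\ve,F)$. This produces integers $Q,R$ bounded in terms of $s,\ve,F$ and, for every sufficiently large $N$ and every $\chi\in\CM_1^c$, a decomposition $\chi_N=\chi_{N,\st}+\chi_{N,\un}+\chi_{N,\er}$ with the properties listed in the theorem. After the standard truncation step (restricting $m,n$ to a range making all four linear factors lie in $[N]$), the integral to be bounded splits into the eight terms obtained by substituting $\chi_{N,\st}+\chi_{N,\un}+\chi_{N,\er}$ into each of the two factors $\chi(x),\overline{\chi(y)}$.

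The error terms contribute at most $O(\ve)$, since by Property~\eqref{it:decomU34} of Theorem~\ref{T:DecompositionII} we have $\int_{\CM}\E_n|\chi_{N,\er}(n)|\,d\nu\leq\ve$, which  absorbs any occurrence of $\chi_{N,\er}$ in either factor (the other factor being trivially bounded). The uniform terms are handled by a generalized von Neumann estimate of the type given in Lemma~\ref{P:semest}: after expanding $x=k\ell_0(m+\ell_1 n)(m+\ell_2 n)$ and $y=k\ell_0(m+\ell_3 n)(m+\ell_4 n)$ into four linear forms in $(m,n)$ (with the coefficient $k$ frozen, or alternatively after introducing $k$ as a third averaging variable), admissibility of $(\ell_0,\dots,\ell_4)$ guarantees that $L_1(m,n):=m+\ell_1 n$ is linearly independent from each of the three other forms $m+\ell_2 n$, $m+\ell_3 n$, $m+\ell_4 n$; hence whenever $\chi_{N,\un}$ appears in place of $\chi$ in the factor containing $L_1$, the corresponding average is bounded by $\|\chi_{N,\un}\|_{U^3[N]}\leq 1/F(Q,R,\ve)$. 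Choosing $F$ to grow sufficiently fast makes this contribution smaller than $\ve$ uniformly in $\nu$.

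The main obstacle, and the heart of the proof, is to establish the positive lower bound for the purely structured term
$$
\int_{\CM_1^c}\E_{k,m,n\in[N]}\chi_{N,\st}(x)\,\overline{\chi_{N,\st}(y)}\,d\nu(\chi).
$$
Here I would use Property~\eqref{it:weakU32b} of Theorem~\ref{T:DecompositionII}, which places the spectrum of $\chi_{N,\st}$ in an $R/\tN$-neighbourhood of the rationals $p/Q$, to approximate $\chi_{N,\st}(n)$ by a trigonometric polynomial $\sum_{p=0}^{Q-1} c_p(\chi)\,\e(np/Q)$ modulo a slowly varying modulation (with error controlled by Property~\eqref{it:decompU32}). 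After restricting $k,m,n$ to appropriate arithmetic sub-progressions of step $Q$ on which each of the factors $k\ell_0$, $m+\ell_j n$ is constant modulo $Q$, the averaged integrand collapses onto the Plancherel diagonal $p=q$, producing a main term proportional to $\int_{\CM_1^c}|\chi_{N,\st}(x_0)|^2\,d\nu(\chi)$ for a suitable reference value $x_0$; the key point, borrowed from Furstenberg's proof of S\'ark\"ozy's theorem, is that $\|\chi_{N,\st}\|_{L^2(\nu\otimes\text{counting})}^2$ is at least $\int|\chi|^2\,d\nu-\ve=\nu(\CM_1^c)-\ve\geq\dmult(E)^2-\ve$, since $f_{N,\st}=\chi_N*\psi_{N,1}$ with $\psi_{N,1}$ a kernel on $\Z_\tN$ of spectrum bounded in $Q$ and $R$ only. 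Feeding this lower bound back and using the dilation invariance built into the admissibility condition ($\ell_0>0$ guarantees $x,y>0$ after a simple sign change, and $\{\ell_1,\ell_2\}\neq\{\ell_3,\ell_4\}$ guarantees $x\neq y$ for generic $m,n$) yields a lower bound of order $\dmult(E)^2-O(\ve)$ for the structured contribution, dominating the errors, and delivering \eqref{E:goalpos} for some admissible $(k,m,n)$, as desired.
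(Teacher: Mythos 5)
Your overall architecture (Bochner representation, reduction to an averaged positivity statement, decomposition via Theorem~\ref{T:DecompositionII} with $s=3$, von Neumann for the uniform part, sub-progressions for the structured part) matches the paper's, but two steps as you have written them would fail. First, your lower bound for the structured term rests on the claim that $\norm{\chi_{N,\st}}_{L^2(\nu\otimes\text{counting})}^2\geq \nu(\CCM)-\ve$. This is false: $\chi_{N,\st}=\chi_N*\psi_{N,1}$ where $\psi_{N,1}$ has spectrum of bounded cardinality near the rationals $p/Q$, so if $\chi$ is aperiodic then $\E_m|\chi_{N,\st}(m)|^2=\sum_\xi|\widehat{\chi_N}(\xi)|^2|\widehat{\psi_{N,1}}(\xi)|^2\to 0$; convolution with a kernel does not preserve $L^2$ mass. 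The positivity has to come from the single atom at the identity: the Bochner measure satisfies $\nu(\{\one\})\geq\dmult(E)^2$ (note the total mass is $\rho(1)=\dmult(E)$, not $\dmult(E)^2$ as you wrote), and for $\chi=\one$ one uses that convolution with a kernel preserves the \emph{mean}, so $\E_m\one_{N,\st}(m)=N/\tN\geq 1/(4\ell)$ and hence $\int\E_m|\chi_{N,\st}(m)|^4\,d\nu\geq\delta^2(4\ell)^{-4}$.

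Second, the order of operations in your plan is wrong in a way that kills the quantitative bookkeeping. You dispose of the error terms ($O(\ve)$ over the full average) \emph{before} restricting to the sub-progression of step $Q$ and relative density $\eta\approx\ve/(QR)$. After that restriction the surviving main term has size of order $\eta\,\delta^2\approx\ve\delta^2/(QR)$, which is dominated by the $O(\ve)$ error already incurred, since $Q,R$ depend on $\ve$ and $F$ in an uncontrolled way; no choice of $\ve$ or $F$ rescues this. The paper's proof restricts only the variable $n$ (not $m$ and not $k$) to the sub-progression while the term is still $f_{N,\st}+f_{N,\er}$, justifies discarding the complementary terms by the hidden non-negativity lemma (Lemma~\ref{L:HiddenPositivity}, which needs property~\eqref{eq:defpos-nuf} of $\nu$ and the fact that both $f_{N,\st}$ and $f_{N,\st}+f_{N,\er}$ are convolutions of $f_N$ with kernels), and only \emph{then} removes $f_{N,\er}$, using translation invariance in the unrestricted variable $m$ so that the error $4\ve$ is compared against the constant $\delta^2(4\ell)^{-4}$ rather than against $\ve\delta^2/(QR)$. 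You should also note that admissibility does not force the four integers $\ell_1,\dots,\ell_4$ to be distinct (only $\ell_1\neq\ell_2$, $\ell_3\neq\ell_4$, $\{\ell_1,\ell_2\}\neq\{\ell_3,\ell_4\}$), so the degenerate cases such as $\ell_1=\ell_3$ need the separate elementary treatment given in the reduction to Proposition~\ref{fo:density-regular4}.
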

In the next two sections  we are going to reinterpret Proposition~\ref{fo:density-regular2}
as a more convenient to prove analytic statement.
\subsection{Integral formulation}\label{subsec:Bochner} We first  reformulate  Proposition~\ref{fo:density-regular2}
using an integral representation result of positive definite sequences on $\Q^+$.
Recall that a function $ f \colon\N\to\C$ is completely multiplicative if $ f (x y)= f (x) f (y)$ for every $x,y\in\N$.
\begin{definition}
 We denote by $\CCM$  the set of completely multiplicative functions of modulus exactly $1$.
\end{definition}
 A completely multiplicative function is uniquely determined by its values on the primes.
Every $ f \in\CCM$ can be extended to a multiplicative function on $\Q^+$, also denoted by $ f $, by
letting for every $x,y\in \N$
\begin{equation}
\label{eq:duality}
 f (xy\inv):= f (x)\overline f (y).
\end{equation}
Endowed with the pointwise multiplication and the topology of pointwise convergence, the family $\CCM$ of completely multiplicative functions is a compact (metrizable) Abelian group, with unit element the constant function $\one$. This group is the dual group of $\Q^+$, the duality being given by~\eqref{eq:duality}.

Let $E\subset\N$ be a set with  $\dmult(E)>0$.
There exists  a sequence $(M_j)$ of integers, tending to infinity, such that
\begin{gather}
\label{eq:M_j1}
\lim_{j\to+\infty}
\frac{|E\cap\Phi_{M_j}|}{|\Phi_{M_j}|}=\dmult(E); \\
\label{eq:M_j2}
\text{ and }\  \rho(r):=\lim_{j\to+\infty}\frac {|E\cap (r\inv E)\cap\Phi_{M_j}|}{|\Phi_{M_j}|}
\ \text{ exists for every }\ r\in \Q^+.
\end{gather}
 Then the function $\rho\colon\Q^+\to\C$ is positive definite, that is, for every $n\in\N$, all $r_1,\dots,r_n\in\Q^+$ and all $\lambda_1,\dots,\lambda_n\in\C$ we have
$$
\sum_{i,j=1}^n\lambda_i\overline{\lambda_j}\,\rho(r_i\,r_j\inv)\geq 0.
$$
By Bochner's theorem, there exists a unique  positive finite measure  $\nu$ on the compact Abelian group $\CCM$ with a  Fourier-Stieltjes transform $\wh \nu$ equal to the function $\rho$. 
This means that
\begin{equation}
\label{eq:fourier-nu}
\text{for every }r\in\Q^+,\
\int_{\CCM} f (r)\,d\nu( f )=\wh\nu(r)
=\rho(r)
=\lim_{j\to+\infty}
\frac{|E\cap (r\inv E)\cap\Phi_{M_j}|}{|\Phi_{M_j}|}.
\end{equation}
We collect the properties of the measure $\nu$  used in the sequel.
\begin{claim*} Let the set $E$ and the measure $\nu$ be as before and $\delta:=\dmult(E)$. Then
\label{cl:properties-nu}
\begin{gather}
\label{eq:defpos-nuf}
\int_\CCM  f (x)\, \overline f (y)\,d\nu( f )\geq 0\ \text{ for every }x,y\in\N;\\
\label{eq:nu-1f}
\nu(\{\one\})\geq\delta^2.
\end{gather}
\end{claim*}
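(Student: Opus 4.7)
\noindent\emph{Plan.} The first statement is immediate.  For every $x,y\in\N$ and every $f\in\CCM$, the duality~\eqref{eq:duality} gives $f(x)\overline{f(y)}=f(xy^{-1})$, so~\eqref{eq:fourier-nu} yields
\[
\int_\CCM f(x)\overline{f(y)}\,d\nu(f)=\widehat\nu(xy^{-1})=\rho(xy^{-1}),
\]
which is nonnegative since $\rho$ is defined in~\eqref{eq:M_j2} as a limit of cardinalities of finite sets divided by $|\Phi_{M_j}|$.

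For the second statement, the plan is to express $\nu(\{\one\})$ as a limit of $L^2$-norms of averages and then apply Jensen's inequality.  For $M\in\N$ and $f\in\CCM$, the average
\[
A_M(f):=\frac{1}{|\Phi_M|}\sum_{r\in\Phi_M}f(r)=\prod_{k=1}^M\frac{1}{M+1}\sum_{j=0}^M f(p_k)^j
\]
factors over primes; this product equals $1$ when $f=\one$, while each factor with $f(p_k)\neq 1$ is bounded by $2/((M+1)|1-f(p_k)|)=O(M^{-1})$.  Hence $|A_M(f)|^2\to \one_{\{f=\one\}}$ pointwise and boundedly, and dominated convergence gives
\[
\nu(\{\one\})=\lim_{M\to\infty}\int_\CCM|A_M(f)|^2\,d\nu(f).
\]
Expanding the square and applying~\eqref{eq:fourier-nu} yields $\int_\CCM|A_M(f)|^2\,d\nu=|\Phi_M|^{-2}\sum_{r,r'\in\Phi_M}\rho(r(r')^{-1})$.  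A short calculation, based on the substitution $x=r'n$ in~\eqref{eq:M_j2} and the F\o lner property~\eqref{eq:Folner} applied to the shift $r'\Phi_{M_j}\triangle\Phi_{M_j}$, produces the identity
\[
\rho(r(r')^{-1})=\lim_{j\to\infty}\frac{|r^{-1}E\cap r'^{-1}E\cap\Phi_{M_j}|}{|\Phi_{M_j}|};
\]
substituting and rearranging gives
\[
\int_\CCM|A_M(f)|^2\,d\nu(f)=\lim_{j\to\infty}\frac{1}{|\Phi_{M_j}|}\sum_{s\in\Phi_{M_j}}h_M(s)^2,\qquad h_M(s):=\frac{1}{|\Phi_M|}\sum_{r\in\Phi_M}\one_E(rs).
\]

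The Jensen inequality applied to the uniform probability measure on $\Phi_{M_j}$ then gives $|\Phi_{M_j}|^{-1}\sum_s h_M(s)^2\geq\bigl(|\Phi_{M_j}|^{-1}\sum_s h_M(s)\bigr)^2$, and a direct computation using~\eqref{eq:Folner}---uniformly in $r\in\Phi_M$ since $\Phi_M$ is finite---shows that
\[
\frac{1}{|\Phi_{M_j}|}\sum_{s\in\Phi_{M_j}}h_M(s)=\frac{1}{|\Phi_M|}\sum_{r\in\Phi_M}\frac{|E\cap r\Phi_{M_j}|}{|\Phi_{M_j}|}\xrightarrow[j\to\infty]{}\delta.
\]
Taking $j\to\infty$ for each fixed $M$ and then $M\to\infty$ yields $\nu(\{\one\})\geq\delta^2$.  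The only mildly delicate step is the F\o lner-based identity in the middle paragraph, which requires keeping careful track of divisibility conditions and of the symmetric difference $r'\Phi_{M_j}\triangle\Phi_{M_j}$; everything else reduces to Jensen's inequality and elementary manipulations.
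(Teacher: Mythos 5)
Your proof of \eqref{eq:defpos-nuf} is exactly the paper's (apply \eqref{eq:fourier-nu} with $r=xy^{-1}$). For \eqref{eq:nu-1f} your argument is correct but follows a genuinely different route. The paper writes $\one_E=\delta+f$ with $f:=\one_E-\delta$, notes that the averages of $f(rx)$ over $\Phi_{M_j}$ vanish by \eqref{eq:M_j1} and the F\o lner property, so that $\wh\nu(r)=\delta^2+\psi(r)$ with $\psi(r):=\lim_j\E_{x\in\Phi_{M_j}}f(x)f(rx)$ positive definite; a second application of Bochner's theorem gives $\psi=\wh\sigma$ for a positive measure $\sigma$, and averaging $\wh\nu$ and $\wh\sigma$ over the F\o lner sequence yields $\nu(\{\one\})=\delta^2+\sigma(\{\one\})\geq\delta^2$. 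You instead identify $\nu(\{\one\})$ as $\lim_M\int|A_M(f)|^2\,d\nu$ via the explicit product formula for $A_M$ (a concrete form of the Wiener-type lemma that the paper uses implicitly when it asserts that F\o lner averages of $\wh\nu$ converge to $\nu(\{\one\})$), convert the resulting double sum of $\rho$-values into a second moment of the local densities $h_M(s)$ by the F\o lner change of variables, and finish with Cauchy--Schwarz/Jensen. Your approach avoids the second invocation of Bochner's theorem and makes the positivity mechanism (a variance inequality) more transparent, at the price of the bookkeeping in the F\o lner identity $\rho(r(r')^{-1})=\lim_j|\Phi_{M_j}|^{-1}|r^{-1}E\cap r'^{-1}E\cap\Phi_{M_j}|$, which you handle correctly since the sums over $r,r'\in\Phi_M$ are finite for each fixed $M$. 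Both arguments are complete; yours is a legitimate alternative.
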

\begin{proof}[Proof of the Claim]
Property~\eqref{eq:defpos-nuf} follows from~\eqref{eq:fourier-nu} with $r:=xy\inv$.
The proof of \eqref{eq:nu-1f} is classical but we give it for completeness.

For $x\in\N$ let $f(x):=\one_E(x)-\delta$. The averages on $\Phi_{M_j}$ of the function $f$  tend to $0$ as $j\to+\infty$, and it follows from~\eqref{eq:Folner} that
$$
\text{for every }r\in\Q^+,\ \lim_{j\to+\infty} \E_{x\in\Phi_{M_j}}f(rx)=0.
$$
Thus,
$$
\wh\nu(r)=\lim_{j\to+\infty}\frac {|E\cap r\inv E\cap\Phi_{M_j}|}{|\Phi_{M_j}|}=
\lim_{j\to+\infty}\E_{x\in\Phi_{M_j}}\one_E(x)\one_E(rx)
=\delta^2+\psi(r),
$$
where
$$
\psi(r):= \lim_{j\to+\infty}\E_{x\in\Phi_{M_j}}f(x)f(rx)
  $$
  and the limit exists for every $r\in \Q^+$ by $\eqref{eq:M_j2}$.
  The function $\psi\colon\Q^+\to\C$ is positive definite and by Bochner's theorem $\psi=\wh\sigma$ for some positive finite measure $\sigma$ on $\CCM$.  Since $\one$ is the unit element of the group $\CCM$ and  $(\Phi_N)$ is a F\o lner sequence in $\Q^+$, the averages of $\wh\nu(r)$ on $(\Phi_N)$ converge to $\nu(\{\one\})$ and the averages of $\wh\sigma(r)$ to  $\sigma(\{\one\})$. Therefore
$$
\nu(\{\one\})= \lim_{M\to+\infty}\E_{r\in\Phi_M}\wh\nu(r)=
\delta^2+  \lim_{M\to+\infty}\E_{r\in\Phi_M}\wh\sigma(r)=\delta^2+\sigma(\{\one\})\geq\delta^2.
\qed
$$
\renewcommand{\qed}{}
\end{proof}

In order to show Proposition~\ref{fo:density-regular2} it suffices to prove the following:
\begin{proposition}[Analytic formulation]
\label{fo:density-regular4}
Let  $\ell_1, \ldots, \ell_4$  be distinct integers and suppose that  $\min\{\ell_1,\dots,\ell_4\}=0$.
Let $\nu$ be a  probability measure on $\CCM$ that satisfies
Properties
 \eqref{eq:defpos-nuf} and~\eqref{eq:nu-1f}.
Then there exist $m,n\in \Z$ such that    $(m+\ell_1n)(m+\ell_2n)$ and
$(m+\ell_3n)(m+\ell_4n)$ are positive, distinct integers,   and we have
\begin{equation}\label{E:1to4}
\int_\CCM  f (m+\ell_1n)\cdot  f (m+\ell_2n)\cdot \overline{ f }(m+\ell_3n)\cdot \overline f (m+\ell_4n)\,d\nu( f )>0.
\end{equation}
\end{proposition}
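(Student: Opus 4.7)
The plan is to prove the quantitative statement
$$
A(N):=\E_{m,n\in[N]}\one_{K_N}(m,n)\int_\CCM f(m+\ell_1n)\,f(m+\ell_2n)\,\overline{f}(m+\ell_3n)\,\overline{f}(m+\ell_4n)\,d\nu(f)\geq c(\delta,\ell_1,\ldots,\ell_4)>0
$$
for all large $N$, where $K_N\subset[N]^2$ is a convex set of density bounded below uniformly in $N$, chosen so that on $K_N$ the two products $(m+\ell_1n)(m+\ell_2n)$ and $(m+\ell_3n)(m+\ell_4n)$ are automatically positive and distinct; one may take, for instance, $K_N=\{(m,n)\in[N]^2\colon m\geq 2\max_i|\ell_i|\,n\}$ minus a thin neighborhood of the algebraic curve where the two products coincide, exploiting the fact that $\ell_1,\ldots,\ell_4$ are distinct and $\min=0$. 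Such a lower bound immediately yields some $(m,n)\in K_N$ for which the integral is strictly positive, establishing the proposition. Note that by the complete multiplicativity of $f\in\CCM$ and its extension~\eqref{eq:duality} to $\Q^+$, the integrand equals $\widehat\nu\!\left(\tfrac{(m+\ell_1n)(m+\ell_2n)}{(m+\ell_3n)(m+\ell_4n)}\right)$, and by hypothesis~\eqref{eq:defpos-nuf} we have $\widehat\nu(r):=\int_\CCM f(r)d\nu(f)\geq 0$ for every $r\in\Q^+$; thus the integrand is pointwise non-negative and $A(N)\geq 0$ automatically. The challenge is the quantitative lower bound.

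To control $A(N)$, apply Theorem~\ref{T:DecompositionII} with $s=3$, the probability measure $\nu$, a small parameter $\ve>0$, and a function $F\colon\N\times\N\times\R^+\to\R^+$; both $\ve$ and $F$ will be chosen at the very end. For every sufficiently large prime $\tN\in[N,\ell N]$ we obtain integer parameters $Q=Q(\ve,F)$ and $R=R(\ve,F)$ and, uniformly in $f\in\CCM\subset\CM$, a decomposition $f_N=f_{N,\st}+f_{N,\un}+f_{N,\er}$ with $f_{N,\st}=f_N*\psi_{N,1}$ for a non-negative kernel $\psi_{N,1}$ independent of $f$ that is approximately $Q$-periodic, with $\norm{f_{N,\un}}_{U^3(\tZN)}\leq 1/F(Q,R,\ve)$, and with $\E_n\!\int_\CCM|f_{N,\er}|\,d\nu\leq\ve$. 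Expanding the four factors in $A(N)$ produces $3^4$ subterms. By a generalized von Neumann inequality for the pairwise independent forms $m+\ell_in$ in the spirit of Lemma~\ref{P:semest}, every subterm involving at least one $f_{N,\un}$ factor is $O(1/F(Q,R,\ve))$; by Fubini together with $|f_{N,\st}|\leq 1$, every subterm involving an $f_{N,\er}$ factor but no $f_{N,\un}$ factor is $O(\ve)$. Consequently $A(N)=M(N)+O(\ve)+O(1/F(Q,R,\ve))+o_{N\to\infty}(1)$, where $M(N)$ denotes the main term in which all four factors are structured.

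The core of the proof is a positive lower bound on $M(N)$ depending only on $\delta$, $\ell_1,\ldots,\ell_4$, and $\ell$, and crucially \emph{not} on $Q,R,F$. Unfolding the convolutions $f_{N,\st}=f_N*\psi_{N,1}$, using the complete multiplicativity of $f\in\CCM$, and setting $L_i:=m+\ell_in$, the integrand of $M(N)$ at $(m,n)$ equals
$$
\E_{\bk\in\tZN^4}\prod_{j=1}^4\psi_{N,1}(k_j)\,\one_{[N]}(L_j-k_j)\,\widehat\nu\!\left(\frac{(L_1-k_1)(L_2-k_2)}{(L_3-k_3)(L_4-k_4)}\right),
$$
which is pointwise non-negative. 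To extract a quantitative lower bound, one brings the integral against $\nu$ to the outside, isolates the atom $\{\one\}$ of $\nu$---which, using $\widehat{\one}\equiv 1$, the non-negativity of $\psi_{N,1}$, and the density of $K_N$, contributes at least $\nu(\{\one\})\cdot c_0\geq c_0\delta^2$ for some $c_0=c_0(\ell_1,\ldots,\ell_4,\ell)>0$ by~\eqref{eq:nu-1f}---and handles the non-atomic part of $\nu$ by a Fourier-analytic argument that uses the spectral localization~\eqref{it:weakU32b} of $f_{N,\st}$ near the rationals $p/Q$ together with the pairwise independence of the forms $m+\ell_in$ to reduce the $(m,n)$-average for each $f\neq\one$ to a non-negative combination of values of $\widehat\nu$. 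Once this $F$-free lower bound $M(N)\geq c_0\delta^2-o_{N\to\infty}(1)$ is established, one selects $\ve<c_0\delta^2/4$ and $F$ so large that $C/F(Q,R,\ve)<c_0\delta^2/4$ for every $Q\leq Q(\ve,F)$ and $R\leq R(\ve,F)$, closing the argument.

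The main obstacle is the final step, namely producing this $F$-free lower bound on $M(N)$. The pointwise non-negativity of the integrand is immediate from complete multiplicativity and hypothesis~\eqref{eq:defpos-nuf}, but the individual contributions of fixed $\bk$ and of individual non-identity points of $\CCM$ are \emph{not} uniformly positive: it is only after averaging in a manner that reflects both the $Q$-rational spectrum of $f_{N,\st}$ and the structure of the forms $m+\ell_in$ that the mass $\nu(\{\one\})\geq\delta^2$ at the atom converts into a genuine quantitative bound independent of $Q$ and $R$. The design of this positivity argument is, as the authors note, guided by Furstenberg's ergodic-theoretic proof of S\'ark\"ozy's theorem: the approximately periodic component $f_{N,\st}$ plays the role of the Kronecker factor, the uniform component $f_{N,\un}$ plays the role of the weak-mixing part, and the bound $\nu(\{\one\})\geq\delta^2$ together with $\widehat\nu\geq 0$ replaces the $L^2$ Cauchy--Schwarz inequality $\mu(A\cap T^{-n^2}A)\geq\mu(A)^2$ that powers the ergodic proof.
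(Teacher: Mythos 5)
Your high-level architecture (decompose via Theorem~\ref{T:DecompositionII} with $s=3$, kill the uniform terms by a generalized von Neumann inequality, and source the positivity from the atom $\nu(\{\one\})\geq\delta^2$) matches the paper's, but the step you yourself flag as ``the main obstacle'' --- the $F$-free lower bound $M(N)\geq c_0\delta^2$ for the all-structured term over the full range of $(m,n)$ --- is exactly where the proposal breaks down, and the paper's proof is organized differently precisely to avoid this. The difficulty is that the non-negativity supplied by \eqref{eq:defpos-nuf} (via Lemma~\ref{L:HiddenPositivity}) is a property of the integral over \emph{all} of $\CCM$; the restriction of $\nu$ to $\CCM\setminus\{\one\}$ does not inherit it, so you cannot simply split off the atom and argue that the non-atomic part is separately controllable. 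Your sketch for the non-atomic part (``reduce the $(m,n)$-average for each $f\neq\one$ to a non-negative combination of values of $\widehat\nu$'') does not survive inspection: expanding $f_{N,\st}=f_N*\psi_{N,1}$ in Fourier on $\tZN$ and integrating in $f$ produces sums of the form $\sum c(n_1,\ldots,n_4)\,\widehat\nu\bigl(n_1n_2/(n_3n_4)\bigr)$ with complex, oscillating coefficients, not non-negative ones. What the paper does instead is restrict $n$ to the sub-progression $\{Qk\colon 1\leq k\leq \eta N\}$ with $\eta=\ve/(QR)$, so that $|f_{N,\st}(m+\ell_iQk)-f_{N,\st}(m)|\leq\ve/Q$ for all four forms simultaneously and the integrand collapses to $|f_{N,\st}(m)|^4+O(\ve/Q)$, which \emph{is} non-negative pointwise in $f$; only then can the atom be isolated. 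This restriction costs a density factor $\eta\sim\ve/(QR)$, so the true lower bound on the structured contribution is of order $\ve\delta^2/(QR)$ --- it genuinely depends on $Q$ and $R$, which is the whole reason Theorem~\ref{T:DecompositionII} carries the arbitrary function $F$ (one needs the uniform error to be small compared with $1/(QR)$, which Theorem~\ref{T:DecompositionSimple} cannot deliver). An $F$-free bound of the kind you posit would render the elaborate form of Theorem~\ref{T:DecompositionII} unnecessary, contrary to the discussion in Section~\ref{subsec:decomposition}.

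A second, related error of ordering: you eliminate $f_{N,\er}$ over the full range of $(m,n)$ before any restriction, incurring a loss of $O(\ve)$ relative to the full average. Since the correctly computed main term is only of size $\sim\ve\delta^2/(QR)$, and $\ve$ must be fixed \emph{before} $Q$ and $R$ are known (they are outputs of the decomposition applied with input $\ve$ and $F$), this $O(\ve)$ loss swamps the main term. The paper performs the restriction to the sub-progression first, using the hidden non-negativity of Lemma~\ref{L:HiddenPositivity} applied to $f_{N,\st}+f_{N,\er}=f_N*\psi_{N,2}$ to discard the complementary terms with a positive sign, and only afterwards removes $f_{N,\er}$, so that the $O(\ve)$ loss is measured against the sub-progression average rather than against $A(N)$. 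This ordering is essential and is flagged explicitly (including in a footnote) in Section~\ref{SS:assuming}; as written, your argument cannot be closed by any choice of $\ve$ and $F$.
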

We show that Proposition~\ref{fo:density-regular4} implies  Proposition~\ref{fo:density-regular2}.
 Without loss of generality we  can assume that the measure $\nu$ defined in \eqref{eq:fourier-nu} is a probability measure. Let  $m,n\in \Z$ satisfy \eqref{E:1to4}. Letting  $x:=(m+\ell_1n)(m+\ell_2n)$ and $y:=(x+\ell_3n)(x+\ell_4n)$ and using ~\eqref{eq:fourier-nu} we get
\begin{multline*}
\dmult(x\inv E\cap y\inv E)=\limsup_{M\to+\infty} \frac{|x\inv yE\cap E\cap \Phi_M|}{|\Phi_M|}\geq\lim_{j\to+\infty}\frac{|x\inv yE\cap E\cap \Phi_{M_j}|}{|\Phi_{M_j}|}\\
=\wh\nu(xy\inv)
=\int_\CCM  f (xy\inv)\,d\nu( f )
=\int_\CCM  f (x)\, \overline{ f }(y)\,d\nu( f )>0.
\end{multline*}

This proves Proposition~\ref{fo:density-regular2} in the case where the integers $\ell_1,\ldots, \ell_4$ are  distinct
 and   $\min\{\ell_1,\ldots,\ell_4\}=0$.

   Let $\underline \ell:=\min\{\ell_1,\dots,\ell_4\}\neq 0$, by replacing $\ell_j$ with $\ell_j-\underline \ell$ for $j=1,\dots,4$ , and making the change of variables
$m\mapsto m-\underline \ell n$, we reduce matters to the case that $\underline\ell=0$.

 It remains to consider
the degenerate cases  where $\ell_1$ or $\ell_2$ is equal to $\ell_3$ or $\ell_4$. Suppose that $\ell_1=\ell_3$, the other cases are similar.   Since $\ell_0,\dots,\ell_4$ are admissible, $\ell_2\neq \ell_4$. We can assume that
 $\ell_2<\ell_4$, the other case is similar. As before,
 we see that it suffices to show that
 there exist $m,n\in \Z$ such that the integers $m+\ell_2n$ and $m+\ell_4n$ are positive and satisfy
 $$
\int_\CCM  f (m+\ell_2n) \cdot \overline f (m+\ell_4n)\,d\nu( f )>0.
$$
 After making the change of variables $m\mapsto m-\ell_2n$  we see that it suffices to
  show that
 there exist $m,n\in \N$ such that
 $$
\int_\CCM  f (m) \cdot \overline f (m+(\ell_4-\ell_2)n)\,d\nu( f )>0.
$$
   Since the averages of $\widehat\nu$ on the  F\o lner sequence $(\Phi_M)$ converge as $M\to+\infty$  to $\nu(\{\one\})$
  which is  positive by \eqref{eq:nu-1f}, there exists $n_0\in\N$ such that
 $\widehat\nu(n_0+1)>0$. Taking $m:=\ell_4-\ell_2$ and $n:=n_0$ we have
$$
\int_\CCM  f (m) \cdot \overline f (m+(\ell_4-\ell_2)n)\,d\nu( f )=
\widehat\nu(n_0+1)>0.
$$

\begin{convention} In the rest of the proof we assume that $\ell_1,\ldots, \ell_4$ are  distinct integers with $\min\{\ell_1,\dots,\ell_4\}=0$.
 We let
$$
\ell:=\ell_1+\ell_2+\ell_3+\ell_4.
$$
For every $N\in\N$, we denote by $\wt N$ the smallest  prime in the interval $[2\ell N, 4\ell N]$.
 As usual, for every function $\phi$ on $\N$, we denote by  $\phi_N$  the function $\one_{[N]}\,\phi$, considered as a function on $\Z_\tN$.
\end{convention}

\subsection{Final analytic formulation}
In order to prove Proposition~\ref{fo:density-regular4},  it suffices to establish the stronger fact that there are
 ``many'' $m,n\in \N$  such that the integral in this statement is positive.
\begin{proposition}[Averaged analytic formulation]
\label{fo:density-regular6}
Let $\ell_1, \ldots, \ell_4\in \Z$ be distinct  with $\min\{\ell_1,\ldots, \ell_4\}=0$
and let $\nu$ be a probability measure on $\CCM$
  that satisfies
Properties
\eqref{eq:defpos-nuf} and~\eqref{eq:nu-1f}. Then
\begin{equation}
\label{eq:average-integral1}
\liminf_{N\to+\infty}  \int_\CCM\E_{(m,n)\in\Theta_N}
 f (m+\ell_1n)\cdot f (m+\ell_2n)\cdot\overline f (m+\ell_3n)\cdot\overline f (m+\ell_4n)\ d\nu( f )>0
\end{equation}
where $\Theta_N:=\{(m,n)\in[N]\times[N]\colon 1\leq m+\ell_in\leq N\text{ for }i=1,2,3, 4
\}$.
\end{proposition}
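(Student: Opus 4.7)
The plan is to apply Theorem~\ref{T:DecompositionII} with $s=3$ to the measure $\nu$, decompose each $f\in\CM$, substitute into the four-fold product, and separately estimate the resulting contributions. Set $\delta^{2}:=\nu(\{\one\})>0$ by~\eqref{eq:nu-1f}. Choose a small $\varepsilon>0$ and a rapidly growing function $F\colon\N\times\N\times\R^{+}\to\R^{+}$, both to be fixed at the end in terms of $\delta$ and the $\ell_{i}$. The theorem provides integers $Q,R$ bounded in terms of $\varepsilon,F$ and, for every sufficiently large $N$ and every $f\in\CM$, a decomposition $f_{N}=f_{N,\st}+f_{N,\un}+f_{N,\er}$ with $f_{N,\st}=f_{N}\ast\psi_{N,1}$ for a nonnegative kernel $\psi_{N,1}$ independent of $f$. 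Since $f(m+\ell_{i}n)=f_{N}(m+\ell_{i}n)$ on $\Theta_{N}$, expanding the four-fold product yields $81$ terms: a main term $M_{N}$ in which all four factors are of type $\st$, and $80$ error terms containing at least one $\un$- or $\er$-factor.

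The error terms are handled by standard Gowers-norm estimates. The linear forms $L_{i}(m,n)=m+\ell_{i}n$ for $i=1,\ldots,4$ are pairwise linearly independent (the $\ell_{i}$ are distinct), so exactly as in the proof of Lemma~\ref{P:semest} the four-fold average is controlled by the $U^{3}$-norm of any single factor -- which is why $s=3$ suffices. Any term with a $\un$-factor is thus bounded uniformly in $f$ by $O_{\ell}(1/F(Q,R,\varepsilon))$, while any term with no $\un$-factor but at least one $\er$-factor is bounded, after $\nu$-integration and using $|f_{N,\st}|\leq 1$, by $O_{\ell}(\varepsilon)$ via Property~\eqref{it:decomU34}. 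The total contribution of the error terms is therefore at most $C_{\ell}(\varepsilon+1/F(Q,R,\varepsilon))$.

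The crux is a quantitative positive lower bound for $M_{N}$. Expanding $f_{N,\st}=f_{N}\ast\psi_{N,1}$ and using the complete multiplicativity of $f\in\CCM$ (which extends $f$ to a character of $\Q^{+}$) gives
\[
M_{N}=\tfrac{1}{\tN^{4}}\E_{(m,n)\in\Theta_{N}}\!\!\!\sum_{y_{1},\ldots,y_{4}\in[N]}\Big(\prod_{i=1}^{4}\psi_{N,1}(m+\ell_{i}n-y_{i})\Big)\,\widehat{\nu}\!\Big(\tfrac{y_{1}y_{2}}{y_{3}y_{4}}\Big).
\]
By Bochner's theorem (the content of~\eqref{eq:defpos-nuf}), $\widehat{\nu}\geq 0$ on $\Q^{+}$, and $\psi_{N,1}\geq 0$, so every summand is nonnegative. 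To extract a quantitative lower bound, restrict $n$ to the subset of $[1,N]$ consisting of multiples of $Q$ that are small enough that the cumulative shift error coming from the approximate $Q$-periodicity~\eqref{it:decompU32} of $f_{N,\st}$ stays below a prescribed threshold; this subset has positive density in $\Theta_{N}$, and on it the integrand is approximately $|f_{N,\st}(m)|^{4}$. A Cauchy--Schwarz in $\nu$ together with Plancherel then yields
\[
\int_{\CCM}\!\E_{m\in\tZN}|f_{N,\st}(m)|^{4}\,d\nu\;\geq\;\Big(\int_{\CCM}\!\E_{m\in\tZN}|f_{N,\st}(m)|^{2}\,d\nu\Big)^{\!2}\;\geq\;\big(\nu(\{\one\})(N/\tN)^{2}\big)^{2}\;\geq\;c_{\ell}\delta^{4},
\]
where the unit atom of $\nu$ is extracted in the last step. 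Combining, $M_{N}\geq c(Q,R,\ell)\,\delta^{4}$ for some $c(Q,R,\ell)>0$ and $N$ large.

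To conclude, one chooses $F$ growing sufficiently rapidly in $Q,R,1/\varepsilon$ (for instance $F(q,r,\varepsilon)=c\,q^{2}r^{2}/\varepsilon^{4}$, as suggested in Remark~(3) after Theorem~\ref{T:DecompositionII}) and then $\varepsilon$ small enough in terms of $\delta$ and $\ell$ that $C_{\ell}(\varepsilon+1/F(Q,R,\varepsilon))<\tfrac{1}{2}c(Q,R,\ell)\,\delta^{4}$; this balancing is made possible precisely by the freedom to choose $F$ arbitrarily, which is the additional flexibility of Theorem~\ref{T:DecompositionII} over Theorem~\ref{T:DecompositionSimple}. The main obstacle in this plan is the lower bound on $M_{N}$: it requires simultaneously invoking Bochner positivity $\widehat{\nu}\geq 0$, the approximate periodicity of $f_{N,\st}$, and Plancherel, in order to reduce the four-variable average to the second moment $\int\E_{m}|f_{N,\st}|^{2}\,d\nu$, which is the one quantity that still detects the unit atom $\nu(\{\one\})\geq\delta^{2}$.
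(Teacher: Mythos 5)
Your overall architecture is the right one and is essentially the paper's: decompose via Theorem~\ref{T:DecompositionII} for the $U^3$-norm, kill the uniform part with the Gowers-norm estimate (Lemma~\ref{L:UnifromityEstimates2}), use the non-negativity coming from \eqref{eq:defpos-nuf} together with the kernel representation of the structured part to restrict $n$ to a sub-progression of step $Q$, and extract the atom at $\one$. But there is a circularity in your final balancing step, and it is exactly the trap flagged in footnote~\ref{foot3} of the paper.

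Concretely: you discard all $80$ error terms over the \emph{full} range $(m,n)\in\Theta_N$, so the terms containing an $f_{N,\er}$-factor cost you $C_\ell\,\ve$ by Property~\eqref{it:decomU34}. Your main term $M_N$, on the other hand, is bounded below only after restricting $n$ to multiples of $Q$ in an initial segment whose length is dictated by the periodicity defect $R/\tN$ from \eqref{it:decompU32}; this set has density of order $1/(QR)$ in $[N]$, so your lower bound is $M_N\ge c_\ell\,\delta^4/(QR)$ --- you correctly record the dependence as $c(Q,R,\ell)$. The required inequality $C_\ell\,\ve<\tfrac12\,c_\ell\,\delta^4/(QR)$ then forces $\ve$ to be small in terms of $Q$ and $R$; but $Q$ and $R$ are \emph{outputs} of Theorem~\ref{T:DecompositionII} depending on $\ve$ and $F$, produced by an energy-increment iteration of length about $\ve^{-2}$, and are not bounded by any explicit function of $1/\ve$. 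Choosing ``$\ve$ small in terms of $\delta$ and $\ell$'' therefore cannot close the loop; only the $1/F(Q,R,\ve)$ part of your error is beatable this way (that is precisely what the free choice of $F$ is for). The paper escapes by reordering the steps: after removing $f_{N,\un}$ it keeps $f_{s,e}=f_{N,\st}+f_{N,\er}=f_N*\psi_{N,2}$ intact, applies the hidden non-negativity of Lemma~\ref{L:HiddenPositivity} to $\psi_{N,2}$ (not to $\psi_{N,1}$) so as to restrict $n$ to the sub-progression \emph{before} discarding $f_{N,\er}$, and only then removes $f_{N,\er}$. The $\er$-error is then incurred relative to the already-restricted average $A_2(N)$, i.e.\ it contributes $O(\ve)$ to a quantity that is itself weighted by $\eta/(40\ell)=\ve/(40\ell QR)$, for a total loss $O(\ve^2/(QR))$, which is dominated by the main term of size about $\delta^2\ve/(QR)$ once $\ve=c_1\delta^2$ with $c_1$ small. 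Your argument needs this reordering; as written, the $\er$-terms swamp the main term.
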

In order to show that Proposition~\ref{fo:density-regular6} implies Proposition~\ref{fo:density-regular4}
we  remark   that for $N\in\N$ sufficiently large we have $|\Theta_N|\geq c_1N^2$ and  the cardinality of the set of pairs $(m,n)\in\Theta_N$ that
satisfy $(m+\ell_1n)(m+\ell_2n)=(m+\ell_3n)(m+\ell_4n)$ is bounded by $C_1N$ for some
constants $c_1$ and $C_1$ that depend only on  $\ell$. Therefore,
Property~\eqref{eq:average-integral1} implies that there exist $m,n\in\N$ such that
$(m+\ell_1n)(m+\ell_2n)\neq(m+\ell_3n)(m+\ell_4n)$ and  $\int_\CCM
 f (m+\ell_1n) f (m+\ell_2n)\,\overline f (m+\ell_3n)\,\overline f (m+\ell_4n)\,d\nu( f )>0$.
Hence, the conclusion of  Proposition~\ref{fo:density-regular4} holds.

\begin{remark}
An alternate (and arguably more natural) way to proceed
 is to replace the additive averages
in Proposition~\ref{fo:density-regular6} with multiplicative ones. Upon doing this,
one is required to analyze averages of the form
$$
 \E_{m,n\in
\Psi_N}
 f \big((m+\ell_1n) (m+\ell_2n)\big)\,\overline f \big((m+\ell_3n)(m+\ell_4n)\big),
$$
where $(\Psi_N)_{N\in\N}$ is a multiplicative F\o lner sequence in $\N$ and $ f  \in \CCM$. Unfortunately, we were not able to
prove anything useful for these multiplicative averages,  although one suspects that a positivity property similar to the one in  \eqref{eq:average-integral1} may hold.
\end{remark}
Next, for technical reasons, we   recast the previous proposition  as a positivity property
involving averages over the cyclic groups $\Z_{\tN}$. This is going to be the final form
of the analytic statement that we aim to prove.
\begin{proposition}[Final analytic formulation]
\label{th:ergo2b}
Let  $\ell_1, \ldots, \ell_4\in \Z$ be distinct  and suppose that  $\min\{\ell_1,\dots,\ell_4\}=0$. Let $\delta>0$  and  $\nu$ be a  probability measure on $\CCM$, such that
\begin{enumerate}
\item
\label{it:nu-1}
$\nu(\{\one\})\geq\delta^2$;
\item
\label{it:defpos-nu}
$\displaystyle\int_\CCM  f (x)\, \overline f (y)\,d\nu( f )\geq 0$ \ for every  $x,y\in\N$.
\end{enumerate}
Then we have
\begin{equation}
\label{eq:average-integral1b}
\liminf_{N\to+\infty}  \int_\CCM\E_{m,n\in\Z_\tN}
\one_{[N]}(n)
 f_ N(m+\ell_1n) f_ N(m+\ell_2n)\overline f_ N(m+\ell_3n)\overline f_ N(m+\ell_4n)\,d\nu( f )>0,
\end{equation}
where in the above average  the expressions $m+\ell_in$ can be considered as elements of  $\Z$ or $\Z_\tN$ without affecting the value of the average.
\end{proposition}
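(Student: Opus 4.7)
The plan is to apply Theorem~\ref{T:DecompositionII} with $s=3$ to the probability measure $\nu$ on $\CCM\subset\CM$, with a decay function $F\colon\N\times\N\times\R^+\to\R^+$ and a small $\ve>0$ to be chosen at the end. For each $f\in\CCM$ this produces a decomposition $f_N = f_{N,\st}+f_{N,\un}+f_{N,\er}$ in which $f_{N,\st}=f_N*\psi_{N,1}$ is the convolution with a common non-negative kernel on $\tZN$ by~\eqref{it:decomU31}, $\|f_{N,\un}\|_{U^3(\tZN)}\leq 1/F(Q,R,\ve)$ by~\eqref{it:decomU33}, and $\E_{n\in\tZN}\int_\CCM|f_{N,\er}(n)|\,d\nu(f)\leq\ve$ by~\eqref{it:decomU34}. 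Expanding the four-fold product in the integrand of~\eqref{eq:average-integral1b} yields $3^4=81$ terms, and the game is to show that the one with all factors equal to $f_{N,\st}$ dominates.

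First I would dispose of the non-structured terms. Since $\ell_1,\ldots,\ell_4$ are distinct with $\min=0$, the four linear forms $m+\ell_i n$ are pairwise linearly independent over $\Z$, so a standard iterated Cauchy--Schwarz argument (generalized von Neumann, in the spirit of Lemma~\ref{P:semest}) bounds any term containing an $f_{N,\un}$ factor by a constant times $\|f_{N,\un}\|_{U^3(\tZN)}\leq 1/F(Q,R,\ve)$. Any remaining non-main term contains an $f_{N,\er}$ factor and no $f_{N,\un}$ factor; integrating against $\nu$ and applying H\"older with $|f_{N,\st}|\leq 1$ and $|f_{N,\er}|\leq 2$ bounds it by a constant times $\E_{n\in\tZN}\int_\CCM|f_{N,\er}(n)|\,d\nu(f)\leq\ve$. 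Choosing first $\ve$ small and then $F(Q,R,\ve)$ large makes the total non-main contribution arbitrarily small, uniformly in $N$.

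It remains to lower-bound the main term
$$
M(N) := \int_\CCM\E_{m,n\in\tZN}\one_{[N]}(n)\,f_{N,\st}(m+\ell_1 n)f_{N,\st}(m+\ell_2 n)\overline{f_{N,\st}(m+\ell_3 n)f_{N,\st}(m+\ell_4 n)}\,d\nu(f).
$$
Writing $f_{N,\st}(u)=\sum_{y\in\tZN}f_N(y)\psi_{N,1}(u-y)$ and exchanging the finite sums with the $\nu$-integral and the $(m,n)$-average produces
$$
M(N) = \sum_{y_1,y_2,y_3,y_4\in\tZN}K_N(y_1,y_2,y_3,y_4)\,\prod_{i=1}^4\one_{[N]}(y_i)\int_\CCM f(y_1 y_2)\,\overline{f(y_3 y_4)}\,d\nu(f),
$$
where $K_N(y_1,y_2,y_3,y_4):=\E_{m,n\in\tZN}\one_{[N]}(n)\prod_{i=1}^4\psi_{N,1}(m+\ell_i n-y_i)\geq 0$ and the inner integral used complete multiplicativity and $|f|=1$. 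Each factor in the summand is non-negative: $K_N$ because $\psi_{N,1}\geq 0$, and the inner integral by hypothesis~\eqref{it:defpos-nu} with $x=y_1y_2$, $y=y_3y_4$. Restricting the $\nu$-integral to the atom at $\one\in\CCM$ of mass at least $\delta^2$ guaranteed by~\eqref{it:nu-1} then yields a clean lower bound: at $f=\one$ one has $f_{N,\st}=\one_{[N]}*\psi_{N,1}$, which is bounded below by a positive constant on the bulk of $[N]$, and the density of $(m,n)\in\tZN^2$ for which $\one_{[N]}(n)$ holds and all $m+\ell_i n$ lie in that bulk is itself a positive constant depending only on $\ell$. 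This gives $M(N)\geq c\,\delta^2$ for some $c>0$ independent of $N$.

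The main obstacle is the first step: carrying out the generalized von Neumann bound on $\tZN$ while tracking the cutoff $\one_{[N]}(n)$ and the wrap-around effects inherent in computing $m+\ell_i n$ modulo $\tN$, and ensuring that the four linear forms remain pairwise linearly independent over $\Z_\tN$ for $\tN$ a prime much larger than the coefficients. Once this is done and $F$ is chosen to beat the dependence of $Q,R$ on $\ve$ produced by Theorem~\ref{T:DecompositionII}, the positivity of $M(N)$ is essentially immediate from the non-negativity of $\psi_{N,1}$ and from the fact that for $f\in\CCM$ of modulus one, a product of the form $f(y_1)f(y_2)\overline{f(y_3)f(y_4)}$ collapses to $f(y_1y_2)\overline{f(y_3y_4)}$, a pairing to which the positive definiteness hypothesis~\eqref{it:defpos-nu} applies directly.
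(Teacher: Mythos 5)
Your overall strategy is the paper's: apply Theorem~\ref{T:DecompositionII} with $s=3$, remove the uniform part with a generalized von Neumann estimate (Lemma~\ref{L:UnifromityEstimates2} in the paper, which also handles the cutoff $\one_{[N]}(n)$ and the wrap-around via a trapezoid function), remove the error part with the $L^1$ bound, and extract positivity from the structured part using \eqref{it:defpos-nu} and the atom at $\one$. The gap is in the last step. Your convolution expansion of $M(N)$, combined with $K_N\geq 0$ and hypothesis \eqref{it:defpos-nu}, shows only that $M(N)\geq 0$: it exhibits the \emph{total} integral over $\CCM$ as a sum of non-negative terms. It does not license "restricting the $\nu$-integral to the atom at $\one$", because that requires $\int_{\CCM\setminus\{\one\}}(\cdots)\,d\nu(f)\geq 0$, and hypothesis \eqref{it:defpos-nu} applies to $\nu$, not to its restriction to $\CCM\setminus\{\one\}$; for a fixed $f\neq\one$ the $(m,n)$-average of $\prod f_{N,\st}(m+\ell_in)\cdots$ can perfectly well be negative. (A purely diagonal lower bound from the terms $y_1y_2=y_3y_4$, where $\widehat\nu(1)=1$, also fails: after the correct $\tN^{-4}$ normalization of the convolution those terms contribute $O(N^3\log N/\tN^{5})\to 0$.)

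The missing idea — and the step the paper flags as the delicate one — is to first restrict $n$ to the sub-progression $\{Qk:1\leq k\leq \eta N\}$ with $\eta:=\ve/(QR)$. On this sub-progression the almost-periodicity \eqref{it:decompU32} forces $f_{N,\st}(m+\ell_iQk)=f_{N,\st}(m)+O(\ve/Q)$ for all four forms simultaneously, so the integrand collapses to $|f_{N,\st}(m)|^4+O(\ve/Q)$, which \emph{is} pointwise non-negative in $f$; only then is peeling off the atom legitimate, and Jensen gives $\int|f_{N,\st}(m)|^4\,d\nu\geq\delta^2(N/\tN)^4$. Two further points of care that your write-up omits: the passage to the sub-progression discards terms, and their non-negativity is exactly what Lemma~\ref{L:HiddenPositivity} (your expansion argument, applied to $f_{s,e}=f_N*\psi_{N,2}$) is for; and this restriction must be performed \emph{before} eliminating $f_{N,\er}$, since the restricted main term has size only $\eta\cdot\delta^2=\ve\delta^2/(QR)$ and would otherwise be swamped by the $O(\ve)$ error incurred when removing $f_{N,\er}$ from the unrestricted average. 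With these modifications your argument becomes the paper's proof.
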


We verify that Proposition~\ref{th:ergo2b} implies Proposition~\ref{fo:density-regular6}.
Using the definition of the set $\Theta_N$ given in Proposition~\ref{fo:density-regular6}, we can rewrite the averages that appear in the statement of Proposition~\ref{fo:density-regular6} as follows:
\begin{multline}\label{E:identity}
\E_{(m,n)\in\Theta_N}
 f (m+\ell_1n)\cdot  f (m+\ell_2n)\cdot \overline f (m+\ell_3n)\cdot\overline f (m+\ell_4n)=\\
 \frac{\wt N^2}{|\Theta_N|}\;\E_{m,n\in[\wt N]} \one_{[N]}(n)\cdot
 f_ N(m+\ell_1 n)\cdot f_ N(m+\ell_2n)\cdot\overline f_ N(m+\ell_3n)\cdot\overline f_ N(m+\ell_4n).
\end{multline}
To prove this equality, we remark  that since $\min\{\ell_1,\ldots, \ell_4\}=0$,  if $m,n$ are such that $m\in[\tN]$, $n\in[N]$, and
$m+\ell_jn \bmod \tN\in[N]$ for $j=1,\dots,4$, then
$m\in[N]$. Thus  $1\leq m+\ell_jn\leq (\ell+1)N<\tN$, hence $m+\ell_jn=m+\ell_jn \bmod \tN\in[N]$ for  $j=1,2,3,4$ and
every $(m,n)\in\Theta_N$. The sets of pairs $(m,n)$ taken in account in the two averages are identical, and
the value of the last  expression remains unchanged if we replace each term $m+\ell_in$ by $m+\ell_in \bmod{\wt N}$.

Using identity \eqref{E:identity} and the estimate $cN^2\leq |\Theta_N|\leq N^2$ which holds for some positive constant $c$ that depends only on $\ell$, we get the asserted implication.

\subsection{A positivity property}
We  derive now a positivity property that will be used in the
proof of Proposition~\ref{th:ergo2b} in the next subsection. Here we make
essential use of the positivity Property~\eqref{it:defpos-nu} of the measure $\nu$ given in
Proposition~\ref{th:ergo2b}.

\begin{lemma}[Hidden non-negativity]\label{L:HiddenPositivity}
Let $\nu$ be a positive finite measure on  $\CCM$ that  satisfies Property~\eqref{it:defpos-nu} of Proposition~\ref{th:ergo2b}.
 Let $\psi$ be a
non-negative function defined on $\Z_\tN$. Then
 $$
\int_\CCM ( f_ N*\psi)(n_1)\cdot ( f_ N*\psi)(n_2)\cdot
 (\overline f_ N*\psi)(n_3)\cdot (\overline f_ N*\psi)(n_4) \ d \nu( f )\geq 0
$$
for every $n_1,n_2,n_3,n_4\in\Z_\tN $, where the convolution product is taken on $\Z_\tN$.
\end{lemma}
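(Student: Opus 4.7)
The plan is to expand the four convolution products into a finite sum on $\Z_\tN$, swap the sum with the integral against $\nu$, and then observe that each resulting term is manifestly non-negative thanks to complete multiplicativity together with the assumed positivity property of $\nu$.

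First I would write, for every $n\in\Z_\tN$,
$$
(f_N*\psi)(n)=\E_{m\in\Z_\tN}f_N(m)\,\psi(n-m),
$$
and similarly for $\overline f_N*\psi$, so that the product of the four convolutions at the points $n_1,n_2,n_3,n_4$ becomes
$$
\frac 1{\tN^4}\sum_{m_1,m_2,m_3,m_4\in\Z_\tN}
f_N(m_1)f_N(m_2)\overline f_N(m_3)\overline f_N(m_4)\,
\prod_{j=1}^4\psi(n_j-m_j).
$$
Since this is a finite sum, integration against $\nu$ can be exchanged with the summation, reducing the claim to showing that each term
$$
\Bigl(\prod_{j=1}^4\psi(n_j-m_j)\Bigr)\,
\int_\CCM f_N(m_1)f_N(m_2)\overline f_N(m_3)\overline f_N(m_4)\,d\nu(f)
$$
is non-negative.

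The factor $\prod_{j=1}^4\psi(n_j-m_j)$ is non-negative because $\psi\geq 0$. For the integral factor, I would observe that if any $m_j$ does not belong to $[N]$ (viewed as a subset of $\Z_\tN$), then $f_N(m_j)=0$ identically and the integral vanishes. Otherwise each $m_j$ is a positive integer in $[N]$, and because every $f\in\CCM$ is completely multiplicative we have the identity
$$
f_N(m_1)f_N(m_2)\overline f_N(m_3)\overline f_N(m_4)=f(m_1m_2)\,\overline f(m_3m_4)
$$
at every $f\in\CCM$. Setting $x:=m_1m_2\in\N$ and $y:=m_3m_4\in\N$, Property~\eqref{it:defpos-nu} of $\nu$ gives
$$
\int_\CCM f(x)\,\overline f(y)\,d\nu(f)\geq 0,
$$
so the integral factor is non-negative as well. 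Summing the non-negative contributions over $(m_1,m_2,m_3,m_4)\in\Z_\tN^4$ yields the desired inequality. There is no real obstacle here beyond noting that complete multiplicativity is exactly what is needed to recast the four-fold product in the form $f(x)\overline f(y)$ to which the hypothesis on $\nu$ applies.
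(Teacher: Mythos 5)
Your proof is correct and follows essentially the same route as the paper: both expand the convolutions into a non-negative linear combination of values $f(k_1)f(k_2)\overline f(k_3)\overline f(k_4)$, use complete multiplicativity to rewrite each term as $f(k_1k_2)\overline f(k_3k_4)$, and then apply Property~\eqref{it:defpos-nu}. The only cosmetic difference is that the paper absorbs the factor $\psi(n-k)$ and the cutoff $\one_{[N]}$ into non-negative coefficients $a_n(k)$, whereas you handle the vanishing outside $[N]$ explicitly.
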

\begin{proof}
The  convolution product $ f_ N*\psi$ is defined on the
group $\Z_\tN$ by the formula
$$
( f_ N*\psi)(n)=\E_{k\in\Z_\tN}\psi(n-k)\cdot f_ N(k).
$$
It follows that for every $n\in[\wt N]$ there exists a sequence
$(a_n(k))_{k\in\Z_\tN}$ of non-negative numbers that are  independent of $ f $,
such that for every $ f \in\CCM$ we have
$$
( f_ N*\psi)(n) =\sum_{k\in\Z_\tN}a_n(k)\,  f (k).
$$
The left hand side of the expression in the statement is thus equal to
\begin{multline*}
\sum_{k_1,k_2,k_3,k_4\in\Z_\tN}\prod_{i=1}^4 a_{n_i}(k_i) \int_\CCM
 f (k_1)\cdot  f (k_2)\cdot
 \overline f (k_3)\cdot \overline f (k_4) \, d \nu( f )=\\
 \sum_{k_1,k_2,k_3,k_4\in \Z_\tN}\prod_{i=1}^4 a_{n_i}(k_i)
\int_\CCM
 f (k_1k_2)\,\overline f (k_3k_4) \, d \nu( f )\geq 0
\end{multline*}
by Property~\eqref{it:defpos-nu}  of Proposition~\ref{th:ergo2b}.
\end{proof}

\subsection{Estimates involving Gowers norms}
Next we establish
an elementary estimate that will be  crucial in the sequel.

\begin{lemma}[Uniformity estimates]
\label{L:UnifromityEstimates2}
Let $ s\geq 3$, $\ell_1,\dots,\ell_{s}\in \Z$ be distinct, and let $\ell:=|\ell_1|+\dots+|\ell_s|$.    Then
there exists $C:=C(\ell)$   such for every $N\in\N$ and  all functions $a_j\colon\Z_\tN\to\C$, $j=1,\dots,s$,  with
$|a_j|\leq 1$, we have
 $$
\big|\E_{m,n\in\Z_\tN}\one_{[N]}(n)\cdot \prod_{j=1}^sa_j(m+\ell_jn)
\big| \leq C
\min_{j=1,\ldots,  s}(\norm{a_j}_{U^{s-1}(\Z_\tN)})^{1/2}+\frac{2}{\tN}
$$
where $\tN$ is the smallest prime that is  greater than $2\ell N$.
\end{lemma}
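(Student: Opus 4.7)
\smallskip

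The plan is a two-step Cauchy--Schwarz argument followed by the classical Gowers--Cauchy--Schwarz inequality for pairwise linearly independent forms. First I will absorb the cutoff $\one_{[N]}(n)$ using one Cauchy--Schwarz in $n$. Writing $F(n):=\E_m\prod_{j=1}^s a_j(m+\ell_j n)$, the bound
$$
|A|^2 \;\le\; \bigl(\E_n\one_{[N]}(n)\bigr)\cdot\bigl(\E_n\one_{[N]}(n)|F(n)|^2\bigr)\;\le\;\frac{N}{\tN}\,\E_n|F(n)|^2\;\le\;\frac{1}{2\ell}\,\E_n|F(n)|^2
$$
uses only $\tN\ge 2\ell N$. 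This reduces the task to a bound of the form $\E_n|F(n)|^2\le C(\ell)\,\|a_{j_0}\|_{U^{s-1}(\Z_\tN)}+O(1/\tN)$.

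Next I expand the square. Substituting $m'=m+h$ in $\E_n|F(n)|^2=\E_{m,m',n}\prod_j a_j(m+\ell_j n)\overline{a_j(m'+\ell_j n)}$ and letting $b_j^h(k):=a_j(k)\overline{a_j(k+h)}$ gives
$$
\E_n|F(n)|^2 \;=\; \E_h\,\E_{m,n}\prod_{j=1}^s b_j^h(m+\ell_j n).
$$
For each fixed $h$ the forms $\{m+\ell_j n\}_{j=1}^s$ are pairwise linearly independent in $(m,n)$, because the $\ell_j$ are distinct and $\tN$ is prime with $\tN>|\ell_j-\ell_{j'}|$; moreover $|b_j^h|\le 1$. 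The classical Gowers--Cauchy--Schwarz inequality for $s$ pairwise independent forms in two variables, obtained by iterating Cauchy--Schwarz $s-1$ times, therefore yields, for any $j_0\in\{1,\dots,s\}$,
$$
\Bigl|\E_{m,n}\prod_{j=1}^s b_j^h(m+\ell_j n)\Bigr|\;\le\;\|b_{j_0}^h\|_{U^{s-1}(\Z_\tN)}.
$$
The diagonal term $h=0$ (where $b_j^0=|a_j|^2$) contributes only $O(1/\tN)$ to the outer $\E_h$, which is the source of the $2/\tN$ boundary term; it is here that $\tN$ being prime and large compared to $\ell$ is used to ensure no wrap-around issues in handling the forms on $\Z_\tN$ versus on $\Z$.

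The main obstacle is the final step: controlling $\E_{h\neq 0}\|b_{j_0}^h\|_{U^{s-1}(\Z_\tN)}$ by $\|a_{j_0}\|_{U^{s-1}(\Z_\tN)}$. The recursive identity $\|a\|_{U^s}^{2^s}=\E_h\|b^h\|_{U^{s-1}}^{2^{s-1}}$ together with H\"older naively yields only a bound in terms of $\|a_{j_0}\|_{U^s}^2$, which is too weak; the square-root exponent in the lemma suggests that only one Cauchy--Schwarz should be spent on this step, so the argument must carefully exploit the hypothesis $s\ge 3$ (equivalently $s-1\ge 2$, so that linear phases are invisible to the $U^{s-1}$-norm and Fourier-analytic tools are available). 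Assuming this step produces $\E_n|F(n)|^2\le C(\ell)\,\|a_{j_0}\|_{U^{s-1}(\Z_\tN)}+4/\tN^2$, taking square roots and minimizing over $j_0=1,\dots,s$ yields the stated estimate.
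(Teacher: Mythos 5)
Your reduction does not close, and the step you defer is not merely difficult: the inequality you would need is false. After the Cauchy--Schwarz in $n$ and the substitution $m'=m+h$, you bound each inner average by $\norm{b_{j_0}^h}_{U^{s-1}(\Z_\tN)}$ and then need $\E_h\norm{b_{j_0}^h}_{U^{s-1}(\Z_\tN)}\ll\norm{a_{j_0}}_{U^{s-1}(\Z_\tN)}$. Take $s=3$ and $a_j(k):=\e(k^2/\tN)$ for all $j$: then $\norm{a_j}_{U^2(\Z_\tN)}=\tN^{-1/4}$, but $b_j^h(k)=\e(-(2hk+h^2)/\tN)$ is a linear phase, so $\norm{b_j^h}_{U^2(\Z_\tN)}=1$ for every $h$ and the left-hand side equals $1$. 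The loss occurs exactly where you apply Gowers--Cauchy--Schwarz separately for each fixed $h$: the cancellation in $\E_h\E_{m,n}\prod_j b_j^h(m+\ell_jn)$ comes from the averaging over $h$ (in the example the inner average vanishes for every $h\neq 0$), and discarding that averaging by passing to $\norm{b_{j_0}^h}_{U^{s-1}}$ destroys the estimate; the recursive identity only recovers $\norm{a_{j_0}}_{U^s}^2$, one degree too high, as you yourself observe. (A smaller mismatch: the $h=0$ diagonal contributes $1/\tN$ to $\E_n|F(n)|^2$, so after taking square roots your error term is $O(\tN^{-1/2})$, not $2/\tN$.)

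The paper avoids squaring altogether. It replaces $\one_{[N]}$ by a trapezoid function $\phi$ agreeing with $\one_{[N]}$ except on two intervals of length $r$, so the two averages differ by at most $2r/\tN$ while $\norm{\wh\phi}_{l^1(\Z_\tN)}\leq\tN/r$; expanding $\phi$ in Fourier series reduces matters to $\max_\xi\bigl|\E_{m,n}\e(n\xi/\tN)\prod_j a_j(m+\ell_jn)\bigr|$. The phase $\e(n\xi/\tN)$ is then absorbed into $a_1$ and $a_2$ using $n=\ell^*\bigl((m+\ell_1n)-(m+\ell_2n)\bigr)$ with $\ell^*(\ell_1-\ell_2)=1\bmod\tN$ --- this is where $s-1\geq 2$ enters, since modulation leaves the $U^{s-1}$-norm unchanged --- and a single application of Gowers--Cauchy--Schwarz gives the bound $U:=\min_j\norm{a_j}_{U^{s-1}(\Z_\tN)}$ with no differencing parameter to average over. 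The resulting total $2r/\tN+2\tN U/r$ is optimized by choosing $r\asymp\sqrt U\,\tN$, which is the true source of both the exponent $1/2$ and the error term $2/\tN$.
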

\begin{proof}
We first  reduce matters to estimating a similar average that does not contain
 the term $\one_{[N]}(n)$.
Let $r$ be an integer that will be specified later and satisfies
$0<r< N/2$. We define the ``trapezoid function'' $\phi$ on $\Z_N$ so
that $\phi(0)=0$, $\phi$ increases linearly from $0$ to $1$ on the
interval $[0,r]$, $\phi(n)=1$ for $r\leq n\leq N-r$, $\phi$
decreases linearly from
 $1$ to $0$ on $[N-r,N]$, and $\phi(n)=0$ for $N<n<\wt N$.

The absolute value of the difference between the average in the statement and
$$
\E_{m,n\in\Z_{\tN}}\phi(n)\cdot \prod_{j=1}^s a_j(m+\ell_jn)
$$
is bounded by $2r/\wt N$.
Moreover, it is classical that (the argument is sketched in the proof of Lemma~\ref{lem:UsLN} in the Appendix)
$$
\norm{\wh\phi}_{l^1(\Z_{\tN})}\leq \frac{2N}r\leq \frac{\wt{N}}r
$$
 and thus
$$
\Bigl|\E_{m,n\in\Z_\tN}\phi(n)\cdot
 \prod_{j=1}^s a_j(m+\ell_jn)\Bigr|\leq
 \frac{\wt{N}}r\,\max_{\xi\in\Z_\tN}
\Bigl|\E_{m,n\in\Z_\tN}\e(n\xi/\tN)\cdot
\prod_{j=1}^s a_j(m+\ell_jn)\Bigr|.
$$

 Since $\ell_1\neq\ell_2$ and $\tN>\ell$ we have $\ell_1-\ell_2\neq 0 \bmod\tN$ and there exist $\ell^*\in\Z_\tN$ such that $\ell^*(\ell_1-\ell_2)=1 \bmod\tN$.
 Upon replacing $a_1(n)$ with
$a_1(n)\e(-\ell^*n\xi/\tN)$ and $a_2(n)$ with
$a_2(n)\e(\ell^*n\xi/\tN)$,  the $U^{s-1}$-norm of all sequences remains unchanged,  and the term $\e(n\xi/\tN)$ disappears. We are thus left with estimating
the average
$$
\big|\E_{m,n\in\Z_\tN} \prod_{j=1}^s a_j(m+\ell_jn)\big|.
$$

  Since $\tN>2\ell$ the numbers  $\ell_1,\ldots, \ell_s$ are  distinct
   as elements of $\Z_\tN$. Using this and the fact that  $\tN$ is a prime, it is possible to show
    by an iterative use of the Cauchy-Schwarz inequality  (see for example \cite[Theorem 3.1]{T06})   that   the last average is bounded
 by
$$ U:=\min_{1\leq j\leq s}\norm{a_j}_{U^{s-1}(\Z_\tN)}.
$$

Combining the preceding estimates, we get that  the average in the
statement is bounded by
$$
\frac {2r}{\wt N} +
\frac{2 \wt{N}}r U.
$$
Assuming that $U\neq 0$ and choosing   $r:                                                                   = \lfloor \sqrt{U}\tN/(8\ell)\rfloor+1$ (then $r\leq
\tN/(8\ell)\leq N/2$)
gives  the announced bound.
\end{proof}

\subsection{Proof of Proposition~\ref{th:ergo2b}}
\label{SS:assuming}
  We start with a brief sketch of our
 proof strategy. Roughly speaking, Theorem~\ref{T:DecompositionII}
enables us to decompose  the restriction of an arbitrary
multiplicative function on a finite interval  into three terms, a
close to  periodic term, a ``very uniform'' term, and an error term.
In the course of the proof of Proposition~\ref{th:ergo2b} we study these
three terms separately. The order of the different steps is
important as well as the precise properties of the decomposition.

 First, we show that the uniform term has a negligible contribution
in evaluating the averages in \eqref{eq:average-integral1b}. To do
this we use  the  uniformity estimates established in
Proposition~\ref{P:semest}. It is for this part of the proof
that it is very important to work with patterns that factor into
products of linear forms in two variables, otherwise we have no
 way of controlling the corresponding averages
 by Gowers uniformity norms. At this
point, the error term is shown to have negligible contribution, and
thus can be ignored.
   Lastly, the
structured term $ f_ \st$  is dealt by restricting the variable $n$
to a suitable sub-progression where each function $ f_ \st$ gives
approximately the same value to all four linear
forms;\footnote{\label{foot3}This  coincidence of values is very
important, not having it is a key technical obstruction for
 handling equations like $x^2+y^2=n^2$. Restricting the range
of both variables $m$ and $n$ does not seem to help either, as this
creates problems with  controlling the error term in the
decomposition.}
 it  then becomes possible to  establish the asserted positivity.
 In fact, the step where we restrict to a
sub-progression  is  rather delicate, as it has to take place
before the component $ f_ \er$ is eliminated (this explains also why
we do not restrict both variables $m$ and $n$ to a sub-progression), and in addition one
has to guarantee that the terms left out are non-negative, a
property that follows from
Lemma~\ref{L:HiddenPositivity}.

 We  now enter the main body of the proof.
Recall that $\ell_1,\ldots, \ell_4\in \Z$ are fixed and distinct and that $\ell=|\ell_1|+\cdots+|\ell_4|$. We stress also  that
in  this proof the quantities $m+\ell_in$ are computed in $\Z_\tN$,
that is, modulo $\wt N$.

Let $\nu$ be a positive finite measure on $\CCM$ satisfying the
Properties~\eqref{it:nu-1} and~\eqref{it:defpos-nu} of Proposition~\ref{th:ergo2b} and let $\delta>0$ be as in~\eqref{it:nu-1}.
We let
$$
\ve:=c_1\delta^2\ \quad \text{ and }\quad
 F(x,y,z):=C_1^2\,
\frac{x^2y^2}{z^4},
$$
where $c_1$ and $C_1$ are positive constants that will be specified
later, what is important is that  they depend only on $\ell$. Our goal is
for all large values of $N\in\N$ (how large will depend only on $\delta$) to bound from below the average
$$
 A(N):= \int_\CCM\E_{m,n\in\Z_\tN}\one_{[N]}(n)\,   f_ N(m+\ell_1n)\,  f_ N(m+\ell_2n) \,
\overline f_ N(m+\ell_3n) \, \overline f_ N(m+\ell_4n)\ d\nu( f ).
$$

We start by applying
Theorem~\ref{T:DecompositionII} for the $U^3$-norms, taking as input the
measure $\nu$, the number $\ve$, and the function $F$ defined
above. Let
$$
 Q:=Q(F,N, \ve, \nu)=Q(N, \delta, \nu),\quad  R:=R(F,N, \ve,  \nu)=R(N,\delta,\nu)
$$ be
the numbers provided by Theorem~\ref{T:DecompositionII}. We
recall  that $Q$ and $R$ are bounded by a constant that depends only
on $\delta$. From this point on we assume that $N\in\N$ is sufficiently
large, depending only on $\delta$,  so that the conclusions of
Theorem~\ref{T:DecompositionII} hold. 
For
 $ f  \in \CCM$, we have a decomposition
$$
 f_ N(n)= f_ {N,\st}(n)+ f_ {N,\un}(n)+ f_ {N,\er}(n), \quad n\in \Z_\tN,
$$
for the decomposition that satisfies Properties~\eqref{it:decomU31}--\eqref{it:decomU34} of Theorem~\ref{T:DecompositionII}.

Next, we  use the uniformity estimates of
Lemma~\ref{L:UnifromityEstimates2} for $s=4$  in order to eliminate the uniform
component $ f_ \un$ from the average $A(N)$. We let
$$
 f_ {s,e}:= f_ {N,\st}+ f_ {N,\er}
$$ and
 $$
A_1(N):=\int_\CCM  \E_{m,n\in\Z_\tN} \one_{[N]}(n) f_ {s,e}(m+\ell_1n)
 f_ {s,e}(m+\ell_2n)  \overline f_ {s,e}(m+\ell_3n)
\overline f_ {s,e}(m+\ell_4n)\ d\nu( f ). $$
 Using
Lemma~\ref{L:UnifromityEstimates2}, Property~\eqref{it:decomU33} of
Theorem~\ref{T:DecompositionII}, and the estimates  $| f_ N(n)|\leq 1$, $| f_ {s,e}(n)|\leq 1$ for every $n\in\Z_\tN$,
we get that
\begin{equation}\label{E:a1}
|A(N)-A_1(N)|\leq \frac{4\, C_2}{F(Q,R,\ve)^{\frac{1}{2}}}+\frac{8}{\tN}
\end{equation}
where $C_2$ is the constant provided by
Lemma~\ref{L:UnifromityEstimates2} and depends only on $\ell$.

 Next, we  eliminate the error term $ f_ \er$. But before doing this, it is
  important to first restrict the range of $n$ to a suitable sub-progression; the utility of
  this  maneuver will be clear on our next step when we estimate the contribution of the leftover term
  $ f_ \st$. We stress that we cannot postpone this restriction on the range of $n$ until after
  the term $ f_ \er$ is eliminated, if we did this  the contribution of the term $ f_ \er$ would
   swamp the positive lower bound we get from the term $ f_ \st$.
 We let
 \begin{equation}
\label{eq:def-epsilon}
\eta:=\frac{\ve}{QR}.
\end{equation}
 By Property~\eqref{it:decomU31} of Theorem~\ref{T:DecompositionII}
and Lemma~\ref{L:HiddenPositivity},  we have
the positivity property
\begin{equation}\label{E:positivea}
\int_\CCM  f_{s,e}(n_1)\cdot  f_ {s,e}(n_2)\cdot
  \overline{f}_{s,e}(n_3)\cdot  \overline{f}_{s,e}(n_4) \ d \nu( f )\geq 0
\end{equation}
for every $n_1,n_2,n_3,n_4\in\Z_\tN $,

 Note that
the integers $Qk$, $1\leq k\leq \eta N$, are distinct elements of
the interval $[N]$.  It follows from \eqref{E:positivea} that
\begin{multline*}
\sum_{m,n\in\Z_\tN}\int_\CCM
\one_{[N]}(n)\, f_ {s,e}(m)\, f_ {s,e}(m+\ell_1n)\,
\overline{ f }_{s,e}(m+\ell_2n)\, \overline{ f }_{s,e}(m+\ell_3n)\,d\nu( f )\geq
\\
\sum_{m\in\Z_\tN}\sum_{k=1}^{\lfloor \eta N\rfloor}
\int_\CCM
 f_ {s,e}(m+\ell_1Qk) \, f_ {s,e}(m+\ell_2Qk)\,
\overline{ f }_{s,e}(m+\ell_3Qk)\, \overline{ f }_{s,e}(m+\ell_4Qk)\,d\nu( f ).
\end{multline*}
Therefore, we have
\begin{equation}
\label{eq:A2} A_1(N)\geq \frac{\lfloor\eta N\rfloor}{\tN}A_2(N)\geq
\frac\eta{40\,\ell}A_2(N) = \ve\, \frac{1}{40\,\ell QR}\,A_2(N)
\end{equation}
where
\begin{multline*}
A_2(N):= \\
\int_\CM  \E_{m\in\Z_\tN}\,\E_{k\in [\lfloor \eta N\rfloor]}\,
 f_ {s,e}(m+\ell_1Qk)\,
 f_ {s,e}(m+\ell_2Qk) \,  \overline f_ {s,e}(m+\ell_3Qk)\,
\overline f_ {s,e}(m+\ell_4Qk)\ d\nu( f ).
\end{multline*}
We let
\begin{multline*}
\label{eq:defA3}
A_3(N):= \\ \int_\CCM\E_{m\in\Z_\tN}\E_{k\in[\lfloor \eta N\rfloor]}
 f_ {N,\st}(m+\ell_1Qk)  f_ {N,\st}(m+\ell_2Qk)
\overline{ f }_{N,\st}(m+\ell_3Qk) \overline{ f }_{N,\st}(m+\ell_4Qk)d\nu( f ).
\end{multline*}
Since for every $n\in\Z_\tN$ we have $| f_ {N,\st}(n)|\leq 1$, and since
$| f_ {s,e}(n)|=| f_ {N,\st}(n)+ f_ {N,\er}(n)|\leq 1$, by
Property~\eqref{it:decomU34} of
 Theorem~\ref{T:DecompositionII} we deduce that
\begin{equation}
\label{eq:A3}
|A_2(N)-A_3(N)|\leq 4\,\int_\CCM\E_{m\in\Z_\tN} | f_ {N,\er}(m)|\,d\nu( f )<4\ve.
\end{equation}

Next, we study the term $A_3(N)$.  We utilize Property~\eqref{it:decompU32} of
Theorem~\ref{T:DecompositionII}, namely
$$
| f_ {N,\st}(n+Q)- f_ {N,\st}(n)|\leq \frac{R}{\tN} \quad \text{ for  every } \ n\in\Z_\tN.
$$
We get for $m\in \Z_\tN$, $1\leq k\leq\eta N$,  and for $i=1,2,3,4$, that
$$
| f_ {N,\st}(m+\ell_i Qk)- f_ {N,\st}(m)|\leq\ \ell_ik\,\frac {R}\tN\leq \ell
\eta N\,\frac {R}\tN \leq \frac \ve Q,
$$
where the last estimate follows from \eqref{eq:def-epsilon} and the estimate $\tN\geq \ell N$.
Using this estimate in conjunction with the definition~ of $A_3(N)$, we get
$$
A_3(N)\geq \int_\CCM\E_{m\in\Z_\tN}| f_ {N,\st}(m)|^4\,d\nu( f )
-\frac{3\ve}Q.
$$

Recall that  $\one$ denotes  the multiplicative function that is identically
 equal  to $1$.
By Property~\eqref{it:nu-1} of Proposition~\ref{th:ergo2b} we have
$\nu(\{\one\})\geq\delta^2$.
 Using this,
we deduce that
$$
\int_\CCM\E_{m\in\Z_\tN}| f_ {N,\st}(m)|^4\,d\nu( f )\geq
\nu(\{\one\}) \cdot\E_{m\in\Z_\tN} |\one_{N,\st}(m)|^4
\geq \delta^2\, \bigl|\E_{m\in\Z_\tN} \one_{N,\st}(m)|^4.
$$
Since $\one_{N,\st}=\one_N*\psi$ for some kernel $\psi$  on $\Z_\tN$  and $\tN\leq 4\ell N$, we have
$$\E_{m\in\Z_\tN}\one_{N,\st}(m)=
\E_{m\in\Z_\tN}\E_{k\in\Z_\tN}\one_N(k)\psi(m-k)=
\E_{k\in\Z_\tN}\one_N(k) =
\frac N{\tN}\geq \frac 1{4\,\ell}.
$$
Combining the above we get
\begin{equation}
\label{eq:A3b} A_3(N)\geq  \frac
{\delta^2}{4^4\,\ell^4}-\frac{3\ve }Q.
\end{equation}

Putting~\eqref{E:a1},  \eqref{eq:A2}, \eqref{eq:A3}, and
\eqref{eq:A3b} together,   we get
$$A(N)
\geq\ve\, \frac{1}{40\,\ell QR}\,\Bigl(\frac
{\delta^2}{4^4\,\ell^4}-7\ve \Bigr)
-\frac{4C_2}{F(Q,R,\ve)^{\frac{1}{2}}}-\frac{8}{\tN}.
$$
Recall that $\ve=c_1\delta^2$, for some positive constant $c_1$ that we
left unspecified until now.  We choose $c_1<1$, depending only on
$\ell$, so that
$$
\frac 1{40\,\ell} \Bigl(\frac
{\delta^2}{4^4\,\ell^4}-7\ve\Bigr) \geq c_2 \delta^2
$$
for some positive constant $c_2$ that depends only on
$\ell$. Then we have
$$
A(N)\geq\delta^2\,\frac{c_2 \ve }{QR}-\frac{4C_2}{F(Q,R,\ve)^{\frac{1}{2}}}-\frac{8}{\tN}.
$$
Recall that $$ F(Q,R,\ve)= C_1^2\frac{Q^2R^2}{\ve^4}
$$
where $C_1$ was not determined until this point. We choose
$$
C_1:=\frac{8c_1C_2 }{c_2 }
$$
and upon recalling that $\ve=c_1 \delta^2$    we get
$$
A(N)+ \frac{8}{\tN}\geq\delta^2\,\frac{c_2\ve }{QR}-C_2\, \frac{4\ve^2}{C_1 QR}=\frac {c_2\delta^2\ve}{2\,QR}=
\frac {c_1 c_2 \delta^4}{2\,QR}>0.
$$
Recall that $Q$ and $R$ are bounded by a constant that depends only
on $\delta$. Hence, $A(N)$ is greater than a positive constant that
depends only on $\delta$, and in particular is independent of $N$,
provided that $N$ is sufficiently large, depending only on $\delta$,
as indicated above. This completes the proof of
Proposition~\ref{th:ergo2b}. \qed

\subsection{Proof of  Theorem~\ref{th:partition-regular3}}
The proof of Theorem~\ref{th:partition-regular3} goes along the lines of
Theorem~\ref{th:partition-regular2} with small changes only.

As a first step we reduce matters to the case where the coefficient of $m$ in all linear forms is $1$.
For $i=1,\ldots, s$, let the linear forms be given by
$L_{1,i}(m,n):=\kappa_im+\lambda_in$, $L_{2,i}(m,n):=\kappa_i'm+\lambda_i'n$,  where $\kappa_i,\kappa_i',\lambda_i,\lambda_i'\in \Z$.
Let
$
\ell_0:=\prod_{i=1}^s\kappa_i=\prod_{i=1}^s\kappa_i'
$
where the second equality follows by our assumption. We also have $\ell_0\neq 0$ by assumption and
we can   assume that $\ell_0>0$, the other case can be treated similarly. Inserting $\ell_0n$ in place of
$n$ and factoring out the coefficients of $m$ we reduce    to the case where $\kappa_i=\kappa'_i=1$ for $i=1,\ldots,s$. Our assumption gives that
$\{\lambda_1,\dots,\lambda_s\}\neq \{\lambda'_1,\dots,\lambda'_s\}$.

Theorem~\ref{th:partition-regular3} can be deduced from an analytic statement completely similar
to Proposition~\ref{th:ergo2b}. By an induction on $s$ we can reduce to the case that the integers
$\lambda_1,\dots,\lambda_s$ and $\lambda'_1,\dots,\lambda'_s$ are distinct and furthermore we can assume that the smallest one is equal to $0$.
 The rest of the argument is identical to the proof of Proposition~\ref{th:ergo2b}  given in this section;
the only difference is that in place of Theorem~\ref{T:DecompositionII} for the $U^3$-norm we use
 the same result for the $U^{2s-1}$-norm.
\appendix

\section{Elementary facts about Gowers norms}
In this section we gather some elementary facts about the $U^s$-norms that
we use throughout the main body of the article.
\subsection{Gowers norms and restriction to subintervals}
Our first result shows  that if the $U^s(\Z_N)$-norm of a function is sufficiently small,
then its restriction to an arbitrary subinterval of $[N]$ is small.

\begin{lemma}
\label{lem:UsLN}
Let  $s\geq 2$ be an integer and  $\ve>0$. There exists $\delta:=\delta(s,\ve)>0$ and $N_0:=N_0(s,\ve)>0$
such that for every integer $N\geq N_0$,   interval $J\subset [N]$, and $a\colon \Z_N\to \C$ with $|a|\leq 1$, the following implication holds:
$$
\text{if }\ \norm a_{U^s(\Z_N)} \leq\delta, \ \text{ then }\ \norm{\one_{J}\cdot a}_{U^s(\Z_N)}\leq \ve.
$$
\end{lemma}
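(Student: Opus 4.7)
The plan is to compare $\one_J\cdot a$ with a smooth (trapezoidal) cutoff that behaves well in Fourier, and then combine two estimates: one using the fact that the $U^s$-norm is invariant under multiplication by linear phases, and one using the crude bound $\|f\|_{U^s}^{2^s}\leq \|f\|_{L^1}$ valid for $|f|\leq 1$.

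More concretely, given an interval $J\subset[N]$, I would introduce a parameter $r$ with $1\leq r<|J|/2$ (to be chosen at the end) and define a trapezoidal kernel $\phi$ on $\Z_N$ that equals $1$ on $J$ outside two intervals of length $r$ next to its endpoints, decreases linearly to $0$ on those two intervals, and vanishes off $J$. A standard calculation (done, e.g., in the proof of Lemma~\ref{L:UnifromityEstimates2}) gives $\|\widehat{\phi}\|_{\ell^1(\Z_N)}\leq CN/r$ for an absolute constant $C$, while $\|\one_J-\phi\|_{L^1(\Z_N)}\leq 2r/N$ and $0\leq \one_J-\phi\leq 1$.

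First step: bound $\|\phi\cdot a\|_{U^s(\Z_N)}$. Expanding $\phi$ in Fourier, $\phi(n)=\sum_{\xi}\widehat{\phi}(\xi)\,\e(n\xi/N)$, and using the triangle inequality together with the fact that for $s\geq 2$ the $U^s(\Z_N)$-norm is invariant under multiplication by a linear phase $\e(n\xi/N)$ (a direct computation from the definition of $\norm\cdot_{U^s}$ as a $2^s$-fold average, since the affine character contributions cancel from the Gowers parallelepiped), we get
$$
\|\phi\cdot a\|_{U^s(\Z_N)}\leq \|\widehat{\phi}\|_{\ell^1(\Z_N)}\,\|a\|_{U^s(\Z_N)}\leq \frac{CN}{r}\,\delta.
$$
Second step: bound $\|(\one_J-\phi)\cdot a\|_{U^s(\Z_N)}$. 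Since $|a|\leq 1$ and $0\leq \one_J-\phi\leq 1$, we have $|(\one_J-\phi)\cdot a|\leq \one_J-\phi$; expanding the $U^s$-norm as an average over Gowers parallelepipeds and using $|\cdot|\leq 1$ on all but one vertex yields
$$
\|(\one_J-\phi)\cdot a\|_{U^s(\Z_N)}^{2^s}\leq \|\one_J-\phi\|_{L^1(\Z_N)}\leq \frac{2r}{N}.
$$

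Combining the two estimates by the triangle inequality, $\|\one_J\cdot a\|_{U^s(\Z_N)}\leq CN\delta/r+(2r/N)^{1/2^s}$, and it remains to choose $r$ to balance the two terms. Setting $r:=\lceil (\ve/2)^{2^s}N\rceil$ makes the second term at most $\ve/2$ (for $N$ large enough that $r<|J|/2$, which can be arranged once $N\geq N_0(s,\ve)$ if $|J|$ is non-negligible; otherwise $|J|\leq r$ and one handles this degenerate case by the trivial estimate $\|\one_J\cdot a\|_{U^s}^{2^s}\leq |J|/N$ directly). Then the first term is at most $\ve/2$ as soon as $\delta\leq c\,\ve^{2^s+1}$ for a suitable $c=c(s)>0$, and this gives the required $\delta(s,\ve)$. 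The only subtlety is the degenerate case where $J$ is too short to accommodate a trapezoid of width $r$, which is handled directly by the $L^1$-bound above without introducing $\phi$ at all; this forces the threshold $N_0(s,\ve)$ but introduces no genuine difficulty.
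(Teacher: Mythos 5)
Your argument is correct and is essentially the paper's own proof: both use a trapezoidal cutoff $\phi$ supported on $J$, bound $\norm{\phi\cdot a}_{U^s(\Z_N)}$ by $\norm{\wh\phi}_{\ell^1(\Z_N)}\norm{a}_{U^s(\Z_N)}$ via the phase-invariance of the $U^s$-norm, bound the remainder $(\one_J-\phi)\cdot a$ by the crude $L^1$ estimate raised to the power $2^{-s}$, and then optimize the width parameter. The only (immaterial) differences are that the paper chooses the width in terms of $\norm{a}_{U^s(\Z_N)}$ rather than in terms of $\ve$ and $N$, which absorbs your degenerate short-interval case automatically, and some constants in your final balancing need trivial adjustment.
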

\begin{proof}
Without loss, we can assume that $0<\norm a_{U^s(\Z_N)}<\frac{1}{4}$ and that the  length of
$J$ is an even number, say $2L$.  Furthermore, since the  $U^s(\Z_N)$-norm is invariant under translations we can assume that $J=[2L]$.

Let $l$ be an integer with $0<l< L$  that will be defined later.
Let $\phi:=\phi(l,L)$ be a ``trapezoid function'' on $\Z_N$ that increases linearly from $0$ to $1$ on the interval $[l]$, is  equal to $1$ between $l$ and $2L-l$, decreases linearly from $1$ to $0$ between $2L-l$ and $2L$, and is equal to $0$ between $2L$ and $N$.  This function is a variant of the function used in de la Vall\'ee-Poussin sums. Indeed, let
$\phi_1$ and $\phi_2$ be  the ``triangle functions'' of  height $1$ and of base $[0,2L]$  and $[l,2L-l]$, respectively. These functions are images under some translation of classical Fejer kernels on $\Z_N$ and thus $\norm{\wh\phi_1}_{l^1(\Z_N)}=\norm{\wh \phi_2}_{l^1(\Z_N)}=1$. Furthermore, for $n\in\Z_N$ we have
$$
\phi(n)=\frac L l\phi_1(n)-\frac {L-l}l\phi_2(n)
$$
and thus
\begin{equation}\label{E:2Ll}
\norm{\wh \phi}_{l^1(\Z_N)}\leq \frac{2L}{l}.
\end{equation}

Since the $U^s(\Z_N)$-norm is invariant under multiplication by $ \e(n\xi/N)$ for $s\geq 2$, using the triangle inequality
for the $U^s(\Z_N)$-norm and \eqref{E:2Ll} we get
\begin{equation}\label{E:phif1}
\norm{\phi\cdot a}_{U^s(\Z_N)}\leq \frac{2L}{l} \, \norm a_{U^s(\Z_N)}.
\end{equation}
Furthermore, since $\one_{[2L]}-\phi$ is supported on an interval of length $2l$ and is bounded by $1$, it follows that
\begin{equation}\label{E:phif2}
\norm{\one_{[2L]}\cdot a-\phi\cdot a}_{U^s(\Z_N)}
\leq
\Big(\frac{2l}{ N} \Big)^{2^{-s}}.
\end{equation}
Using \eqref{E:phif1}, \eqref{E:phif2}, and the triangle inequality for the $U^s(\Z_N)$-norm, we get
\begin{equation}\label{E:phif3}
\norm{\one_{[2L]}\cdot a}_{U^s(\Z_N)}\leq \frac{2L}{l}\, \norm a_{U^s(\Z_N)}+
\Big(\frac{2l}{ N}\Big)^{2^{-s}}.
\end{equation}
We choose
$l:=\lfloor 2L\cdot \norm a_{U^s(\Z_N)}^{2^s/(2^s+1)}\rfloor +1$. Since
$\norm a_{U^s(\Z_N)}<\frac{1}{4}$,  we get  $1\leq l\leq L$,
and \eqref{E:phif3}    together with the estimate $l\leq N \norm a_{U^s(\Z_N)}^{2^s/(2^s+1)}+1$ give the bound
$$
\norm{\one_{[2L]}\cdot a}_{U^s(\Z_N)}\leq 3 \norm a_{U^s(\Z_N)}^{1/(2^s+1)}+2N^{-2^{-s}}.
$$
The asserted result follows at once from this estimate.
\end{proof}

\subsection{Relations between the norms $U^s(\Z_N)$ and  $U^s[N]$} Our next goal is to show that the
 $U^s(\Z_N)$ and  $U^s[N]$ norms (both defined in Section~\ref{SS:Gowers}) are equivalent measures of randomness. We make this  precise in Lemma~\ref{lem:NormsUs} and Proposition~\ref{prop:UsN}.
We start with two preliminary lemmas.
\begin{lemma}
\label{cl:gowersMN}
Let $s,N, N^*\in\N$ with $s\geq 2$ and $N^*\geq N$ and let  $J\subset\Z_N$ be an interval of length smaller than $N/2$. Then, for every function $a\colon [N]\to \C$   we have
$$
\norm{\one_J\cdot a}_{U^s(\Z_N)}=\Bigl( \frac {N^*} N\Bigr)^{(s+1)/2^s}\norm{\one_J \cdot a}_{U^s(\Z_{N^*})}.
$$
\end{lemma}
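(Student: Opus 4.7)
The plan is to expand both $U^s$-norms as $2^s$-th roots of sums over parallelepipeds and show that the sums of non-zero terms coincide; only the normalization changes. Writing $b := \one_J \cdot a$, we have the standard Gowers formula
\[
\norm{b}_{U^s(\Z_M)}^{2^s} = \frac{1}{M^{s+1}} \sum_{x, t_1, \ldots, t_s \in \Z_M} \prod_{\omega \in \{0,1\}^s} \mathcal{C}^{|\omega|} b(x + \omega \cdot t),
\]
valid for any cyclic group $\Z_M$, where $\mathcal{C}$ denotes complex conjugation and $\omega\cdot t = \sum_i \omega_i t_i$. I will apply this with $M = N$ and $M = N^*$ and show the resulting sums (over all $(x,t_1,\ldots,t_s)$) are literally equal.

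The key observation is that since $b$ is supported in the interval $J\subset \Z_N$ of length $|J| < N/2 \le N^*/2$, a term in either sum can be non-zero only when all $2^s$ points $x+\omega\cdot t$ lie in $J$. In particular $x\in J$, and choosing $\omega$ to be the standard basis vectors forces each $t_i$ to be the difference of two elements of $J$. Lifting $J$ to an interval of integers (also of length $<N/2$), each non-zero tuple $(x,t_1,\ldots,t_s)$ admits a unique integer representative with $x$ in this interval and $|t_i|<|J|$, and this representative automatically satisfies $|t_i|<N^*/2$ as well. Therefore the natural map from this set of integer tuples to $\Z_N^{s+1}$ (resp.\ $\Z_{N^*}^{s+1}$) is injective in both cases, and the contributions are identical term by term. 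Calling the common value $S$, we obtain
\[
\norm{\one_J \cdot a}_{U^s(\Z_N)}^{2^s} = \frac{S}{N^{s+1}}, \qquad \norm{\one_J \cdot a}_{U^s(\Z_{N^*})}^{2^s} = \frac{S}{(N^*)^{s+1}}.
\]
Taking $2^s$-th roots and dividing yields the claimed identity with exponent $(s+1)/2^s$.

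There is no serious obstacle here: the only point requiring attention is the non-wraparound argument for the parallelepipeds, which is where the hypothesis $|J|<N/2$ is used. Everything else is a bookkeeping exercise comparing the normalizations $N^{-(s+1)}$ and $(N^*)^{-(s+1)}$.
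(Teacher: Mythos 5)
Your proof is correct, but it takes a genuinely different route from the paper's. The paper argues by induction on $s$ through the recursive definition $\norm{a}_{U^{s+1}(\Z_M)}^{2^{s+1}}=\E_{t\in\Z_M}\norm{a\,\overline{a_t}}_{U^s(\Z_M)}^{2^s}$: after translating $J$ to $[L]$, the product $a\,\overline{a_t}$ vanishes unless $|t|<L$, the surviving products computed in $\Z_N$ and in $\Z_{N^*}$ agree on $[L]$, and the induction hypothesis is applied to each of them; the exponent $(s+1)/2^s$ accumulates step by step. The advantage of that route is that wraparound only ever has to be ruled out in one difference parameter at a time, which is the trivial one-dimensional observation. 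You instead unfold the closed parallelepiped formula and match the two sums directly, so the whole identity reduces to a single counting statement plus the normalization $M^{-(s+1)}$; this is arguably more transparent but puts the full burden on a multidimensional non-wraparound claim.

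That claim is the one step you pass over too quickly. Taking $\omega=0$ and $\omega=e_i$ pins down integer lifts $\tilde x\in\tilde J$ and $|\tilde t_i|<L:=|J|$, as you say, but for $|\omega|\ge 2$ the integer $\tilde x+\omega\cdot\tilde t$ can have size up to roughly $sL$, which exceeds $M/2$ once $s\ge 3$ and $L$ is close to $N/2$; so it is not automatic that ``$x+\omega\cdot t\bmod M$ lies in $J$'' forces ``$\tilde x+\omega\cdot\tilde t$ lies in $\tilde J$ as an integer'', and without that the set of contributing tuples could depend on the modulus. The statement is nevertheless true, but its proof must use that \emph{all} $2^s$ vertices lie in $J$ modulo $M$: induct on $|\omega|$, writing $\tilde x+\omega\cdot\tilde t=(\tilde x+\omega'\cdot\tilde t)+\tilde t_i$ with $\omega'=\omega-e_i$. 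By the inductive hypothesis the first summand lies in $\tilde J$, so the sum lies in the window $\tilde J+(-L,L)$, and since $2L<M$ the translates $\tilde J\pm M$ are disjoint from this window; hence the only way the sum can be congruent mod $M$ to an element of $J$ is to lie in $\tilde J$ itself. With this two-line induction inserted, your bijection between the non-zero terms of the two sums is complete and the rest of the argument goes through.
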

\begin{proof}
The proof goes by induction on $s$. The result is obvious for $s=1$. Suppose that the result holds for $s\geq 1$; we are going to show that it holds for $s+1$.
Substituting $\one_J\cdot a$ for $a$, we can (and will) assume henceforth that $a$ vanishes outside $J$.  Since  the Gowers norms
are invariant under translation, after shifting the interval $J$ to the left we can assume  that $J=[L]$ for some integer $L$ with $0<L\leq N/2$.

For convenience, we  identify $\Z_N$ and
$\Z_{N^*}$ with the intervals $I_N:=[-\lceil N/2\rceil, \lfloor N/2\rfloor) $ and  $I_N^*:=\bigl[-\lceil N^*/2\rceil, \lfloor N^*/2\rfloor\bigr)$ respectively.
For $t\in\Z_N$ we let $a_t\colon \Z_N\to \C$ be defined by
$a_t(n):=a(t+n \bmod N)$, and  for  $t\in\Z_N^*$ we let $a_t\colon \Z_N\to \C$ be defined by
$a_t(n):=a(t+n \bmod{ N^*})$.
Keeping in mind that the function $a$ vanishes outside $[L]$ we see that the following properties hold:
\begin{enumerate}
\item \label{it:a1}
If $t\in I_N^*$ and $t\notin I_N$, then the function $a\overline{a_t^*}$ is identically zero.
\item \label{it:a2}
 If $t\in I_N$ and  $|t|\geq L$, then the  functions $a\overline{a_t^*}$ and  $a\overline{a_t}$ vanish.
\item \label{it:a3}
If $|t|<L$, the functions $a\overline{a_t^*}$  and $a\overline{a_t}$ vanish outside
$[L]$ and coincide for $n\in[L]$.
\end{enumerate}

Therefore,
$$
\norm a_{U^{s+1}(\Z_N)}^{2^{s+1}}
=\E_{t\in I_N}\norm{a\, \overline{a_t}}_{U^s(\Z_N)}^{2^s}
=\frac 1{N}\sum_{|t|<L}\norm{a\, \overline{a_t}}_{U^s(\Z_N)}^{2^s}
$$
where the last equality follows from  Property~\eqref{it:a2}.
Using Property~\eqref{it:a3} and  the induction hypothesis, we see that the last quantity is equal to
$$
\frac 1{N}\sum_{|t|<L}\norm{a\, \overline{a_t^*}}_{U^s(\Z_N)}^{2^s}=\frac 1{N}\sum_{|t|<L}\Bigl(\Bigl(\frac {N^*}N\Bigr)^{(s+1)/2^s}\norm{a\, \overline{a^*_t}}_{U^s(\Z_{N^*})}\Bigr)^{2^s},
$$
which in turn,  by Properties~\eqref{it:a1} and~\eqref{it:a2},   is equal to
$$\Bigl(\frac {N^*}N\Bigr)^{s+2}\E_{t\in I_{N^*}}\norm{a\, \overline{a_t^*}}_{U^s(\Z_{N^*})}
=\Bigl(\frac {N^*}N\Bigr)^{s+2}\norm a_{U^{s+1}(\Z_{N^*})}^{2^{s+1}}.
$$
This completes the induction and the proof.
\end{proof}

\begin{lemma}
\label{L:norm-N}
If $N^*\geq N$, then for every $s\geq 2$ we have
$$
\norm{\one_{[N]}}_{U^s(\Z_{N^*})}\geq \frac N{N^*}.
$$
\end{lemma}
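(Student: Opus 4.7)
The plan is to deduce the lemma directly from the monotonicity of the Gowers norms stated in \eqref{E:UkIncreases}, together with a one-line computation of the $U^1$-norm of $\one_{[N]}$.

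First, I would compute the base case $s=1$ explicitly. By the definition of the $U^1$-norm as the modulus of the mean,
\[
\norm{\one_{[N]}}_{U^1(\Z_{N^*})}
=\bigl|\E_{n\in\Z_{N^*}}\one_{[N]}(n)\bigr|
=\frac{N}{N^*},
\]
since $[N]$ is viewed as a subset of $\Z_{N^*}$ of cardinality $N$.

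Next, I would invoke \eqref{E:UkIncreases}, which asserts that $\norm{a}_{U^{s+1}(\Z_{N^*})}\geq\norm{a}_{U^s(\Z_{N^*})}$ for every $s\geq 1$. Applying this inequality iteratively $s-1$ times with $a:=\one_{[N]}$ yields the telescoping chain
\[
\norm{\one_{[N]}}_{U^s(\Z_{N^*})}
\geq \norm{\one_{[N]}}_{U^{s-1}(\Z_{N^*})}
\geq \dots
\geq \norm{\one_{[N]}}_{U^1(\Z_{N^*})}
=\frac{N}{N^*},
\]
which is exactly the claimed bound.

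There is no real obstacle here: both ingredients (the explicit $U^1$-value and the monotonicity \eqref{E:UkIncreases}) have already been recorded in the paper, so the argument amounts to assembling them. The only point worth noting is that \eqref{E:UkIncreases} is stated for all $s\in\N$, so its use starting from $s=1$ is legitimate even though $\norm{\cdot}_{U^1(\Z_{N^*})}$ is merely a seminorm.
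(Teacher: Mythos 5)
Your proof is correct and is essentially identical to the paper's own argument: the paper likewise invokes the monotonicity property \eqref{E:UkIncreases} to reduce to the $U^1$-norm and then computes $\norm{\one_{[N]}}_{U^1(\Z_{N^*})}=N/N^*$. Nothing is missing.
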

\begin{proof}
 By the monotonicity  property \eqref{E:UkIncreases} we have
$$
\norm{\one_{[N]}}_{U^s(\Z_{N^*})}\geq \norm{\one_{[N]}}_{U^1(\Z_{N^*})}=
\frac N{N^*}
$$
as required.\end{proof}

%

\begin{lemma}
\label{lem:NormsUs}
Let $s\geq 2$ be an integer and $\ve>0$. There exists
$\delta:=\delta(s,\ve)>0$ and $N_0:=N_0(s,\ve)>0$  such that for every integer $N\geq N_0$ and every function $a\colon [N]\to \C$ with $|a|\leq 1$ we have
\begin{gather*}
\text{if }\norm a_{U^s[N]}\leq\delta\text{ then }\norm a_{U^s(\Z_N)}\leq\ve;\\
\text{if }\norm a_{U^s(\Z_N)}\leq\delta\text{ then }\norm a_{U^s[N]}\leq\ve.
\end{gather*}
\end{lemma}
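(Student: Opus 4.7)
The plan is to bridge the two norms through a common intermediate object: the $U^s$-norm of $\one_J\cdot a$ for short subintervals $J\subset[N]$ of length strictly less than $N/2$. For such short intervals, Lemma~\ref{cl:gowersMN} already identifies $\norm{\one_J\cdot a}_{U^s(\Z_N)}$ and $\norm{\one_J\cdot a}_{U^s(\Z_{N^*})}$ up to the explicit factor $(N^*/N)^{(s+1)/2^s}$, and Lemma~\ref{lem:UsLN} says that smallness of the unrestricted $U^s$-norm on any cyclic group propagates to all such restrictions. Combined with the lower bound $\norm{\one_{[N]}}_{U^s(\Z_{N^*})}\geq N/N^*$ from Lemma~\ref{L:norm-N} (which controls the denominator in the definition of the $U^s[N]$-norm), this should yield the claimed equivalence with constants depending only on $s$. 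Throughout, I would fix once and for all $N^*:=2N+1$, so that $N^*/N\leq 3$ and the scaling factor in Lemma~\ref{cl:gowersMN} is bounded by a constant $C_s$ depending only on $s$; independence of $N^*$ in the definition of $\norm\cdot_{U^s[N]}$ makes this choice harmless.

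First I would fix a partition $[N]=J_1\cup J_2\cup J_3$ into three intervals each of length strictly less than $N/2$ (e.g.\ cutting at $\lfloor N/3\rfloor$ and $\lfloor 2N/3\rfloor$). For the implication $\norm a_{U^s(\Z_N)}\leq\delta\Rightarrow\norm a_{U^s[N]}\leq\ve$, I would apply Lemma~\ref{lem:UsLN} to $a$ on $\Z_N$ to get $\norm{\one_{J_i}\cdot a}_{U^s(\Z_N)}\leq\ve_1$ for $i=1,2,3$, where $\ve_1=\ve_1(s,\delta)\to 0$ as $\delta\to 0$ (provided $N$ is large enough). Transferring each $J_i$-restricted norm to $\Z_{N^*}$ by Lemma~\ref{cl:gowersMN} gives $\norm{\one_{J_i}\cdot a}_{U^s(\Z_{N^*})}\leq C_s\ve_1$, and then the triangle inequality $\norm{\one_{[N]}\cdot a}_{U^s(\Z_{N^*})}\leq\sum_i\norm{\one_{J_i}\cdot a}_{U^s(\Z_{N^*})}$ combined with Lemma~\ref{L:norm-N} ($\norm{\one_{[N]}}_{U^s(\Z_{N^*})}\geq 1/3$) yields $\norm a_{U^s[N]}\leq 9C_s\ve_1$, which is $\leq\ve$ for $\delta$ small enough.

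The converse direction $\norm a_{U^s[N]}\leq\delta\Rightarrow\norm a_{U^s(\Z_N)}\leq\ve$ runs along exactly the same bridge but read in reverse. From $\norm a_{U^s[N]}\leq\delta$ and $\norm{\one_{[N]}}_{U^s(\Z_{N^*})}\leq 1$ I get $\norm{\one_{[N]}\cdot a}_{U^s(\Z_{N^*})}\leq\delta$. Applying Lemma~\ref{lem:UsLN} now to the function $\one_{[N]}\cdot a$ on $\Z_{N^*}$ produces $\norm{\one_{J_i}\cdot(\one_{[N]}\cdot a)}_{U^s(\Z_{N^*})}\leq\ve_1$, and since $J_i\subset[N]$ we have $\one_{J_i}\cdot\one_{[N]}=\one_{J_i}$, so this reads $\norm{\one_{J_i}\cdot a}_{U^s(\Z_{N^*})}\leq\ve_1$. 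Transferring back to $\Z_N$ via Lemma~\ref{cl:gowersMN} gives $\norm{\one_{J_i}\cdot a}_{U^s(\Z_N)}\leq C_s\ve_1$, and using the identification $[N]=\Z_N$ so that $a=\sum_i\one_{J_i}\cdot a$ on $\Z_N$, the triangle inequality gives $\norm a_{U^s(\Z_N)}\leq 3C_s\ve_1\leq\ve$.

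There is no real obstacle here beyond bookkeeping: the only essential point is the partition of $[N]$ into three pieces of length $<N/2$, which is forced by the length hypothesis in Lemma~\ref{cl:gowersMN}; a naive two-piece split at $N/2$ would not qualify. Once that geometric choice is made, both implications are symmetric applications of Lemmas~\ref{cl:gowersMN}, \ref{L:norm-N}, and~\ref{lem:UsLN}, and the size conditions $N\geq N_0(s,\ve)$ arise only through the hypothesis of Lemma~\ref{lem:UsLN}, applied either to $a$ on $\Z_N$ or to $\one_{[N]}\cdot a$ on $\Z_{N^*}$.
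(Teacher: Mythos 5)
Your argument is correct and follows essentially the same route as the paper's proof: partition $[N]$ into three intervals of length less than $N/2$, use Lemma~\ref{lem:UsLN} on the appropriate cyclic group, transfer between $\Z_N$ and $\Z_{N^*}$ via Lemma~\ref{cl:gowersMN}, and control the normalizing factor with Lemma~\ref{L:norm-N}. The only cosmetic difference is your choice $N^*=2N+1$ versus the paper's $N^*=3N$, which is immaterial since the $U^s[N]$-norm is independent of $N^*>2N$; just make sure the quantifiers are set up by first fixing $\ve_1:=\ve/(9C_s)$ and then taking $\delta:=\delta(s,\ve_1)$ from Lemma~\ref{lem:UsLN}.
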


\begin{proof}
Let  $\delta:= \delta(s,\ve/9 )$, $N_0:= N_0(s,\ve/9 )$ be defined as in Lemma~\ref{lem:UsLN}. Let $N\geq N_0$ be an integer and  $a\colon [N]\to \C$ be a function with $|a|\leq 1$.

Suppose  first that $\norm a_{U^s[N]}\leq\delta$.
 By Lemma~\ref{cl:gowersMN} and the definition of the $U^s[N]$-norms we have
$$
\norm {\one_{[N]}\cdot a}_{U^s(\Z_{3N})}= \norm{\one_{[N]}}_{U^s(\Z_{3N})}\cdot \norm a_{U^s[N]}\leq \norm a_{U^s[N]}\leq \delta.
$$
We partition the interval $[N]$ into three intervals of length less than $N/2$. If $J$ is any of these intervals, by  Lemma~\ref{lem:UsLN} applied to the function $\one_{[N]}\cdot a$ and the definition of $\delta$ we have $\norm{\one_J \cdot a}_{U^s(\Z_{3N})}\leq\ve/9$.
By Lemma~\ref{cl:gowersMN} we have
\begin{equation}\label{E:UsJ}
\norm{\one_J \cdot a}_{U^s(\Z_N)}\leq 3^{(s+1)/2^s}\ve/9\leq\ve/3.
\end{equation}
Taking the sum of these estimates for the three intervals $J$ that  partition of $[N]$ we get $\norm a_{U^s(\Z_N)}\leq\ve$.

Suppose now that $\norm a_{U^s(\Z_N)}\leq\delta$. As above,
we  partition the interval $[N]$ into three intervals of length less than $N/2$. If $J$ is any of these intervals, by  Lemma~\ref{lem:UsLN} and  the definition of $\delta$ we have
$\norm{\one_J \cdot a}_{U^s(\Z_N)}\leq \ve/9$. Hence, by Lemma~\ref{cl:gowersMN}
we have $\norm{\one_J \cdot a}_{U^s(\Z_{3N})}=3^{-(s+1)/2^s}\norm{\one_J \cdot a}_{U^s(\Z_N)}\leq \ve/9$.
Lastly, note that the definition of the  $U^s[N]$-norm and Lemma~\ref{L:norm-N} give
$\norm{\one_J \cdot a}_{U^s[N]}\leq \ve/3$. Taking the sum of these estimates for the three intervals $J$ that partition  $[N]$ we deduce that $\norm a_{U^s[N]}\leq\ve$. This completes the proof.
\end{proof}

\begin{proposition}
\label{prop:UsN}
Let $s\geq 2$ and $a\colon\N\to \C$ be bounded. Then the following properties are equivalent:
\begin{enumerate}
\item \label{it:normintervals}
$\norm a_{U^s[N]}\to 0$ as $N\to+\infty$;
\item\label{it:normcyclic}
$\norm a_{U^s(\Z_N)}\to 0$ as $N\to+\infty$;
\item
\label{it:normsoussuite}
there exists $C>1$ and a sequence $(N_j)$ of integers
with $N_j<N_{j+1}\leq CN_j$ for every $j\in \N$ such that
$$
\norm a_{U^s[N_j]}\to 0\text{ as }j\to+\infty.
$$
\end{enumerate}
\end{proposition}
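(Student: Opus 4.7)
The equivalence \eqref{it:normintervals}~$\Leftrightarrow$~\eqref{it:normcyclic} follows directly from Lemma~\ref{lem:NormsUs}, which already establishes the required quantitative comparison between the two types of Gowers norms. The implication \eqref{it:normintervals}~$\Rightarrow$~\eqref{it:normsoussuite} is immediate upon taking $N_j:=j$ and $C:=2$, since then $N_{j+1}/N_j\leq 2$ for all $j\geq 1$.

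The substantive content is therefore the implication \eqref{it:normsoussuite}~$\Rightarrow$~\eqref{it:normintervals}. The plan is to exploit the freedom in the choice of the auxiliary modulus $N^{*}>2N$ in Definition~\ref{def:Us-interv} in order to compare $\norm a_{U^s[N]}$ and $\norm a_{U^s[N_{j+1}]}$ on a common cyclic group, and then invoke the subinterval stability result of Lemma~\ref{lem:UsLN}. Fix $\ve>0$ and let $N$ be large. Choose the unique index $j$ with $N_j\leq N\leq N_{j+1}$, and pick an integer $M$ with $M>2N_{j+1}$, for instance $M:=2N_{j+1}+1$; then automatically $M>2N$ as well, and the ratio $M/N$ is bounded by $2C+1$ since $N_{j+1}\leq CN_j\leq CN$.

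With this common modulus, I would argue as follows. Since $|\one_{[N_{j+1}]}|\leq 1$ the trivial bound $\norm{\one_{[N_{j+1}]}}_{U^s(\Z_M)}\leq 1$ holds, and Definition~\ref{def:Us-interv} applied with $N^{*}:=M$ gives
$$
\norm{\one_{[N_{j+1}]}\cdot a}_{U^s(\Z_M)}
=\norm{\one_{[N_{j+1}]}}_{U^s(\Z_M)}\cdot\norm a_{U^s[N_{j+1}]}
\leq\norm a_{U^s[N_{j+1}]},
$$
and the right-hand side tends to $0$ as $j\to+\infty$ by hypothesis~\eqref{it:normsoussuite}. Applying Lemma~\ref{lem:UsLN} on $\Z_M$ to the function $b:=\one_{[N_{j+1}]}\cdot a$ and the subinterval $[N]\subset [M]$ (noting that $\one_{[N]}\cdot b=\one_{[N]}\cdot a$ since $N\leq N_{j+1}$), one can make $\norm{\one_{[N]}\cdot a}_{U^s(\Z_M)}$ smaller than any prescribed positive quantity, say $\ve/(2C+1)$, provided $j$ is large enough in terms of $s$, $\ve$, and $C$. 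Dividing by $\norm{\one_{[N]}}_{U^s(\Z_M)}\geq N/M\geq (2C+1)^{-1}$, which is Lemma~\ref{L:norm-N}, and using Definition~\ref{def:Us-interv} once more yields $\norm a_{U^s[N]}\leq\ve$, as required.

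There is no real obstacle in this argument; it is essentially a bookkeeping exercise using the lemmas already proved in this appendix. The only points requiring mild attention are ensuring that the same modulus $M$ is admissible simultaneously for $\norm a_{U^s[N]}$ and $\norm a_{U^s[N_{j+1}]}$, and tracking that all constants depend only on $s$, $\ve$, and the universal ratio bound $C$, which is straightforward given that $N_{j+1}/N_j$ is uniformly bounded by $C$.
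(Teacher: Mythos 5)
Your proposal is correct and follows essentially the same route as the paper's proof: the two nontrivial implications are handled by Lemma~\ref{lem:NormsUs} and, for \eqref{it:normsoussuite}~$\Rightarrow$~\eqref{it:normintervals}, by placing $[N]$ and the nearest larger $N_{j}$ (you use $N_{j+1}$, the paper uses the smallest $N_j\geq N$) inside a common cyclic group of comparable size, applying Lemma~\ref{lem:UsLN}, and dividing out $\norm{\one_{[N]}}_{U^s(\Z_M)}\geq N/M$ via Lemma~\ref{L:norm-N}. Only cosmetic differences remain (choice of modulus $2N_{j+1}+1$ versus $3N_j$, the constant $\ve/(2C+1)$ versus $\ve/(3C)$, and the implicit normalization $|a|\leq 1$ needed to invoke Lemma~\ref{lem:UsLN}).
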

\begin{proof}
The equivalence between  \eqref{it:normintervals} and \eqref{it:normcyclic} is given by  Lemma~\ref{lem:NormsUs}.

The implication  \eqref{it:normintervals} $\implies$ \eqref{it:normsoussuite} is obvious.
 We show that
 \eqref{it:normsoussuite} $\implies$ \eqref{it:normintervals}.
 Let $(N_j)$ and $C$ be as in the statement. For given  $\ve>0$
let $\delta:=\delta(s,\ve/3C)$ and $N_0:=N_0(s,\ve/3C)$ be given by Lemma~\ref{lem:UsLN}.
 Let $j_0$ be such that  $\norm a_{U^s[N_j]}\leq\delta$ for $j\geq j_0$.

Let $N\geq \max\{N_{j_0},N_0\}$ be an integer. Let $j$ be the smallest integer such that $N_j\geq N$. By hypothesis, $j\geq j_0$ and $N_j\leq CN$. Let $\wt N_j:=3N_j$.
By  Lemma~\ref{cl:gowersMN} we have
$$
\norm {1_{[N_j]}a}_{U^s(\Z_{\wt N_j})}=\norm{\one_{[N_j]}}_{U^s(\Z_{\wt N_j})}\cdot\norm a_{U^s[N_j]}
\leq \norm a_{U^s[N_j]}\leq\delta.
$$
Thus, by the definition of $\delta$, we have $\norm{\one_{[N]} \cdot a}_{U^s(\Z_{\wt N_j})}\leq\ve/3C$.
Combining this and   Lemmas~\ref{cl:gowersMN} and \ref{L:norm-N} we get
$$
\norm a_{U^s[N]}=\norm{\one_{[N]}}_{U^s(\Z_{\wt N_j})}\inv\cdot  \norm{\one_{[N]}\cdot  a}_{U^s(\Z_{\wt N_j})}
\leq \frac{\wt N_j}N \cdot \norm{\one_{[N]}\cdot  a}_{U^s(\Z_{\wt N_j})}
\leq \frac{\wt N_j}N \cdot\frac{\ve}{3C}\leq\ve.
$$
Hence, $\limsup_{N\to +\infty} \norm a_{U^s[N]}\leq\ve$.  As $\ve$ is arbitrary, we get~\eqref{it:normintervals}, completing the proof.
\end{proof}

\subsection{Some estimates involving Gowers norms}
We record here  two easy  estimates that were used  in the main text.
\begin{lemma}
\label{lem:Us-intels}
 There exists a constant $C>0$ such that for every prime number   $N$, function $a\colon \Z_N\to \C$, and arithmetic progression $P$ contained in the
interval $[N]$, we have
$$
\label{eq:normU1}
 \big|\E_{n\in [N]}\one_P(n)\cdot
 a(n) \big|\leq
 C\norm{a}_{U^2(\Z_N)}.
$$
\end{lemma}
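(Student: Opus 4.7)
The plan is to argue entirely in Fourier space on $\Z_N$, using the fact (from \eqref{eq:U2Fourier}) that the $U^2(\Z_N)$-norm is exactly the $\ell^4$-norm of the Fourier transform. Since $P\subset[N]$, by Plancherel we have
$$
\E_{n\in[N]}\one_P(n)\,a(n)=\E_{n\in\Z_N}\one_P(n)\,a(n)=\sum_{\xi\in\Z_N}\widehat{\one_P}(\xi)\,\overline{\widehat{\overline{a}}(\xi)},
$$
so  H\"older's inequality with conjugate exponents $4/3$ and $4$ gives
$$
\bigl|\E_{n\in[N]}\one_P(n)\,a(n)\bigr|\leq \norm{\widehat{\one_P}}_{\ell^{4/3}(\Z_N)}\cdot\norm{\widehat{\overline{a}}}_{\ell^4(\Z_N)}=\norm{\widehat{\one_P}}_{\ell^{4/3}(\Z_N)}\cdot\norm{a}_{U^2(\Z_N)},
$$
where the last equality uses $|\widehat{\overline a}(\xi)|=|\widehat a(-\xi)|$ together with \eqref{eq:U2Fourier}. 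Thus it suffices to prove that $\norm{\widehat{\one_P}}_{\ell^{4/3}(\Z_N)}$ is bounded by an absolute constant $C$, uniformly in $N$ (prime), $L:=|P|\leq N$, and $P$.

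Next I would reduce to the special case where $P$ is an interval. Write $P=\{a_0+jd\colon 0\leq j<L\}\subset\Z_N$ for some common difference $d$. Because $P\subset[N]$ has $L\leq N$ elements and $N$ is prime, $d$ is invertible in $\Z_N$, and a direct computation yields $|\widehat{\one_P}(\xi)|=|\widehat{\one_J}(d\xi)|$ for the reference interval $J:=\{0,1,\dots,L-1\}$. Since $\xi\mapsto d\xi$ is a bijection of $\Z_N$, we have
$$
\norm{\widehat{\one_P}}_{\ell^{4/3}(\Z_N)}=\norm{\widehat{\one_J}}_{\ell^{4/3}(\Z_N)},
$$
so the lemma is reduced to a single Fourier estimate for the indicator of an interval.

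The final step is the standard geometric-sum bound: summing the geometric series defining $\widehat{\one_J}(\xi)$ and using $|\sin(\pi x)|\geq 2\norm x$ gives
$$
|\widehat{\one_J}(\xi)|\leq \frac{1}{N}\min\!\Bigl(L,\frac{1}{4\norm{\xi/N}}\Bigr)\quad\text{for all }\xi\in\Z_N.
$$
I would then split the $\ell^{4/3}$-sum according to whether $N\norm{\xi/N}$ is smaller or larger than $N/L$. The ``low frequency'' piece contributes $O(N/L)$ terms of size at most $L/N$, giving $O(L^{1/3}N^{-1/3})$; the ``high frequency'' piece is controlled by $\sum_{k>N/L}(N/k)^{4/3}/N^{4/3}=O(L^{1/3}N^{-1/3})$. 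Combining, $\norm{\widehat{\one_J}}_{\ell^{4/3}(\Z_N)}^{4/3}\leq C\,(L/N)^{1/3}\leq C$, as desired. The calculation is entirely routine; the only structural point that has to be exploited is the primality of $N$ in the change-of-variable step, which I would view as the  key (but very mild) obstacle to a purely general statement.
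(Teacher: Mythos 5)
Your proof is correct and is essentially the paper's own argument: both reduce to the case of an interval using the primality of $N$ (the paper by applying an affine change of variables under which the $U^2(\Z_N)$-norm is invariant, you by transferring the same change of variables to $\widehat{\one_P}$), and both then combine the standard bound $|\widehat{\one_J}(\xi)|\ll \min(L, \norm{\xi/N}^{-1})/N$ with Parseval, H\"older with exponents $4/3$ and $4$, and the identity $\norm a_{U^2(\Z_N)}=\norm{\wh a}_{\ell^4(\Z_N)}$. The only quibble is the harmless constant in your geometric-sum bound (it should be $1/(2N\norm{\xi/N})$ rather than $1/(4N\norm{\xi/N})$), which does not affect the conclusion.
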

\begin{proof}
If $N$ is a prime, since the $U^2(\Z_N)$-norm of a function on $\Z_N$ is invariant under any change of variables of the form $x\mapsto kx+l \bmod N$, where $k,l\in\Z_N$ with $k\neq 0 \bmod{\Z_N}$, we  can   reduce matters to the case where
 $P=\{0,\dots,m\}$ for some  $m\in \{0,\ldots, N-1\}$, considered as a subset of $\Z_N$.
In this case, a direct computation shows that
$$
 |\widehat{\one_P}(\xi)|\leq
\frac{2}{N||\xi/N||}= \frac{2}{\min\{\xi,N-\xi\}}  \quad \text{for
}\ \xi=1,\ldots,N-1,
$$
and as a consequence
$$
\norm{\widehat{\one_P}(\xi)}_{l^{4/3}(\Z_N)}
\leq C
$$
for some universal constant $C$. Using this estimate,  Parseval's
identity,   H\"older's inequality, and identity \eqref{eq:U2Fourier},
we deduce that
$$
\big|\E_{n\in [N]}\one_P(n)\cdot a(n) \big|=\big|\sum_{\xi\in [N]}\widehat \one_P(\xi)\cdot \widehat a(\xi) \big|\leq
C\cdot\Bigl(\sum_{\xi\in[N]} |\widehat a(\xi)|^4\Bigr)^{1/4}=
C\,\norm a_{U^2(\Z_N)}.\qed
$$
\renewcommand{\qed}{}
\end{proof}
\begin{lemma}
\label{lem:U2ent}
There exists a constant $C>0$ such that for every $N\in\N$ and function $a\colon\Z_N\to \C$ we have
$$
\sup_{t\in\R} \bigl|\E_{n\in[N]} a(n)\, \e(nt)\bigr|\leq C\norm a_{U^2(\Z_N)}.
$$
\end{lemma}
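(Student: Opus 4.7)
I would adapt the Fourier-analytic approach of Lemma~\ref{lem:Us-intels}: express the inner product against the phase $\e(nt)$ via Parseval's identity, apply H\"older's inequality to separate an $\ell^4$-factor (equal to the $U^2$-norm of $a$) from an $\ell^{4/3}$-factor involving the Fourier transform of the phase, and then bound this second factor uniformly in $t$. Concretely, identifying $[N]$ with $\Z_N$ and setting $\psi_t(n):=\e(-nt)$, I would write
$$
\E_{n\in[N]}a(n)\,\e(nt)=\E_{n\in\Z_N}a(n)\,\overline{\psi_t(n)}=\sum_{\xi\in\Z_N}\wh a(\xi)\,\overline{\wh{\psi_t}(\xi)},
$$
and then invoke H\"older together with identity~\eqref{eq:U2Fourier} to obtain
$$
\Bigl|\E_{n\in[N]}a(n)\,\e(nt)\Bigr|\le\norm{\wh a}_{\ell^4(\Z_N)}\cdot\norm{\wh{\psi_t}}_{\ell^{4/3}(\Z_N)}=\norm{a}_{U^2(\Z_N)}\cdot\norm{\wh{\psi_t}}_{\ell^{4/3}(\Z_N)}.
$$
The entire lemma thus reduces to proving a uniform bound $\norm{\wh{\psi_t}}_{\ell^{4/3}(\Z_N)}\le C$ for all $N\in\N$ and all $t\in\R$.

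Next I would establish this uniform bound by a direct computation. The geometric sum
$$
\wh{\psi_t}(\xi)=\frac{1}{N}\sum_{n=1}^{N}\e(-n(t+\xi/N))
$$
has modulus bounded by $1$ trivially, and by $(2N\norm{t+\xi/N})^{-1}$ when $t+\xi/N\notin\Z$, using the elementary estimate $|\sin(\pi x)|\ge 2\norm{x}$. The crucial point is that as $\xi$ ranges over $\Z_N$, the points $t+\xi/N$ form an arithmetic progression in $\T$ with common step $1/N$; consequently at most one value of $\xi$ gives $\norm{t+\xi/N}\le 1/(2N)$, and for each $k=1,\dots,\lfloor N/2\rfloor$ at most two values of $\xi$ give $\norm{t+\xi/N}\in((k-\tfrac12)/N,(k+\tfrac12)/N]$.

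Putting these observations together yields
$$
\sum_{\xi\in\Z_N}\bigl|\wh{\psi_t}(\xi)\bigr|^{4/3}\le 1+2\sum_{k=1}^{\lfloor N/2\rfloor}\frac{1}{(2k-1)^{4/3}},
$$
which is bounded by a universal constant because $4/3>1$. The only mild subtlety relative to Lemma~\ref{lem:Us-intels} is that $t$ is allowed to be an arbitrary real rather than a rational of the form $p/N$, and this is precisely what the uniform Dirichlet-kernel estimate above is designed to handle; beyond that, no real obstacle arises, and the constant $C$ obtained is independent of both $t$ and $N$.
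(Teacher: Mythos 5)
Your proposal is correct and follows essentially the same route as the paper: Parseval, H\"older with exponents $4$ and $4/3$, identity~\eqref{eq:U2Fourier}, and a uniform bound on $\norm{\wh{\psi_t}}_{\ell^{4/3}(\Z_N)}$, which the paper dismisses as "a direct computation" and which your Dirichlet-kernel estimate supplies in full. No gaps.
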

\begin{proof}
Writing $\phi_t(n):=\e(nt)$,  a direct computation gives that
$\norm{\wh{\phi_t}}_{l^{4/3}(\Z_N)}\leq C$ for some universal constant $C$, and the result follows as above from Parseval's
identity,   H\"older's inequality, and identity \eqref{eq:U2Fourier}.
\end{proof}

\section{Rational elements in a  nilmanifold}
\label{ap:A}
We collect here some properties of rational elements and rational subgroups.
Additional relevant  material can be found in \cite{GT12a} and in \cite{L06}.

Let $X:=G/\Gamma$ be an $s$-step nilmanifold of dimension $m$. As
everywhere in this article we assume that $G$ is connected and
simply connected, and endowed with a Mal'cev basis. Recall that we
write $e_X$ for the image in $X$ of the unit element  $\one_G$ of
$G$.

From  Properties~\eqref{it:mal3} and \eqref{it:MalcevGamma} of Mal'cev bases stated in
Section~\ref{subsec:nilmanifolds}, we immediately deduce:
\begin{lemma}
\label{lem:finite-Gamma}
 The group $\Gamma$ is finitely generated.
\end{lemma}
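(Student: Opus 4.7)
The plan is to exhibit an explicit finite generating set for $\Gamma$ coming directly from the Mal'cev basis. Set
\[
\gamma_i := \exp(\xi_i), \quad i=1,\ldots,m.
\]
First I would verify that each $\gamma_i$ lies in $\Gamma$. Indeed, by Property~(iii) of the Mal'cev basis, $\gamma_i$ has coordinates $(0,\ldots,0,1,0,\ldots,0)$ (with a $1$ in the $i^{\text{th}}$ slot), and since all these coordinates are integers, Property~(iv) places $\gamma_i$ in $\Gamma$.

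Next, I would show that the set $\{\gamma_1,\ldots,\gamma_m\}$ generates $\Gamma$. Fix $\gamma\in\Gamma$ and let $(t_1,\ldots,t_m)\in\R^m$ be its coordinates in the Mal'cev basis, so that
\[
\gamma=\exp(t_1\xi_1)\exp(t_2\xi_2)\cdots\exp(t_m\xi_m).
\]
By Property~(iv), all $t_i$ lie in $\Z$. For each one-parameter subgroup $\{\exp(t\xi_i)\colon t\in\R\}$ of $G$ we have $\exp(t_i\xi_i)=\gamma_i^{t_i}$ when $t_i\in\Z$, so
\[
\gamma=\gamma_1^{t_1}\gamma_2^{t_2}\cdots\gamma_m^{t_m},
\]
which exhibits $\gamma$ as a word in the $\gamma_i^{\pm 1}$. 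Hence $\Gamma$ is generated by the finite set $\{\gamma_1,\ldots,\gamma_m\}$.

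There is essentially no obstacle here: the two quoted properties of the Mal'cev basis were designed precisely to make this identification between $\Gamma$ and $\Z^m$ (as a set, via coordinates) transparent, and the only substantive point is the elementary observation that $\exp(t\xi_i)=\exp(\xi_i)^t$ for $t\in\Z$, which holds because $t\mapsto\exp(t\xi_i)$ is a one-parameter subgroup of $G$.
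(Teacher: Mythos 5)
Your proof is correct and is exactly the argument the paper has in mind: the paper simply states that the lemma follows "immediately" from Properties~(iii) and (iv) of the Mal'cev basis, and your write-up spells out that immediate deduction, with $\exp(\xi_1),\dots,\exp(\xi_m)$ as the finite generating set.
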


\subsection{Rational elements}
We recall that an element $g\in G$ is $Q$-rational if
$g^n\in\Gamma$ for some  $n\in \N$ with $ n\leq Q$.
  We note that all quantities  introduced  below depend implicitly on the nilmanifold $X$.

\begin{lemma}[{\cite[Lemma A.11]{GT12a}}]
\label{lem:rational}
\begin{enumerate}
\item
\label{it:rat1} For every $Q\in\N$ there exists $Q'\in\N$ such that
the product of any two $Q$-rational elements is $Q'$-rational; it
follows  that the set of rational elements is a subgroup of $G$.

\item
\label{it:rat2}
For every $Q\in \N$ there exists $Q'\in\N$ such that the Mal'cev coordinates of any $Q$-rational element are rational with denominators at most $Q'$; it follows that the set of $Q$-rational elements is a discrete subset of $G$.
\item
\label{it:rat3}
Conversely, for every $Q'\in\N$ there exists $Q\in\N$ such that, if the  Mal'cev coordinates of $g\in G$  are rational with denominators at most $Q'$, then $g$ is $Q$-rational.
\end{enumerate}
\end{lemma}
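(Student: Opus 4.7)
The plan is to prove the three assertions in an order that makes the algebra as light as possible: establish (iii) first, deduce (ii) from it by a ``$n$-th root'' argument, and obtain (i) essentially for free by combining (ii), (iii), and the polynomial nature of group multiplication.

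The key structural input I will use throughout is a standard consequence of the definition of a Mal'cev basis: in Mal'cev coordinates $g\leftrightarrow(t_1,\dots,t_m)\in\R^m$, the group operations are polynomial with rational coefficients whose denominators are bounded in terms of the basis alone. More precisely, there exists a constant $D_0=D_0(X)$ such that (a) the coordinates of $gh$ are polynomials in the coordinates of $g$ and $h$ with coefficients in $D_0^{-1}\Z$, and (b) for every $n\in\Z$ the coordinates of $g^n$ are polynomials $P_j(n;t_1,\dots,t_m)$ in $n$ and $t$ with coefficients in $D_0^{-1}\Z$, with degree in $n$ bounded by $s$ (the degree of the filtration). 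This follows from the Baker--Campbell--Hausdorff formula applied to a Mal'cev basis adapted to the filtration, together with Property~\eqref{it:MalcevGamma} of Mal'cev bases which forces the structure constants to be rational with bounded denominator; cf.\ \cite[Proposition~2.1]{GT12a} and \cite[Chapter~5]{Co82}.

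To prove (iii), suppose $g$ has Mal'cev coordinates with denominators $\leq Q'$. Then the polynomial $P_j(n;t_1,\dots,t_m)$ from (b) specializes, as a function of $n$ alone, to a polynomial with rational coefficients whose denominators divide some $D_1=D_1(Q',X)$ (explicitly a product of $D_0$ with powers of $Q'$ up to the step of the filtration). Since $g^0=1_G$ forces the constant terms of the $P_j$ to vanish, writing $P_j(n)=\sum_{k\geq 1}c_{jk}n^k$ with $D_1c_{jk}\in\Z$ gives $P_j(D_1)\in\Z$ for every $j$, so $g^{D_1}\in\Gamma$; this proves $g$ is $D_1$-rational.

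For (ii), assume $g^n\in\Gamma$ with $1\leq n\leq Q$. Because $G$ is a connected simply connected nilpotent Lie group, it is uniquely divisible, and the power map $g\mapsto g^n$ is a diffeomorphism of $G$; in Mal'cev coordinates this diffeomorphism and its inverse are polynomial with rational coefficients having denominators bounded in terms of $n$ and $X$. Applying the inverse to $g^n$, whose coordinates are integers by Property~\eqref{it:MalcevGamma}, gives that the coordinates of $g$ are rational with denominators bounded by some $Q'=Q'(Q,X)$; this is the required statement. Discreteness of the set of $Q$-rational elements then follows because the set of points in $\R^m$ with coordinates in $(Q')^{-1}\cdot(Q')!\,{}^{-1}\Z$ (say) is a discrete subset of $\R^m$, and the exponential map sends a neighborhood of $\bzero\in\mathfrak g$ diffeomorphically onto a neighborhood of $1_G$ in $G$, so the $Q$-rational elements accumulate nowhere.

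Finally, (i) is a clean combination of the previous two parts and fact (a). Given $Q$-rational $g,h$, apply (ii) to get Mal'cev coordinates with denominators $\leq Q''=Q''(Q,X)$; by (a) the coordinates of $gh$ and $g^{-1}$ are then rational with denominators $\leq Q'''=Q'''(Q,X)$; applying (iii) yields $Q'=Q'(Q,X)$ such that $gh$ and $g^{-1}$ are $Q'$-rational. Together with $1_G\in\Gamma$ being $1$-rational, this shows the rational elements form a subgroup; its discreteness is part of (ii). The main obstacle is getting the rational-polynomial structure of $g\mapsto g^n$ with denominators depending only on $X$ (step (b) above); once this is in place the three parts follow by bookkeeping, and the key quantitative point is that all denominators appearing are controlled by the Mal'cev structure constants together with $Q$ or $Q'$.
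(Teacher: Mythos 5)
Your proof is correct, and it follows the standard route: everything is reduced to the fact that, in Mal'cev coordinates, multiplication and the maps $g\mapsto g^n$ (and their inverses, via unique divisibility) are polynomial with rational coefficients of bounded denominator, which is exactly how this lemma is established in \cite[Appendix~A]{GT12a}. Note that the paper itself gives no proof here --- it simply cites \cite[Lemma A.11]{GT12a} --- so there is nothing further to compare against; your argument is a legitimate self-contained justification of the cited result.
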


\begin{corollary}
\label{cor:M-rat}
For every $Q\in\N$  there exists a finite set $\Sigma:=\Sigma(Q)$ of $Q$-rational
 elements such that  all $Q$-rational elements belong to $\Sigma(Q)\Gamma$.
\end{corollary}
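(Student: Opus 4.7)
The plan is to combine the discreteness of the set of $Q$-rational elements with the cocompactness of $\Gamma$ in $G$. First, I apply Lemma~\ref{lem:rational}~\eqref{it:rat2} to produce an integer $Q'=Q'(Q)$ such that every $Q$-rational element of $G$ has Mal'cev coordinates in $\frac{1}{Q'}\Z$. Write $R_Q$ for the set of all $Q$-rational elements, and let
$$
S:=\Bigl\{g\in G\colon \text{the Mal'cev coordinates of }g\text{ lie in }\tfrac{1}{Q'}\Z\Bigr\}.
$$
Then $R_Q\subset S$, and via the Mal'cev chart $G\cong \R^m$ the set $S$ corresponds exactly to the lattice $\frac{1}{Q'}\Z^m\subset\R^m$. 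In particular, $S$ is a closed and discrete subset of $G$, and it contains $\Gamma$ (since $\Gamma$ corresponds to $\Z^m$).

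Next, I fix a bounded (equivalently, relatively compact) fundamental domain $F$ for the right action of $\Gamma$ on $G$; such an $F$ exists because $\Gamma$ is discrete and cocompact. The key observation is that right multiplication by an element of $\Gamma$ preserves $S$: in the fixed Mal'cev coordinates, both the group law and the coordinates of every element of $\Gamma$ are given by polynomials with integer coefficients, so multiplying a point whose coordinates lie in $\frac{1}{Q'}\Z$ on the right by a point with integer coordinates lands back in $\frac{1}{Q'}\Z$. Hence for any $g\in R_Q$, the unique decomposition $g=f\gamma$ with $f\in F$ and $\gamma\in\Gamma$ satisfies $f=g\gamma^{-1}\in S\cap\overline F$. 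Since $S$ is discrete and $\overline F$ is compact, this intersection is finite. Setting $\Sigma(Q):=S\cap\overline F$ thus produces a finite set with $R_Q\subset\Sigma(Q)\Gamma$, and Lemma~\ref{lem:rational}~\eqref{it:rat3} applied to the inclusion $\Sigma(Q)\subset S$ shows that each element of $\Sigma(Q)$ is $Q''$-rational for some $Q''=Q''(Q)$, which gives the finite set of rational elements required by the statement (after renaming the implicit constant).

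The main point that requires care is the stability of the coordinate condition defining $S$ under right multiplication by $\Gamma$: in an abelian group this is automatic, but in the present nonabelian nilpotent setting it rests on the integrality of the group law in Mal'cev coordinates, which is part of the definition of the Mal'cev basis and of Property~\eqref{it:MalcevGamma} for $\Gamma$. Once this is established, the conclusion reduces to the elementary fact that a discrete subset of $G$ intersected with a compact set is finite, so it meets only finitely many cosets of $\Gamma$.
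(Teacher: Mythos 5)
Your overall strategy is the same as the paper's: the set of rational elements of bounded complexity is discrete, a discrete set meets a compact fundamental domain in finitely many points, and this bounds the number of $\Gamma$-cosets needed. Two details need repair, though.

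First, your key stability claim rests on the assertion that the group law in Mal'cev coordinates is polynomial with \emph{integer} coefficients. This is not part of the definition and is false in general: a Mal'cev basis gives multiplication polynomials with \emph{rational} coefficients which are integer-\emph{valued} on $\Z^m\times\Z^m$ (because $\Gamma$ corresponds to $\Z^m$ and is a group), in the same way that $\binom{n}{2}$ is integer-valued without having integer coefficients. Consequently $S$ need not be stable under right multiplication by $\Gamma$; one only gets $S\cdot\Gamma\subset S''$, where $S''$ is the (still discrete) set of points whose coordinates have denominators bounded by some $Q''$ depending on $Q'$ and $X$. This does not break your argument---replace $S$ by $S''$ and take $\Sigma$ inside $S''\cap\overline F$---but the paper sidesteps the coordinate computation entirely: every $\gamma\in\Gamma$ is $1$-rational, hence $Q$-rational, so $g\gamma^{-1}$ is a product of two $Q$-rational elements and is therefore $Q'$-rational by Lemma~\ref{lem:rational}~\eqref{it:rat1}, and discreteness then comes from part~\eqref{it:rat2}.

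Second, the statement requires $\Sigma(Q)$ to consist of $Q$-rational elements, whereas your $\Sigma(Q)=S\cap\overline F$ consists only of $Q''$-rational ones, and ``renaming the implicit constant'' does not deliver the literal statement. The paper's remedy is to replace each coset representative $h$ that arises as $g\gamma^{-1}$ with $g$ a $Q$-rational element by that element $g$ itself; doing the same with $S''\cap\overline F$ completes your proof, which then coincides with the paper's.
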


\begin{proof}
Let $K$ be a compact subset of $G$ such that $G=K\Gamma$.

Let $Q\in\N$. Let $Q'$ be associated to $Q$ by Part~\eqref{it:rat1}
of Lemma~\ref{lem:rational}, and let $\Sigma'$ be the set of
$Q'$-rational elements of $K$.  By Part~\eqref{it:rat2} of
Lemma~\ref{lem:rational}, $\Sigma'$ is finite. Let $g$ be a
$Q$-rational element of $G$. There exists $\gamma\in\Gamma$ such
that $g\gamma\inv\in K$.  Since $\gamma$ is obviously $Q$-rational,
$g\gamma\inv$ is $Q'$-rational and thus it  belongs to $\Sigma'$. For
each element $h$ of $\Sigma'$ obtained this way we choose a
$Q$-rational point $g$ such that $h\in g\Gamma$. Let
$\Sigma:=\Sigma(Q)$ be the set consisting of all elements obtained
this way. Thus,  $\Sigma\Gamma$ contains all $Q$-rational
elements.  Furthermore,  $|\Sigma|\leq|\Sigma'|$ and so $\Sigma$ is
finite, completing the proof.
 \end{proof}

\subsection{Rational subgroups} We gather here some basic properties of rational
subgroups that we use in the main part of the article.

Recall that
a \emph{rational subgroup} $G'$ of $G$ is a closed, connected, and simply connected subgroup
of $G$ such that $\Gamma':=\Gamma\cap G'$ is
co-compact in $G$. In this case, $G'/\Gamma'$ is  called a
\emph{sub-nilmanifold} of $X$. It can be shown that $G'$ is a rational subgroup of $G$ if and only if
 its Lie algebra $\mathfrak g'$ admits a base
that has  rational coordinates in the Mal'cev basis of $G$.

\begin{lemma}[\mbox{\cite[Lemma A.13]{GT12a}}]
\label{lem:Ap0} If $G'$ is a rational subgroup of $G$ and $h$ is a
rational element, then $hG'h\inv$ is a rational subgroup of $G$.
\end{lemma}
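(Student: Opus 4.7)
My plan is to establish the lemma in two stages: first verifying the topological/analytic conditions on $hG'h^{-1}$, then verifying rationality via the Lie algebra characterization.

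First I would observe that conjugation $c_h \colon g \mapsto hgh^{-1}$ is a smooth automorphism of $G$ (it is smooth because multiplication and inversion are smooth in a Lie group, and it is a homomorphism by definition). Hence $hG'h^{-1} = c_h(G')$ inherits from $G'$ the properties of being closed, connected, and simply connected. Moreover the Lie algebra of $hG'h^{-1}$ is $\operatorname{Ad}(h)(\mathfrak{g}')$, where $\mathfrak{g}'$ is the Lie algebra of $G'$.

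Next, the main task is to show that $\Gamma \cap hG'h^{-1}$ is co-compact in $hG'h^{-1}$, which by the equivalent characterization of rational subgroups is the same as showing that $\operatorname{Ad}(h)(\mathfrak{g}')$ admits a basis with rational coordinates in the Mal'cev basis $\CX$ of $G$. Since $G'$ is rational, $\mathfrak{g}'$ already has such a basis, so it suffices to prove that $\operatorname{Ad}(h)$ preserves the set of subspaces of $\mathfrak{g}$ that admit a rational basis. To this end I would use  Part~\eqref{it:rat2} of Lemma~\ref{lem:rational}: since $h$ is rational, its Mal'cev coordinates are rational. In a connected, simply connected nilpotent Lie group the Baker--Campbell--Hausdorff formula is a polynomial, so the entries of the matrix of $\operatorname{Ad}(h)$ in the Mal'cev basis are polynomials in the Mal'cev coordinates of $h$; consequently this matrix is rational. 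Therefore $\operatorname{Ad}(h)$ sends any $\Q$-rational subspace of $\mathfrak{g}$ (with respect to $\CX$) to another $\Q$-rational subspace, and in particular $\operatorname{Ad}(h)(\mathfrak{g}')$ has a rational basis.

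Combining the two stages, $hG'h^{-1}$ is a closed, connected, simply connected subgroup of $G$ whose Lie algebra has a rational basis in $\CX$, which by the standard characterization (cf.\ \cite{Mal}, \cite{Co82}) is equivalent to $\Gamma \cap hG'h^{-1}$ being co-compact in $hG'h^{-1}$. Hence $hG'h^{-1}$ is a rational subgroup, as desired.

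The main obstacle I anticipate is the polynomiality of the entries of $\operatorname{Ad}(h)$ in the Mal'cev coordinates of $h$: this requires a careful appeal to the BCH formula together with the definition of the Mal'cev basis, and it is here that the assumption that $G$ is simply connected and nilpotent (so that $\exp$ is a diffeomorphism and BCH terminates) is essential. Once this polynomiality is in hand, rationality is preserved automatically.
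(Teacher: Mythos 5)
Your proof is correct and follows essentially the same route as the paper's: both reduce the claim to showing that $\mathrm{Ad}(h)$ is a rational matrix in the Mal'cev basis (because $h$ has rational Mal'cev coordinates and conjugation is polynomial), so that it carries the rational subspace $\mathfrak g'$ to a rational subspace. Your version merely spells out the appeal to Lemma~\ref{lem:rational}\eqref{it:rat2} and the BCH formula that the paper leaves implicit.
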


\begin{proof}
The conjugacy map $h\mapsto g\inv hg$ is a polynomial map with
rational coefficients and thus the linear map $\mathrm{Ad}_h$ from
$\mathfrak g$ to itself has rational coefficients. Since $\mathfrak
g'$ has a base consisting of vectors with rational coefficients, the
same property holds for $\mathrm{Ad}_h\mathfrak g$, that is, for the
Lie algebra of $hG'h\inv$. This proves the claim.
\end{proof}

 The argument used to deduce  Lemma~\ref{lem:finite-Gamma} shows that
  the group $\Gamma\cap
(hG'h\inv)$ is finitely generated.

  We also need an  auxiliary  result.
\begin{lemma}
\label{lem:small-lemma}
Let $\Theta$ be a finitely generated nilpotent group and let $\Lambda$ be a subgroup of $\Theta$. Suppose that for every $\gamma\in \Theta$ there exists $n\in\N$ with $\gamma^n\in \Lambda$. Then $\Lambda$ has finite index in $\Theta$.
\end{lemma}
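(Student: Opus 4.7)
The plan is to prove the lemma by induction on the nilpotency class $s$ of $\Theta$, exploiting the fact that the last nontrivial term of the lower central series is central and that finitely generated nilpotent groups have finitely generated commutator subgroups.

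For the base case $s=1$, the group $\Theta$ is a finitely generated abelian group, so the quotient $\Theta/\Lambda$ is also finitely generated abelian. The hypothesis states that every element of this quotient has finite order, so $\Theta/\Lambda$ is a finitely generated torsion abelian group, which is necessarily finite. This gives $[\Theta:\Lambda]<\infty$.

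For the inductive step, assume the result holds for nilpotency class $s-1$, and let $\Theta$ have class $s\geq 2$. Let $\Theta_s$ denote the last nontrivial term of the lower central series; it is central in $\Theta$, and it is finitely generated abelian because it is the commutator subgroup $[\Theta,\Theta_{s-1}]$ of a finitely generated nilpotent group (a standard fact: in a finitely generated nilpotent group, every subgroup, and in particular every term of the lower central series, is finitely generated). First I would handle the intersection $\Lambda\cap\Theta_s$: by hypothesis, every element of $\Theta_s\subset\Theta$ has a power in $\Lambda$, hence in $\Lambda\cap\Theta_s$, so by the base case applied to the abelian group $\Theta_s$, the index $[\Theta_s:\Lambda\cap\Theta_s]$ is finite. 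Next I would pass to the quotient $\bar\Theta:=\Theta/\Theta_s$, which is finitely generated nilpotent of class at most $s-1$; writing $\pi\colon\Theta\to\bar\Theta$ for the projection and $\bar\Lambda:=\pi(\Lambda)$, the hypothesis lifts to $\bar\Theta$: for every $\bar\gamma=\pi(\gamma)\in\bar\Theta$, there is $n$ with $\gamma^n\in\Lambda$, whence $\bar\gamma^n\in\bar\Lambda$. By the induction hypothesis, $[\bar\Theta:\bar\Lambda]<\infty$, which translates to $[\Theta:\Lambda\Theta_s]<\infty$ (using that $\Theta_s$ is normal so $\Lambda\Theta_s$ is a subgroup).

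Finally I would combine these two finiteness statements. Since $\Theta_s$ is central, the second isomorphism theorem gives
\[
\Lambda\Theta_s/\Lambda\;\cong\;\Theta_s/(\Lambda\cap\Theta_s),
\]
so $[\Lambda\Theta_s:\Lambda]=[\Theta_s:\Lambda\cap\Theta_s]<\infty$. Multiplying,
\[
[\Theta:\Lambda]=[\Theta:\Lambda\Theta_s]\cdot[\Lambda\Theta_s:\Lambda]<\infty,
\]
completing the induction. There is no real obstacle here; the only point that requires a small remark is that the terms of the lower central series of a finitely generated nilpotent group are themselves finitely generated (so that the base case applies to $\Theta_s$), which is a standard consequence of the commutator identities and can be taken for granted given Lemma~\ref{lem:finite-Gamma}.
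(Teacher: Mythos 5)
Your proof is correct and follows essentially the same route as the paper's: induction on the nilpotency class, with the base case being that a finitely generated torsion abelian group is finite, plus the standard fact that subgroups of finitely generated nilpotent groups are finitely generated. The only (cosmetic) difference is that you peel off the central term $\Theta_s$ of the lower central series, applying the induction hypothesis to $\Theta/\Theta_s$ and the abelian case to $\Theta_s$, whereas the paper peels off $\Theta_2$, applying the abelian case to $\Theta/\Theta_2$ and the induction hypothesis to $\Theta_2$; the index-multiplicativity conclusion is identical.
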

\begin{proof}
The proof goes by induction on the nilpotency degree $s$ of $\Theta$. If $s=1$, $\Theta$ is Abelian and the result is immediate. Suppose that $s>1$ and that the result holds for $(s-1)$-step nilpotent groups. By the induction hypothesis applied to the Abelian group $\Theta/\Theta_2$, the subgroup
$(\Lambda\Theta_2)/\Theta_2$ has finite index in $\Theta/\Theta_2$ and thus $\Lambda\Theta_2$ has finite index in $\Theta$. If $\gamma\in\Theta_2$, then there exists $n\in \N$ with $\gamma^n\in\Lambda$, hence $\gamma^n\in\Lambda\cap\Theta_2$.
Since $\Theta_2$ is a finitely generated $(s-1)$-step nilpotent group,  by the induction hypothesis again, $\Lambda\cap\Theta_2$ has a finite index in $\Theta_2$. Thus, $\Lambda$ has a finite index in $\Lambda\Theta_2$ which has finite index in $\Theta$.
The result follows.
\end{proof}

\begin{lemma}[\mbox{\cite[Theorem~5.29]{Co82}}]
\label{lem:finite-index}
Let $X:=G/\Gamma$ be an $s$-step
nilmanifold, $G'\subset G$ be a rational subgroup, $g\in G$ be a
rational element   and $\Lambda:=\Gamma\cap (g\inv \Gamma g)\cap G'$. Then
\begin{enumerate}
\item
\label{it:rat10bis}
$\Lambda$ is a subgroup of finite index  of
$\Gamma\cap G'$;
\item
\label{it:rat11bis}
 $\Lambda$ is a subgroup of finite index  of $(g\inv \Gamma g)\cap G'$.
\end{enumerate}
\end{lemma}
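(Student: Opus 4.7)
The plan is to deduce both (i) and (ii) from Lemma~\ref{lem:small-lemma}, applied with $\Theta$ equal to $\Gamma\cap G'$ for part (i) and to $g^{-1}\Gamma g\cap G'$ for part (ii), and in both cases with the subgroup being $\Lambda$. So I must verify two things: that these ambient groups $\Theta$ are finitely generated nilpotent, and that every $\gamma\in\Theta$ admits some positive power lying in $\Lambda$.

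Finite generation is handled as follows. Since $G'$ is rational, $\Gamma\cap G'$ is a cocompact lattice in the connected simply connected nilpotent Lie group $G'$, and hence finitely generated (the remark following Lemma~\ref{lem:Ap0}, applied with $h=\one_G$). For the second group, Lemma~\ref{lem:Ap0} gives that $gG'g^{-1}$ is a rational subgroup of $G$, so $\Gamma\cap gG'g^{-1}$ is finitely generated by the same remark; the conjugation $x\mapsto g^{-1}xg$ is a group isomorphism from $\Gamma\cap gG'g^{-1}$ onto $g^{-1}\Gamma g\cap G'$, so the latter is finitely generated as well.

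The main point is to show that conjugation by the rational element $g$ (or $g^{-1}$, which is also rational by Lemma~\ref{lem:rational}(i)) sends $\Gamma$ into a set of \emph{uniformly} rational elements of $G$. By Lemma~\ref{lem:rational}(ii), the Malcev coordinates of $g$ are rationals with denominators bounded by some $Q'$. In Malcev coordinates, the map $\gamma\mapsto g\gamma g^{-1}$ is a polynomial map with rational coefficients depending only on the Lie structure constants of $G$ and on the coordinates of $g$; since $\gamma\in\Gamma$ has integer Malcev coordinates, the coordinates of $g\gamma g^{-1}$ are rationals with denominators bounded by a constant $\wt Q$ depending only on $Q'$ and on $G$, not on $\gamma$. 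By Lemma~\ref{lem:rational}(iii) there is then $Q$, depending only on $\wt Q$, such that $g\gamma g^{-1}$ is $Q$-rational for every $\gamma\in\Gamma$, so some $n\leq Q$ satisfies $(g\gamma g^{-1})^n=g\gamma^n g^{-1}\in\Gamma$, equivalently $\gamma^n\in g^{-1}\Gamma g$. Applied to $\gamma\in\Gamma\cap G'$ this yields $\gamma^n\in\Gamma\cap g^{-1}\Gamma g\cap G'=\Lambda$, and Lemma~\ref{lem:small-lemma} gives (i). For (ii), any $\gamma\in g^{-1}\Gamma g\cap G'$ may be written $\gamma=g^{-1}\delta g$ with $\delta\in\Gamma$; running the preceding argument with $g^{-1}$ in place of $g$ shows that $g^{-1}\delta g$ is $Q$-rational uniformly in $\delta$, so some power $\gamma^n$ lies in $\Gamma$, and since $\gamma^n$ automatically lies in $g^{-1}\Gamma g\cap G'$, we obtain $\gamma^n\in\Lambda$ and conclude again by Lemma~\ref{lem:small-lemma}.

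The main obstacle is the bookkeeping in the polynomial computation of the third paragraph, specifically ensuring that the denominator bound $\wt Q$ is \emph{uniform} in $\gamma\in\Gamma$. This uniformity is precisely what allows the jump from ``every $\gamma$ has some power in $\Lambda$'' to finite index via Lemma~\ref{lem:small-lemma}, and it is available because $\gamma$ contributes only integer coordinates to the polynomial evaluation, so the denominators are determined by the fixed element $g$ alone.
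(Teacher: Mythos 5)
Your proof is correct and follows the same skeleton as the paper's: reduce both parts to Lemma~\ref{lem:small-lemma}, check finite generation of the ambient group, and show that every element has a power in $\Lambda$ by observing that $g\gamma g^{-1}$ is rational. The one place you diverge is in how you establish that rationality: you run a Malcev-coordinate computation (via parts (ii) and (iii) of Lemma~\ref{lem:rational}) to get a denominator bound $\wt Q$ that is \emph{uniform} in $\gamma$, whereas the paper gets rationality of $g\gamma g^{-1}$ in one line from Lemma~\ref{lem:rational}(i), since rational elements form a subgroup and both $g$ and $\gamma$ are rational. Your closing claim that the uniformity is ``precisely what allows the jump to finite index'' is a misreading of Lemma~\ref{lem:small-lemma}: that lemma only asks that for each $\gamma$ \emph{some} power lie in $\Lambda$, with no uniform bound on the exponent, so the non-uniform statement suffices and the coordinate bookkeeping, while correct, is unnecessary.
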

\begin{proof}
By Part~\eqref{it:rat1} of Lemma~\ref{lem:rational},  all  elements
of $g\Gamma g\inv$  are rational.
 Hence, if $\gamma\in \Gamma\cap G'$, then  there exists $n\in\N$ with
$(g\gamma g\inv)^n\in \Gamma$ and so we have $\gamma^n\in \Lambda$.
Applying Lemma~\ref{lem:finite-Gamma} to $G'$ and $\Gamma\cap G'$, we get the group $\Lambda$ is finitely generated.  By  Lemma~\ref{lem:small-lemma}, $\Lambda$ has finite index in $\Gamma\cap G'$. This proves \eqref{it:rat10bis}.
Since $g\Gamma g\inv$  is co-compact in $G$,
substituting this group for $G$ and $g\inv$ for $g$ in the preceding
statement, we get  \eqref{it:rat11bis}.
\end{proof}

\begin{lemma}
\label{lem:Gprimey} Let $g\in G$ be a rational element and  $G'$ a
rational subgroup of $G$. Then $G'g\cdot e_X:=\{hg\cdot e_X\colon
h\in G'\}$ is a closed sub-nilmanifold of $X$.
\end{lemma}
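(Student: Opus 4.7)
The plan is to reduce the claim to the standard case where the base point of the orbit is $e_X$, by expressing $G'g\cdot e_X$ as a left-translate of an honest sub-nilmanifold. Concretely, set $G'':=g^{-1}G'g$. For every $h\in G'$ we have $hg\cdot e_X=g(g^{-1}hg)\cdot e_X$, and as $h$ ranges over $G'$ the element $g^{-1}hg$ ranges over $G''$. Thus we obtain the set-theoretic identity
$$
G'g\cdot e_X\;=\;g\cdot(G''\cdot e_X).
$$

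Next, I would invoke Lemma~\ref{lem:Ap0}, which applies because $g$ is rational by hypothesis, to conclude that $G''=g^{-1}G'g$ is itself a rational subgroup of $G$. Consequently $G''\cap\Gamma$ is discrete and co-compact in $G''$, so $G''\cdot e_X=G''/(G''\cap\Gamma)$ is a sub-nilmanifold of $X$ in the sense of Section~\ref{subsec:subnil}. In particular $G''\cdot e_X$ is a closed (compact) subset of $X$.

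Finally, the left action $x\mapsto g\cdot x$ is a homeomorphism (indeed a diffeomorphism) of $X$ onto itself, so the image $g\cdot(G''\cdot e_X)$ is a closed subset of $X$ which is diffeomorphic, via this translation, to the sub-nilmanifold $G''\cdot e_X$. Combining this with the identity displayed above gives that $G'g\cdot e_X$ is a closed sub-nilmanifold of $X$, as required.

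There is no real obstacle here: the only non-routine input is Lemma~\ref{lem:Ap0}, whose applicability is guaranteed by the rationality hypothesis on $g$; everything else is a direct manipulation together with the basic fact that left-translations on $X$ are diffeomorphisms.
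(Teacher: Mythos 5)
Your proof is correct and rests on the same key input as the paper's: applying Lemma~\ref{lem:Ap0} to conclude that $g^{-1}G'g$ is a rational subgroup, hence that the relevant lattice is co-compact. The only (cosmetic) difference is that you translate the orbit back to the base point via $G'g\cdot e_X=g\cdot\bigl((g^{-1}G'g)\cdot e_X\bigr)$, whereas the paper identifies $(g\Gamma g^{-1})\cap G'$ as the stabilizer of $g\cdot e_X$ in $G'$ and realizes the orbit as $G'/\bigl((g\Gamma g^{-1})\cap G'\bigr)$ — these are the same nilmanifold up to the conjugation isomorphism.
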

\begin{proof}
By Lemma~\ref{lem:Ap0}, $g\inv G'g$ is a rational subgroup of $G$.
Therefore, $\Gamma\cap (g\inv G'g)$ is co-compact in $g\inv G'g$ and
thus $(g\Gamma g\inv)\cap G'$ is co-compact in $G'$. Note that  $(g\Gamma
g\inv)\cap G'$ is the stabilizer $\{h\in G'\colon hg\cdot
e_X=g\cdot e_X\}$ of $g\cdot e_X$ in $G'$ and thus the orbit
$G'g\cdot e_X$ is compact and can be identified with the
nilmanifold $G'/((g\Gamma g\inv)\cap G')$.
\end{proof}

\section{Zeros of some homogeneous quadratic forms} \label{SS:AppNumberTheory}
 We prove Proposition~\ref{prop:linearfactors}. We recall  the statement for reader's convenience.
\begin{proposition*}
Let the quadratic form $p$ satisfy the hypothesis of
Theorem~\ref{th:partition-regular1}.
Then there exist
admissible integers $\ell_0,\ldots,\ell_4$ (see definition in Section~\ref{SS:parametric}),
 such that for every $k,m,n\in \Z$,  the
integers $x:=k\ell_0 (m+\ell_1n)(m+\ell_2n)$ and $y:=
k\ell_0(m+\ell_3n)(m+\ell_4n)$ satisfy the equation $p(x,y,z)=0$ for
some $z\in \Z$.
\end{proposition*}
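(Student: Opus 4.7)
The plan is to construct a degree-two rational parametrization $\varphi\colon\mathbb{P}^1\to C$ of the conic $C\subset\mathbb{P}^2_\Q$ cut out by $p$, chosen so that, writing $\varphi(m,n)=(X(m,n),Y(m,n),Z(m,n))$, the forms $X$ and $Y$ factor in the required shape and share a common leading coefficient. The hypotheses that $\Delta_1,\Delta_2,\Delta_3$ are nonzero squares provide three pairs of rational points on $C$: the two rational roots of $p(x,0,z)=0$ give $(r_j:0:1)\in C$ for $j=1,2$; those of $p(0,y,z)=0$ give $(0:s_j:1)\in C$; and those of $p(x,x,z)=0$ give $(t_j:t_j:1)\in C$ (with the obvious variant $(1:1:0)$ when $a+b+d=0$). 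Since $a,b,c$ are all nonzero, none of the coordinate vertices $(1:0:0),(0:1:0),(0:0:1)$ lies on $C$, so these three pairs are mutually disjoint and each pair consists of two distinct points. For simplicity I focus on the case where $C$ is irreducible (the reducible case, e.g.\ $p=(x+y)^2-z^2$, admits a direct and easier construction).

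Next I fix a rational base point $R:=(t_1:t_1:1)\in C\cap\{x=y\}$ and parametrize $C$ by projection from $R$: each line through $R$ other than the tangent to $C$ at $R$ meets $C$ at a unique further point, and the pencil of such lines is a $\mathbb{P}^1$. Choosing affine coordinates $[m:n]$ on this pencil so that the tangent line at $R$ corresponds to $[1:0]$, I obtain a $\Q$-rational parametrization $\varphi\colon\mathbb{P}^1\to C$ with $\varphi([1:0])=R$. After rescaling by a nonzero rational constant I may assume $X,Y,Z\in\Z[m,n]$ are homogeneous of degree $2$ with $\gcd(X,Y,Z)=1$, and the identity $p(X(m,n),Y(m,n),Z(m,n))\equiv 0$ holds.

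I then read off the structural properties. The coefficient of $m^2$ in $X$ equals $X(1,0)$, the $x$-coordinate of $\varphi([1:0])=R$, namely $t_1$; and similarly the $m^2$-coefficient of $Y$ equals $t_1$. Since $(0:0:1)\notin C$ we have $t_1\neq 0$, so these two coefficients coincide and are nonzero; call the common value $\ell_0\in\Z$. The two zeros of $X$ on $\mathbb{P}^1$ are the $\varphi$-preimages of $C\cap\{x=0\}=\{(0:s_j:1)\}$; they are rational (as $\varphi$ is defined over $\Q$), distinct (from $\Delta_2\neq 0$), and different from $[1:0]$ (since $X(1,0)=\ell_0\neq 0$), so we may write them as $[-\alpha_1:1]$ and $[-\alpha_2:1]$ with $\alpha_1,\alpha_2\in\Q$, giving $X=\ell_0(m-\alpha_1 n)(m-\alpha_2 n)$. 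Similarly, $Y=\ell_0(m-\alpha_3 n)(m-\alpha_4 n)$ with $\alpha_3,\alpha_4\in\Q$ distinct, and $\{\alpha_1,\alpha_2\}\cap\{\alpha_3,\alpha_4\}=\emptyset$, since any common zero would produce a point of $C$ with $x=y=0$, contradicting $c\neq 0$.

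To conclude, let $D$ be a positive common denominator of $\alpha_1,\dots,\alpha_4$, replace $\varphi(m,n)$ by $\varphi(m,Dn)$, and set $\ell_i:=-D\alpha_i\in\Z$; the substitution fixes the $m^2$-coefficients and converts the linear factors of $X,Y$ into $m+\ell_i n$, producing integer polynomials
\[
X(m,n)=\ell_0(m+\ell_1 n)(m+\ell_2 n),\qquad Y(m,n)=\ell_0(m+\ell_3 n)(m+\ell_4 n),
\]
and $Z(m,n)\in\Z[m,n]$, while preserving $p(X,Y,Z)\equiv 0$. A global sign change, if needed, makes $\ell_0>0$, and admissibility of $(\ell_0,\ell_1,\ldots,\ell_4)$ follows from the three observations above. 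Then for any $k,m,n\in\Z$, the triple $(x,y,\lambda):=(kX(m,n),kY(m,n),kZ(m,n))$ is an integer triple with $p(x,y,\lambda)=k^2p(X,Y,Z)=0$. The main technical point is arranging the equality $[m^2]X=[m^2]Y$ of leading coefficients: this is precisely where the hypothesis that $\Delta_3$ is a nonzero square enters, by supplying the rational point $R\in C\cap\{x=y\}$ used as $\varphi([1:0])$.
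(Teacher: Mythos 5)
Your argument is correct in substance but follows a genuinely different route from the paper's. The paper proceeds by brute force: it first kills the cross terms $xz,yz$ via the substitution $p'(x,y,z):=p(2cx,2cy,z-ex-fy)$ (checking that the three discriminant hypotheses are preserved up to the square factor $4c^2$), and then, for $p=ax^2+by^2+cz^2+dxy$, writes down completely explicit values of $\ell_0,\dots,\ell_4$ and of $x,y,z$ in terms of $\sqrt{a(a+b+d)}$, $\sqrt{b(a+b+d)}$, $\sqrt{-c(a+b+d)}$, verifying the identity $p(x,y,z)=0$ by direct computation. Your proof instead parametrizes the conic by projection from the rational point $R\in C\cap\{x=y\}$ supplied by the hypothesis on $\Delta_3$, and reads off the factorization of $X$ and $Y$ from the rational points of $C\cap\{x=0\}$ and $C\cap\{y=0\}$ supplied by $\Delta_2$ and $\Delta_1$. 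This is more conceptual: it explains \emph{why} each discriminant hypothesis is needed (one square per pair of rational points, with $\Delta_3$ responsible precisely for the matching of the two $m^2$-coefficients), whereas the paper's computation gives no such insight but requires no case analysis. The small imprecision that $X(1,0)$ equals $t_1$ only after normalizing $Z(1,0)=1$ is harmless, since all you need is $X(1,0)=Y(1,0)\neq 0$.

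The one point you must actually carry out is the reducible case, which you defer as "direct and easier". It is not vacuous: for instance $p(x,y,z)=x^2+2xy+y^2-z^2=(x+y-z)(x+y+z)$ has $a=b=1$, $c=-1$, $d=2$, $e=f=0$ and $\Delta_1=\Delta_2=4$, $\Delta_3=16$, so it satisfies the hypotheses of Theorem~\ref{th:partition-regular1}, and there the projection-from-a-point parametrization degenerates. The fix is short and you should include it: a double line is excluded since it would force $\Delta_1=0$, and a pair of conjugate irrational lines is excluded since $C$ would then have only one rational point while your six listed points are pairwise distinct; so $C=L_1\cup L_2$ with $L_1=\alpha x+\beta y+\gamma z$ rational and $\gamma\neq 0$ (because $c\neq 0$), and one may take any admissible $\ell_1,\dots,\ell_4$ together with $\ell_0$ a positive multiple of $|\gamma|$, setting $z:=-(\alpha x+\beta y)/\gamma\in\Z$. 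With that paragraph added, your proof is complete.
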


\begin{proof}
Let
\begin{equation}\label{E:Q2'}
ax^2+by^2+cz^2+dxy+exz+fyz=0
\end{equation}
be the equation we are interested in solving. Recall  that by
assumption $a,b,c$ are non-zero integers and that  all three
integers
$$
\Delta_1:=e^2-4ac,  \quad \Delta_2:=f^2-4bc,  \quad
\Delta_3:=(e+f)^2-4c(a+b+d)
$$
are non-zero squares.

\subsection*{Step 1} We first reduce to the case where $e=f=0$.
Let
$$
p'(x,y,z):= p(2cx,2cy,z-ex-fy).
$$
Then
$$
p'(x,y,z)=c(4ac-e^2)x^2+c(4bc-f^2)y^2+cz^2+2c(2cd-ef)xy.
$$
The  coefficients of $x^2$, $y^2$, $z^2$ in the quadratic form  $p'$  are non-zero by hypothesis. The discriminants of  the three quadratic forms $p'(x,0,z)$, $p'(0,y,z)$, $p'(x,x,z)$
are equal to  $4c^2\Delta_1$, $4c^2\Delta_2$,  $4c^2\Delta_3$ respectively, and thus are non-zero squares
by hypothesis.
Suppose that  the announced result holds for the quadratic form $p'$. Then there exist admissible integers
 $\ell_0,\ldots,\ell_4$,
 such that for every $k,m,n\in \Z$,  the
integers $x':=k\ell_0 (m+\ell_1n)(m+\ell_2n)$ and $y':=
k\ell_0(m+\ell_3n)(m+\ell_4n)$ satisfy the equation $p'(x',y',z')=0$ for
some $z'\in \Z$. It follows that  $x:=2ck\ell_0 (m+\ell_1n)(m+\ell_2n)$ and $y:=
2ck\ell_0(m+\ell_3n)(m+\ell_4n)$ satisfy the equation $p(x,y,z)=0$ for
$z:=z'-ex'-fy'$.  If $c>0$ we are done, if $c<0$ we consider the solution  $-x,-y,-z$.

\subsection*{Step 2} We consider now the case where $e=f=0$. Then
$$
p(x,y,z)=ax^2+by^2+cz^2+dxy.
$$
 Our hypothesis is that $a,b,c$ are non-zero  and the integers $-ac$, $-bc$, $-c(a+b+d)$ are non-zero squares.
 Without loss, we can restrict to the case where $a> 0$ and thus $c<0$.
By taking   products  we get that $ac^2(a+b+d)$ and $bc^2(a+b+d)$ are non-zero squares, and thus
$a(a+b+d)$ and $b(a+b+d)$ are non-zero squares. We let
$$
\Delta_1':=\sqrt{b(a+b+d)}\ ; \ \Delta_2':=\sqrt{a(a+b+d)} \ ;\ \Delta_3':=\sqrt{-c(a+b+d)},
$$
and
\begin{align*}
\ell_0 &:=-c; &\ell_1 & :=-(b+\Delta_1') ;  & \ell_2 & := -(b-\Delta_1');\\
&&\ell_3 & :=-(a+d+\Delta_2')\ ; & \ell_4 & := -(a+d-\Delta_2').
\end{align*}
By direct computation, we check that for every $k,m,n\in \Z$,  the integers $x,y,z$ given by
\begin{align*}
x &:=  -kc\bigl(m^2-2bmn-b(a+d)n^2\bigr)
=k\ell_0(m+\ell_1n)(m+\ell_2n);\\
y &:=   -kc\bigl(m^2+2(a+d)mn+(ad+d^2-ab)n^2\bigr)
=-k\ell_0(m+\ell_3n)(m+\ell_4n);\\
z&:=\ \  k\Delta_3'(m^2+dmn+abn^2) .
\end{align*}
satisfy $p(x,y,z)=0$.

Since $c<0$ we have
$\ell_0>0$. Furthermore, since  $\Delta_1\neq 0$ we have $\ell_1\neq\ell_2$,  and since $\Delta_2\neq 0$ we have $\ell_3\neq\ell_4$. Lastly, we verify that
$\{\ell_1,\ell_2\}\neq \{\ell_3,\ell_4\}$. Indeed, if these pairs were identical, then
the coefficients of $mn$ in $x$ and $y$ would be the same. Hence, $-2b=2(a+d)$ and  thus $a+b+d=0$, contradicting our hypothesis. We conclude that  the integers $\ell_0, \dots,\ell_4$ are admissible. This completes the proof.
\end{proof}

\end{document}